\documentclass[10pt,twoside,a4paper,english,leqno]{article}

\title{Decoupled and unidirectional asymptotic models for the propagation of internal waves}
\newcommand{\shorttitle}{Decoupled and unidirectional asymptotic models for the propagation of internal waves}
\author{Vincent Duch\^ene\thanks{Department of Applied Physics and Applied Mathematics, Columbia University, New York, U.S.A.}}
\date{\today}

\usepackage{babel} 
\usepackage[utf8]{inputenc} 
\usepackage[T1]{fontenc} 
\usepackage[ec]{aeguill}
\usepackage{amsmath, amsthm} 
\usepackage{amsfonts,amssymb}
\usepackage{float} 
\usepackage[pdftex]{graphicx} 
\usepackage{epstopdf}
\usepackage{tabularx}
\usepackage{fancyhdr}
\usepackage{url} 
\urlstyle{sf} 
\usepackage{hyperref} 
\usepackage{verbatim} 
\usepackage[hang,centerlast]{subfigure}
\setlength{\marginparwidth}{2cm} 
\usepackage{enumerate} 

\numberwithin{equation}{section}

\setlength{\oddsidemargin}{20pt} 	
\setlength{\evensidemargin}{0pt} 
\setlength{\textwidth}{430pt}	 
\setlength{\textheight}{670pt}

\makeatletter
\let\Title\@title
\let\Author\@author
\makeatother
 
\fancypagestyle{mystyle}{%
	\fancyfoot{}
	\fancyhead[CE]{\shorttitle} 
	\fancyhead[CO]{\scriptsize Vincent Duch\^ene} 
	\fancyhead[RE,LO]{} 
	\fancyhead[LE,RO]{\scriptsize\thepage}
	}
\fancypagestyle{plain}{\fancyhead{} } 

\pagestyle{mystyle}

\newcommand{\RR}{\mathbb{R}}

\newcommand{\NN}{\mathbb{N}}

\renewcommand{\t}{\tilde}
\renewcommand{\b}{\bar}
\renewcommand{\S}{\mathcal{S}}
\renewcommand{\P}{\mathcal{P}}

\newcommand{\N}{\mathcal{N}}
\newcommand{\A}{\mathcal{A}}
\newcommand{\T}{\mathcal{T}}

\newcommand{\Q}{\mathcal{Q}}

\newcommand{\R}{\mathcal{R}}

\renewcommand{\O}{\mathcal{O}}

\newcommand{\eps}{\boldsymbol\varepsilon}

\newcommand{\nn}{\nonumber}
\newcommand{\p}{\mathfrak{p}}

\newcommand{\id}[1]{\left\vert_{ #1}\right.}

\newtheorem{Theorem}{Theorem}[section]
\newtheorem{Definition}[Theorem]{Definition}
\newtheorem{Proposition}[Theorem]{Proposition}
\newtheorem{Lemma}[Theorem]{Lemma}
\newtheorem{Corollary}[Theorem]{Corollary}
\newtheorem{Remark}[Theorem]{Remark}

\newtheorem{Hypothesis}[Theorem]{Hypothesis}

\begin{document}

\maketitle
\begin{abstract}
We study the relevance of various scalar equations, such as inviscid Burgers', Korteweg-de Vries (KdV), extended KdV, and higher order equations (of Camassa-Holm type), as asymptotic models for the propagation of internal waves in a two-fluid system. These scalar evolution equations may be justified with two approaches. The first method consists in approximating the flow with two decoupled, counterpropagating waves, each one satisfying such an equation. One also recovers homologous equations when focusing on a given direction of propagation, and seeking unidirectional approximate solutions. This second justification is more restrictive as for the admissible initial data, but yields greater accuracy. Additionally, we present several new coupled asymptotic models: a Green-Naghdi type model, its simplified version in the so-called Camassa-Holm regime, and a weakly decoupled model. All of the models are rigorously justified in the sense of consistency.
\end{abstract}

\section{Introduction}
\subsection{Motivation}
In this paper, we study asymptotic models for the propagation of internal waves in a two-fluid system, which consists in two layers of immiscible, homogeneous, ideal, incompressible and irrotational fluids under the only influence of gravity. We assume that there is no topography (the bottom is flat) and that the surface is confined by a flat rigid lid, although the case of a free surface could be handled with our method. The interface between the two layers of fluids is given as the graph of a function, $\zeta(t,x)$, which expresses the deviation from its rest position $\{(x,z),z=0\}$ at the spatial coordinate $x$ and at time $t$. As we will focus on scalar models, we restrict ourselves to the case of horizontal dimension one.
The governing equations of such a system, describing the evolution of the deformation of the interface, $\zeta(t,x)$, as well as the velocity flow inside the two layers of fluid is recalled in Figure~\ref{fig:SketchOfDomain} (see Section~\ref{sec:models} for more details).
\begin{figure}[htb]
\centering
 \includegraphics[width=0.8\textwidth]{./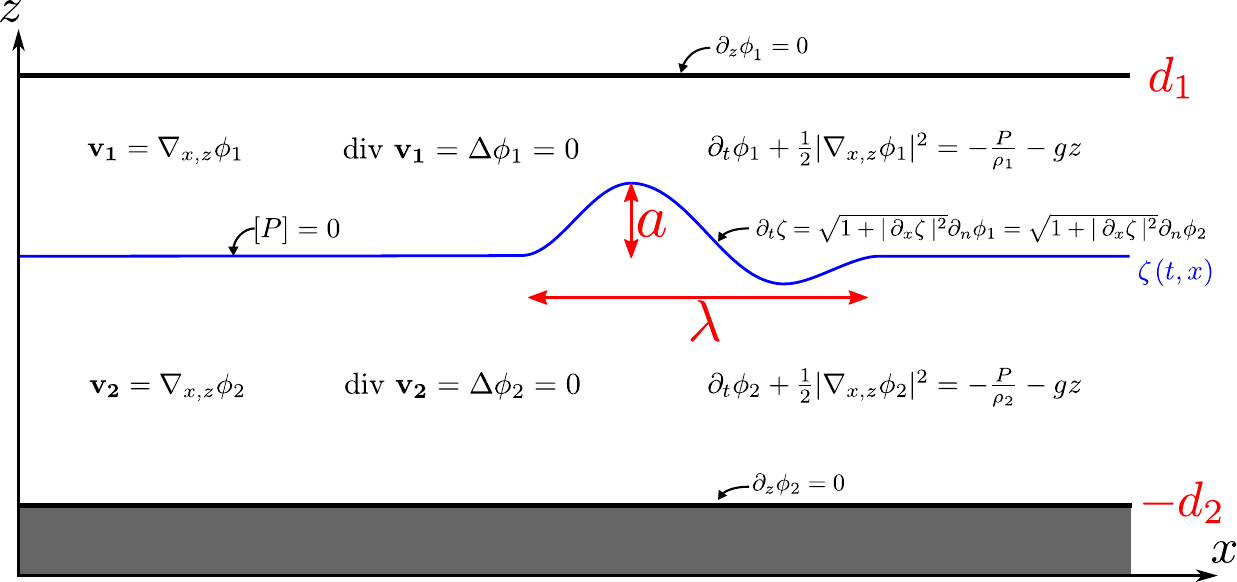}
 \caption{Sketch of the domain, and full Euler system}
\label{fig:SketchOfDomain}
\end{figure}

The mathematical theory for this system of equation (the so-called full Euler system) is extremely challenging; see~\cite{Lannes10} in particular. which has led to extensive works concerning simplified, asymptotic models, aiming at capturing the main characteristics of the flow with much simpler equations, provided the size of given parameters. Relevant dimensionless parameters that may be small in oceanographic situations include\footnote{Of course, these two dimensionless parameters describe only part of the many different regimes that may be considered; see~\eqref{eqn:defRegimeFNL} for the precise regime we consider in this work. Other relevant regimes are quickly discussed thereafter and we refer to the survey article~\cite{HelfrichMelville06} and references therein for a good overview of the oceanographic significance of asymptotic models in our regime.}
\[ \epsilon \ = \ \frac{a}{d_1} \ \quad ; \quad \mu \ = \ \frac{d_1^2}{\lambda^2} \ ;\]
where $\epsilon$ measures the amplitude of the deformation at the interface with respect to the depth of the layers of fluids, and $\mu$ measures the shallowness of the two layers of fluid, when compared with the typical wavelength of the deformation. Among other works, we would like to highlight~\cite{BonaLannesSaut08}, where many asymptotic models are presented and rigorously justified, in a wide range of regimes. In each case, the resulting model consists in two relatively simple evolution equations coupling the shear velocity and the deformation at the interface.

However, in this work, we are mainly interested in scalar models, which can be used in particular (but not only as we shall see) to describe the unidirectional propagation of gravity waves. The derivation and study of such models have a very rich and ancient history, starting with the work of Boussinesq~\cite{Boussinesq71} and Korteweg-de Vries~\cite{KortewegDe95} which introduced the famous {\em Korteweg-de Vries equation}
\begin{equation}\tag{KdV}\label{KdV}
\partial_t u \ + \ c\partial_x u \ + \ \alpha u\partial_x u \ + \ \nu\partial_x^3 u \ = \ 0,
\end{equation}
in the context of propagation of surface gravity waves, above one layer of homogeneous fluid (which we refer as ``water-wave problem'').
However, let us note that the complete rigorous justification of such model is recent:~\cite{KanoNishida86,SchneiderWayne00,BonaColinLannes05}. Such a justification is to be understood in the following sense: in the {\em long wave regime}
\[\epsilon \ \sim \ \mu \ \ll \ 1,\]
the flow can be accurately approximated as a decomposition into two counterpropagating waves, each component satisfying a KdV equation (or more generally, with the same order of accuracy, a {\em Benjamin-Bona-Mahony equation}~\cite{BenjaminBonaMahony72,Peregrine66}). 
\begin{equation}\tag{BBM}\label{BBM}
\partial_t u \ + \ c\partial_x u \ + \ \alpha u\partial_x u \ + \ \nu_x\partial_x^3 u \ - \nu_t\partial_x^2\partial_t u \ = \ 0,
\end{equation}
The situation is similar in the case of two layers of immiscible fluids, with a rigid lid (as well as in the case of two layers with a free surface, except the flow is then decomposed into four propagating waves), and corresponding results are presented in~\cite{Duchene11a}.

Of course the coefficients ($c,\alpha,\nu$, {\em etc.}) depend on the situation, and a striking difference between the water-wave case and the case of internal waves is that in the latter case, there exists a critical ratio for the two layers of fluid (depending on the ratio of the mass densities) for which the nonlinearity coefficient $\alpha$ vanishes. In that case, heuristic arguments support the inclusion of the next order (cubic) nonlinearity, which yields the {\em modified KdV equation} (see~\cite{OstrovskyStepanyants89} and references therein):
\begin{equation}\tag{mKdV}\label{mKdV}
\partial_t u \ + \ c\partial_x u \ + \ \alpha_2 u^2\partial_x u \ + \ \nu_x\partial_x^3 u \ - \nu_t\partial_x^2\partial_t u \ = \ 0.
\end{equation}
For the cubic nonlinearity to be of the same order of magnitude as the dispersive terms, this urges to consider the so-called (refering to~\cite{ConstantinLannes09} for this non-standard denomination) {\em Camassa-Holm regime}:
\[ \epsilon^2 \ \sim \ \mu \ \ll \ 1.\]
Note that this regime is interesting by itself as it allows larger amplitude waves than the long wave regime, and in particular yields models developing finite time breaking wave singularities~\cite{Constantin01,ConstantinLannes09}. Thus we ask:
{\em Can we extend to the Camassa-Holm regime the rigorous justification of the small amplitude (long wave) one?}
\medskip

 As we shall see, the answer is not straightforward, as several evolution equations are in competition, with an accuracy depending on the situation, and in particular the criticality of the depth ratio, and localization in space of the initial data.
 In addition to the equations already presented above, we consider the {\em extended KdV} (or {\em Gardner}) equation
\begin{equation}\tag{eKdV}\label{eKdV}
\partial_t u \ + \ c\partial_x u \ + \ \alpha_1 u\partial_x u \ + \ \alpha_2 u^2\partial_x u\ + \ \nu_x\partial_x^3 u \ - \nu_t\partial_x^2\partial_t u \ = \ 0,
\end{equation}
as presented in~\cite{KakutaniYamasaki78,DjordjevicRedekopp78,Miles81}, studied in~\cite{Grimshaw04,OstrovskyStepanyants05,SakaiRedekopp07} (among other works), and tested against experiments in~\cite{KoopButler81,HelfrichMelvilleMiles84,MichalletBarth'elemy98}. More generally, we
work with the higher order model 
\begin{align}
\partial_t u \ + \ c\partial_x u \ + \ \alpha_1 u\partial_x u \ + \ \alpha_2 u^2\partial_x u \ + \ \alpha_3 u^3\partial_x u \ + \nu_x\partial_x^3 u \ - \nu_t\partial_x^2\partial_t u & \nn\\
 + \ \partial_x\big(\kappa_1 u\partial_x^2u \ + \ \kappa_2(\partial_x u)^2\big) & \ = \ 0.
\tag{CL}\label{CL} \end{align}
Such a model has been presented and justified (in the water-wave setting, and in the Camassa-Holm regime) in~\cite{ConstantinLannes09}; we therefore designate it as {\em Constantin-Lannes} equation. 
This equation is related, though slightly different from the {\em Camassa-Holm} equation obtained for example in~\cite{CamassaHolm93,Johnson02} or higher order models (see various such models formally derived in~\cite{Whitham,Fokas95,FokasLiu96,ChoiCamassa99,
GrimshawPelinovskyPoloukhina02,OstrovskyGrue03,DullinGottwaldHolm03,CraigGuyenneKalisch05}, concerning the water wave or internal wave problem). However, each of these works is limited to the restrictive assumption that only one direction of propagation is non-zero.
\medskip

We propose to study in detail the justification of the decomposition of the flow (although the unidirectional case is also treated) as presented above, and in the case of internal waves with a rigid lid. Our results are expressed in a very general setting: assuming only
\[ \mu \ \ll\ 1 \quad \text{ and } \quad \epsilon \ \ll \ 1,\]
and expressing the accuracy of the different models as a function of these parameters. Although this complicates the expression of our results, and renders the proofs fairly technical, such choice allows to cover general regimes, including the long-wave as well as the Camassa-Holm regimes, described above. We recover in particular the relevance of the KdV approximation in the long wave regime. In the Camassa-Holm regime, the conclusion not as definite, but depends on the criticality of the depth ratio, the localization in space of the initial data, as well as the time-scale which is considered.

\subsection{Main results and outline of the paper}
As motivated above, the main of this article is to construct and rigorously justify asymptotic models for the system of equations, presented in figure~\ref{fig:SketchOfDomain}, and which describe the behavior of two layers of immiscible, homogeneous, ideal, incompressible fluids under the sole influence of gravity. The derivation of such equations is not new; we briefly recall it in Section~\ref{ssec:FullEulerSystem}, and refer to~\cite{BonaLannesSaut08} for more details.

The so-called {\em full Euler system} can be written as two evolution equations using Zakharov's canonical variables, namely $\zeta$ the deformation of the interface from its rest position, and the trace of a velocity potential at the interface. Such a formulation is built upon the so-called {\em Dirichlet-to-Neumann} operators, solving Laplace's equation on the two domains of fluid, with suitable Neumann or Dirichlet boundary conditions.

Lastly, we non-dimensionalize the system in order to put forward the relevant dimensionless parameters of the system, and in particular $\epsilon$, the nonlinearity parameter, and $\mu$, the shallowness parameter. We restrict our study to the following set of parameters:
\begin{equation} \label{eqn:defRegimeFNL}
\P \ \equiv \ \big\{ (\mu,\epsilon,\delta,\gamma),\ \ 0 \leq \ \mu \ \leq \ \mu_{\text{max}}, \quad 0 \ \leq\ \epsilon \ \leq \ \epsilon_{\text{max}} , \quad \delta \in (\delta_{\text{min}},\delta_{\text{max}}), \quad 0\ \leq \ \gamma\ <\ 1 \ \big\},\end{equation}
where $\mu_{\text{max}},\epsilon_{\text{max}},\delta_{\text{min}},\delta_{\text{max}}$ are positive, finite. $\delta\equiv\frac{d_1}{d_2}$ is the depth ratio, and $\gamma\equiv\frac{\rho_1}{\rho_2}$ is the mass density ratio.

As expressed above, our asymptotic models rely on the assumption that
$\mu \ \ll\ 1 $ and $\epsilon \ \ll \ 1$. More precisely, our results hold for any $(\mu,\epsilon,\delta,\gamma)\in\P$, with $\mu_{\text{max}},\epsilon_{\text{max}}$ not necessarily small, but our approximate solutions are accurate if and only if both $\epsilon$ and $\mu$ are small. 

Of course, the choice of our regime inevitably restricts the scope of our results, and the validity of our approximate models. In particular, even if we attach a great attention to impose as weak a constraint as possible on the nonlinearity and shallowness parameters in~\eqref{eqn:defRegimeFNL}, we forbid the depth ratio to approach zero or infinity. In particular, this is incompatible with the intermediate long wave regime (ILW) or Benjamin-Ono regime (BO). It would be of great interest to extend our results to such regimes, since the equivalent model to our Green-Naghdi coupled model has been derived in~\cite{Xu12}, together with the proof of energy estimates allowing to handle the full justification process as described below. Let us also recall that one can find a very large family of coupled models in~\cite{BonaLannesSaut08}, associated with a wide range of regimes, and all justified in the sense of consistency.

Finally, we would like to emphasize that our results are especially adapted to parameters restricted in the so-called Camassa-Holm regime:
\begin{equation} \label{eqn:defRegimeCH}
 \P_{CH} \ \equiv \ \big\{ (\mu,\epsilon,\delta,\gamma),\ 0 \leq \ \mu \ \leq \ \mu_{\text{max}}, \quad 0 \ \leq \ \epsilon \ \leq \ M\sqrt\mu, \quad \delta \in (\delta_{\text{min}},\delta_{\text{max}}), \quad 0< \gamma < 1 \ \big\}, 
\end{equation}
with $M$ positive, finite.

\paragraph{The Green-Naghdi coupled model.}
The first step of our analysis is to construct a shallow-water ($\mu\ll1$) high-order, coupled asymptotic model: the so-called {\em Green-Naghdi model}.

The key ingredient is to obtain an expansion of the aforementioned Dirichlet-to-Neumann operators, with respect to the shallowness parameter, $\mu$. Such an expansion is provided in the (one layer) water-wave case in~\cite[Proposition 3.8]{Alvarez-SamaniegoLannes08}, and a first-order expansion in the bi-fluidic case is obtained in~\cite[Section 2]{BonaLannesSaut08} (let us also note that the case of two layers of immiscible fluids with a free surface is given in~\cite[Section 2.2]{Duchene10}). We offer a second order expansion of the Dirichlet-to-Neumann operators, precisely disclosed in Proposition~\ref{prop:expGH}.

When replacing the Dirichlet-to-Neumann operators by their truncated expansion, and after straightforward computations, one is able to deduce
 the Green-Naghdi model, that we disclose below. Our system has two unknowns: $\zeta$ representing the deformation of the interface, and $\b v$ the shear layer-mean velocity, as defined by $\bar v \ \equiv \ \overline{u}_{2}\ -\ \gamma \overline{u}_{1}$, 
where $\overline{u}_{1}, \ \overline{u}_{2}$ are the horizontal velocities integrated across the vertical layer in each fluid:
\[
\overline{u}_{1}(t,x) \ = \ \frac{1}{h_1(t,x)}\int_{\epsilon\zeta(t,x)}^{1} \partial_x \phi_1(t,x,z) \ dz, \quad \text{ and }\quad \overline{u}_{2}(t,x) \ = \ \frac{1}{h_2(t,x)}\int_{-\frac1\delta}^{\epsilon\zeta(t,x)} \partial_x \phi_2(t,x,z) \ dz.
\]
 Precisely, our Green-Naghdi model is the following.
\begin{equation}\label{eqn:GreenNaghdiMeanI}
\left\{ \begin{array}{l}
\displaystyle\partial_{ t}{\zeta} \ + \ \partial_x \Big(\frac{h_1h_2}{h_1+\gamma h_2}\bar v\Big)\ =\ 0, \\ \\
\displaystyle\partial_{ t}\Big( \bar v \ + \ \mu\overline{\Q}[h_1,h_2]\bar v \Big) \ + \ (\gamma+\delta)\partial_x{\zeta} \ + \ \frac{\epsilon}{2} \partial_x\Big(\dfrac{{h_1}^2 -\gamma {h_2}^2 }{(h_1+\gamma h_2)^2} |\bar v|^2\Big) \ = \ \mu\epsilon\partial_x\big(\overline{\R}[h_1,h_2]\bar v \big) ,
\end{array} 
\right. 
\end{equation}
where $h_1=1-\epsilon\zeta$ and $h_2=\frac1\delta+\epsilon\zeta$ are the depth of, respectively, the upper and the lower layer, and where we define the following operators:
 \begin{align*} 
 \overline{\Q}[h_1,h_2]V \ &\equiv \ \frac{-1}{3h_1 h_2}\Bigg(h_1 \partial_x \Big({h_2}^3\partial_x\big(\frac{h_1\ V}{h_1+\gamma h_2} \big)\Big)\ +\ \gamma h_2\partial_x \Big( {h_1}^3\partial_x \big(\frac{h_2\ V}{h_1+\gamma h_2}\big)\Big)\Bigg), \\
 \overline{\R}[h_1,h_2]V \ &\equiv \ \frac12 \Bigg( \Big( h_2\partial_x \big( \frac{h_1\ V}{h_1+\gamma h_2} \big)\Big)^2\ -\ \gamma\Big(h_1\partial_x \big(\frac{h_2\ V}{h_1+\gamma h_2} \big)\Big)^2\Bigg)\\
 &\qquad + \frac13\frac{V}{h_1+\gamma h_2}\ \Bigg( \frac{h_1}{h_2}\partial_x\Big( {h_2}^3\partial_x\big(\frac{h_1\ V}{h_1+\gamma h_2} \big)\Big) \ - \ \gamma\frac{h_2}{h_1}\partial_x\Big({h_1}^3\partial_x \big(\frac{h_2\ V}{h_1+\gamma h_2}\big)\Big) \Bigg).
 \end{align*}
 This system is related to the one introduced by Choi and Camassa in~\cite{ChoiCamassa99}, and is the two-layers counterpart of the water-wave Green-Naghdi model introduced in~\cite{GreenNaghdi76}, and fully justified in~\cite{Alvarez-SamaniegoLannes08,Li06,Israwi11,Lannes}. We present here the first rigorous justification of the two-layer Green-Naghdi model, in the sense of consistency.
 \begin{Proposition}\label{prop:ConsGreenNaghdiMeanI}
Let $U^\p\equiv(\zeta^\p,\psi^\p)_{\p\in\P}$ be a family of solutions of the full Euler system~\eqref{eqn:EulerCompletAdim}, such that $\zeta^\p\in W^{1}([0,T);H^{s+11/2}),\partial_x\psi^\p\in W^{1}([0,T);H^{s+13/2})$ with $s\geq s_0+1/2$, $s_0>1/2$, and uniformly with respect to $\p\in\P$; see~\eqref{eqn:defRegimeFNL}. Moreover assume that there exists $h>0$ such that
\[
h_1 \ \equiv\ 1-\epsilon\zeta^\p \geq h>0, \quad h_2 \ \equiv \ \frac1\delta +\epsilon \zeta^\p\geq h>0.
\]
Define $\bar v^\p \ \equiv \ \overline{u}_{2}\ -\ \gamma \overline{u}_{1}$ as above.
Then $(\zeta^\p,\bar v^\p)$ satisfies~\eqref{eqn:GreenNaghdiMeanI}, up to a remainder $R$, bounded by
\[ \big\Vert R \big\Vert_{L^\infty([0,T);H^s)} \ \leq \ \mu^2\ C,\]
with $C=C(\frac1{s_0-1/2},\frac1{h},\epsilon_{\text{max}},\mu_{\text{max}},\frac1{\delta_{\text{min}}},\delta_{\text{max}},\big\Vert \zeta^\p \big\Vert_{W^{1}([0,T);H^{s+11/2})},\big\Vert \partial_x\psi^\p \big\Vert_{W^{1}([0,T);H^{s+13/2})} )$.
\end{Proposition}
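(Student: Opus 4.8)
The plan is to run the standard \emph{consistency} argument: start from the full Euler system~\eqref{eqn:EulerCompletAdim}, written for Zakharov's variables $(\zeta^\p,\psi^\p)$ in terms of the two Dirichlet--Neumann operators, and substitute the second-order shallow-water expansion of these operators provided by Proposition~\ref{prop:expGH}. Since by hypothesis $\zeta^\p$ and $\partial_x\psi^\p$ are bounded uniformly in $\p\in\P$ in high-order Sobolev spaces, and the depths $h_1,h_2$ are bounded below by $h>0$, every quantity appearing in that expansion — and in particular its remainder, which is of size $O(\mu^3)$ at the level of the operators, hence $O(\mu^2)$ after the $\mu^{-1}$ prefactor carried by the mass-conservation equation — is controlled in $H^s$ by means of the tame product estimates, the continuity of the operators $\overline{\Q}[h_1,h_2]$, $\overline{\R}[h_1,h_2]$, and the explicit bound on the Dirichlet--Neumann remainder stated in Proposition~\ref{prop:expGH}.

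The next step is to pass from $\psi^\p$ to the shear layer-mean velocity $\bar v^\p$. This requires expanding each vertically averaged velocity $\overline{u}_i$ in terms of $\partial_x\psi^\p$, using the same elliptic estimates that underlie Proposition~\ref{prop:expGH}: at leading order $\bar v^\p$ equals a $\zeta^\p$-dependent factor times $\partial_x\psi^\p$, plus an $O(\mu)$ correction, and inverting this relation modulo $O(\mu^2)$ lets one rewrite $\partial_x\psi^\p$ — as well as $\partial_t\partial_x\psi^\p$, after using the evolution equations to trade $\partial_t$ for spatial operators — in terms of $\bar v^\p$. Substituting back into the expanded Euler system and collecting the $O(1)$, $O(\epsilon)$, $O(\mu)$ and $O(\mu\epsilon)$ contributions, one checks that they reassemble \emph{exactly} into the two equations of~\eqref{eqn:GreenNaghdiMeanI}; everything left over is the remainder $R$, which by construction consists only of terms of order $\mu^2$ (and higher), times bounded functions of $\zeta^\p,\bar v^\p$.

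The final step is the Sobolev estimate on $R$. Here I would use repeatedly the tame product rule $\Vert fg\Vert_{H^s}\lesssim \Vert f\Vert_{H^{s_0}}\Vert g\Vert_{H^s}+\Vert f\Vert_{H^s}\Vert g\Vert_{H^{s_0}}$ with $s_0>1/2$, together with the boundedness of $\overline{\Q}$, $\overline{\R}$ and of the Dirichlet--Neumann remainder operators, keeping careful track of how many derivatives each term costs. The worst terms come from the $O(\mu^2)$ Dirichlet--Neumann remainder acting on $\psi^\p$, which is precisely what forces $\partial_x\psi^\p\in H^{s+13/2}$ and, through the mass equation and the change of variables $\bar v^\p\leftrightarrow\partial_x\psi^\p$, $\zeta^\p\in H^{s+11/2}$; the $W^{1}([0,T);\cdot)$ regularity in time enters because $R$ involves $\partial_t\zeta^\p$ and $\partial_t\psi^\p$ through the reduction above, and it is converted into the stated $L^\infty([0,T);H^s)$ bound simply by taking the supremum over $t\in[0,T)$. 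The constant $C$ then depends only on the quantities listed in the statement.

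The main obstacle is the bookkeeping: one must verify, uniformly in $\p\in\P$, that the reduction to~\eqref{eqn:GreenNaghdiMeanI} leaves a genuinely $O(\mu^2)$ remainder — in particular that the $\epsilon$-dependence of every discarded term is at worst linear, so that no hidden $\mu^2/\epsilon$ (which would ruin the estimate in the Camassa--Holm regime $\epsilon\sim\sqrt\mu$) ever appears — and that the passage $\psi^\p\mapsto\bar v^\p$ is carried out at the correct order without degrading the power of $\mu$. Once Proposition~\ref{prop:expGH} and the associated elliptic estimates are invoked, the remaining work is the lengthy but routine algebra of collecting terms together with the product and commutator estimates.
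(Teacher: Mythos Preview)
Your proposal is correct and follows essentially the same route as the paper: invoke the second-order expansion of the Dirichlet--Neumann operators (Proposition~\ref{prop:expGH}), change variables from $\psi^\p$ to the shear layer-mean velocity $\bar v^\p$, and control the $O(\mu^2)$ remainder via product estimates, with the $W^1$-in-time regularity entering through the term $\partial_t(H^{\mu,\epsilon}\psi^\p)$. Two minor points of comparison: the paper organizes the argument by passing through the intermediate systems~\eqref{eqn:GreenNaghdi1} and~\eqref{eqn:GreenNaghdi2} (Propositions~\ref{prop:ConsGreenNaghdi1}--\ref{prop:ConsGreenNaghdi2}) before reaching~\eqref{eqn:GreenNaghdiMeanI}, and it observes that the first equation of~\eqref{eqn:GreenNaghdiMeanI} is in fact satisfied \emph{exactly} by the full Euler solution (since $\partial_t\zeta=\frac1\mu G^{\mu,\epsilon}\psi=-\partial_x\big(\frac{h_1h_2}{h_1+\gamma h_2}\bar v\big)$ holds identically), so the remainder lives entirely in the second equation; also, the paper handles $\partial_t(H^{\mu,\epsilon}\psi^\p)$ by differentiating the Laplace problem in time rather than by trading $\partial_t$ for spatial operators, but either device works.
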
 
\bigskip

 Here, and in the following, we denote by $C(\lambda_1,\lambda_2,\ldots)$ any positive constant, depending on the parameters ${\lambda_1,\lambda_2,\ldots }$, and whose dependence on $\lambda_j$ is assumed to be nondecreasing. $C_0$ is a positive constant, which do not depend on all the other parameters involved, and we write $A=\O( B)$ if $A\leq C_0 B$, and $A\approx B$ if $A=\O( B)$ and $B=\O( A)$.
 
 For $0 < T \leq \infty$ and $f(t,x)$ a function defined on $[0,T]\times\RR$, we write $f\in L^\infty([0,T];H^s)$ if $f$ is uniformly (with respect to $t\in [0,T]$) bounded in $H^s=H^s(\RR)$ the $L^2$-based Sobolev space. Its norm is denoted $\big\Vert\cdot\big\Vert_{L^\infty([0,T); H^s)}$, as the Sobolev norms are denoted with simple bars: $\big\vert\cdot\big\vert_{H^s}$. Finally, $W^{1}([0,T);H^{s+1})$ is the space of functions $f(t,x)\in L^\infty([0,T];H^{s+1})$ such that $\partial_t f\in L^\infty([0,T];H^s)$, endowed with the norm
$\big\Vert f \big\Vert_{W^{1}([0,T);H^{s+1})} \equiv \big\Vert f \big\Vert_{L^\infty([0,T);H^{s+1})} + \big\Vert \partial_t f \big\Vert_{L^\infty([0,T);H^{s})} $.
 \medskip
 
 \paragraph{Consistency and full justification.}
Consistency results, such as Proposition~\ref{prop:ConsGreenNaghdiMeanI}, are part of the procedure that leads to a full justification of asymptotic models, as it has been achieved in the water wave case in~\cite{Alvarez-SamaniegoLannes08}. A model is said to be {\em fully justified} (following the terminology of~\cite{Lannes}) if the Cauchy problem of both the full Euler system and the asymptotic model is well-posed for a given class of initial data, and over the relevant time scale; and if the solutions with corresponding initial data remain close.
As described in~\cite[Section~6.3]{Lannes10}, the full justification of system~\eqref{eqn:GreenNaghdiMeanI} follows from:
\begin{itemize}
\item (Consistency) One proves that families of solutions of the full Euler system, existing and controlled over the relevant time scale (that is $\O(1/\epsilon)$ here), solves the Green-Naghdi model~\eqref{eqn:GreenNaghdiMeanI} up to a small residual. This is Proposition~\ref{prop:ConsGreenNaghdiMeanI}.
\item (Existence) One proves that families of solutions to the full Euler system as above do exist. This difficult step is ensured by Theorem 5.8 in~\cite{Lannes10}, provided that a small surface tension is added, and that an additional stability criterion is satisfied (see details therein).
\item (Convergence) One proves that the solutions of the full Euler system, and the ones of the Green-Naghdi model~\eqref{eqn:GreenNaghdiMeanI} with corresponding initial data remain close, over the relevant time scale. 
\end{itemize}
 The last step requires the well-posedness of the Cauchy problem for the Green-Naghdi model~\eqref{eqn:GreenNaghdiMeanI}, as well as the stability of its solutions with respect to perturbations. More precisely, we require that functions satisfying the Green-Naghdi model up to a small residual remain close to the exact solution with corresponding initial data, so that the first two steps of our procedure (consistency and existence) yield the conclusion (convergence), and therefore the full justification of the Green-Naghdi model.
See Theorem~6.1 in~\cite{Lannes10} for the application of such procedure for the full justification of the so-called {\em shallow-water/shallow-water} asymptotic model, which corresponds to~\eqref{eqn:GreenNaghdiMeanI}, when withdrawing $\O(\mu)$ terms.
\medskip

The different models throughout this work are justified through consistency results, with respect to the Green-Naghdi model~\eqref{eqn:GreenNaghdiMeanI}. Let us define precisely below what we designate by consistency in the core of this paper.
 \begin{Definition}[Consistency]\label{def:consistency}
Let $(\zeta^\p,v^\p)_{\p\in\P}$ be a family of pair of functions, uniformly bounded in $L^\infty([0,T);H^{s+\b s})$ ($\b s\geq0$ to be determined), depending on parameters $\p\in \P$; see~\eqref{eqn:defRegimeFNL}.

We say that $(\zeta^\p,v^\p)$ is {\em consistent} with Green-Naghdi system~\eqref{eqn:GreenNaghdiMeanI} at precision $\O(\eps^\p)$, of order $s$ and on $[0,T)$, if $(\zeta^\p,v^\p)$ satisfies, for $\eps^\p$ sufficiently small,
 \[
\left\{ \begin{array}{l}
\displaystyle\partial_{ t}{\zeta^\p} \ + \ \partial_x \Big(\frac{h_1h_2}{h_1+\gamma h_2} v^\p\Big)\ =\ \eps^\p\ r_1, \\ \\
\displaystyle\partial_{ t}\Big( v^\p + \mu\overline{\Q}[h_1,h_2] v^\p \Big) + (\gamma+\delta)\partial_x{\zeta^\p} + \frac{\epsilon}{2} \partial_x\Big(\dfrac{{h_1}^2 -\gamma {h_2}^2 }{(h_1+\gamma h_2)^2} | v^\p|^2\Big) - \mu\epsilon\partial_x\big(\overline{\R}[h_1,h_2]\bar v^\p \big) = \eps^\p\ r_2 ,
\end{array} 
\right. \]
where $h_1=1-\epsilon\zeta^\p$ and $h_2=\frac1\delta+\epsilon\zeta^\p$, and with
\[\big\Vert (r_1, r_2)\big\Vert_{L^\infty([0,T);H^s)^2}\ \leq \ C\Big(\mu_{\text{max}},\epsilon_{\text{max}},1/\delta_{\text{min}},\delta_{\text{max}},\big\Vert\zeta^\p\big\Vert_{L^\infty([0,T);H^{s+\b s})},\big\Vert v^\p\big\Vert_{L^\infty([0,T);H^{s+\b s})}\Big).\]
\end{Definition}

Of course, one can apply the procedure described above to any consistent approximate solutions of the Green-Naghdi system. Consequently, our approximate solutions, described in Propositions~\ref{prop:unidirI},~\ref{prop:decompositionI} and~\ref{prop:other}, are fully justified as approximate solutions of the Green-Naghdi model, {\em and therefore as approximate solutions of the full Euler system~\eqref{eqn:EulerCompletAdim}}, provided the Green-Naghdi system~\eqref{eqn:GreenNaghdiMeanI}, {\em or any equivalently consistent model}, enjoys the following property.
\begin{Hypothesis}[Well-posedness and stability]\label{conj:stab}
Let $(\epsilon,\mu,\delta,\gamma)=\p\in\P$, as defined in~\eqref{eqn:defRegimeFNL},
and $U_0^\p\in H^{s+\bar s}$, with $s$ and $\bar s$ sufficiently big. Then the following holds.
\begin{enumerate}
\item There exists $T>0$ and unique strong solutions of the Green-Naghdi system~\eqref{eqn:GreenNaghdiMeanI}, $U_{\text{GN}}^\p$, such that $U_{\text{GN}}^\p\id{t=0}=U_0^\p$ and $U_{\text{GN}}^\p$ is uniformly bounded on $W^1([0,T);H^s)$.
\item 
Let $U^\p$, satisfying $U^\p\id{t=0}={U_0^\p}\id{t=0}$, be consistent (in the sense of Definition~\ref{def:consistency}) with Green-Naghdi system~\eqref{eqn:GreenNaghdiMeanI} on $[0,T]$, of order $s$, at precision $\O(\eps)$. Then the difference between the two families of functions is estimated as
\[ \big\Vert U^\p-U_{\text{GN}}^\p \big\Vert_{L^\infty([0,t];H^s)} \ \leq \ C\ \eps\ t,\]
with $C=C\big(\big\Vert U_{\text{GN}}^\p\big\Vert_{L^\infty([0,T];H^s)} ,\big\Vert U^\p\big\Vert_{L^\infty([0,T];H^{s+\b s})},\frac{1}{\delta_{\text{min}}},\delta_{\text{max}},\epsilon_{\text{max}},\mu_{\text{max}} \big)$.
\end{enumerate}
\end{Hypothesis}
Both of the properties in Hypothesis~\ref{conj:stab} typically follow from energy estimates on the Green-Naghdi system. Such a result has been obtained for a well-chosen Green-Naghdi system in the case of the water-wave problem in~\cite{Li06,Alvarez-SamaniegoLannes08a,Israwi11}. In the two-layer setting, the result has been proved for an equivalent system (the so-called symmetric Boussinesq/Boussinesq model) in the long wave regime, $\epsilon=\O(\mu)$, in~\cite{Duchene11a}.\footnote{Although the results of~\cite{Duchene11a} are dedicated to the two-layer case with a free surface, the method can easily be adapted to the simpler rigid lid situation. Let us also mention~\cite{SautXu12} for an extensive study of the very similar Boussinesq models in the water-wave case.} Such result can be extended to the Camassa-Holm regime~\eqref{eqn:defRegimeCH}, and will be the object of a future publication. 
\medskip

Let us emphasize that Hypothesis~\ref{conj:stab} is not necessary for our results to hold, but only to complete the full justification procedure described above. However, such a result is very useful to perceive the accuracy of our approximate solutions (in the sense of convergence) that one expects, especially for the decoupled approximations in Propositions~\ref{prop:decompositionI} and~\ref{prop:other}. Thus in the discussion of our results in Sections~\ref{sec:unidirectional} and~\ref{ssec:discussion}, we assume that 
 Hypothesis~\ref{conj:stab} holds. Numerical simulations are in perfect agreement with the resulting estimates.

\paragraph{The scalar models.}
Let us describe the different scalar evolution equations which are considered in this paper. The higher-order model we study is the {\em Constantin-Lannes} equation (CL):
\begin{multline} \label{eq:CLI}
 (1- \ \mu\beta \partial_x^2)\partial_t v \ +\ \epsilon \alpha_1 v\partial_x v \ + \ \epsilon^2 \alpha_2 v^2\partial_x v\ + \ \epsilon^3 \alpha_3 v^3\partial_x v \\
 +\ \mu\nu \partial_x^3 v \ \ + \ \mu\epsilon\partial_x\big(\kappa_1 v\partial_x^2 v+\kappa_2(\partial_x v)^2\big) \ = \ 0, 
\end{multline}
where $\beta,\alpha_i$ ($i=1,2,3$), $\nu$, $\kappa_1$, $\kappa_2$, are fixed parameters . 
\medskip

Note that the Constantin-Lannes equation can be seen as a generalization of classical lower order models (obtained when neglecting higher-order terms, or equivalently setting some parameters to zero). In the following, we consider
\begin{itemize}
\item the {\em inviscid Burgers’} equation (iB): 
\[
\partial_t v \ + \ \epsilon\alpha_1 v\partial_x v \ \ = \ 0; 
\]
\item the {\em Korteweg-de Vries} (or more precisely {\em Benjamin-Bona-Mahony}~\cite{BenjaminBonaMahony72}) equation (KdV):
\[
(1- \ \mu\beta \partial_x^2)\partial_t v \ + \ \epsilon\alpha_1 v\partial_x v \
+\ \mu\nu \partial_x^3 v \ = \ 0; 
\]
\item the {\em extended Korteweg-de Vries} equation (eKdV):
\[
(1- \ \mu\beta \partial_x^2)\partial_t v \ + \ \epsilon\alpha_1 v\partial_x v \ + \ \epsilon^2 \alpha_2 v^2\partial_x v\ \ 
+\ \mu\nu \partial_x^3 v \ = \ 0. 
\]
\end{itemize}
\medskip

Our first result regarding these scalar evolution equations, concerns the long time well-posedness of the Cauchy problem, and the persistence in time of the localization in space of initial data. We treat simultaneously all scalar models by allowing parameters in~\eqref{eq:CLI} to vanish. However, we require that $\mu \beta>0$, as our proof relies heavily on {\em a priori} estimates of the solution in the following scaled Sobolev norm:
\[ \big\vert u \big\vert_{H^{s+1}_{\mu\beta}}^2 \ \equiv \ \big\vert u \big\vert_{H^{s}}^2 \ + \ \mu\beta \big\vert u \big\vert_{H^{s+1}}^2,\quad \text{ for some } s\geq0.\]
We also make use of the closed functional subspaces $X^s_{n,\mu}\subset H^{s+2n}$, endowed with the following weighted Sobolev norm:
\[ \big\vert u \big\vert_{X^s_{n,\mu}} \ = \ \sum_{j=0}^n\big\vert x^j {u}\big\vert_{H^{s+2(n-j)}_\mu} .\]
\begin{Proposition}[Well-posedness and persistence]\label{prop:WPI}
Let $u^0 \in H^{s+1}$, with $s\geq s_0>3/2$. Let the parameters be such that $\beta,\mu,\epsilon>0$, and define $M>0$ such that
\[ \beta+\frac{1}{\beta}+\mu+\epsilon +|\alpha_1|+|\alpha_2|+|\alpha_3|+|\nu|+|\kappa|+|\iota| \ \leq \ M. \]
Then 
 there exists $T=C\big(\frac1{s_0-3/2},M,\big\vert u^0 \big\vert_{H^{s+1}_\mu}\big)$ and a unique $u \in C^0([0,T/\epsilon); H^{s+1}_\mu)\cap C^1([0,T/\epsilon);H^{s}_\mu)$ such that $u$ satisfies~\eqref{eq:CLI} and initial condition $u\id{t=0}=u^0$.

 Moreover, $u$ satisfies the following energy estimate for $0\leq t\leq T/\epsilon$:
\[
\big\Vert \partial_t u \big\Vert_{L^\infty([0,T/\epsilon);H^{s}_\mu)} \ + \ \big\Vert u \big\Vert_{L^\infty([0,T/\epsilon);H^{s+1}_\mu)} \ \leq \ C(\frac1{s_0-3/2},M, \big\vert u^0 \big\vert_{H^{s+1}_\mu}) .
\]

\medskip

 Assume additionally that for fixed $n, k\in\NN$, the function $x^{j} {u^0}\in H^{s+\b s}$, with $0\leq j \leq n$ and $\b s=k+1+2(n-j)$. Then 
 there exists $T=C\big(\frac1{s_0-3/2},M,n,k,\sum_{j=0}^n\big\vert x^j {u^0}\big\vert_{H^{s+k+1+2(n-j)}_\mu}\big)$
such that for $0\leq t\leq T\times\min(1/\epsilon,1/\mu)$, one has
\[ \big\Vert x^n\partial_k \partial_t {u}\big\Vert_{L^\infty([0,t);H^{s}_\mu)} + \big\Vert x^n \partial_k {u}\big\Vert_{L^\infty([0,t);H^{s+1}_\mu)} \ \leq \ C\Big( \frac1{s_0-3/2},M,n,k,\sum_{j=0}^n\big\vert x^j {u^0}\big\vert_{H^{s+k+1+2(n-j)}_\mu}\Big) .\]
In particular, 
one has, for $0\leq t\leq T\times\min(1/\epsilon,1/\mu)$,
\[
 \big\Vert \partial_t {u}\big\Vert_{L^\infty([0,t);X^{s}_{n,\mu})} + \big\Vert {u}\big\Vert_{L^\infty([0,t);X^{s+1}_{n,\mu})} \ \leq \ C\Big(\frac1{s_0-3/2}, M,n,\big\vert u^0 \big\vert_{X^{s+1}_{n,\mu}} \Big) .
\]
\end{Proposition}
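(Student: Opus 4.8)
The plan is to treat the scalar equation~\eqref{eq:CLI} as a quasilinear evolution equation for which the operator $(1-\mu\beta\partial_x^2)$ provides an invertible, order-$(-2)$ smoothing map; dividing through by it, one writes $\partial_t u = F_\mu(u)$ where $F_\mu$ loses one derivative (the worst term being $(1-\mu\beta\partial_x^2)^{-1}\mu\nu\partial_x^3 u$, which is order $1$ uniformly in $\mu$ because $\mu\partial_x^2 (1-\mu\beta\partial_x^2)^{-1}$ is bounded on every $H^s$ with norm $\O(1/\beta)$). First I would establish the local existence and the a priori estimate on the time scale $1/\epsilon$ by the standard energy method in the scaled space $H^{s+1}_{\mu\beta}$: apply $\Lambda^s=(1-\partial_x^2)^{s/2}$ to the equation, pair with $\Lambda^s u$ and with $\mu\beta\,\Lambda^{s+1}\partial_x u$ appropriately so as to reconstruct $\tfrac{d}{dt}\vert u\vert_{H^{s+1}_{\mu\beta}}^2$, and control the commutators. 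The key point is that every nonlinear term carries either an explicit $\epsilon$ (the $v\partial_x v$, $v^2\partial_x v$, $v^3\partial_x v$ terms) or an $\mu\epsilon$ (the $\kappa$ terms, where after division by $(1-\mu\beta\partial_x^2)$ the $\mu$ compensates the extra derivatives), so that the nonlinear contribution to the energy inequality is bounded by $\epsilon\, C(M,\vert u\vert_{H^{s+1}_{\mu\beta}})\,\vert u\vert_{H^{s+1}_{\mu\beta}}^2$; the linear dispersive term $\mu\nu\partial_x^3 u$ is skew-adjoint modulo lower order and contributes nothing to the energy. A Gronwall argument then gives the lifespan $T/\epsilon$ with $T$ depending only on $M$, $s_0$ and $\vert u^0\vert_{H^{s+1}_{\mu\beta}}$, and uniqueness follows from the same estimate applied to the difference of two solutions in $H^{s-1}_{\mu\beta}$ (or $H^s$). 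The condition $s>s_0>3/2$ is exactly what is needed for the Moser-type estimates and for $H^s\hookrightarrow W^{1,\infty}$.

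For the persistence of spatial decay, I would differentiate the equation: let $w = x^n\partial_x^k u$ (with $\partial_k$ denoting $\partial_x^k$ as in the statement). Commuting $x^n\partial_x^k$ through~\eqref{eq:CLI} produces, besides the "good" term $(1-\mu\beta\partial_x^2)\partial_t w$ (up to a commutator $[x^n\partial_x^k,\mu\beta\partial_x^2]$ which is lower order in $x$ and $\partial_x$ and handled inductively), a collection of source terms: the commutators of $x^n$ and $\partial_x^k$ with the transport and dispersive operators. The crucial observation is that $[x^n,\partial_x^3]$ and $[x^n,\partial_x]$ lower the power of $x$, so on the right-hand side one only ever sees $x^j\partial_x^\ell u$ with $j<n$ or $\ell\le k$ together with fewer $x$-weights — i.e. quantities already controlled by the induction hypothesis on $(n,k)$ — while the terms that keep the full weight $x^n$ come with a factor $\epsilon$ (from the nonlinearity) or $\mu\nu$ (from the dispersion, where the commutator $[x^n,\mu\nu\partial_x^3]$ costs one order of $x$ but gains a $\mu$, giving an $\O(\mu)$ forcing). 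Running the same energy estimate for $w$ in $H^{s+1}_{\mu\beta}$, the self-interaction is $\O(\epsilon)$ and the inhomogeneous forcing from lower-order weighted norms and from dispersion is $\O(\max(\epsilon,\mu))$; hence a Gronwall argument over the induction on $n$ (and on $k$, or one can do all $k\le$ fixed value at once) closes on the time scale $\min(1/\epsilon,1/\mu)$, yielding the stated bound on $\Vert x^n\partial_x^k\partial_t u\Vert_{H^s_\mu}+\Vert x^n\partial_x^k u\Vert_{H^{s+1}_\mu}$. Summing over $0\le j\le n$ with the weights prescribed in the definition of $X^s_{n,\mu}$ then gives the final displayed estimate, after relabelling the regularity index $s\mapsto s$ and absorbing the extra $H^{s+k+1+2(n-j)}$ data norms into $\vert u^0\vert_{X^{s+1}_{n,\mu}}$ for $k=0$.

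The main obstacle, and the place where care is genuinely needed rather than routine, is keeping track of \emph{uniformity in $\mu$} throughout. Two features conspire in our favour and must be used explicitly: (i) the multiplier $m_\mu(\xi)=\mu\xi^2/(1+\mu\beta\xi^2)$ is bounded by $1/\beta$ uniformly in $\mu$, so dividing by $(1-\mu\beta\partial_x^2)$ never produces a genuine derivative loss that is singular as $\mu\to0$; and (ii) the scaled norm $H^{s+1}_{\mu\beta}$ is precisely the one in which the symmetrized energy of the BBM-type operator is coercive uniformly in $\mu$, so the top-order term $\mu\beta\vert u\vert_{H^{s+1}}^2$ is available "for free" in the energy and is exactly what is needed to absorb the commutator $[\Lambda^s,v]\partial_x v$-type terms coming from the $\kappa$ nonlinearity after multiplication by $\mu$. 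In the weighted estimates the analogous subtlety is that the commutator of $x^n$ with the dispersive symbol $\mu\nu\xi^3/(1+\mu\beta\xi^2)$ must be shown to be bounded on $H^s$ uniformly in $\mu$ with a gain of $\mu$ relative to the undifferentiated term — this is where the factor $\mu$ (not $\sqrt\mu$) in front of the dispersion, and hence the time scale $1/\mu$ rather than $1/\sqrt\mu$, enters. Everything else (Kato–Ponce commutator estimates, Moser estimates for the polynomial nonlinearities, the Bona–Smith-type regularization to make the formal energy computation rigorous, and the bootstrap from the a priori bound to actual existence) is standard and I would only sketch it.
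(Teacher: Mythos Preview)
Your proposal is correct and follows essentially the same route as the paper: energy estimates in the scaled space $H^{s+1}_{\mu\beta}$ for well-posedness (the paper simply cites \cite{ConstantinLannes09} for this part), and for persistence an induction on the weight power $n$ using the equation satisfied by $x^n u$, where the commutator with the dispersive term produces an $\O(\mu)$ forcing involving lower weights, whence the timescale $\min(1/\epsilon,1/\mu)$ via a Gronwall--Bihari argument.

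The only organizational difference worth noting is how the extra derivatives $\partial_x^k$ are handled. You propose to commute $x^n\partial_x^k$ through the equation all at once and run a double induction on $(n,k)$; the paper instead treats only $v_n=x^n u$ by energy methods (writing down explicitly the equation it satisfies, with remainder $R_n[u]$ collecting the $\O(\mu)$ lower-weight terms), and then recovers the bound on $x^n\partial_x^k u$ purely algebraically via the Leibniz identity $x^n\partial_x^k u=\partial_x^k(x^n u)-\sum_{j<k}\binom{k}{j}(\partial_x^{k-j}x^n)(\partial_x^j u)$. This avoids redoing the energy argument for each $k$ and explains cleanly why the required data regularity is $H^{s+k+1+2(n-j)}$ for $x^j u^0$: one needs $x^n u$ controlled in $H^{s+k+1}_\mu$, and each step down in the weight costs two more derivatives through the inductive control of $R_n[u]$. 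Your direct commutation approach would of course yield the same bounds, but the bookkeeping of regularity indices is a bit heavier.
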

\begin{Remark}
The well-posedness of the Cauchy problem for inviscid Burgers’, KdV and eKdV equations are well-known (see for example~\cite{BonaSmith75,Kato75,Kato83,GinibreTsutsumi89,CollianderKeelStaffilaniEtAl03} and references therein), and actually do not require $\mu\beta>0$. The case of Constantin-Lannes equation is provided in Proposition~4 in~\cite{ConstantinLannes09}. 

The persistence of the solution in weighted Sobolev norms for the Constantin-Lannes equation is new, as far as we know. Similar results in the case of (eventually extended) Korteweg-de Vries equations are obtained in slightly different setting (for the most part using weighted $L^2$ spaces intersected with non-weighted Sobolev spaces $H^s$, $s>0$) in~\cite{Iorio90,BonaSaut93,SchneiderWayne00,Vera02,CarvajalGamboa10,Nahas10,NahasPonce11,Brandolese11}.
\end{Remark}

As mentioned previously, we consider two different approaches concerning the justification of scalar equations such as~\eqref{eq:CLI}, to construct approximate solutions of the Green-Naghdi system~\eqref{eqn:GreenNaghdiMeanI}, and therefore as asymptotic models for the propagation of internal waves. The first justification, that we call {\em unidirectional approximation}, consists in adjusting carefully the initial data so that solutions of~\eqref{eqn:GreenNaghdiMeanI} provide a good approximation of the flow. The second justification states that any initial perturbation of the flow can be approximately decomposed into the sum of two decoupled waves, each of them satisfying a scalar equation. Controlling the precision of such {\em decoupled approximation} is of course more difficult, and the unidirectional approximation gains in precision what it lacks in generality.

These two approaches have been successfully developed in the (one layer) water wave situation (see~\cite{Lannes} and references therein), and extended to the bifluidic case in the long wave regime in~\cite{Duchene11a}. Our results may therefore be considered as a continuation of these earlier works.

 \paragraph{Unidirectional propagation.} This result follows the strategy developed for the water-wave problem in~\cite{Johnson02,ConstantinLannes09}. We prove that if one chooses carefully the initial perturbation (deformation of the interface as well as shear layer-mean velocity), then the flow can be approximated as a solution of the Constantin-Lannes equation~\eqref{eq:CLI}, with a great accuracy.
\begin{Proposition} \label{prop:unidirI}
Set $\lambda,\theta\in\RR$, and $\zeta^0 \in H^{s+5}$ with $s\geq s_0>3/2$. For $(\epsilon,\mu,\delta,\gamma)=\p\in\P$, as defined in~\eqref{eqn:defRegimeFNL}, denote
$(\zeta^{\p})_{\p \in\P }$ the unique solution of the equation
\begin{align*}
\partial_t \zeta + \partial_x \zeta + \epsilon\alpha_1\zeta\partial_x \zeta + \epsilon^2 \alpha_2 \zeta^2\partial_x \zeta +\epsilon^3 \alpha_3 \zeta^3\partial_x \zeta+\mu\nu_x^{\theta,\lambda}\partial_x^3\zeta-\mu\nu_t^{\theta,\lambda}\partial_x^2\partial_t\zeta & \\
+\mu\epsilon\partial_x\left(\kappa_1^{\theta,\lambda} \zeta\partial_x^2\zeta +\kappa_2^{\theta} (\partial_x \zeta)^2\right) & =0\ ,
\end{align*}
where parameters $\alpha_i$ ($i=1,2,3$), $\nu_x^{\theta,\lambda}$, $\nu_t^{\theta,\lambda}$, $\kappa_1^{\theta,\lambda}$, $\kappa_2^{\theta}$, are precisely enclosed in Proposition~\ref{eq:CLuni}. 
For given $M_{s+5},h>0$, assume that there exists $T_{s+5}>0$ such that
\[ T_{s+5} \ = \ \max\big( \ T\geq0\quad \text{such that}\quad \big\Vert \zeta^{\p} \big\Vert_{L^\infty([0,T);H^{s+5})}\ \leq \ M_{s+5}\ \big) \ ,\]
and for any $(t,x)\in [0,T_{s+5})\times\RR$,
\[ h_1(t,x)=1-\epsilon\zeta^\p(t,x)>h>0, \quad h_2(t,x)=\frac1\delta+\epsilon\zeta^\p(t,x)>h>0.\]
Then define
$v^\p$ as $ v^\p=\frac{h_1+\gamma h_2}{h_1h_2}\underline{v}[\zeta^\p]$, with
\[
\underline{v}[\zeta] \ = \ \zeta + \epsilon\frac{\alpha_1}2\zeta^2 + \epsilon^2 \frac{\alpha_2}{3} \zeta^3 +\epsilon^3 \frac{\alpha_3}4 \zeta^4 +\mu\nu\partial_x^2\zeta+\mu\epsilon\left(\kappa_1 \zeta\partial_x^2\zeta +\kappa_2 (\partial_x \zeta)^2\right),
\]
where parameters $\alpha_1,\alpha_2,\alpha_3$ are as above, and
$ \nu=\nu^{0,0}_x$, $ \kappa_1=\kappa_1^{0,0}$, $ \kappa_2=\kappa_2^{0}$.

Then $(\zeta^\p,v^\p)$ is consistent with Green-Naghdi equations~\eqref{eqn:GreenNaghdiMeanI}, of order $s$ and on $[0,T_{s+5})$, with precision $\O(\eps)$, with
\[ \eps \ = \ C(M_{s+5},\frac1{s_0-3/2},{h}^{-1},\delta_{\text{min}}^{-1},\delta_{\text{max}},\epsilon_{\text{max}},\mu_{\text{max}},|\lambda|,|\theta|)\ \times \ \max(\epsilon^4,\mu^2)\ .\]
\end{Proposition}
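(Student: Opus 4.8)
The strategy is the one alluded to in the statement: construct the scalar equation and the companion ansatz $\underline v[\zeta]$ by formally imposing consistency with the Green--Naghdi system~\eqref{eqn:GreenNaghdiMeanI} to the required order, then close the argument by substituting the exact solution of the scalar equation and estimating the remainder. First I would work at the level of formal expansions. I plug an ansatz $v = \frac{h_1+\gamma h_2}{h_1h_2}\,\underline v[\zeta]$ into the first (mass conservation) equation of~\eqref{eqn:GreenNaghdiMeanI}; by construction this turns it exactly into $\partial_t\zeta + \partial_x\underline v[\zeta] = 0$, so the first residual $r_1$ is \emph{zero} to all orders once $\underline v$ is chosen so that $\partial_x\underline v[\zeta]$ matches the scalar evolution. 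Then the content is entirely in the second equation: I insert $v = \frac{h_1+\gamma h_2}{h_1h_2}\underline v[\zeta]$ there, expand $h_1=1-\epsilon\zeta$, $h_2=\tfrac1\delta+\epsilon\zeta$, expand the operators $\overline{\Q}$, $\overline{\R}$ (which are $\O(1)$ and $\O(1)$ bounded respectively, the $\mu$ and $\mu\epsilon$ prefactors being explicit in~\eqref{eqn:GreenNaghdiMeanI}), and use the scalar equation to replace every occurrence of $\partial_t\zeta$ by $-\partial_x\zeta - \epsilon\alpha_1\zeta\partial_x\zeta - \cdots$ (and $\partial_t\partial_x^2\zeta$ similarly). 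Collecting powers of $\epsilon$ and $\mu$, I read off: the $\O(1)$ term forces $c=1$; the $\O(\epsilon)$ and $\O(\mu)$ terms force $\alpha_1$, $\nu_x^{\theta,\lambda}$, $\nu_t^{\theta,\lambda}$; the $\O(\epsilon^2)$, $\O(\epsilon\mu)$ terms force $\alpha_2$, $\kappa_1^{\theta,\lambda}$, $\kappa_2^{\theta}$; the $\O(\epsilon^3)$ term forces $\alpha_3$. The parameters $\lambda,\theta$ parametrize the near-identity change of unknown / use of the BBM trick (trading $\partial_x^3$ against $\partial_x^2\partial_t$), which is why two free parameters survive. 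Everything not matched is pushed into $r_2$, and by the power counting it is $\O(\max(\epsilon^4,\epsilon^3\mu,\epsilon^2\mu^2,\mu^3)) = \O(\max(\epsilon^4,\mu^2))$ times explicit nonlinear expressions in $\zeta$ and its derivatives up to order $s+5$ (the worst term being $\partial_x^3$ of a product inside $\overline{\R}$, costing $s+2$ derivatives on $\underline v[\zeta]$, hence $s+4$ on $\zeta$, plus one more from an outer $\partial_x$).

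The second step is to make this rigorous. Here I invoke Proposition~\ref{prop:WPI} (with $\mu\beta>0$ provided by $\nu_t^{\theta,\lambda}$ after choosing $\theta$ so this coefficient is positive, or else noting that for the persistence/regularity we only need $\zeta^\p$ bounded in $H^{s+5}$, which is exactly the hypothesis $\|\zeta^\p\|_{L^\infty([0,T_{s+5});H^{s+5})}\le M_{s+5}$): it gives the existence of $\zeta^\p$, its uniform-in-$\p$ bound in $W^1([0,T_{s+5});H^{s+5})$, and therefore uniform bounds on $\partial_t\zeta^\p$ in $H^{s+4}$, on $v^\p$ in $H^{s+4}$, and on $\partial_t v^\p$ in $H^{s+3}$. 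With these bounds in hand, every term appearing in the formal residual $r_2$ — being a finite sum of products of $\zeta^\p$, $\partial_x\zeta^\p$, \dots up to order $s+5$ and possibly $\partial_t\zeta^\p$ — is estimated in $H^s$ by the Sobolev algebra property ($s\ge s_0>3/2$ ensures $H^s$ is a Banach algebra and $H^{s+k}\hookrightarrow H^s$ with product estimates) together with the Moser-type tame estimate $|fg|_{H^s}\le C(|f|_{H^{s_0}}|g|_{H^s}+|f|_{H^s}|g|_{H^{s_0}})$ and the composition estimates for the smooth functions $h_1,h_2\mapsto \frac{h_1h_2}{h_1+\gamma h_2}$ etc., which are controlled using $h_1,h_2\ge h>0$ and $0\le\gamma<1$, $\delta\in(\delta_{\min},\delta_{\max})$. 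This yields $\|(r_1,r_2)\|_{L^\infty([0,T_{s+5});H^s)^2}\le C(M_{s+5},\tfrac1{s_0-3/2},h^{-1},\delta_{\min}^{-1},\delta_{\max},\epsilon_{\max},\mu_{\max},|\lambda|,|\theta|)$, and the overall prefactor $\max(\epsilon^4,\mu^2)$ comes out of the power counting, giving precisely the stated $\eps$.

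The main obstacle is the bookkeeping in the first step: there are many terms, and one must be careful that (i) the substitution of $\partial_t\zeta$ via the scalar equation is performed consistently to the order claimed, i.e.\ one must iterate the substitution (replacing $\partial_t\zeta$ introduces new $\epsilon$- and $\mu$-small terms containing $\partial_t$ of higher derivatives, which must themselves be substituted), being careful that this iteration terminates at $\O(\max(\epsilon^4,\mu^2))$; (ii) the first equation really is satisfied with $r_1=0$, which pins down $\underline v[\zeta]$ uniquely as the integral (in the sense of $\partial_x$) of the scalar flux — this is what forces the specific polynomial-plus-dispersive form of $\underline v[\zeta]$ given in the statement, and one checks the coefficients $\tfrac{\alpha_1}{2},\tfrac{\alpha_2}{3},\tfrac{\alpha_3}{4}$ are exactly the antiderivatives of $\alpha_1\zeta,\alpha_2\zeta^2,\alpha_3\zeta^3$; (iii) the free parameters $\theta,\lambda$ enter only through the $\O(\mu)$ dispersive split and the $\O(\mu\epsilon)$ terms, so that $\alpha_1,\alpha_2,\alpha_3$ and the $\nu=\nu_x^{0,0}$, $\kappa_i=\kappa_i^{0}$ used to \emph{define} $v^\p$ are the $\theta=\lambda=0$ values — one must verify the change of variables connecting the $(\theta,\lambda)$-family to the $(0,0)$-representative introduces only $\O(\max(\epsilon^4,\mu^2))$ errors, which is again a power-counting check. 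None of this is conceptually hard, but it is where an error is most likely to hide; the analytic step (tame product/composition estimates) is routine given Proposition~\ref{prop:WPI}. I would present the explicit values of all coefficients as the content of the referenced Proposition~\ref{eq:CLuni} and here only verify the consistency bound.
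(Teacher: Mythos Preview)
Your proposal is correct and follows essentially the same approach as the paper: rewrite the Green--Naghdi system in the variable $\underline{v}=\frac{h_1h_2}{h_1+\gamma h_2}\bar v$ so that the first equation becomes exactly $\partial_t\zeta+\partial_x\underline{v}=0$ (forcing the antiderivative form of $\underline{v}[\zeta]$), plug into the second equation, expand in $\epsilon,\mu$ and match coefficients order by order to determine $\alpha_i,\nu,\kappa_i$, then handle general $(\theta,\lambda)$ via the BBM trick and a near-identity change of variable, each step producing only $\O(\max(\mu^2,\mu\epsilon^2))\subset\O(\max(\epsilon^4,\mu^2))$ errors. The paper carries out the explicit expansion (displaying the six residual terms whose vanishing fixes the coefficients) and then the $(\theta,\lambda)$-manipulations separately, but the structure and the analytic estimates (Sobolev product/composition bounds under $h_1,h_2\ge h>0$) are exactly as you describe.
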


 \begin{Remark}
 When $\gamma\to0$ and $\delta\to1$, one recovers the one-fluid model introduced by Constantin and Lannes in~\cite[Section~2.2]{ConstantinLannes09}, with $q=\frac{1-\theta}{6}$ and $\lambda=0$, using notations therein. In the bi-fluidic case, Choi and Camassa~\cite[Appendix A]{ChoiCamassa99} obtained a very similar result.
 \end{Remark} 
 \begin{Remark}
Our approximation consists in solving a scalar evolution equation for the deformation at the interface, followed by a reconstruction of the shear layer-mean velocity from the deformation (in particular, the initial shear velocity is determined by the initial deformation). Following~\cite{ConstantinLannes09}, a similar strategy consists in looking for an evolution equation for the shear layer-mean velocity, and reconstruct the deformation at the interface. We decide not to present the outcome of such strategy, as the result is very similar, and calculations are somewhat heavier in that case.
\end{Remark}
\begin{Remark}
As discussed in~\cite[Proposition~5]{ConstantinLannes09}, specific values of parameters in~\eqref{eq:CLuni} yield equations with different properties, especially concerning the behavior near the maximal time of definition (if it is finite). Indeed, the proof of~\cite[Proposition~5]{ConstantinLannes09} can easily be adapted to more general coefficients, and one obtains:
\begin{itemize}
\item If $\nu_t^{\theta,\lambda}>0$, $\kappa_1^{\theta,\lambda}=2\kappa_2^{\theta}>0$ and $\alpha_3>0$, then singularities can develop in finite time only in the form of {\em surging wave breaking}. In other words, if the maximal time of existence of $\zeta$ is finite, $T<\infty$, then
\[ \sup_{t\in[0,T), x\in\RR} \{|\zeta(t,x)|\}<\infty \quad \text{ and } \quad \sup_{x\in\RR}\{\partial_x \zeta(t,x)\}\uparrow\infty \ \text{ as } t\uparrow T .\]
 \item If $\nu_t^{\theta,\lambda}>0$, $\kappa_1^{\theta,\lambda}=2\kappa_2^{\theta}<0$ and $\alpha_3<0$, then singularities can develop in finite time only in the form of {\em plunging wave breaking}. In other words, if the maximal time of existence of $\zeta$ is finite, $T<\infty$, then
\[ \sup_{t\in[0,T), x\in\RR} \{|\zeta(t,x)|\}<\infty \quad \text{ and } \quad \inf_{x\in\RR}\{\partial_x \zeta(t,x)\}\downarrow -\infty \ \text{ as } t\uparrow T .\] 
\end{itemize}
Identity $\kappa_1^{\theta,\lambda}=2\kappa_2^{\theta}$ holds in the line $\theta-\lambda=1/2$, and in that case, $\nu_t^{\theta,\lambda}>0$ if and only if $\theta>1/4$. 
Restricting to $\theta\leq1$ as natural values for the use of BBM trick, one can easily check that if $\gamma=0$, then singularities may occur only as surging wave breaking, as it is the case in the one-layer situation. On the contrary, if $\gamma\sim 1$, as is the case in the stratified ocean (small variation of densities) then singularities will occur as surging wave breaking if $\delta>1$ (thicker upper layer), and plunging wave breaking will occur for $\delta<1$ (thicker lower layer).
\end{Remark}

\paragraph{The decoupled approximation.} We now turn to the case of a generic initial perturbation of the flow. When neglecting any term of size $\O(\epsilon+\mu)$ in~\eqref{eqn:GreenNaghdiMeanI}, one obtains a simple wave equation for $\zeta,\bar v$, which predicts that the flow splits into two counterpropagating waves, each one moving at velocity $c=\pm1$. Our aim is to provide a higher precision model, by allowing each of these waves to satisfy a scalar evolution equation. The strategy presented here has been used in~\cite{BonaColinLannes05}, where the authors present a similar rigorous justification of the KdV equation as asymptotic model for the (one fluid) water wave problem in the long wave regime (see~\cite{Duchene11a} for the bi-fluidic case). As discussed earlier on, a major difference of the bifluidic case is the existence of a {\em critical ratio}, for which the quadratic nonlinearity in our models vanish. This is our main motivation for the present work, which extends the previous results to more general regimes, allowing greater nonlinearities, and higher order scalar equations.
More precisely, we define the following Constantin-Lannes approximation.
\begin{Definition}[Constantin-Lannes decoupled approximation] \label{def:CL}
Let $\zeta^0,v^0$ be given scalar functions, and set parameters $(\epsilon,\mu,\gamma,\delta)\in\P$, as defined in~\eqref{eqn:defRegimeFNL}, and $(\lambda,\theta)\in\RR^2$. The Constantin-Lannes decoupled approximation is then
\[ U_{\text{CL}}\ \equiv \ \Big(v_+(t,x-t)+v_-(t,x+t),(\gamma+\delta)\big(v_+(t,x-t)-v_-(t,x+t)\big)\Big),\]
 where ${v_\pm}\id{t=0} \ = \ \frac12(\zeta^0\pm\frac{v^0}{\gamma+\delta})\id{t=0}$ and $v_\pm=(1\pm\mu\lambda \partial_x^2)^{-1}v_\pm^\lambda$ with $v_\pm^\lambda$ satisfying
\begin{multline} \label{eq:CL}
 \partial_t v_\pm^\lambda \ \pm \ \epsilon\alpha_1 v_\pm^\lambda\partial_x v_\pm^\lambda \ \pm \ \epsilon^2 \alpha_2 (v_\pm^\lambda)^2\partial_x v_\pm^\lambda\ \pm \ \epsilon^3 \alpha_3^{\theta,\lambda}(v_\pm^\lambda)^3\partial_x v_\pm \\
\pm\ \mu\nu^{\theta,\lambda}_x \partial_x^3 v_\pm^\lambda \ - \ \mu\nu^{\theta,\lambda}_t \partial_x^2\partial_t v_\pm^\lambda \ \pm \ \mu\epsilon\partial_x\big(\kappa_1^{\theta,\lambda} v_\pm^\lambda\partial_x^2 v_\pm^\lambda+\kappa_2^{\theta}(\partial_x v_\pm^\lambda)^2\big) \ = \ 0, 
\end{multline}
with parameters disclosed precisely in~\eqref{eqn:parameters}.
\end{Definition}
As mentioned above, the Constantin-Lannes equation is the higher order model we consider. It is not obvious that such precise model offers a significantly better approximation than lower-order models. As a matter of fact, this is generically not the case, as discussed in Section~\ref{ssec:discussion} (see Remark~\ref{rem:differentmodels}, below).
Thus we also consider models with formally lower order accuracy.
\begin{Definition}[lower order decoupled approximations] \label{def:other}
Let $\zeta^0,v^0$ be given scalar functions, and set parameters $(\epsilon,\mu,\gamma,\delta)\in\P$, $(\lambda,\theta)\in\RR^2$. A decoupled approximate solution of the system~\eqref{eqn:GreenNaghdiMeanI} is \[ U\ \equiv \ \Big(v_+(t,x-t)+v_-(t,x+t),(\gamma+\delta)\big(v_+(t,x-t)-v_-(t,x+t)\big)\Big),\]
 where ${v_\pm}\id{t=0} \ = \ \frac12(\zeta^0\pm\frac{v^0}{\gamma+\delta})$ and $v_\pm=(1\pm\mu\lambda \partial_x^2)^{-1}v_\pm^\lambda$ with $v_\pm^\lambda$ satisfying a scalar evolution equation. In what follows, we consider
\begin{itemize}
\item the inviscid Burgers’ equation: 
\[
\partial_t v_\pm^\lambda \ \pm \ \epsilon\alpha_1 v_\pm^\lambda\partial_x v_\pm^\lambda \ \ = \ 0; 
\]
\item the Korteweg-de Vries (or more precisely Benjamin-Bona-Mahony) equation:
\[
\partial_t v_\pm^\lambda \ \pm \ \epsilon\alpha_1 v_\pm^\lambda\partial_x v_\pm^\lambda \
\pm\ \mu\nu^{\theta,\lambda}_x \partial_x^3 v_\pm^\lambda \ - \ \mu\nu^{\theta,\lambda}_t \partial_x^2\partial_t v_\pm^\lambda \ \ = \ 0; 
\]
\item the extended Korteweg-de Vries equation:
\[
\partial_t v_\pm^\lambda \ \pm \ \epsilon\alpha_1 v_\pm^\lambda\partial_x v_\pm^\lambda \ \pm \ \epsilon^2 \alpha_2 (v_\pm^\lambda)^2\partial_x v_\pm^\lambda\ \ 
\pm\ \mu\nu^{\theta,\lambda}_x \partial_x^3 v_\pm^\lambda \ - \ \mu\nu^{\theta,\lambda}_t \partial_x^2\partial_t v_\pm^\lambda \ = \ 0; 
\]
\end{itemize}
 where the parameters are the same as in Definition~\ref{def:CL}.
\end{Definition}

Let us now state our main results, concerning the justification of the decoupled scalar approximations.
\begin{Proposition}[Consistency]\label{prop:decompositionI}
Let $\zeta^0,v^0\in H^{s+6}$, with $s\geq s_0> 3/2$. For $(\epsilon,\mu,\delta,\gamma)=\p\in\P$, as defined in~\eqref{eqn:defRegimeFNL}, we denote $U_{\text{CL}}^\p$ the unique solution of the CL approximation, as defined in Definition~\ref{def:CL}. For some given $M^\star_{s+6}>0$, sufficiently big, we assume that there exists $T^\star>0$ and a family $(U_{\text{CL}}^\p)_{\p\in\P}$ such that
\[ T^\star \ = \ \max\big( \ T\geq0\quad \text{such that}\quad \big\Vert U_{\text{CL}}^\p \big\Vert_{L^\infty([0,T);H^{s+6})}+\big\Vert \partial_t U_{\text{CL}}^\p \big\Vert_{L^\infty([0,T);H^{s+5})} \ \leq \ M^\star_{s+6}\ \big) \ .\]

Then there exists $U^c=U^c[U_{\text{CL}}^\p]$ such that $U\equiv U_{\text{CL}}^\p+U^c$ is consistent with Green-Naghdi equations~\eqref{eqn:GreenNaghdiMeanI} of order $s$ on $[0,t]$ for $t<T^\star$, at precision $\O(\eps^\star_{\text{CL}})$ with
\[ \eps^\star_{\text{CL}} \ = \ C\ \max(\epsilon^2(\delta^2-\gamma)^2,\epsilon^4,\mu^2)\ (1+\sqrt t) ,\]
with $C=C(\frac1{s_0-3/2},M^\star_{s+6},\frac1{\delta_{\text{min}}},\delta_{\text{max}},\epsilon_{\text{max}},\mu_{\text{max}},|\lambda|,|\theta|)$, and the corrector term $U^c$ is estimated as
\[ \big\Vert U^c \big\Vert_{L^\infty([0,T^\star];H^{s})}+\big\Vert \partial_t U^c \big\Vert_{L^\infty([0,T^\star];H^{s})} \leq C\ \max(\epsilon(\delta^2-\gamma),\epsilon^2,\mu) \min( t, \sqrt t) .\]
\medskip

Additionally, if there exists $\alpha>1/2$, $M^\sharp_{s+6},\ T^\sharp>0$ such that 
\[ \sum_{k=0}^6\big\Vert (1+x^2)^\alpha \partial_x^k U_{\text{CL}}^\p \big\Vert_{L^\infty([0,T);H^{s})} +\sum_{k=0}^5\big\Vert (1+x^2)^\alpha \partial_x^k\partial_t U_{\text{CL}}^\p \big\Vert_{L^\infty([0,T);H^{s})} \ \leq \ M^\sharp_{s+6}\ ,\]
then $U\equiv U_{\text{CL}}^\p+U^c$ is consistent with Green-Naghdi equations~\eqref{eqn:GreenNaghdiMeanI} of order $s$ on $[0,t]$ for $t<T^\sharp$, at precision $\O(\eps_{\text{CL}}^\sharp)$ with
\[ \eps^\sharp_{\text{CL}} \ = \ C\ \max(\epsilon^2(\delta^2-\gamma)^2,\epsilon^4,\mu^2),\]
with $C=C(\frac1{s_0-3/2},M^\sharp_{s+6},\frac1{\delta_{\text{min}}},\delta_{\text{max}},\epsilon_{\text{max}},\mu_{\text{max}},|\lambda|,|\theta|)$ and $U^c$ is uniformly estimated as
\[ \big\Vert U^c \big\Vert_{L^\infty([0,T^\sharp];H^{s})}+\big\Vert \partial_t U^c \big\Vert_{L^\infty([0,T^\sharp];H^{s})} \leq C\ \max(\epsilon(\delta^2-\gamma),\epsilon^2,\mu) \min( t, 1). \]
\end{Proposition}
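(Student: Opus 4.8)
The plan is to prove consistency by substituting the ansatz $U = U_{\text{CL}}^\p + U^c$ into the Green-Naghdi system \eqref{eqn:GreenNaghdiMeanI} and tracking all residual terms as functions of $\epsilon$, $\mu$, and the criticality parameter $\delta^2-\gamma$. The starting point is the observation that, to leading order, \eqref{eqn:GreenNaghdiMeanI} decouples under the change of variables $v_\pm = \frac12(\zeta \pm \bar v/(\gamma+\delta))$ into two transport equations $\partial_t v_\pm \pm \partial_x v_\pm = 0$; the higher-order terms (the $\O(\epsilon)$ nonlinearity, the $\O(\mu)$ dispersion coming from $\overline{\Q}$, and the $\O(\mu\epsilon)$ term from $\overline{\R}$) are precisely what the scalar equation \eqref{eq:CL} is designed to absorb along each characteristic. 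So I would first set $v_\pm(t,x\mp t)$ to be the BBM-type solutions of Definition~\ref{def:CL}, plug into the GN system, and collect the residual: it will consist of (a) terms quadratic in $(v_+, v_-)$ describing interaction between the two counterpropagating waves, (b) higher-order self-interaction terms of size $\O(\epsilon^2(\delta^2-\gamma), \epsilon^3, \mu\epsilon^2)$ that were not matched by the scalar equation, and (c) terms of size $\O(\mu^2)$ already present in \eqref{eqn:GreenNaghdiMeanI} truncation. The corrector $U^c$ is then constructed to cancel the worst of the coupling terms in (a): since $v_+$ and $v_-$ travel in opposite directions, $\partial_x(v_+ v_-)$-type products integrate in time to something of lower order — this is the classical non-resonance / "separation of supports on average" mechanism of \cite{BonaColinLannes05}.

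The key steps, in order, are: (1) Derive the exact evolution equations satisfied by $\zeta = v_+ + v_-$ and $\bar v = (\gamma+\delta)(v_+ - v_-)$ when $v_\pm$ solve the scalar model, expanding $h_1, h_2$ and the operators $\overline{\Q}, \overline{\R}$ in powers of $\epsilon$ (using $h_1 = 1-\epsilon\zeta$, $h_2 = \frac1\delta + \epsilon\zeta$) and keeping terms up to $\O(\epsilon^4, \mu^2, \mu\epsilon^2)$. (2) Identify that all remaining terms of order lower than $\eps^\sharp_{\text{CL}}$ are of the form $\partial_x F(v_+(t,x-t), v_-(t,x+t))$ where $F$ is a bilinear-or-higher product involving both $v_+$ and $v_-$; introduce $U^c = U^c[U_{\text{CL}}^\p]$ solving $\partial_t U^c \pm \partial_x U^c = -(\text{these terms})$ (or rather the linearized GN operator applied to $U^c$ equals the negative of these terms), so that $U^c$ is given by an explicit time integral of products of oppositely-traveling profiles. (3) Estimate $U^c$ in $H^s$: for the first (weaker) estimate, use the crude bound $\int_0^t \|\partial_x(v_+(s,\cdot-s)v_-(s,\cdot+s))\|_{H^s}\,ds \lesssim t$, giving the $\min(t,\sqrt t)$-type growth after optimizing against an alternative $L^2$-in-time bound; for the second (uniform) estimate, invoke the weighted-norm hypothesis on $U_{\text{CL}}^\p$ together with the non-stationary phase / change-of-variables lemma that $\int_0^\infty \|f(\cdot-s)g(\cdot+s)\|_{H^s}\,ds \lesssim \|f\|_{X}\|g\|_{X}$ when $f,g$ decay like $(1+x^2)^{-\alpha}$ with $\alpha>1/2$ — this is what upgrades the $\sqrt t$ to a uniform constant. (4) Combine: $U = U_{\text{CL}}^\p + U^c$ then satisfies the GN system up to $\eps^\star_{\text{CL}}$ (resp. $\eps^\sharp_{\text{CL}}$), after checking that substituting $U$ rather than $U_{\text{CL}}^\p$ into the nonlinear terms produces only corrections of size $\|U^c\|_{H^s}\times\O(\epsilon+\mu)$, which is absorbed.

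The main obstacle, and the technically heaviest part, is step (2)–(3): correctly bookkeeping which products of $v_+$ and $v_-$ genuinely average out in time versus which self-interaction terms ($v_\pm$ alone) must be matched by the scalar equation's coefficients, and then proving the time-integral estimates with the right dependence on $t$. The subtlety is that a term like $v_+^2$ does \emph{not} oscillate away — it rides along the $x-t$ characteristic — so it must be built into \eqref{eq:CL}; but a term like $v_+ v_-$ or $v_+ \partial_x v_-$ does average, and the decay rate of the corrector depends delicately on the localization of the data. The sharp version requires the weighted estimate, which is why Proposition~\ref{prop:WPI} (persistence in $X^s_{n,\mu}$ and in weighted norms) is needed as an input to guarantee the hypothesis $\sum_k \|(1+x^2)^\alpha \partial_x^k U_{\text{CL}}^\p\|_{H^s} \leq M^\sharp_{s+6}$ is propagated in time. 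A secondary difficulty is handling the BBM regularization operators $(1\pm\mu\lambda\partial_x^2)^{-1}$ uniformly in $\mu$ — one checks these are bounded on every $H^s$ with constant independent of $\mu \leq \mu_{\text{max}}$, and that they commute suitably with the transport structure so the decoupling argument is not disturbed.
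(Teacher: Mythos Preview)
Your proposal is correct and follows essentially the same approach as the paper: decompose into $v_\pm$, construct the corrector $U^c$ by solving transport equations forced by the coupling terms $f^l(v_+,v_-)-f^l(v_+,0)$ (and symmetrically for $f^r$), and control the secular growth of $U^c$ via the non-resonance estimates for products of counterpropagating waves. The paper packages this with a two-time-scale formalism $\tau = \sigma t$ (so that $v_\pm$ solve transport equations in $t$ and the CL equation in $\tau$), which is an organizational device you do not mention but is not essential to the argument; the key technical ingredients you identify --- the $\min(t,\sqrt t)$ bound for generic data and the uniform bound under the $(1+x^2)^\alpha$, $\alpha>1/2$ weighted assumption --- are exactly the content of the paper's Lemmata~5.7 and~5.8 (the latter drawn from~\cite{Lannes03}).
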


Equivalent results can be obtained when following the exact same strategy, but using lower order equations to describe the evolution of the decoupled waves, $v_\pm$.
We detail below the accuracy of such approximations.
\begin{Proposition}\label{prop:other}
Assume that the hypotheses of Proposition~\ref{prop:decompositionI} hold. Denote $U_{\text{eKdV}}^\p$, $U_{\text{KdV}}^\p$ and $U_{\text{iB}}^\p$, respectively, the solutions of the eKdV, KdV and iB approximations, as defined in Definition~\ref{def:other}. 
In each case, we assume that the decoupled approximation is uniformly estimated in $[0,T^\star]$, as in Proposition~\ref{prop:decompositionI}.
Then 
\begin{enumerate}[i.]
\item there exists $U^c=U^c[U_{\text{eKdV}}^\p]$ such that $U\equiv U_{\text{eKdV}}^\p +U^c$ is consistent with Green-Naghdi equations~\eqref{eqn:GreenNaghdiMeanI} of order $s$ on $[0,t]$ for $t<T^\star$, at precision $\O(\eps^\star_{\text{eKdV}})$, with
\[ \eps^\star_{\text{eKdV}} \ = \ C\ \times\ \Big(\ \max(\epsilon^2(\delta^2-\gamma)^2,\epsilon^4,\mu^2)\ (1+\sqrt t) \ + \ \max(\epsilon^3,\mu\epsilon) \ \Big)\ ;\]
\item there exists $U^c=U^c[U_{\text{KdV}}^\p]$ such that $U\equiv U_{\text{KdV}}^\p+U^c$ is consistent with Green-Naghdi equations~\eqref{eqn:GreenNaghdiMeanI} of order $s$ on $[0,t]$ for $t<T^\star$, at precision $\O(\eps^\star_{\text{KdV}})$ with
\[ \eps^\star_{\text{KdV}} \ = \ C\ \times\ \Big(\ \max(\epsilon^2(\delta^2-\gamma)^2,\epsilon^4,\mu^2)\ (1+\sqrt t) \ + \ \epsilon^2 \ \Big)\ ;\]
\item there exists $U^c=U^c[U_{\text{iB}}^\p]$ such that $U\equiv U_{\text{iB}}^\p+U^c$ is consistent with Green-Naghdi equations~\eqref{eqn:GreenNaghdiMeanI} of order $s$ on $[0,t]$ for $t<T^\star$, at precision $\O(\eps^\star_{\text{iB}})$ with
\[ \eps^\star_{\text{iB}} \ = \ C\ \times\ \Big(\ \max(\epsilon^2(\delta^2-\gamma)^2,\epsilon^4,\mu^2)\ (1+\sqrt t) \ + \ \max(\epsilon^2,\mu) \ \Big)\ ;\]
\end{enumerate}
where $C=C(\frac1{s_0-3/2},M^\sharp_{s+6},\frac1{\delta_{\text{min}}},\delta_{\text{max}},\epsilon_{\text{max}},\mu_{\text{max}},|\lambda|,|\theta|)$. Each time, the corrector term $U^c$ is estimated as follows:
\[ \big\Vert U^c \big\Vert_{L^\infty([0,T^\star];H^{s})}+\big\Vert \partial_t U^c \big\Vert_{L^\infty([0,T^\star];H^{s})} \leq C\ \max(\epsilon(\delta^2-\gamma),\epsilon^2,\mu) \min( t, \sqrt t) .\]

Moreover, if the decoupled approximation is sufficiently localized in space, then all the estimates are improved as in the second part of the Proposition~\ref{prop:decompositionI} (that is replacing $\sqrt t$ by $1$).
\end{Proposition}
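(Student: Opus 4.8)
\textbf{Proof proposal for Proposition~\ref{prop:other}.}

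The plan is to reduce everything to Proposition~\ref{prop:decompositionI} by treating the lower-order equations (iB, KdV, eKdV) as approximations of the Constantin--Lannes equation~\eqref{eq:CL}, and then tracking the additional error generated by dropping the higher-order terms. Concretely, given a solution $v_\pm^\lambda$ of, say, the KdV (BBM) equation from Definition~\ref{def:other}, I would plug it into the CL equation~\eqref{eq:CL}: it satisfies that equation up to a residual consisting exactly of the omitted terms, namely $\pm\epsilon^2\alpha_2(v_\pm^\lambda)^2\partial_x v_\pm^\lambda \pm \epsilon^3\alpha_3^{\theta,\lambda}(v_\pm^\lambda)^3\partial_x v_\pm \pm \mu\epsilon\partial_x(\kappa_1^{\theta,\lambda}v_\pm^\lambda\partial_x^2 v_\pm^\lambda+\kappa_2^\theta(\partial_x v_\pm^\lambda)^2)$ in the KdV case, and similarly a residual of size $\O(\max(\epsilon^3,\mu\epsilon))$ for eKdV and $\O(\max(\epsilon^2,\mu))$ for iB. Since the construction of the corrector $U^c$ and the consistency estimate in Proposition~\ref{prop:decompositionI} depend on $v_\pm^\lambda$ only through its being a uniformly bounded (in the appropriate $H^{s+6}$-type norm) approximate solution of the CL equation with a controlled residual, the same corrector construction applies verbatim here, and the consistency precision picks up an extra additive contribution equal to (a constant times) the size of that residual. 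This yields exactly the claimed $\eps^\star_{\text{eKdV}}$, $\eps^\star_{\text{KdV}}$, $\eps^\star_{\text{iB}}$, each being the CL precision $\O(\max(\epsilon^2(\delta^2-\gamma)^2,\epsilon^4,\mu^2)(1+\sqrt t))$ plus the omitted-terms size.

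The steps I would carry out, in order, are: (i) recall from the proof of Proposition~\ref{prop:decompositionI} the precise way the residual of~\eqref{eq:CL} enters the definition of $U^c$ and the final bound on $(r_1,r_2)$ — the key point being linearity of that dependence, so that replacing a zero residual by an $\O(\eta)$ residual changes the precision additively by $\O(\eta)$ and leaves the corrector estimate $\big\Vert U^c\big\Vert + \big\Vert \partial_t U^c\big\Vert \leq C\max(\epsilon(\delta^2-\gamma),\epsilon^2,\mu)\min(t,\sqrt t)$ unchanged (since the corrector absorbs only the genuinely coupling terms of~\eqref{eqn:GreenNaghdiMeanI}, not the scalar-equation residual, which is simply added to $(r_1,r_2)$); (ii) for each of the three equations, identify the omitted terms and bound them in $L^\infty([0,T^\star);H^s)$ using the uniform bound $M^\star_{s+6}$ on $U$ in $H^{s+6}$ (note eKdV omits two terms of orders $\epsilon^3$ and $\mu\epsilon$, hence $\max(\epsilon^3,\mu\epsilon)$; KdV additionally omits the $\epsilon^2\alpha_2$ term, hence $+\epsilon^2$; iB omits everything dispersive and the $\alpha_2$ term, hence $\max(\epsilon^2,\mu)$ — here I should double-check whether the $\mu\nu\partial_x^3$ and $\mu\nu_t\partial_x^2\partial_t$ terms combine to give $\O(\mu)$ and whether there is any $\mu^{1/2}$-type loss from the BBM change of variable $v_\pm=(1\pm\mu\lambda\partial_x^2)^{-1}v_\pm^\lambda$, which I believe is harmless since $\lambda$ is a fixed parameter); (iii) assemble the bounds and restate with the uniform constant $C$; (iv) observe that the localized (weighted) improvement of Proposition~\ref{prop:decompositionI} — replacing $\sqrt t$ by $1$ — only affects the coupling-term contribution to the precision and the corrector estimate, not the scalar-equation residual, so under the hypothesis that the lower-order decoupled approximation is sufficiently localized in space the same replacement $\sqrt t \rightsquigarrow 1$ goes through in every bound, giving the final sentence of the statement.

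The main obstacle I anticipate is the bookkeeping in step (i): I must verify that in the proof of Proposition~\ref{prop:decompositionI} the scalar-equation residual really does enter only additively and only through its $H^s$ norm, i.e. that the corrector $U^c$ is built to cancel the $\O(\epsilon+\mu)$ coupling terms arising from substituting the ansatz $U_{\text{CL}}$ into~\eqref{eqn:GreenNaghdiMeanI} and is \emph{insensitive} to whatever equation $v_\pm^\lambda$ actually solves. If instead $U^c$ were defined using the CL equation to simplify time derivatives $\partial_t v_\pm^\lambda$ (substituting the spatial part of~\eqref{eq:CL} for $\partial_t v_\pm^\lambda$), then changing the equation would change $U^c$ itself and one would need to re-examine its estimate; but even then the discrepancy between the two choices of $U^c$ is exactly (a multiple of) the omitted terms, so the conclusion is the same up to harmless modifications. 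The only genuinely new verification, beyond Proposition~\ref{prop:decompositionI}, is thus the elementary $H^s$ estimate of a handful of explicit polynomial-in-$v_\pm^\lambda$-and-derivatives terms, which is routine given the uniform $H^{s+6}$ control and the algebra property of $H^s$ for $s>s_0>1/2$.
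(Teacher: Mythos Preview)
Your proposal is correct and matches the paper's approach: the paper states (in a Remark following the restatement of Proposition~\ref{prop:decompositionI}) that the proof is ``identical'' to that of Proposition~\ref{prop:decompositionI}, with the only modification being that the last term of the remainder estimate~\eqref{eq:estR} in Lemma~\ref{lem:proofcons} picks up the contribution of the neglected unidirectional terms, yielding an additional error uniformly bounded on $[0,T^\star]$ and $[0,T^\sharp]$. Your observation in step~(i)---that the corrector $U^c$ is built from the coupling terms of the Green--Naghdi system and is insensitive to which scalar equation $v_\pm^\lambda$ solves, so the omitted terms enter only additively in $(r_1,r_2)$---is exactly the mechanism the paper invokes, and your identification of the sizes $\max(\epsilon^3,\mu\epsilon)$, $\epsilon^2$, $\max(\epsilon^2,\mu)$ for eKdV, KdV, iB respectively is correct.
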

\begin{Remark} \label{rem:WC}
The function $U^c$, which depends only on the decoupled waves $v_\pm$, is a first order corrector which allows to take into account the leading order coupling effects, and therefore reach the desired accuracy. Its construction and precise definition is explicitly displayed in the proof of the Proposition; see Section~\ref{sec:prop:decompositionI}, and in particular Definition~\ref{def:corrector}.
The approximate solution given by $U\equiv U_{\text{CL}}^\p+U^c$ can be seen as a weakly coupled model of independent interest, in the spirit of~\cite{Wright05}.
\end{Remark}
\begin{Remark}\label{rem:differentmodels}
In the estimates presented in Propositions~\ref{prop:decompositionI} and~\ref{prop:other}, the terms growing in $\O(\sqrt t)$ come from coupling effects between the two propagating waves, that are neglected in our decoupled models; this is why the accuracy is significantly better if the initial data is localized in space, as the two counterpropagating waves will be located far away from each other after some time. 

Uniformly bounded terms are the contribution of unidirectional errors, generated by the different manipulations on the equation ({\em e.g.} BBM trick), and eventually neglected terms in lower order approximations for Proposition~\ref{prop:other}, below. 

The magnitude of each contribution, and therefore the accuracy of the decoupled approximation, depends on \begin{enumerate}[i.]
\item the evolution equation considered (CL,eKdV,KdV, {\em etc.});
\item the size of the parameters $\epsilon,\mu$ as well as $\delta^2-\gamma$ (critical ratio);
\item the localization in space of the initial data;
\item the time-scale considered.
\end{enumerate}
In that respect, it is not obvious to decide which approximation is the best to consider, that is what is the simplest equation leading to the highest accuracy. We discuss several important cases (long-wave regime, Camassa-Holm regime with critical or non-critical ratio) in Section~\ref{ssec:discussion}, with several numerical simulations to support our conclusions.
\end{Remark}

\paragraph{Long time behavior} A notable difference in the statement of our results, when compared with previous work in~\cite{BonaColinLannes05,Duchene11a,ConstantinLannes09}, is that our estimates in Propositions~\ref{prop:decompositionI} and~\ref{prop:other} (and similarly Proposition~\ref{prop:unidirI}) are valid as long as the solution of the approximate scalar model is defined and uniformly estimated.
Let us note first that Proposition~\ref{prop:WPI} ensures that these results are not empty, but on the contrary are valid for long times (provided that $\nu_t^{\theta,\lambda}>0$ and the initial data sufficiently smooth). More precisely, one has straightforwardly the following result.
\begin{Corollary}[Existence and magnitude of $T^\star$, $T^\sharp$]
Let $(\zeta^0,v^0)=U^0\in H^{s+7},s\geq s_0>3/2$, and let $\p\in\P$, with additional restriction $\nu_t^{\theta,\lambda}>\nu_0>0$. 

Then there exists $C_1,C_2=C(\frac1{s_0-3/2},\mu_{\text{max}},\epsilon_{\text{max}},\delta_{\text{min}}^{-1},\delta_{\text{max}},|\lambda|,|\theta|,\nu_0^{-1},\big\vert U^0 \big\vert_{H^{s+7}_\mu})$, independent of $\p$, such that for any $M^\star_{s+6}\geq C_1$, the decoupled approximate solutions defined in
Propositions~\ref{prop:decompositionI} and~\ref{prop:other} are uniquely defined and satisfy the uniform bound of the Proposition, with
\[T^\star\geq C_2/\epsilon.\]
If $U^0\in X^{s+7}_2$, then there exists $C_1,C_2=C(\frac1{s_0-3/2},\mu_{\text{max}},\epsilon_{\text{max}},\delta_{\text{min}}^{-1},\delta_{\text{max}},|\lambda|,|\theta|,\nu_0^{-1},\big\vert U^0 \big\vert_{X^{s+7}_{2,\mu}})$ such that for any $M^\sharp_{s+6}\geq C_1$, one has
\[ T^\sharp\geq C_2/\max(\epsilon,\mu).\]
\end{Corollary}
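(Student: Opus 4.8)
The plan is to read this corollary off Proposition~\ref{prop:WPI} (and, for the genuine inviscid Burgers' equation, off classical theory), applied to the scalar equations that define the decoupled approximations of Definitions~\ref{def:CL} and~\ref{def:other}. Recall that each such approximation $U$ (one of $U_{\text{CL}}^\p$, $U_{\text{eKdV}}^\p$, $U_{\text{KdV}}^\p$, $U_{\text{iB}}^\p$) is assembled from two scalar profiles $v_\pm$, through $v_\pm=(1\pm\mu\lambda\partial_x^2)^{-1}v_\pm^\lambda$ and the travelling‑wave substitutions $x\mapsto x\mp t$, with initial datum $v_\pm\id{t=0}=\tfrac12(\zeta^0\pm\tfrac{v^0}{\gamma+\delta})$, where $v_\pm^\lambda$ solves a scalar evolution equation. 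Since $\partial_t\big[v_\pm(t,x\mp t)\big]=\big(\partial_t v_\pm\mp\partial_x v_\pm\big)(t,x\mp t)$, since translations are isometries of all the (weighted) Sobolev spaces involved, and since $(1\pm\mu\lambda\partial_x^2)^{-1}$ is bounded on them uniformly in $\mu\le\mu_{\text{max}}$ (for admissible $\lambda$), controlling $U$ in $H^{s+6}$ and $\partial_t U$ in $H^{s+5}$ reduces to controlling $v_\pm^\lambda$ in $H^{s+6}$ and $\partial_t v_\pm^\lambda$ in $H^{s+5}$; the same is true, mutatis mutandis, for the weighted bounds of the localized statement, where one may take $\alpha=1$ so that $(1+x^2)^\alpha$ is exactly the combination of moments appearing in the spaces $X^s_{n,\mu}$ with $n=2$.

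Next I would check that the equation satisfied by $v_\pm^\lambda$ is, after the reflection $x\mapsto-x$ turning the $-$ case into the $+$ case, an instance of~\eqref{eq:CLI} with $\beta=\nu_t^{\theta,\lambda}$ and the remaining coefficients taken from $\{\alpha_i,\nu_x^{\theta,\lambda},\kappa_1^{\theta,\lambda},\kappa_2^{\theta}\}$ (some of them set to zero for the eKdV and KdV approximations). All of these depend only on $\delta,\gamma,\theta,\lambda$, hence, for $\p\in\P$ and $|\lambda|,|\theta|$ fixed, stay bounded by a constant of the form $C(\mu_{\text{max}},\epsilon_{\text{max}},\delta_{\text{min}}^{-1},\delta_{\text{max}},|\lambda|,|\theta|)$; together with the standing assumption $\nu_t^{\theta,\lambda}>\nu_0>0$ — which supplies both the bound $1/\beta\le 1/\nu_0$ and the positivity $\mu\beta>0$ required by Proposition~\ref{prop:WPI} — this gives a constant $M$ in Proposition~\ref{prop:WPI} that is \emph{independent of} $\p$. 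The genuine inviscid Burgers' equation of Definition~\ref{def:other} is outside the scope of Proposition~\ref{prop:WPI} (there $\mu\beta=0$); for it I would use the classical well‑posedness of Burgers' equation in $H^{s+6}$, which likewise yields a unique solution on some $[0,C/\epsilon)$ with the required $H^{s+6}$ and $\partial_t$‑$H^{s+5}$ bounds (and, for localized data, the analogous weighted bounds).

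I would then apply Proposition~\ref{prop:WPI} with its Sobolev index shifted so as to control $v_\pm^\lambda$ in $H^{s+6}_\mu$ and $\partial_t v_\pm^\lambda$ in $H^{s+5}_\mu$. Because $\gamma+\delta\ge\delta_{\text{min}}$ and $|U^0|_{H^{s+7}_\mu}\le C(\mu_{\text{max}})|U^0|_{H^{s+7}}$, the datum $v_\pm\id{t=0}$ lies in $H^{s+6}$ with $|v_\pm\id{t=0}|_{H^{s+6}_\mu}\le C(\delta_{\text{min}}^{-1})\,|U^0|_{H^{s+7}_\mu}$, a $\p$‑uniform bound, so Proposition~\ref{prop:WPI} provides a unique $v_\pm^\lambda$ on $[0,C_2/\epsilon)$ with
\[ \big\Vert \partial_t v_\pm^\lambda \big\Vert_{L^\infty([0,C_2/\epsilon);H^{s+5}_\mu)}+\big\Vert v_\pm^\lambda \big\Vert_{L^\infty([0,C_2/\epsilon);H^{s+6}_\mu)}\ \le\ C_1, \]
where $C_1,C_2$ have the stated dependencies. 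Reconstructing $U$ from $v_\pm$ as above — possibly enlarging $C_1$ by a fixed multiplicative factor, and further enlarging it so that it dominates whatever is meant by ``$M^\star_{s+6}$ sufficiently big'' in Proposition~\ref{prop:decompositionI} — one gets $\Vert U\Vert_{L^\infty([0,C_2/\epsilon);H^{s+6})}+\Vert \partial_t U\Vert_{L^\infty([0,C_2/\epsilon);H^{s+5})}\le C_1$. Hence, as soon as $M^\star_{s+6}\ge C_1$, the interval $[0,C_2/\epsilon)$ lies in the set defining $T^\star$, so $T^\star\ge C_2/\epsilon$, and the uniform bound of Propositions~\ref{prop:decompositionI} and~\ref{prop:other} holds by the very definition of $T^\star$ as the maximal such time. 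For the localized statement I would instead use the persistence part of Proposition~\ref{prop:WPI} with $n=2$: from $U^0\in X^{s+7}_{2,\mu}$ one gets $v_\pm\id{t=0}\in X^{s+7}_{2,\mu}$ with $\p$‑uniform norm, hence control of $v_\pm^\lambda$ and $\partial_t v_\pm^\lambda$ in the appropriate $X$‑spaces on $[0,C_2\times\min(1/\epsilon,1/\mu))$, and since $\min(1/\epsilon,1/\mu)=1/\max(\epsilon,\mu)$ the same argument gives $T^\sharp\ge C_2/\max(\epsilon,\mu)$.

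There is no genuine difficulty; the only delicate bookkeeping is (i) the $\p$‑uniformity of every constant, which rests on the coefficients of~\eqref{eq:CLI} and the operators $(1\pm\mu\lambda\partial_x^2)^{-1}$ being bounded uniformly over $\P$ and on the hypothesis $\nu_t^{\theta,\lambda}>\nu_0$; (ii) tracking the derivatives lost in passing from $v_\pm^\lambda$ to $U$ (through the travelling‑wave change of variables) and in the weighted estimates, which is exactly what forces the $H^{s+7}$, respectively $X^{s+7}_2$, regularity assumed on $U^0$; and (iii) treating the genuine inviscid Burgers' equation separately by classical theory, since it is not an instance of~\eqref{eq:CLI} with $\mu\beta>0$.
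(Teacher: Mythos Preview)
Your proposal is correct and follows exactly the approach the paper intends: the Corollary is stated in the paper without proof, introduced only by the remark that ``Proposition~\ref{prop:WPI} ensures that these results are not empty\ldots\ More precisely, one has straightforwardly the following result.'' Your write-up is in fact more detailed than anything the paper provides, including the careful bookkeeping of $\p$-uniformity, the reduction from $U$ to $v_\pm^\lambda$, and the observation that the genuine inviscid Burgers' case must be handled separately by classical theory since $\mu\beta=0$ there.
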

One question, which is essential in the discussion of Section~\ref{ssec:discussion}, is whether these above estimates are optimal, or in the contrary can be extended to longer times. Let us discuss below some elements of answer.

 First, it is well-known that inviscid Burgers’ equation $\partial_t u + u\partial_x u$ will generate a shock in finite time, for any non-trivial, decreasing at infinity initial data. A simple scaling arguments shows that the inviscid Burgers decoupled approximation will therefore generate shocks in finite time $T\approx 1/\epsilon$. On the contrary, using conservation laws of the KdV equation, one can extend local well-posedness result inherited from the hyperbolic energy method, to global well-posedness in sufficiently regular spaces $H^s, \ s\geq1$ (see~\cite{KenigPonceVega93} among many other works). The same result holds for the modified KdV (when the power of the nonlinearity is non-quadratic but sub-critical), and is therefore valid for extended Korteweg-de Vries equations in Definition~\ref{def:other}, provided $\nu_t^{\theta,\lambda}=0$~\cite{Alejo12}.

Concerning higher order models, it is known that Camassa-Holm family of equations, related to~\eqref{eq:CLI}, can develop singularities in finite time in the form of wave breaking (see~\cite{Constantin01,Brandolese11}). In~\cite{ConstantinLannes09}, the authors show that wave breaking of solutions to~\eqref{eq:CLI} occurs for a specific set of parameters, at hyperbolic time $T\approx 1/\epsilon$, provided that the initial data is sufficiently big in $L^\infty$ norm. However, as the justification of our model assumes that the initial deformation is bounded in Sobolev norm, uniformly with respect to the parameters, assumptions of~\cite[Proposition 6]{ConstantinLannes09} cannot be justified. Thus the problem of the well-posedness of Constantin-Lannes equation for longer time than the one expressed in Proposition~\ref{prop:WPI} is still open, as far as we know. 

As for the localization in space, it is clear that our models, as they include dispersive terms, cannot be uniformly controlled in weighted Sobolev norms, globally in time. However, as the two counterpropagating waves move away from each other while their spatial localization weakens, it does not seem out of reach to extend Propositions~\ref{prop:decompositionI} and~\ref{prop:other} in order to uniformly control the coupling effects for very long time. On the other hand, in order to complete the full justification of our asymptotic models as described above, one has to obtain Hypothesis~\ref{conj:stab} over times greater than the typical hyperbolic time scale $\O(1/\epsilon)$. Thus the dispersive properties of the scalar models we consider, which are largely overlooked in this work but have been extensively studied in the literature, will play a predominant role in the behavior of the system at very long time. To conclude, let us note that, despite all the aforementioned difficulties, numerical simulations in Section~\ref{ssec:discussion} indicate that our uniform estimate in Proposition~\ref{prop:decompositionI} remains valid for times of order $\O(\epsilon^{-3/2})$.

\paragraph{Outline of the paper}
The precise presentation and justification of the governing (full Euler) equations of our system is introduced in Section~\ref{sec:models}. Using the shallowness assumption ($\mu\ll1$), we then introduce the so-called Green-Naghdi model~\eqref{eqn:GreenNaghdiMeanI}, as written out above. 
 Several equivalently precise models are constructed, using different variables; a short analysis on the linear well-posedness of these systems supports the choice of the shear layer-mean velocity, which yields~\eqref{eqn:GreenNaghdiMeanI}. 
 
 The well-posedness of the scalar models we consider, as well as the persistence of spatial decay of its solutions (Proposition~\ref{prop:WPI}), is proved in Section~\ref{sec:prop:WPI}.

 In Section~\ref{sec:unidirectional}, we turn to the case of the unidirectional approximation, and prove Proposition~\ref{prop:unidirI}. We then numerically investigate if the quite restricting condition on the initial data arises naturally, that is if after some time, the flow generated by any initial perturbation will eventually decompose into two almost purely unidirectional waves.
 
Finally, Section~\ref{sec:decomposition} is dedicated to the study of the decoupled models.
In Section~\ref{ssec:formal}, we present a formal argument which allows to derive the decoupled approximations, as defined precisely in~Definitions~\ref{def:CL} and~\ref{def:other}. As an intermediary step, we introduce a coupled asymptotic models, which we believe is of independent interest:~\eqref{eq:dSerrel}-\eqref{eq:dSerrer} is a simplified version of the Green-Naghdi equation, with same precision in the Camassa-Holm regime~\eqref{eqn:defRegimeCH}, as stated in Proposition~\ref{prop:ConsSerre}.
The main result, Proposition~\ref{prop:decompositionI}, is proved in Section~\ref{sec:prop:decompositionI}. Finally, Section~\ref{ssec:discussion} contains a discussion concerning the competition between the different scalar models, in various scenari, supported by numerical simulations.

 \section{Derivation of the Green-Naghdi system}
 \label{sec:models}

This section is dedicated to the construction and justification of the Green-Naghdi model~\eqref{eqn:GreenNaghdiMeanI}, which is the groundwork of our study. We first briefly recall the so-called {\em full Euler system}~\eqref{eqn:EulerComplet}, governing the behavior of two layers of immiscible, homogeneous, ideal, incompressible fluids under the sole influence of gravity. Following Craig-Sulem~\cite{CraigSulem93}, the system can be written as two evolution equations~\eqref{eqn:EulerCompletDN} coupling Zakharov's canonical variables~\cite{Zakharov68}, thanks to the use of Dirichlet-to-Neumann operators (Definition~\ref{def:D2N}).

After non-dimensionalizing our system of equations in order to make appear the dimensionless parameters at stake, we expand the Dirichlet-to-Neumann operators with respect to the shallowness, parameter, $\mu$ (Proposition~\ref{prop:expGH}).
Green-Naghdi models are obtained when replacing the Dirichlet-to-Neumann operators by their truncated expansion, and one obtains successively ~\eqref{eqn:GreenNaghdi1},~\eqref{eqn:GreenNaghdi2} and~\eqref{eqn:GreenNaghdiMeanA}. These systems are equivalent, but handle different velocity variables as unknowns. The latter is the system we base our study on, {\em i.e.~\eqref{eqn:GreenNaghdiMeanI}}, and considers the shear layer-mean velocity, obtained after integrating the velocity potential across the vertical layer in each fluid.
\footnote{Note that one cannot write the full Euler system in the simple form of two evolution equations using layer-mean velocity variables, as the pressure cannot be eliminated from the equation. The nice formulation of the Green-Naghdi system with the shear layer-mean velocity relies on the assumption of shallow water, $\mu\ll1$, which allows to approximate Zakharov's canonical variables in terms of layer-mean velocity variables. In~\cite{ChoiCamassa96,ChoiCamassa99}, Choi and Camassa formally construct similar Green-Naghdi models using layer-mean velocity variables from start to finish.}

All of these asymptotic models are justified by a consistency result, stating that solutions of the full Euler system satisfy Green-Naghdi asymptotic models up to a small remainder, of size $\O(\mu^2)$.

\subsection{The Full Euler system}
\label{ssec:FullEulerSystem}

The system we study consists in two layers of immiscible fluids, compelled below by a flat bottom and by a flat, rigid lid (see Figure~\ref{fig:SketchOfDomain}). We restrict ourselves to the two-dimensional case, {\em i.e.} to horizontal dimension $d=1$. The domains of the two fluids are infinite in the horizontal dimension, and the fluids are at rest at infinity. The depth of the upper and lower layers are, respectively, $d_1$ and $d_2$.

We assume that the interface between the two fluids is given as the graph of a function $\zeta(t,x)$ which expresses the deviation from its rest position $\{(x,z),z=0\}$ at the horizontal spatial coordinate $x$ and at time $t$. 
Therefore, at each time $t\ge 0$, the domains of the upper and lower fluid (denoted, respectively, $\Omega_1^t$ and $\Omega_2^t$), are given by
\begin{align*}
 \Omega_1^t \ &= \ \{\ (x,z)\in\RR^{d}\times\RR, \quad \zeta(t,x)\ \leq\ z\ \leq \ d_1\ \}, \\
 \Omega_2^t \ &= \ \{\ (x,z)\in\RR^{d}\times\RR, \quad -d_2 \ \leq\ z\ \leq \ \zeta(t,x)\ \}.
\end{align*}
We assume that the two domains are strictly connected, that is
\[ d_1+\zeta(t,x) \geq h>0, \qquad d_2+\zeta(t,x)\geq h>0.\]

We denote by $(\rho_1,{\mathbf v}_1)$ and $(\rho_2,{\mathbf v}_2)$ the mass density and velocity fields of, respectively, the upper and the lower fluid. The two fluids are assumed to be homogeneous and incompressible, so that the mass densities $\rho_1,\ \rho_2$ are constant, and the velocity fields ${\mathbf v}_1,\ {\mathbf v}_2$ are divergence free.
As we assume the flows to be irrotational, one can express the velocity field as gradients of a potential: ${\mathbf v}_i=\nabla\phi_i$ $(i=1,2)$, and the velocity potentials satisfy Laplace's equation
\[\partial_x^2 \phi_i \ + \ \partial_z^2 \phi_i \ = \ 0.\] 

 The fluids being ideal, they satisfy the Euler equations; the momentum equations can be integrated, which yields Bernoulli's equation:
 \[ \partial_t \phi_i+\frac{1}{2} |\nabla_{x,z} \phi_i|^2=-\frac{P}{\rho_i}-gz \quad \text{ in }\Omega^t_i \quad (i=1,2),\]
 where $P$ denotes the pressure inside the fluids.
 
 From the assumption that no fluid particle crosses the surface, the bottom or the interface, one deduces kinematic boundary conditions, and the set of equations is closed by the continuity of the pressure at the interface, assuming that there is no surface tension. \footnote{The surface tension effects should be included for our system to be well-posed. However, the surface tension is very small in practice, and does not play any role in our asymptotic analysis. See~\cite{Lannes10} for in depth study of this phenomenon.} 
 
Altogether, the governing equations of our problem are the following:
 \begin{equation} \label{eqn:EulerComplet}
\left\{\begin{array}{ll}
 \partial_x^2 \phi_i \ + \ \partial_z^2 \phi_i \ = \ 0 & \mbox{ in }\Omega^t_i, \ i=1,2,\\
 \partial_t \phi_i+\frac{1}{2} |\nabla_{x,z} \phi_i|^2=-\frac{P}{\rho_i}-gz & \mbox{ in }\Omega^t_i, \ i=1,2, \\
 \partial_{z}\phi_1 \ = \ 0 & \mbox{ on } \Gamma_t\equiv\{(x,z),z=d_1\}, \\
 \partial_t \zeta \ = \ \sqrt{1+|\partial_x\zeta|^2}\partial_{n}\phi_1 \ = \ \sqrt{1+|\partial_x\zeta|^2}\partial_{n}\phi_2 & \mbox{ on } \Gamma \equiv\{(x,z),z=\zeta(t,x)\},\\ 
 \partial_{z}\phi_2 \ = \ 0 & \mbox{ on } \Gamma_b\equiv\{(x,z),z=-d_2\}, \\
 P \text{ continous } & \mbox{ on } \Gamma ,
 \end{array}
\right.
\end{equation}
where $n$ is the unit upward normal vector at the interface.
\medskip

\paragraph{Rewriting the system as evolution equations.}
The system~\eqref{eqn:EulerComplet} can be reduced into two evolution equations coupling Zakharov's canonical variables, namely the deformation of the free interface from its rest position, $\zeta$, and the trace of the upper potential at the interface, $\psi$:
\[ \psi \ \equiv \ \phi_1(t,x,\zeta(t,x)).\]
The potentials $\phi_1$ and $\phi_2$ are uniquely deduced from $(\zeta,\psi)$ as the unique solutions of the following Laplace's problems:\footnote{The solution of the second Laplace's problem is defined up to a constant, which does not play any role in our analysis.}
\begin{equation}\label{eqn:Laplace}
\left\{\begin{array}{ll}
 (\ \partial_x^2 \ + \ \partial_z^2\ )\ \phi_1=0 & \mbox{in } \Omega_1, \\
\phi_1 =\psi & \mbox{on } \Gamma, \\
 \partial_{z}\phi_1 =0 & \mbox{on } \Gamma_t, \\

\end{array}
\right. \quad \text{and} \quad 
\left\{\begin{array}{ll}
 (\ \partial_x^2 \ + \ \partial_z^2\ )\ \phi_2=0 & \mbox{in } \Omega_2, \\
 \partial_{n}\phi_2 =\partial_{n}\phi_1 & \mbox{on } \Gamma_t,\\
 \partial_{z}\phi_2 =0 & \mbox{on } \Gamma_b.
\end{array}\right.
\end{equation}
More precisely, we define the so-called Dirichlet-Neumann operators.
\begin{Definition}[Dirichlet-Neumann operators]\label{def:D2N}
Let $\zeta\in W^{1,\infty}(\RR)$, and $\partial_x \psi\in H^{1/2}(\RR)$. Then we define 
\begin{align*}
G[\zeta]\psi \ &= \ \sqrt{1+|\partial_x\zeta|^2}\big(\partial_n \phi_1 \big)\id{z=\zeta} \ = \ \big(\partial_z \phi_1\big)\id{z=\zeta} -(\partial_x \zeta)\big(\partial_x \phi_1\big)\id{z=\zeta}, \\
H[\zeta]\psi \ &= \ \partial_x \big( {\phi_2}\id{z=\zeta}\big) \ = \ \partial_x \big( \phi_2(t,x,\zeta(t,x))\big),
\end{align*}
where $\phi_1$ and $\phi_2$ are uniquely defined (up to a constant for $\phi_2$) as the solutions in $H^2(\RR)$ of~\eqref{eqn:Laplace}.
\end{Definition}
The well-posedness of Laplace's problem~\eqref{eqn:Laplace}, and therefore the Dirichlet-Neumann operators, follow from classical arguments detailed, for example, in~\cite[Proposition 2.1]{Duchene10}.
\medskip

One can then rewrite the conservation of momentum equations in~\eqref{eqn:EulerComplet} at the interface, thanks to chain rule:
\[ \partial_t ( {\phi_i}\id{z=\zeta})+g \zeta + \frac12 \big|\partial_x \big({\phi_2}\id{z=\zeta}\big) \big|^2 +\frac{\big(G[\zeta]\psi +(\partial_x\zeta)(\partial_x \big({\phi_2}\id{z=\zeta}) \big)^2}{2(1+|\partial_x\zeta|^2)} \ = \ \frac{P\id{z=\zeta}}{\rho_i}.\]
 Using the continuity of the pressure at the interface, one deduces from the identities above
\[ \partial_t (\rho_2 H[\zeta]\psi -\rho_1 \partial_x \psi)+g(\rho_2-\rho_1)\partial_x \zeta + \frac12\partial_x \Big( \rho_2 |H[\zeta]\psi|^2 -\gamma |\partial_x \psi|^2\Big)\ =\ \partial_x \N(\zeta,\psi) ,\]
where $\N$ is defined as
\[ \N(\zeta,\psi) \ \equiv \ \frac{\rho_1\big(G[\zeta]\psi +(\partial_x\zeta)(\partial_x\psi)\big)^2 -\rho_2\big(G[\zeta]\psi +(\partial_x\zeta)H[\zeta]\psi \big)^2}{2(1+|\partial_x\zeta|^2)}.\]

The kinematic boundary condition at the interface is obvious, and the system~\eqref{eqn:EulerComplet} is therefore rewritten as
 \begin{equation} \label{eqn:EulerCompletDN}
\left\{\begin{array}{l}
 \partial_t (\rho_2 H[\zeta]\psi -\rho_1 \partial_x \psi)+g(\rho_2-\rho_1)\partial_x \zeta + \frac12\partial_x \Big( \rho_2 |H[\zeta]\psi|^2 -\rho_1 |\partial_x \psi|^2\Big)=\partial_x \N(\zeta,\psi) , \\
 \partial_t \zeta \ = \ G[\zeta]\psi .
 \end{array}
\right.
\end{equation}
which is exactly system (9) in~\cite{BonaLannesSaut08}.
\medskip

\paragraph{Nondimensionalization of the system.}
Thanks to an appropriate scaling, the two-layer full Euler system~\eqref{eqn:EulerCompletDN} can be written in dimensionless form.
The study of the linearized system (see~\cite{Lannes10}, for example), which can be solved explicitly, leads to a well-adapted rescaling. 

\medskip

Let $a$ be the maximum amplitude of the deformation of the interface. We denote by $\lambda$ a characteristic horizontal length, say the wavelength of the interface. Then the typical velocity of small propagating internal waves (or wave celerity) is given by
\[c_0 \ = \ \sqrt{g\frac{(\rho_2-\rho_1) d_1 d_2}{\rho_2 d_1+\rho_1 d_2}}.\]
Consequently, we introduce the dimensionless variables
\footnote{We choose $d_1$ as the reference vertical length. By doing so, we implicitly assume that the two layers of fluid have comparable depth, and therefore that the depth ratio $\delta$ do not approach zero or infinity. Note also that $c_0\to0$ as $\rho_2\to \rho_1$.}
\[\begin{array}{cccc}
 \t z \ \equiv\ \dfrac{z}{d_1}, \quad\quad & \t x\ \equiv \ \dfrac{x}{\lambda}, \quad\quad & \t t\ \equiv\ \dfrac{c_0}{\lambda}t,
\end{array}\]
the dimensionless unknowns
\[\begin{array}{cc}
 \t{\zeta}(\t x)\ \equiv\ \dfrac{\zeta(x)}{a}, \quad\quad& \t{\psi}(\t x)\ \equiv\ \dfrac{d_1}{a\lambda c_0}\psi(x),
\end{array}\]
and the four independent dimensionless parameters
\[
 \gamma\ =\ \dfrac{\rho_1}{\rho_2}, \quad \epsilon\ \equiv\ \dfrac{a}{d_1},\quad \mu\ \equiv\ \dfrac{d_1^2}{\lambda^2}, \quad \delta\ \equiv \ \dfrac{d_1}{d_2}.
\]
With this rescaling, the system~\eqref{eqn:EulerCompletDN} becomes (we withdraw the tildes for the sake of readability)
\begin{equation}\label{eqn:EulerCompletAdim}
\left\{ \begin{array}{l}
\displaystyle\partial_{ t}{\zeta} \ -\ \frac{1}{\mu}G^{\mu,\epsilon}\psi\ =\ 0, \\ \\
\displaystyle\partial_{ t}\Big(H^{\mu,\epsilon}\psi-\gamma \partial_x{\psi} \Big)\ + \ (\gamma+\delta)\partial_x{\zeta} \ + \ \frac{\epsilon}{2} \partial_x\Big(|H^{\mu,\epsilon}\psi|^2 -\gamma |\partial_x {\psi}|^2 \Big) \ = \ \mu\epsilon\partial_x\N^{\mu,\epsilon} \ ,
\end{array} 
\right. 
\end{equation}with
\[ \N^{\mu,\epsilon} \ \equiv \ \dfrac{\big(\frac{1}{\mu}G^{\mu,\epsilon}\psi+\epsilon(\partial_x{\zeta})H^{\mu,\epsilon}\psi \big)^2\ -\ \gamma\big(\frac{1}{\mu}G^{\mu,\epsilon}\psi+\epsilon(\partial_x{\zeta})(\partial_x{\psi}) \big)^2}{2(1+\mu|\epsilon\partial_x{\zeta}|^2)}, 
 \]
 and the dimensionless Dirichlet-to-Neumann operators defined by
 \begin{align*}
 &G^{\mu,\delta}\psi \ \equiv \ \sqrt{1+|\epsilon\partial_x\zeta|^2}\big(\partial_n \phi_1 \big)\id{z=\epsilon\zeta} \ = \ -\mu\epsilon(\partial_x\zeta) (\partial_x\phi_1)\id{z=\epsilon\zeta}+(\partial_z\phi_1)\id{z=\epsilon\zeta},\\
 &H^{\mu,\delta}\psi\ \equiv\ \partial_x \big(\phi_1\id{z=\epsilon\zeta}\big) \ = \ (\partial_x\phi_1)\id{z=\epsilon\zeta}+\epsilon(\partial_x \zeta)(\partial_z\phi_1)\id{z=\epsilon\zeta},
\end{align*}
where $\phi_1$ and $\phi_2$ are the solutions of the rescaled Laplace problems
\begin{align}
\label{eqn:Laplace1} &\left\{
\begin{array}{ll}
 \left(\ \mu\partial_x^2 \ +\ \partial_z^2\ \right)\ \phi_1=0 & \mbox{ in } \Omega_1\equiv \{(x,z)\in \RR^{2}, \epsilon{\zeta}(x)<z<1\}, \\
\partial_z \phi_1 =0 & \mbox{ on } \Gamma_t\equiv \{z=1\}, \\
 \phi_2 =\psi & \mbox{ on } \Gamma\equiv \{z=\epsilon \zeta\}, 
\end{array}
\right.\\ 
\label{eqn:Laplace2}&\left\{
\begin{array}{ll}
 \left(\ \mu\partial_x^2\ + \ \partial_z^2\ \right)\ \phi_2=0 & \mbox{ in } \Omega_2\equiv\{(x,z)\in \RR^{2}, -\frac{1}{\delta}<z<\epsilon\zeta\}, \\
\partial_{n}\phi_1 = \partial_{n}\phi_2 & \mbox{ on } \Gamma_2, \\
 \partial_{z}\phi_2 =0 & \mbox{ on } \Gamma_b\equiv \{z=-\frac{1}{\delta}\}.
\end{array}
\right.
\end{align}
Similarly as in Definition~\ref{def:D2N}, the Dirichlet-Neumann operators are well defined, provided that $\zeta\in W^{1,\infty}(\RR)$, $\partial_x \psi\in H^{1/2}(\RR)$, and 
the following condition holds: there exists $h>0$ such that
\begin{equation} \label{eqn:connected}
h_1 \ \equiv\ 1-\epsilon\zeta \geq h>0, \quad h_2 \ \equiv \ \frac1\delta +\epsilon \zeta\geq h>0.
\end{equation}
Any reference of the {\em full Euler system} in the following concerns system~\eqref{eqn:EulerCompletAdim} with operators defined as above.

\subsection{Asymptotic models}
Our aim is now to obtain asymptotic models for the full Euler system~\eqref{eqn:EulerCompletAdim}, using smallness of dimensionless parameters. In the following, we will consider the case of shallow water, namely
\[ \mu \ \ll \ 1.\]
The key ingredient comes from the expansions of the Dirichlet-Neumann operators, in terms of $\mu$. Replacing the operators by the leading order terms of these expansions allow to obtain the desired asymptotic models, which are consequently justified in the sense of consistency. 

As a second step, we rewrite the equations using the shear layer-mean velocity as unknown.
One benefit of such a choice is that it yields to a much better behavior concerning the linear well-posedness, as we discuss at the end of this section.

Our method has been used by Alvarez-Samaniego and Lannes~\cite{Alvarez-SamaniegoLannes08} in the case of the water-wave problem (one layer of fluid, with free surface), and lead the authors to a complete rigorous justification of the so-called Green-Naghdi equations~\cite{GreenNaghdi76}. In the case of two layers with a free surface, a shallow water model (first order) and Boussinesq-type models (in the long wave regime) have been derived and justified in the sense of consistency in~\cite{BonaLannesSaut08}; the analysis below is therefore an extension of their work. Similar models as our Green-Naghdi system have been formally obtained in~\cite{ChoiCamassa99}, as well as in~\cite{OstrovskyGrue03} (with the additional assumption of $\gamma\approx 1$) and in~\cite{CraigGuyenneKalisch05}, but it is the first time to our knowledge that a rigorous justification is provided. Let us also mention the work concerning the case of two layers of fluids with an interface and a free surface: Green-Naghdi-type models have been derived in~\cite{BarrosGavrilyuk07,BarrosGavrilyukTeshukov07}, and justified in the sense of consistency in~\cite{Duchene10}. One could formally recover our models from~\cite[(44) and (60)]{Duchene10} by forcing the surface to be flat ($\alpha\equiv0$ using notation therein).
\medskip

\noindent{\bf Expansion of the Dirichlet-Neumann operators.}\\
The main ingredients of the following Proposition are given in~\cite{BonaLannesSaut08}, and we extend their result one order further.
\begin{Proposition}[Expansion of the Dirichlet-Neumann operators]\label{prop:expGH}
Let $s_0>1/2$ and $s\geq s_0 + 1/2$. Let $\psi$ be such that $\partial_x \psi \in H^{s+11/2}(R)$, and $\zeta \in H^{s+9/2}(\RR)$. Let $h_1=1-\epsilon\zeta$ and $h_2=1/\delta+\epsilon\zeta$ such that~\eqref{eqn:connected} is satisfied. 
Then
\begin{align}\label{eqn:expG0}
\big\vert \frac1\mu G^{\mu,\epsilon}\psi \ -\ \partial_x(h_1\partial_x\psi) \big\vert_{H^s} \ \leq \ \mu\ C_0,\\
\label{eqn:expG}
\big\vert \frac1\mu G^{\mu,\epsilon}\psi \ -\ \partial_x(h_1\partial_x\psi)\ -\ \mu\frac13\partial_x^2({h_1}^3\partial_x^2\psi)\big\vert_{H^s} \ \leq \ \mu^2\ C_1,\\
\label{eqn:expH0}
 \big\vert H^{\mu,\epsilon}\psi +\frac{h_1}{h_2}\partial_x\psi \big\vert_{H^s} \ \leq \ \mu\ C_0,\\
 \label{eqn:expH}
 \big\vert H^{\mu,\epsilon}\psi +\frac{h_1}{h_2}\partial_x\psi -\frac\mu{3h_2}\partial_x\Big({h_2}^3\partial_x\big(\frac{h_1}{h_2}\partial_x\psi\big)-{h_1}^3\partial_x^2\psi\Big) \big\vert_{H^s} \ \leq \ \mu^2\ C_1,
 \end{align}
with $C_j=C(\frac1{h},\epsilon_{\text{max}},\mu_{\text{max}},\frac1{\delta_{\text{min}}},\delta_{\text{max}},\big\vert \zeta \big\vert_{H^{s+5/2+2j}},\big\vert \partial_x\psi \big\vert_{H^{s+7/2+2j}} )$. The estimates are uniform with respect to the parameters $(\mu,\epsilon,\gamma,\delta)\in\P$, as defined in~\eqref{eqn:defRegimeFNL}.
\end{Proposition}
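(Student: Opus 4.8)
The plan is to follow the strategy used by Alvarez--Samaniego and Lannes in the water-wave case~\cite{Alvarez-SamaniegoLannes08}, the first-order bifluidic version being already available in~\cite{BonaLannesSaut08}, so that only one extra term in the expansion has to be produced here. Three ingredients are involved: flattening the two fluid domains, building explicit polynomial-in-$z$ approximate solutions of the Laplace problems~\eqref{eqn:Laplace1}--\eqref{eqn:Laplace2} with a residual of size $\O(\mu^{3})$, and controlling the difference between exact and approximate potentials by elliptic estimates in which the powers of $\mu$ are carefully tracked. First I would introduce the diffeomorphisms $(x,z)\mapsto\big(x,\tfrac{z-\epsilon\zeta}{h_1}\big)$ on $\Omega_1$ and $(x,z)\mapsto\big(x,\tfrac{z-\epsilon\zeta}{h_2}\big)$ on $\Omega_2$, which send the two slanted strips onto the fixed ones $\RR\times(0,1)$ and $\RR\times(-1,0)$. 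In these variables each equation $\mu\partial_x^2\phi_i+\partial_z^2\phi_i=0$ becomes, after multiplying by the Jacobian, an elliptic equation $\nabla\cdot(P_i\nabla\phi_i)=0$ with a symmetric, uniformly coercive coefficient matrix $P_i=P_i(\mu,\epsilon\zeta)$ whose $\partial_x$-related entries are $\O(\mu)$; the Dirichlet and Neumann conditions become flat boundary conditions, and the interface condition $\partial_n\phi_1=\partial_n\phi_2$ couples the two problems.

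The heart of the construction is a BKW-type Ansatz $\phi_1^{\mathrm{app}}=\phi_1^{(0)}+\mu\phi_1^{(1)}+\mu^2\phi_1^{(2)}$, with $\phi_1^{(k)}$ a polynomial of degree $2k$ in the (flattened) vertical variable: at each order, $\partial_z^2\phi_1^{(k)}$ is prescribed by $-\partial_x^2\phi_1^{(k-1)}$ together with the lower-order corrections coming from the nonconstant part of $P_1$, while the two constants of integration in $z$ are fixed by the top Neumann condition and the interface Dirichlet condition $\phi_1=\psi$. One finds $\phi_1^{(0)}=\psi$, $\phi_1^{(1)}=\tfrac12\big(h_1^2-(z-1)^2\big)\partial_x^2\psi$, and an explicit $\phi_1^{(2)}$. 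Inserting this into the definition of $G^{\mu,\epsilon}$ and using $\partial_x h_1=-\epsilon\partial_x\zeta$ produces $\tfrac1\mu G^{\mu,\epsilon}\psi=\partial_x(h_1\partial_x\psi)+\mu\tfrac13\partial_x^2(h_1^3\partial_x^2\psi)+\O(\mu^2)$. For the lower layer the same scheme yields $\phi_2^{(0)}=\chi$, plus explicit polynomials $\phi_2^{(1)},\phi_2^{(2)}$, the a priori undetermined function $\chi$ (and the analogous freedom at each higher order) being fixed by matching the interface Neumann datum, which at order $\mu$ reads $\partial_x(h_2\partial_x\chi)=-\partial_x(h_1\partial_x\psi)$, hence $\partial_x\chi=-\tfrac{h_1}{h_2}\partial_x\psi$; reading $H^{\mu,\epsilon}\psi$ off $\phi_2^{\mathrm{app}}$ at the interface then gives~\eqref{eqn:expH0}--\eqref{eqn:expH}. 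By construction $\phi_i^{\mathrm{app}}$ solves the flattened equations up to a source bounded in $H^s$, uniformly on $\P$, by $\mu^{3}C_1$, and meets the boundary and transmission conditions exactly.

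It then remains to estimate $\phi_i-\phi_i^{\mathrm{app}}$, which solves the same coupled elliptic transmission problem with that small source. I would close the estimate by the energy method for the degenerate operator $\mu\partial_x^2+\partial_z^2$: testing against the solution bounds $\|\partial_z(\cdot)\|_{L^2}^2+\mu\|\partial_x(\cdot)\|_{L^2}^2$ by the source (Poincar\'e in $z$ being available since the difference vanishes at the interface), and commuting the equations with tangential derivatives $\partial_x^j$ --- relying on Moser-type product and commutator estimates in $H^s$ to handle the $\epsilon\zeta$-dependent coefficients, and on the trace inequality for the interface terms --- propagates this to $H^s$ regularity while keeping track of the powers of $\mu$. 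The expansions~\eqref{eqn:expG0}--\eqref{eqn:expH} for $\tfrac1\mu G^{\mu,\epsilon}\psi$ and $H^{\mu,\epsilon}\psi$ then follow by taking traces, the (half-integer) losses of derivatives in the hypotheses being exactly the cost of these trace estimates. The hard part will be this last step: keeping the elliptic estimates \emph{uniform} as $\mu\to0$ for a second-order expansion, because the operator degenerates --- one has to work with the anisotropic energy $\int\big(\mu|\partial_x\phi|^2+|\partial_z\phi|^2\big)$ and ensure that the variable-coefficient contributions, which are themselves $\O(\mu)$ corrections, neither destroy coercivity nor consume the budget of $\mu$-powers in the commutator hierarchy. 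Relative to~\cite{BonaLannesSaut08}, where this is carried out at first order, the present argument amounts to pushing the same machinery through with one additional term in the Ansatz.
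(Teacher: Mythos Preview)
Your proposal is correct and follows the same overall strategy as the paper: flatten the strips, build a formal $\mu$-expansion of the potentials, and control the remainder by elliptic estimates uniform in $\mu$. The organization differs in two ways worth noting. First, for $G^{\mu,\epsilon}$ the paper does not redo the construction at all: it observes that $G^{\mu,\epsilon}\psi=-\mathcal G[-\epsilon\zeta]\psi$ is exactly (minus) the one-layer Dirichlet--Neumann operator with flat bottom, so~\eqref{eqn:expG0}--\eqref{eqn:expG} are read directly from~\cite[Proposition~3.8]{Alvarez-SamaniegoLannes08}. Second, this decouples the two problems: once the expansion of $G^{\mu,\epsilon}\psi$ is known, the lower-layer problem for $\phi_2$ is a pure Neumann problem with that expansion as boundary datum, so one only has to run the BKW scheme on $\phi_2$ alone (no transmission condition to match order by order). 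Your coupled treatment is equivalent but does more work; in particular your claim that the approximate potentials ``meet the transmission condition exactly'' cannot quite be right, since the exact Neumann trace is $G^{\mu,\epsilon}\psi$ itself --- in the paper's setup the mismatch $V=G^{\mu,\epsilon}\psi-\mu\partial_x(h_1\partial_x\psi)-\tfrac{\mu^2}{3}\partial_x^2(h_1^3\partial_x^2\psi)$ is carried as an $\O(\mu^3)$ boundary residual and absorbed in the final estimate. For that final elliptic estimate the paper also avoids redoing the commutator hierarchy you outline, invoking instead~\cite[Proposition~3]{BonaLannesSaut08} (which packages exactly the anisotropic $H^s$ bound you describe). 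So your plan would reprove those two black boxes; the paper simply quotes them and only produces the new $\phi^{(2)}$ term.
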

\begin{proof}
As remarked in~\cite{BonaLannesSaut08}, the operator $G^{\mu,\epsilon}\psi$ can be deduced from similar operator in the (one layer) water wave case with flat bottom:
\[ G^{\mu,\epsilon}\psi \ = \ -\mathcal G[-\epsilon\zeta]\psi,\]
where $\mathcal G$ is defined in~\cite[Section 3]{Alvarez-SamaniegoLannes08}, and estimates~\eqref{eqn:expG0},\eqref{eqn:expG} follows from Proposition 3.8 therein.
\medskip

Estimate~\eqref{eqn:expH0} is given in~\cite[Section 2.2.2]{BonaLannesSaut08}, and we obtain~\eqref{eqn:expH} using the same method, expanding one order further. Let us detail the strategy.
\medskip

The first step consists in rewriting the scaled Laplace problem~\eqref{eqn:Laplace2} into a variable-coefficient, boundary-value problem on the flat strip $\S := \RR \times (-1, 0)$ using the diffeomorphism
\[ \begin{array}{rcl}
& \S &\to \Omega_2, \\
\sigma: & (x,z) & \mapsto \sigma (x,z)\equiv \big(x,(1/\delta+\epsilon\zeta )z +\epsilon\zeta \big) .
\end{array}\]
Now, one can check (see~\cite{BonaLannesSaut08} or~\cite[Proposition 2.1]{Duchene10}) that $\phi_2$ solves~\eqref{eqn:Laplace2} if and only if $\underline{\phi_2}\equiv \phi_2 \circ \sigma$ satisfies
\begin{equation}\label{eqn:FlatLaplace}
\left\{ \begin{array}{ll}
\nabla_{x,z} \cdot Q^\mu[\epsilon\zeta ] \nabla_{x,z} \underline{\phi_2} \ = \ 0 & \text{ in } \S \\
\partial_n \underline{\phi_2}\id{z=0}=G^{\mu,\epsilon}\psi & \partial_n \underline{\phi_2}\id{z=-1}=0,
\end{array} \right.
\end{equation}
with
\[ Q^\mu[\epsilon\zeta] \ \equiv \ \begin{pmatrix} 
\mu\partial_z\sigma & -\mu\partial_x\sigma \\
-\mu\partial_x\sigma & \frac{1+\mu|\partial_x \sigma|^2}{\partial_z \sigma}
\end{pmatrix} \ = \ \frac1{1/\delta+\epsilon\zeta}\begin{pmatrix} 
0 & 0 \\
0 & 1 \end{pmatrix}+\mu \begin{pmatrix} 
1/\delta+\epsilon\zeta & -(z+1)\epsilon\partial_x\zeta \\
-(z+1)\epsilon\partial_x\zeta & \dfrac{|(z+1)\epsilon\partial_x\zeta|^2}{1/\delta+\epsilon\zeta}
\end{pmatrix},\]
and where $\partial_n \underline{\phi_2}$ stands for the upward conormal derivative associated to the elliptic operator involved:
\[ \partial_n \underline{\phi_2}\id{z=z_0} \ = \ \begin{pmatrix} 0\\1 \end{pmatrix} \cdot Q\nabla_{x,z} \underline{\phi_2}\id{z=z_0}.\]
The asymptotic expansion of the Dirichlet-Neumann operator $H^{\mu,\epsilon}$ is deduced from the identity
\[ H^{\mu,\epsilon}\psi \ = \ \partial_x \big( \underline{\phi_2}\id{z=0} \big).\]

The second step consists in computing the formal expansion of $\underline{\phi_2}$ the solution of~\eqref{eqn:FlatLaplace}, as
\[ \underline{\phi_2} \ = \ \phi^{(0)} \ + \ \mu \phi^{(1)} \ + \ \mu^2 \phi^{(2)}\ + \ \mu^3 \phi_r,\]
One can solve~\eqref{eqn:FlatLaplace} at each order, using the obvious expansion of $Q^\mu$, as well as the known expansion of the operator $G^{\mu,\epsilon}\psi$. This yields explicit formulas for $\phi^{(i)}$, and the estimate follows from adequate control of the residual $\phi_r$.
\medskip

At first order, one has
\[
\left\{ \begin{array}{ll}
\partial_z \big( \frac1{1/\delta+\epsilon\zeta}\partial_z \phi^{(0)} \big) \ = \ 0 & \text{ in } \S \\
\frac1{1/\delta+\epsilon\zeta}\partial_z \phi^{(0)}\id{z=0}=0 & \frac1{1/\delta+\epsilon\zeta}\partial_z \phi^{(0)}\id{z=-1}=0,
\end{array} \right.
\]
so that
\[ \phi^{(0)}(x,z) \ = \ \phi^{(0)}(x),\]
independent of $z$. At next order, one has (denoting $h_2(x)=1/\delta+\epsilon\zeta(x)$)
\[
\left\{ \begin{array}{l}
\dfrac1{h_2}\partial_z^2 \phi^{(1)}\ = \ -\nabla_{x,z} \cdot\begin{pmatrix} 
h_2 & -(z+1)\epsilon\partial_x\zeta \\
-(z+1)\epsilon\partial_x\zeta & \dfrac{|(z+1)\epsilon\partial_x\zeta|^2}{h_2}
\end{pmatrix} \nabla_{x,z} \phi^{(0)} \ = \ -h_2 \partial_x^2\phi^{(0)} \quad \text{ in } \S \\
\frac1{h_2}\partial_z \phi^{(1)}\id{z=0}=(\epsilon\partial_x\zeta)(\partial_x\phi^{0})+\partial_x(h_1\partial_x\psi) \hfill \frac1{h_2}\partial_z \phi^{(0)}\id{z=-1}=0 ,
\end{array} \right.
\]
where we used that $\phi^{(0)}$ is independent of $z$. The above system is a second order ordinary differential equation, which is solvable under the condition
\[ \partial_x(h_1\partial_x\psi) \ = \ -\partial_x(h_2\partial_x \phi^{(0)}) ,\]
and whose solution is then
\[ \phi^{(1)}(x,z) \ = \ -\frac12(z+1)^2 {h_2}^2 \partial_x^2\phi^{(0)}+\phi^{(1)}_0(x) ,\]
with $\phi^{(1)}_0(x)$ being a function independent of $z$, to be determined later. Note that since the horizontal dimension is one,\footnote{In the 2d case, one should introduce the non-local operator of orthogonal projection onto the gradient vector fields, as in~\cite{BonaLannesSaut08}, to ensure that the right hand side is a gradient.} and using the fact that the fluids are at rest at infinity, integrating the compatibility conditions yields
\[ \partial_x \phi^{(0)} \ = \ -\frac{h_1}{h_2}\partial_x\psi.\]

\medskip

Let us turn to the next order. One has 
\[
\left\{ \begin{array}{l}
\dfrac1{h_2}\partial_z^2 \phi^{(2)}\ = \ -h_2\partial_x^2 \phi^{(1)}_0 +(z+1)^2F(x) \quad \text{ in } \S \\
\frac1{h_2}\partial_z \phi^{(1)}\id{z=0}=G(x)+(\partial_x h_2)(\partial_x \phi^{(1)}_0)+\frac13\partial_x^2({h_1}^3\partial_x^2\psi) \qquad \frac1{h_2}\partial_z \phi^{(0)}\id{z=-1}=0 ,
\end{array} \right.
\]
with
\[
F(x) \ = \ \frac{{h_2}^3}2\partial_x^4 \phi^{(0)}\quad \text{ and } \quad 
G(x) \ = \ -\frac16\partial_x({h_2}^3)\partial_x^3 \phi^{(0)}.
\]
Solving the first identity with boundary condition $\partial_z \phi^{(0)}\id{z=-1}=0 $ yields
\[ \phi^{(2)}(x,z) \ = \ -\frac{(z+1)^2}2 {h_2}^2\partial_x^2 \phi^{(1)}_0 F(x)+\frac{(z+1)^4}{12}h_2F(x)+\phi^{(2)}_0(x),\]
with $\phi^{(2)}_0(x)$ independent of $z$ (and which can be set to zero for simplicity), and solving the boundary condition at $z=0$ yields the compatibility condition:
\[-h_2\partial_x^2 \phi^{(1)}_0+\frac13 F(x)=G(x)+(\partial_x h_2)(\partial_x \phi^{(1)}_0)+\frac13\partial_x^2({h_1}^3\partial_x^2\psi), \]
or, equivalently,
\[ \partial_x(h_2\partial_x \phi^{(1)}_0) \ = \ \frac16 \partial_x( {h_2}^3\partial_x^3 \phi^{(0)})-\frac13\partial_x^2({h_1}^3\partial_x^2\psi). \]
Finally, integrating this identity and using the expression of $\partial_x \phi^{(0)}$ obtained above, one deduces
\[ h_2\partial_x \phi^{(1)}_0 \ = \ -\frac13\partial_x({h_1}^3\partial_x^2\psi)-\frac16 {h_2}^3\partial_x^2( \frac{h_1}{h_2}\partial_x\psi).\]

The final step is as follows. Let us define 
\[ \underline{\phi_{2,\text{app}}} \ \equiv \ \phi^{(0)} \ + \ \mu \phi^{(1)} , \]
where $\phi^{(0)}$ and $\phi^{(1)}$ have been obtained by the above calculations. Note that 
\begin{align*}
 H_\text{app} \ = \ \partial_x \big( \underline{\phi_{2,\text{app}}} \id{z=0} \big) \ &= \ \partial_x \phi^{(0)}(x)+\mu\partial_x\Big( \ - \frac12 {h_2}^2 \partial_x^2\phi^{(0)}+\phi^{(1)}_0(x)\Big) \\
 &= \ -\frac{h_1}{h_2}\partial_x\psi +\frac\mu{3h_2}\partial_x\Big(-{h_1}^3\partial_x^2\psi+{h_2}^3\partial_x\big(\frac{h_1}{h_2}\partial_x\psi\big)\Big) ,
 \end{align*} 
which is exactly the expansion in ~\eqref{eqn:expH}. Therefore, the result follows from an adequate estimate on 
\[ u \ \equiv \ \partial_x \big( \underline{\phi_{2}} \id{z=0}\big) -\partial_x \big( \underline{\phi_{2,\text{app}}} \id{z=0}\big) .\]

Let us first note that one has straightforwardly
\[ \big\vert \phi^{(2)} \big\vert_{H^s} \ \leq \ C(\frac1{h},\epsilon_{\text{max}},\mu_{\text{max}},\big\vert \zeta \big\vert_{H^{s+9/2}},\big\vert \partial_x\psi \big\vert_{H^{s+11/2}} ).\]
Then, using previous calculations, $v \equiv \underline{\phi_{2}} - \underline{\phi_{2,\text{app}}} - \mu^2\phi^{(2)}$ satisfies the system
\begin{equation}\label{eqn:FlatLaplacerhs}
\left\{ \begin{array}{ll}
\nabla_{x,z} \cdot Q^\mu[\epsilon\zeta ] \nabla_{x,z} v \ = \ \mu^2\nabla_{x,z}\cdot \mathbf h & \text{ in } \S \\
\partial_n \underline{\phi_2}\id{z=0}=V +\mu^2 \begin{pmatrix} 0\\1 \end{pmatrix} \cdot {\mathbf h}\id{z=0}& \partial_n \underline{\phi_2}\id{z=-1}=\mu^2 \begin{pmatrix} 0\\1 \end{pmatrix} \cdot {\mathbf h}\id{z=-1},
\end{array} \right.
\end{equation}
with $\mathbf h \ = \ Q^\mu \nabla_{x,z} \phi^{(2)}$, and
$V=G^{\mu,\epsilon}\psi+\mu \partial_x(h_1\partial_x\psi)+\mu^2\frac13\partial_x^2({h_1}^3\partial_x^2\psi)$, so that~\eqref{eqn:expG} yields
\[ \big\vert V \big\vert_{H^s} \ \leq \ \mu^3 C(\frac1{h},\epsilon_{\text{max}},\mu_{\text{max}},\big\vert \zeta \big\vert_{H^{s+9/2}},\big\vert \partial_x\psi \big\vert_{H^{s+11/2}} ).\]
One can now apply Proposition 3 of~\cite{BonaLannesSaut08}, after straightforward adjustments, and deduce
\[ \big\vert \partial_x v\id{z=0} \big\vert_{H^s} \ \leq \ \mu^2 C(\frac1{h},\epsilon_{\text{max}},\mu_{\text{max}},\big\vert \zeta \big\vert_{H^{s+9/2}},\big\vert \partial_x\psi \big\vert_{H^{s+11/2}} ).\]
The Proposition is proved.
\end{proof}

\paragraph{The Green-Naghdi models.} Let us now plug the expansions of Proposition~\ref{prop:expGH} into the full Euler system~\eqref{eqn:EulerCompletAdim}, and withdraw $\O(\mu^2)$ terms. One obtains
\begin{equation}\label{eqn:GreenNaghdi1}
\left\{ \begin{array}{l}
\displaystyle\partial_{ t}{\zeta} \ - \ \partial_x(h_1\partial_x\psi)\ -\ \mu\frac13\partial_x^2({h_1}^3\partial_x^2\psi) \ =\ 0, \\ \\
\displaystyle\partial_{ t}\Big(-\frac{h_1+\gamma h_2}{h_2}\partial_x{\psi}+\mu\frac1{h_2}\partial_x\big( \P\partial_x\psi\big) \Big)\ + \ (\gamma+\delta)\partial_x{\zeta} \\
\qquad \qquad \displaystyle + \ \dfrac{\epsilon}{2} \partial_x\Big(|-\frac{h_1}{h_2}\partial_x\psi + \mu\frac1{h_2}\partial_x\big(\P\partial_x\psi\big) |^2 -\gamma |\partial_x {\psi}|^2 \Big) = \ \mu\epsilon\partial_x\big( \N[h_1,h_2]\partial_x\psi\big) \ ,
\end{array} 
\right. 
\end{equation}
where we denote $\P\partial_x\psi \ = \ \P[h_1,h_2]\partial_x\psi$, and with the following operators
\begin{align*}
 \P[h_1,h_2]V \ &\equiv \ \frac13 \Big({h_2}^3\partial_x\big(\frac{h_1}{h_2}V \big)-{h_1}^3\partial_x V\Big),\\
\N[h_1,h_2]V \ &\equiv \ \frac12 \left( \big(\partial_x(h_1 V) -\epsilon(\partial_x{\zeta})\frac{h_1}{h_2}V \big)^2\ -\ \gamma\big(h_1\partial_x^2\psi \big)^2\right).
 \end{align*} 
 
 Our model is justified by the following consistency result.
\begin{Proposition}\label{prop:ConsGreenNaghdi1}
 Let $U^\p\equiv(\zeta^\p,\psi^\p)_{\p\in\P}$ be a family of solutions of the full Euler system~\eqref{eqn:EulerCompletAdim}, such that~\eqref{eqn:connected} holds, and $\zeta^\p\in W^{1}([0,T);H^{s+11/2}),\partial_x\psi^\p\in W^{1}([0,T);H^{s+13/2})$ with $s\geq s_0+1/2$, $s_0>1/2$, and uniformly with respect to $\p\in\P$; see~\eqref{eqn:defRegimeFNL}. Then $U^\p$ satisfies~\eqref{eqn:GreenNaghdi1}, up to a remainder $R$, bounded by
\[ \big\Vert R \big\Vert_{L^\infty([0,T);H^s)} \ \leq \ \mu^2\ C,\]
with $C=C(\frac1{s_0-1/2},\frac1{h},\epsilon_{\text{max}},\mu_{\text{max}},\frac1{\delta_{\text{min}}},\delta_{\text{max}},\big\Vert \zeta^\p \big\Vert_{W^{1}([0,T);H^{s+11/2})},\big\Vert \partial_x\psi^\p \big\Vert_{W^{1}([0,T);H^{s+13/2})} )$.
\end{Proposition}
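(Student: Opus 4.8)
The plan is to derive the Green-Naghdi system~\eqref{eqn:GreenNaghdi1} directly from the full Euler system~\eqref{eqn:EulerCompletAdim} by substituting the expansions of Proposition~\ref{prop:expGH} for the Dirichlet-to-Neumann operators $\frac1\mu G^{\mu,\epsilon}$ and $H^{\mu,\epsilon}$, and then carefully tracking every resulting error term. The only inputs are the four estimates~\eqref{eqn:expG0}--\eqref{eqn:expH} of Proposition~\ref{prop:expGH}, together with the classical Sobolev product and composition estimates (tame product estimate in $H^s$ for $s>s_0>1/2$, and the fact that $h_1,h_2$ are bounded away from zero so that multiplication by $1/h_2$ and by smooth functions of $h_1,h_2$ is bounded on $H^s$). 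Since the regularity hypotheses on $\zeta^\p,\partial_x\psi^\p$ in the statement exactly match (with room to spare) the regularity demanded in Proposition~\ref{prop:expGH} at the second-order level ($\zeta\in H^{s+9/2}$, $\partial_x\psi\in H^{s+11/2}$ there, versus $\zeta\in H^{s+11/2}$, $\partial_x\psi\in H^{s+13/2}$ here), all the $C_1$ constants appearing are controlled by the asserted $C$.

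First I would handle the first (kinematic) equation: by definition of the full Euler system, $\partial_t\zeta^\p=\frac1\mu G^{\mu,\epsilon}\psi^\p$, and~\eqref{eqn:expG} gives $\frac1\mu G^{\mu,\epsilon}\psi^\p=\partial_x(h_1\partial_x\psi^\p)+\mu\frac13\partial_x^2({h_1}^3\partial_x^2\psi^\p)+\O(\mu^2)$, which is precisely the first line of~\eqref{eqn:GreenNaghdi1} up to an $H^s$-remainder of size $\mu^2 C$. Second, for the momentum equation I would substitute $H^{\mu,\epsilon}\psi^\p=-\frac{h_1}{h_2}\partial_x\psi^\p+\frac\mu{3h_2}\partial_x(\P[h_1,h_2]\partial_x\psi^\p)+\O(\mu^2)$ (which is~\eqref{eqn:expH} rewritten with the operator $\P$) into each occurrence of $H^{\mu,\epsilon}\psi^\p$ in~\eqref{eqn:EulerCompletAdim}: into the time-derivative term $\partial_t(H^{\mu,\epsilon}\psi^\p-\gamma\partial_x\psi^\p)$, into the quadratic term $\frac\epsilon2\partial_x(|H^{\mu,\epsilon}\psi^\p|^2-\gamma|\partial_x\psi^\p|^2)$, and into $\N^{\mu,\epsilon}$. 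In the quadratic term one expands $|{-\frac{h_1}{h_2}\partial_x\psi^\p}+\mu(\cdots)|^2$ and keeps the cross term of order $\mu$, discarding the $\mu^2$ term; in $\N^{\mu,\epsilon}$, since it is already multiplied by $\mu\epsilon$, only the leading order of $H^{\mu,\epsilon}\psi^\p$ and of $\frac1\mu G^{\mu,\epsilon}\psi^\p$ (namely $\partial_x(h_1\partial_x\psi^\p)$ via~\eqref{eqn:expG0}) is needed, and one must also expand the denominator $1+\mu|\epsilon\partial_x\zeta^\p|^2=1+\O(\mu)$, which again only costs $\O(\mu)$ inside the $\mu\epsilon$ prefactor, i.e.\ $\O(\mu^2)$ overall; collecting these gives $\N[h_1,h_2]\partial_x\psi^\p$ up to $\O(\mu^2)$ after division by $\mu\epsilon$. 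Throughout, each discarded term is estimated in $L^\infty([0,T);H^s)$ by writing it as a product/composition of factors controlled by $\big\Vert\zeta^\p\big\Vert_{L^\infty H^{s+\cdots}}$ and $\big\Vert\partial_x\psi^\p\big\Vert_{L^\infty H^{s+\cdots}}$ (the worst term requires two extra derivatives on $\psi^\p$ beyond those already lost in Proposition~\ref{prop:expGH}, which is why the $s+13/2$ appears), times $\mu^2$ and a constant of the stated form.

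The one genuinely non-cosmetic point is the term $\partial_t\big(H^{\mu,\epsilon}\psi^\p-\gamma\partial_x\psi^\p\big)$: substituting the expansion of $H^{\mu,\epsilon}\psi^\p$ produces $\partial_t$ of the approximate expression plus $\partial_t$ of the $\O(\mu^2)$ error. Since $\partial_t$ is not bounded on $H^s$, one cannot simply say ``$\partial_t$ of something $\O(\mu^2)$ is $\O(\mu^2)$''; instead, one uses that $\zeta^\p\in W^1([0,T);H^{s+11/2})$ and $\partial_x\psi^\p\in W^1([0,T);H^{s+13/2})$, so that $\partial_t\zeta^\p,\partial_t\partial_x\psi^\p$ are themselves controlled in high Sobolev norms, and the error term — which is a smooth function of $(h_1,h_2,\partial_x\psi^\p)$ and their spatial derivatives — has a time derivative obtained by the chain rule that is again a bounded multilinear expression in quantities controlled by the $W^1([0,T);H^{\cdots})$ norms. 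This is the reason the hypotheses are stated in $W^1([0,T);H^{s+11/2})$ and $W^1([0,T);H^{s+13/2})$ rather than merely $L^\infty$ in time. Concretely I would revisit the proof of Proposition~\ref{prop:expGH}: the error $u=\partial_x(v\id{z=0})$ solves an elliptic problem~\eqref{eqn:FlatLaplacerhs} with data of size $\mu^2$; differentiating that elliptic problem in $t$ (the coefficients $Q^\mu[\epsilon\zeta^\p]$ and the data depend on $t$ only through $\zeta^\p,\psi^\p$) and applying the same elliptic estimate (Proposition 3 of~\cite{BonaLannesSaut08}) gives a bound on $\partial_t u$ of size $\mu^2$ times a constant depending on the $W^1$-in-time norms — this is exactly the step I expect to be the main obstacle, as it requires care that no time derivative falls on a factor already at maximal regularity. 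Once this is in place, adding the kinematic-equation remainder and all the momentum-equation remainders and taking the $L^\infty([0,T);H^s)$ norm yields $\big\Vert R\big\Vert_{L^\infty([0,T);H^s)}\leq\mu^2 C$ with $C$ of the asserted form, which completes the proof.
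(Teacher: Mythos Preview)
Your proposal is correct and follows essentially the same approach as the paper: substitute the expansions of Proposition~\ref{prop:expGH} into~\eqref{eqn:EulerCompletAdim}, estimate all discarded terms using Sobolev product/composition rules, and single out $\partial_t(H^{\mu,\epsilon}\psi^\p)$ as the only non-cosmetic step, handling it by differentiating the elliptic problem in time and invoking the $W^1$ hypotheses. The paper says exactly this (referring to~\cite[Proposition~2.12]{Duchene10} for details), so your write-up is in fact more explicit than the original.
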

\begin{proof}
When plugging $U^\p$ into~\eqref{eqn:GreenNaghdi1}, and after some straightforward computation, one can clearly estimate the remainder using the expansions of the Dirichlet-Neuman operators in Proposition~\ref{prop:expGH}. The only non-trivial term comes from
$\partial_{ t}\big(H^{\mu,\epsilon}\psi^\p-\gamma \partial_x{\psi}^\p \big)$, which requires the corresponding expansion of $\partial_{ t}\big(H^{\mu,\epsilon}\psi^\p\big)$. This can be obtained as in Proposition~\ref{prop:expGH}, using the elliptic problems satisfied by the time derivative of the potentials. The expansion follows in the same way, provided that $\zeta^\p\in W^{1}([0,T);H^{s+11/2}),\partial_x\psi^\p\in W^{1}([0,T);H^{s+13/2})$.
See the proof of~\cite[Proposition 2.12]{Duchene10} for more details.
\end{proof}

In~\cite{BonaLannesSaut08}, the authors use the shear velocity as for the velocity variable:
\begin{equation}\label{eqn:defv}
v \ \equiv\ \partial_x\left(\big(\phi_2 - \gamma\phi_1 \big)\id{z=\eps\zeta}\right)\ =\ H^{\mu,\epsilon}\psi-\gamma \partial_x\psi.\end{equation}
The expansion of $H^{\mu,\epsilon}$ in~\eqref{eqn:expH} allows to obtain approximately $\partial_x \psi$ as a function of $v$:
\begin{equation}\label{eqn:psitov}
 \partial_x \psi \ = \ -\frac{h_2}{h_1+\gamma h_2}v+\mu\frac1{h_1+\gamma h_2}\partial_x\Big( \P[h_1,h_2]\big(-\frac{h_2}{h_1+\gamma h_2}v\big) \Big) \ + \ \O(\mu^2).\end{equation}
Here and in the following, we use the notation $\O(\cdot)$ for estimates as in Proposition~\ref{prop:expGH}.

Plugging~\eqref{eqn:defv} into~\eqref{eqn:GreenNaghdi1}, and again withdrawing $\O(\mu^2)$ terms, yields
\begin{equation}\label{eqn:GreenNaghdi2}
\left\{ \begin{array}{l}
\displaystyle\partial_{ t}{\zeta} \ + \ \partial_x \Big(\frac{h_1h_2}{h_1+\gamma h_2}v\Big)\ -\ \mu\partial_x\Big( \Q[h_1,h_2]v \Big) \ =\ 0, \\ \\
\displaystyle\partial_{ t}v \ + \ (\gamma+\delta)\partial_x{\zeta} \ + \ \frac{\epsilon}{2} \partial_x\Big(\dfrac{{h_1}^2 -\gamma {h_2}^2 }{(h_1+\gamma h_2)^2} |v|^2\Big) \ = \ \mu\epsilon\partial_x\big(\R[h_1,h_2]v \big) \ ,
\end{array} 
\right. 
\end{equation}
with the following operators:
 \begin{align*} 
 \Q[h_1,h_2]V \ &\equiv \ \frac{-1}{3(h_1+\gamma h_2)}\Bigg(h_1 \partial_x \Big({h_2}^3\partial_x\big(\frac{h_1\ V}{h_1+\gamma h_2} \big)\Big)\ +\ \gamma h_2\partial_x \Big( {h_1}^3\partial_x \big(\frac{h_2\ V}{h_1+\gamma h_2}\big)\Big)\Bigg), \\
 \R[h_1,h_2]V \ &\equiv \ \frac12 \Bigg( \Big( h_2\partial_x \big( \frac{h_1\ V}{h_1+\gamma h_2} \big)\Big)^2\ -\ \gamma\Big(h_1\partial_x \big(\frac{h_2\ V}{h_1+\gamma h_2} \big)\Big)^2\Bigg)\\
 &\qquad + \gamma\frac{h_1+h_2}{3(h_1+\gamma h_2)^2}V\ \partial_x\Big( {h_2}^3\partial_x\big(\frac{h_1\ V}{h_1+\gamma h_2} \big)-{h_1}^3\partial_x \big(\frac{h_2\ V}{h_1+\gamma h_2}\big)\Big) . 
 \end{align*}

System~\eqref{eqn:GreenNaghdi2} is justified as an asymptotic model for the full Euler system~\eqref{eqn:EulerCompletAdim}, by a consistency result, with the same precision as~\eqref{eqn:GreenNaghdi1}. 
\begin{Proposition}\label{prop:ConsGreenNaghdi2}
Let $U^\p\equiv(\zeta^\p,\psi^\p)_{\p\in\P}$ be a family of solutions of the full Euler system~\eqref{eqn:EulerCompletAdim}, such that~\eqref{eqn:connected} holds, and $\zeta^\p\in W^{1}([0,T);H^{s+11/2}),\partial_x\psi^\p\in W^{1}([0,T);H^{s+13/2})$ with $s\geq s_0+1/2$, $s_0>1/2$, and uniformly with respect to $\p\in\P$; see~\eqref{eqn:defRegimeFNL}. Define $v^\p$ as in~\eqref{eqn:defv}. Then $(\zeta^\p,v^\p)$ satisfies~\eqref{eqn:GreenNaghdi2}, up to a remainder $R$, bounded by
\[ \big\Vert R \big\Vert_{L^\infty([0,T);H^s)} \ \leq \ \mu^2\ C,\]
with $C=C(\frac1{s_0-1/2},\frac1{h},\epsilon_{\text{max}},\mu_{\text{max}},\frac1{\delta_{\text{min}}},\delta_{\text{max}},\big\Vert \zeta^\p \big\Vert_{W^{1}([0,T];H^{s+11/2})},\big\Vert \partial_x\psi^\p \big\Vert_{W^{1}([0,T];H^{s+13/2})} )$.
\end{Proposition}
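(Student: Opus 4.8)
The plan is to deduce this from Proposition~\ref{prop:ConsGreenNaghdi1}, by carrying the solution $(\zeta^\p,\psi^\p)$ of the full Euler system through the change of velocity variable $v=H^{\mu,\epsilon}\psi-\gamma\partial_x\psi$ and checking that every manipulation used in the passage from~\eqref{eqn:GreenNaghdi1} to~\eqref{eqn:GreenNaghdi2} is an identity modulo a remainder of size $\O(\mu^2)$ in $L^\infty([0,T);H^s)$. The guiding principle is that $v^\p$ is tied to $\psi^\p$ precisely by the relation used in the derivation of~\eqref{eqn:GreenNaghdi2}, so nothing new has to be invented; the task is essentially bookkeeping of orders.

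Concretely, I would first invert $v=H^{\mu,\epsilon}\psi-\gamma\partial_x\psi$ by means of the expansion~\eqref{eqn:expH}, obtaining $\partial_x\psi$ in terms of $v$ as in~\eqref{eqn:psitov} with an $\O(\mu^2)$ error, and then establish the analogous identity after applying $\partial_t$ to both sides. The latter is the only delicate point: it needs the $\mu$-expansion of $\partial_t(H^{\mu,\epsilon}\psi)$, which — just as in the proof of Proposition~\ref{prop:ConsGreenNaghdi1} — is produced by differentiating in time the flattened elliptic problem~\eqref{eqn:FlatLaplace} and repeating that argument on $\partial_t\underline{\phi_2}$. This is where the time-regularity hypotheses, and the Sobolev indices $s+11/2$ for $\zeta^\p$ and $s+13/2$ for $\partial_x\psi^\p$, are consumed, so that the residual of the time-differentiated expansion remains $\O(\mu^2)$ in $H^s$.

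I would then substitute this expression for $\partial_x\psi$, and for $\partial_t(\partial_x\psi)$, term by term into~\eqref{eqn:GreenNaghdi1}. Since the correction to $\partial_x\psi$ in~\eqref{eqn:psitov} is itself $\O(\mu)$, it only contributes at order $\mu^2$ when fed into the $\O(\mu)$ or $\O(\mu\epsilon)$ terms of~\eqref{eqn:GreenNaghdi1}, so there the leading order $\partial_x\psi\simeq-\frac{h_2}{h_1+\gamma h_2}v$ suffices; in the $\O(1)$ terms both terms of~\eqref{eqn:psitov} are kept. Expanding $\partial_x(h_1\partial_x\psi)$ produces the flux $\partial_x\big(\tfrac{h_1h_2}{h_1+\gamma h_2}v\big)$ plus a $\mu$-term which, once combined with $-\mu\tfrac13\partial_x^2(h_1^3\partial_x^2\psi)$ and regrouped using the definition of $\P[h_1,h_2]$, is exactly $-\mu\partial_x(\Q[h_1,h_2]v)$. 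The time-derivative term $\partial_t\big(-\tfrac{h_1+\gamma h_2}{h_2}\partial_x\psi+\mu\tfrac1{h_2}\partial_x(\P[h_1,h_2]\partial_x\psi)\big)$ collapses to $\partial_t v$, because the $\mu$-order correction in~\eqref{eqn:psitov} is designed so that $-\tfrac{h_1+\gamma h_2}{h_2}$ times it cancels the term $\mu\tfrac1{h_2}\partial_x(\P[h_1,h_2]\partial_x\psi)$ up to $\O(\mu^2)$. Finally, the quadratic terms $\tfrac\epsilon2\partial_x(\cdots)$ and $\mu\epsilon\partial_x(\N[h_1,h_2]\partial_x\psi)$ become the corresponding quadratic terms of~\eqref{eqn:GreenNaghdi2}, the operator $\R[h_1,h_2]$ arising once the $\mu\epsilon$-contributions are collected (those coming from the $\mu$-correction of $\partial_x\psi$ inside $\tfrac\epsilon2\partial_x(\cdots)$ adding to those from $\mu\epsilon\partial_x(\N[h_1,h_2]\partial_x\psi)$). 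Every discarded term is a product of the coefficients $h_1,h_2$, of powers of $\epsilon,\mu$ bounded by $\epsilon_{\text{max}},\mu_{\text{max}}$, and of derivatives of $v$ — equivalently of $\zeta^\p$ and $\partial_x\psi^\p$ — up to the order permitted by the stated regularity, hence is $\le\mu^2 C$ with $C$ as in the statement, by Moser-type product estimates in $H^s$. Combining with the $\O(\mu^2)$ remainder of Proposition~\ref{prop:ConsGreenNaghdi1} and the triangle inequality closes the argument.

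The main obstacle is thus not analytic but organisational: one must be careful that inserting a merely $\O(\mu^2)$-accurate formula for $\partial_x\psi$ into an equation already carrying $\O(\mu)$ terms does not spoil the precision, and in particular that the time-differentiated substitution is valid with the given regularity — which is why $\partial_t v$ should be handled through the elliptic problem for $\partial_t\underline{\phi_2}$ rather than by formally differentiating~\eqref{eqn:expH}. Apart from that, the proof is the explicit algebra already carried out in the derivation of~\eqref{eqn:GreenNaghdi2}.
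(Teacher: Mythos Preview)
Your proposal is correct and follows essentially the same approach as the paper, which simply states that the proof is identical to that of Proposition~\ref{prop:ConsGreenNaghdi1} once one obtains a rigorous statement of~\eqref{eqn:psitov} in the $W^{1}([0,T];H^{s+1})$ norm. You have correctly identified that the only genuinely new ingredient is the time-differentiated version of~\eqref{eqn:psitov}, obtained by differentiating the flattened elliptic problem in time, and your account of the term-by-term substitution is the explicit algebra that the paper leaves implicit.
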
 
The proof of Proposition~\ref{prop:ConsGreenNaghdi2} is identical to the proof of Proposition~\ref{prop:ConsGreenNaghdi1}, once one obtains a rigorous statement of~\eqref{eqn:psitov} in $W^1([0,T];H^{s+1})$ norm, thus we omit it.
 \medskip
 
\noindent{\bf Using layer-mean velocities.}\\
As we have seen, several different velocity variables are natural when expressing the Green-Naghdi equations. In the following, we choose to use, as in~\cite{ChoiCamassa96,ChoiCamassa99} for example,
the {\em shear layer-mean velocity}, defined by
\begin{equation}\label{eqn:defbarv}
\bar v \ \equiv \ \overline{u}_{2}\ -\ \gamma \overline{u}_{1}, 
\end{equation}
where $\overline{u}_{1}, \ \overline{u}_{2}$ are the layer-mean velocities integrated across the vertical layer in each fluid:
\[
\overline{u}_{1}(t,x) \ = \ \frac{1}{h_1(t,x)}\int_{\epsilon\zeta(t,x)}^{1} \partial_x \phi_1(t,x,z) \ dz, \quad \text{ and }\quad \overline{u}_{2}(t,x) \ = \ \frac{1}{h_2(t,x)}\int_{-\frac1\delta}^{\epsilon\zeta(t,x)} \partial_x \phi_2(t,x,z) \ dz.
\]
We see two main benefits for such a choice. First, the equation describing the evolution of the deformation of the interface is an exact equation, and not an $\O(\mu^2)$ approximation. What is more, the system obtained using layer-mean velocities have a better behavior as for the linear well-posedness. These two facts shall be discussed in more details below.
\medskip

When integrating Laplace's equation in~\eqref{eqn:EulerComplet} against a test function $\tilde \varphi(x,z)=\varphi(x)$ on the lower domain $\Omega_2$, and using boundary kinematic equations, one has
\begin{align*}
0 \ &= \ \iint_{\Omega_2} \tilde\varphi (\mu\partial_x^2+\partial_z^2 )\phi_2 \ dx \ dz \ = \ -\iint_{\Omega_2} \nabla^\mu_{x,z} \tilde\varphi \cdot\nabla^\mu_{x,z} \phi_2 \ dx \ dz \ + \ \int_{\Gamma} \varphi\ \partial_n \phi_2 \ - \ \int_{\Gamma_b} \varphi\ \partial_z\phi_2\\
 &= \ -\int_\RR dx \sqrt\mu\partial_x \varphi \int_{-1/\delta}^{\epsilon\zeta} \sqrt\mu\partial_x \phi_2(x,z) \ dz \ \ - \ \int_\RR dx G^{\mu,\epsilon}\psi,
\end{align*}
where $\nabla\mu_{x,z}=(\sqrt\mu\partial_x,\partial_z)^T$, so that one deduces that for any $x\in\RR$,
\[ -\partial_x \big(h_2 \overline{u}_{2}\big) \ = \ \frac1\mu G^{\mu,\epsilon}\psi.\]
In the same way, one has
\begin{align*}
0 \ &= \ \iint_{\Omega_1} \tilde\varphi (\mu\partial_x^2+\partial_z^2 )\phi_1 \ dx \ dz \ = \ -\iint_{\Omega_1} \nabla^\mu_{x,z} \tilde\varphi \cdot\nabla^\mu_{x,z} \phi_1 \ dx \ dz \ - \ \int_{\Gamma} \varphi\ \partial_n \phi_1 \ + \ \int_{\Gamma_t} \varphi\ \partial_z\phi_1\\
 &= \ -\int_\RR dx\ \partial_x \sqrt\mu\varphi \int_{\epsilon\zeta}^1 \sqrt\mu\partial_x \phi_1(x,z) \ dz \ \ + \ \int_\RR dx G^{\mu,\epsilon}\psi,
\end{align*}
so that \[ \partial_x \big(h_1 \overline{u}_{1}\big) \ = \ \frac1\mu G^{\mu,\epsilon}\psi.\]

One deduces from the two above identities, imposing zero boundary conditions at infinity (the fluid is at rest at infinity), 
\[h_2 \overline{u}_{2} \ = \ -h_1 \overline{u}_{1}, \quad \text{ and }\quad \bar v \ = \ \frac{ h_1+\gamma h_2}{h_1}\overline{u}_{2} \ = \ -\frac{ h_1+\gamma h_2}{h_2}\overline{u}_{1},\]

It follows that the first equation in~\eqref{eqn:EulerCompletAdim} becomes
\[ \partial_t\zeta \ = \ \frac1\mu G^{\mu,\epsilon}\psi \ = \ -\partial_x \Big(\frac{h_1h_2}{h_1+\gamma h_2}\bar v\Big).\]
Let us emphasize again that this identity is exact, as opposed to the $\O(\mu^2)$ approximations in previous asymptotic models.

One also deduces an expansion of $\bar v$ in terms of $v$, using Proposition~\ref{prop:expGH} (or, equivalently, identifying the above identity with the first line of~\eqref{eqn:GreenNaghdi2}). It follows
\[ v \ = \ \bar v \ + \ \mu \frac{h_1+\gamma h_2}{h_1h_2} \Q[h_1,h_2]\bar v \ + \ \O(\mu^2).\]

System~\eqref{eqn:GreenNaghdi2} therefore becomes
\begin{equation}\label{eqn:GreenNaghdiMeanA}
\left\{ \begin{array}{l}
\displaystyle\partial_{ t}{\zeta} \ + \ \partial_x \Big(\frac{h_1h_2}{h_1+\gamma h_2}\bar v\Big)\ =\ 0, \\ \\
\displaystyle\partial_{ t}\Bigg( \bar v \ + \ \mu\overline{\Q}[h_1,h_2]\bar v \Bigg) \ + \ (\gamma+\delta)\partial_x{\zeta} \ + \ \frac{\epsilon}{2} \partial_x\Big(\dfrac{{h_1}^2 -\gamma {h_2}^2 }{(h_1+\gamma h_2)^2} |\bar v|^2\Big) \ = \ \mu\epsilon\partial_x\big(\overline{\R}[h_1,h_2]\bar v \big) \ ,
\end{array} 
\right. 
\end{equation}
with the following operators:
 \begin{align*} 
 \overline{\Q}[h_1,h_2]V \ &\equiv \ \frac{-1}{3h_1 h_2}\Bigg(h_1 \partial_x \Big({h_2}^3\partial_x\big(\frac{h_1\ V}{h_1+\gamma h_2} \big)\Big)\ +\ \gamma h_2\partial_x \Big( {h_1}^3\partial_x \big(\frac{h_2\ V}{h_1+\gamma h_2}\big)\Big)\Bigg), \\
 \overline{\R}[h_1,h_2]V \ &\equiv \ \frac12 \Bigg( \Big( h_2\partial_x \big( \frac{h_1\ V}{h_1+\gamma h_2} \big)\Big)^2\ -\ \gamma\Big(h_1\partial_x \big(\frac{h_2\ V}{h_1+\gamma h_2} \big)\Big)^2\Bigg)\\
 &\qquad + \frac13\frac{V}{h_1+\gamma h_2}\ \Bigg( \frac{h_1}{h_2}\partial_x\Big( {h_2}^3\partial_x\big(\frac{h_1\ V}{h_1+\gamma h_2} \big)\Big) \ - \ \gamma\frac{h_2}{h_1}\partial_x\Big({h_1}^3\partial_x \big(\frac{h_2\ V}{h_1+\gamma h_2}\big)\Big) \Bigg).
 \end{align*}
 This system is exactly our Green-Naghdi system~\eqref{eqn:GreenNaghdiMeanI}, as presented in the Introduction. One obtains Proposition~\ref{prop:ConsGreenNaghdiMeanI}, namely the consistency of the solutions of the full Euler system towards our Green-Naghdi model, in the same way as Propositions~\ref{prop:ConsGreenNaghdi1} and~\ref{prop:ConsGreenNaghdi2} above, after several technical but straightforward computations. 
 
 We prove below that system~\eqref{eqn:GreenNaghdiMeanA} is linearly well-posed, as opposed to system~\eqref{eqn:GreenNaghdi2}.

\paragraph{Linear dispersion relations.}
The linearized system from~\eqref{eqn:GreenNaghdi2} is
 \[\left\{ \begin{array}{l}
\partial_{ t}\zeta +\frac1{\gamma+\delta} \partial_x v \ + \ \mu\frac{1+\gamma\delta}{3\delta (\delta+\gamma)^2}\partial_x^3 v \ =\ 0,\\ 
\partial_{ t} v \ +\ (\gamma+\delta)\partial_x \zeta \ = \ 0,
\end{array} \right. \]
Let us look for solutions of the form $\zeta=\zeta^0e^{i(kx-\omega t)}, \ v = v^0e^{i(kx-\omega t)}$. This leads to the algebraic system
\[\left\{ \begin{array}{l}
-i\omega \zeta^0 +\frac{ik}{\gamma+\delta} v^0 \ -\ \mu i k^3 \frac{1+\gamma\delta}{3\delta(\gamma+\delta)^2} v^0 \ =\ 0,\\ 
-i\omega \b v^0\ + ik(\gamma+\delta) \zeta^0 \ = \ 0,
\end{array} \right. \]
which yields the dispersion relation (with $\omega v^0 \ = \ k(\gamma+\delta)\zeta^0$)
\[\omega^2 \ = \ k^2 \ -\ \mu k^4 \frac{1+\gamma\delta}{3\delta(\gamma+\delta)} .\]
This equation does not have any real solution $\omega(k)$ if $\mu k^2 \frac{1+\gamma\delta}{3\delta(\gamma+\delta)} > 1$, thus {\em the system~\eqref{eqn:GreenNaghdi2} is linearly ill-posed}.
\medskip

The linearized system from~\eqref{eqn:GreenNaghdiMeanA} is
 \[\left\{ \begin{array}{l}
\partial_{ t}\zeta +\frac1{\gamma+\delta} \partial_x\b v \ =\ 0,\\ 
\partial_{ t} \left(\b v\ -\ \mu \frac{1+\gamma\delta}{3\delta(\gamma+\delta)}\partial_x^2 \b v \right) + (\gamma+\delta)\partial_x \zeta \ = \ 0,
\end{array} \right. \]
Same calculations as above yield the algebraic system
\[\left\{ \begin{array}{l}
-i\omega \zeta^0 +\frac{ik}{\gamma+\delta} \b v^0 \ =\ 0,\\ 
-i\omega \left(\b v^0\ +\ \mu k^2 \frac{1+\gamma\delta}{3\delta(\gamma+\delta)} \b v^0 \right) + ik(\gamma+\delta) \zeta^0 \ = \ 0,
\end{array} \right. \]
so that $\omega\zeta^0 \ = \ \frac{k}{\gamma+\delta}\b v^0$ and the dispersion relation is
\[ \omega^2 \left(1\ +\ \mu k^2 \frac{1+\gamma\delta}{3\delta(\gamma+\delta)} \right) \ = \ k^2 .\]
This equations has always solutions: $\omega_\pm(k) \ = \ \pm \frac{k}{\sqrt{1\ +\ \mu k^2 \frac{1+\gamma\delta}{3\delta(\gamma+\delta)} }}$. 

Thus
{\em the system using layer-mean velocity variables(namely~\eqref{eqn:GreenNaghdiMeanA}, or identically~\eqref{eqn:GreenNaghdiMeanI}) is linearly well-posed.}

\section{Well-posedness of our scalar evolution equations; proof of Proposition~\ref{prop:WPI}}\label{sec:prop:WPI}
This section is dedicated to the study of the well-posedness of the Cauchy problem as well as the persistence of spatial localization (expressed by weighted Sobolev norms), for the following equation:
\begin{align}
\label{eq:u02}
(1- \mu\ \beta \partial_{x}^2)\partial_t u \ + \ \epsilon\alpha_1 u\partial_x u \ + \ \epsilon^2\ \alpha_2 u ^2 \partial_x u\ + \ \epsilon^3\ \alpha_3u^3\partial_x u\ & \\
+ \ \mu\ \nu \partial_x^3 u \ + \ \mu\epsilon\ \partial_x\big(\kappa u\partial_x^2 u+\iota (\partial_x u)^2\big) \ &= \ 0, \nn
\end{align}
where $\alpha_i$ ($i=1,2,3$), $\nu$, $\kappa$, $\iota$ are fixed parameters (possibly zero) and $\beta,\mu,\epsilon>0$.
More precisely, we prove the assertions of Proposition~\ref{prop:WPI}, recalled below
\begin{Proposition}[Well-posedness and persistence]\label{prop:WP}
Let $u^0 \in H^{s+1}$, with $s\geq s_0>3/2$. Let the parameters be such that $\beta,\mu,\epsilon>0$, and define $M>0$ such that
\[ \beta+\frac{1}{\beta}+\mu+\epsilon +|\alpha_1|+|\alpha_2|+|\alpha_3|+|\nu|+|\kappa|+|\iota| \ \leq \ M. \]
Then 
 there exists $T=C\big(\frac1{s_0-3/2},M,\big\vert u^0 \big\vert_{H^{s+1}_\mu}\big)$ and a unique $u \in C^0([0,T/\epsilon); H^{s+1}_\mu)\cap C^1([0,T/\epsilon);H^{s}_\mu)$ such that $u$ satisfies~\eqref{eq:u02} and initial condition $u\id{t=0}=u^0$.

 Moreover, $u$ satisfies the energy estimate for $0\leq t\leq T/\epsilon$:
\begin{equation}\label{eq:estCL}
\big\Vert \partial_t u \big\Vert_{L^\infty([0,T/\epsilon);H^{s}_\mu)} \ + \ \big\Vert u \big\Vert_{L^\infty([0,T/\epsilon);H^{s+1}_\mu)} \ \leq \ C(\frac1{s_0-3/2},M, \big\vert u^0 \big\vert_{H^{s+1}_\mu}) .
\end{equation}

\medskip

 Assume additionally that for fixed $n, k\in\NN$, the function $x^{j} {u^0}\in H^{s+\b s}$, with $0\leq j \leq n$ and $\b s=k+1+2(n-j)$. Then 
 there exists $T=C\big(\frac1{s_0-3/2},M,n,k,\sum_{j=0}^n\big\vert x^j {u^0}\big\vert_{H^{s+k+1+2(n-j)}_\mu}\big)$
such that for $0\leq t\leq T\times\min(1/\epsilon,1/\mu)$, one has
\[ \big\Vert x^n\partial_k \partial_t {u}\big\Vert_{L^\infty([0,t);H^{s}_\mu)} + \big\Vert x^n \partial_k {u}\big\Vert_{L^\infty([0,t);H^{s+1}_\mu)} \ \leq \ C\Big( \frac1{s_0-3/2},M,n,k,\sum_{j=0}^n\big\vert x^j {u^0}\big\vert_{H^{s+k+1+2(n-j)}_\mu}\Big) .\]
In particular, 
one has, for $0\leq t\leq T\times\min(1/\epsilon,1/\mu)$,
 \begin{equation} \label{eq:estCLweight2} 
 \big\Vert \partial_t {u}\big\Vert_{L^\infty([0,t);X^{s}_{n,\mu})} + \big\Vert {u}\big\Vert_{L^\infty([0,t);X^{s+1}_{n,\mu})} \ \leq \ C\Big(\frac1{s_0-3/2}, M,n,\big\vert u^0 \big\vert_{X^{s+1}_{n,\mu}} \Big) .
\end{equation} 
\end{Proposition}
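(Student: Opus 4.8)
The plan is to treat the unweighted assertion~\eqref{eq:estCL} by a quasilinear energy method adapted to the scaled norm $|\cdot|_{H^{s+1}_\mu}$, and then to bootstrap to the weighted estimate~\eqref{eq:estCLweight2} by commuting powers of $x$ (and $\partial_x$) through the equation and running a finite induction. Write~\eqref{eq:u02} as $(1-\mu\beta\partial_x^2)\partial_t u = -\mathcal{N}[u]$, where $\mathcal{N}[u]$ collects all the spatial terms, and note that \emph{every} term of $\mathcal{N}[u]$ carries a factor $\epsilon$ or $\mu$; in particular $\partial_t u = -(1-\mu\beta\partial_x^2)^{-1}\mathcal{N}[u]$ is of size $\max(\epsilon,\mu)$. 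Introduce $E_s(u) \equiv \big(\Lambda^s u,\,(1-\mu\beta\partial_x^2)\Lambda^s u\big)_{L^2} = |u|_{H^s}^2 + \mu\beta|\partial_x u|_{H^s}^2$, which is equivalent to $|u|_{H^{s+1}_\mu}^2$ uniformly in the parameters since $\mu\beta\leq M^2$. Applying $\Lambda^s$ to the equation and pairing with $\Lambda^s u$ gives $\tfrac12\tfrac{d}{dt}E_s(u) = -\big(\Lambda^s\mathcal{N}[u],\Lambda^s u\big)_{L^2}$, and it suffices to bound the right-hand side by $\epsilon\,C(\sqrt{E_s(u)})\,E_s(u)$. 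The dispersive contribution vanishes, $\big(\Lambda^s\partial_x^3 u,\Lambda^s u\big)_{L^2}=0$, by integration by parts. The polynomial terms $\epsilon^j\alpha_j u^j\partial_x u$ are estimated, via a Kato--Ponce commutator, the algebra property of $H^s$ and the embedding $H^{s_0-1}\hookrightarrow L^\infty$ (here $s_0>3/2$ enters), by $\epsilon\,C(|u|_{H^s})\,E_s(u)$; these do not feel the $H^{s+1}$ part of the norm. The term $\mu\epsilon\,\partial_x(\kappa u\partial_x^2 u+\iota(\partial_x u)^2)$ is the delicate one: integrating by parts once produces the factor $\Lambda^s\partial_x u$, after which the would-be top-order contributions (those placing $s+2$ derivatives on $u$) must be shown to cancel through further integrations by parts---this is exactly the computation underlying Proposition~4 of~\cite{ConstantinLannes09}---leaving a bound $\mu\epsilon\,C(|u|_{H^s})\,|u|_{H^{s+1}}^2$; using $1/\beta\leq M$ and $\mu\beta|u|_{H^{s+1}}^2\leq(1+M^2)E_s(u)$ this becomes $\epsilon\,C(|u|_{H^s})\,E_s(u)$ as well. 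A Gronwall/continuity argument then bounds $E_s(u)$ on $[0,T/\epsilon)$ with $T$ depending only on the stated quantities, which is~\eqref{eq:estCL}.

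Existence of the solution follows from this a priori bound by a standard regularization scheme (for instance replacing $\mathcal{N}[u]$ by $\mathcal{N}[J_\delta u]$ with Friedrichs mollifiers, solving the resulting ODE in $H^{s+1}_\mu$, and passing to the limit using the uniform bounds and compactness); uniqueness comes from an energy estimate for the difference of two solutions at the level $H^{s}_\mu$ (one derivative is lost, which is admissible since the solutions lie in $H^{s+1}_\mu$ and $s\geq s_0$); and $u\in C^0([0,T/\epsilon);H^{s+1}_\mu)\cap C^1([0,T/\epsilon);H^{s}_\mu)$ follows from a Bona--Smith type argument together with the equation. One may alternatively invoke~\cite[Proposition~4]{ConstantinLannes09} after keeping track of the dependence on the parameters.

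For the persistence of spatial decay, since $\partial_x^k$ commutes with every operator in~\eqref{eq:u02}, it suffices to carry out an induction on the number $n$ of powers of $x$ with $u$ replaced by $\partial_x^k u$ at Sobolev level $s+k$; I suppress $k$. For $n\geq1$, applying $x^n$ to the equation and commuting it past each operator (using $x^n\partial_x^m f=\partial_x^m(x^n f)-\sum_{i=1}^m\binom mi\tfrac{n!}{(n-i)!}x^{n-i}\partial_x^{m-i}f$), one finds that $w_n\equiv x^n u$ solves an equation
\[ (1-\mu\beta\partial_x^2)\partial_t w_n \ + \ \mathcal{L}[u]\,w_n \ = \ S_n, \]
where $\mathcal{L}[u]$ is a quasilinear operator of exactly the type appearing in~\eqref{eq:u02}---so the energy estimate of the first step applies verbatim and its contribution to $\tfrac{d}{dt}E_s(w_n)$ is $\leq\epsilon\,C(|u|_{H^s})\,E_s(w_n)$---and the source $S_n$ is a finite sum of terms, each carrying a factor $\epsilon$ or $\mu$, built from $x^{n'}\partial_x^{m'}u$ (or products of two such) with $n'\leq n-1$ and uniformly bounded $m'$, together with commutator terms involving $\partial_t$ of lower moments arising from $[x^n,1-\mu\beta\partial_x^2]\partial_t u$. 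Thus $\|S_n\|_{H^s}\leq\max(\epsilon,\mu)\,C$, the constant being controlled by the induction hypothesis applied at the Sobolev level dictated by the most demanding commutator (the piece $x^{n-1}\partial_x^2 u$ of the $\partial_x^3$-commutator, which requires $x^{n-1}u\in H^{s+2}$): running the hypothesis at level $s+2$ and moment $n-1$ asks exactly for $x^j u^0\in H^{(s+2)+1+2((n-1)-j)}=H^{s+1+2(n-j)}$, which is the assumed budget (the general-$k$ bookkeeping is identical). Since $E_s(w_n)\approx\|x^n u\|_{H^{s+1}_\mu}^2$, we get $\tfrac{d}{dt}\sqrt{E_s(w_n)}\leq\epsilon\,C\sqrt{E_s(w_n)}+\max(\epsilon,\mu)\,C$, and Gronwall over $[0,T\min(1/\epsilon,1/\mu))$ closes the induction. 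Summing over $n$ and $k$ yields~\eqref{eq:estCLweight2}, the $\partial_t$ estimates being read off $\partial_t u=-(1-\mu\beta\partial_x^2)^{-1}\mathcal{N}[u]$ and its moments; this a priori scheme is made rigorous by the same regularization as above.

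The genuinely delicate point is the energy estimate for the $\mu\epsilon\,\partial_x(\kappa u\partial_x^2 u+\iota(\partial_x u)^2)$ term in the first step: a crude bound would cost $\mu\epsilon\,|u|_{H^{s+2}}^2$, which is not available in our norm, so one must exhibit the cancellation of the order-$(s+2)$ contributions and retain only $\mu\beta|u|_{H^{s+1}}^2\leq(1+M^2)E_s(u)$---this is the only place where the precise algebraic structure of the higher-order terms is used. Everything else is routine quasilinear theory; in the weighted estimate the only real care lies in the combinatorial bookkeeping, namely checking that the sources $S_n$ never involve a moment of order $\geq n$ with a net derivative gain exceeding the budget $H^{s+k+1+2(n-j)}$, and that they all carry the $\epsilon$-or-$\mu$ prefactor responsible for the $\min(1/\epsilon,1/\mu)$ time scale.
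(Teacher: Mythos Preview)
Your approach is essentially the same as the paper's: both use the scaled energy $E^s(u)=|u|_{H^s}^2+\mu\beta|u|_{H^{s+1}}^2$, cite \cite[Proposition~4]{ConstantinLannes09} for the unweighted part, and then run an induction on $n$ by writing the equation satisfied by $v_n=x^n u$, estimating the commutator source terms via the induction hypothesis at two extra derivatives, and closing with a Gronwall--Bihari inequality on $[0,T\min(1/\epsilon,1/\mu))$.

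One point to correct: your reduction ``since $\partial_x^k$ commutes with every operator in~\eqref{eq:u02}'' is false as stated---$\partial_x^k$ does not commute with the nonlinear maps $u\mapsto u^j\partial_x u$ or $u\mapsto u\partial_x^2 u$. The paper handles the passage from $k=0$ to general $k$ differently (and more simply): it first proves the $k=0$ estimate for $x^n u$ at arbitrary Sobolev level, then uses the Leibniz identity
\[
x^n\partial_x^k u \ = \ \partial_x^k(x^n u) \ - \ \sum_{j=0}^{k-1}\binom{k}{j}(\partial_x^{k-j}x^n)(\partial_x^j u),
\]
so that $|x^n\partial_x^k u|_{H^{s+1}_\mu}\leq|x^n u|_{H^{s+k+1}_\mu}+\text{(lower moments)}$, and closes by induction on $k$. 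This avoids ever differentiating the equation and keeps the nonlinear structure intact. Your scheme could be repaired along similar lines, but the commutation claim as written does not hold.
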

\begin{proof} The existence and uniqueness of $u \in C^0([0,T/\epsilon); H^{s+1}_\mu)\cap C^1([0,T/\epsilon);H^{s}_\mu)$ such that $u$ satisfies~\eqref{eq:u02} and initial condition $u\id{t=0}=u^0\in H^s$ $s\geq s_0>3/2$ has been obtained in~\cite[Proposition~4]{ConstantinLannes09} (where the authors used slightly different parameters).

The proof is based on an iterative scheme, which relies heavily on the following energy estimate:
\[ \frac1{1+\beta^{-1}}\big\vert u\big\vert_{H^{s+1}_\mu} \ \leq \ \left(\big\vert u( t, \cdot )\big\vert_{H^s}^2+\beta\mu \big\vert u( t,\cdot )\big\vert_{H^{s+1}}^2\right)^{1/2} \ \equiv \ E^{s}(u)( t) \ \leq \ C(M, \big\vert u^0 \big\vert_{H^{s+1}_\mu}),\]
which is proved to be valid for $t\leq T_\epsilon\equiv \epsilon^{-1}\ C(\frac1{s_0-3/2},M,\big\vert u^0 \big\vert_{H^{s+1}_\mu})$. One proves in the same way
\[ \frac1{1+\beta^{-1}}\big\vert \partial_t u\big\vert_{H^{s}_\mu} \ \leq \ E^{s-1}(\partial_t u)( t) \ \leq \ C(\frac1{s_0-3/2},M,E^s(u)) \ \leq \ C(\frac1{s_0-3/2},M, \big\vert u^0 \big\vert_{H^{s+1}_\mu}),\]
 so that the estimate~\eqref{eq:estCL} follows.
 \medskip
 
In order to obtain the estimates in the weighted Sobolev norms, we introduce the function $v_n \ = \ x^n u$, where $u$ satisfies~\eqref{eq:u02}. We will prove that estimate~\eqref{eqn:weight}, below, holds for any $n,k\in\NN$, and for $0\leq t\leq T_{\epsilon,\mu}\equiv C(\frac1{s_0-3/2},M,n,k,\sum_{j=0}^n\big\vert x^{n-j} {u^0}\big\vert_{H^{s+k+2j+1}_\mu})\times \min(1/\epsilon,1/\mu)$ by induction on $n\in\NN$. Proposition~\ref{prop:WP} is a straightforward consequence.
 \begin{equation} \label{eqn:weight} 
 \big\Vert x^n\partial_k \partial_t {u}\big\Vert_{L^\infty([0,t);H^{s}_\mu)} + \big\Vert x^n \partial_k {u}\big\Vert_{L^\infty([0,t);H^{s+1}_\mu)} \ \leq \ C\Big( M,n,k,\sum_{j=0}^n\big\vert x^{n-j} {u^0}\big\vert_{H^{s+k+2j+1}_\mu}\Big) .
\end{equation} 

\noindent {\em The case $n=1$.} One can easily check that $v_1\equiv x\ u$ satisfies the equation
\begin{align}
\label{eq:v1}
(1- \mu\ \beta \partial_{x}^2)\partial_t v_1 \ + \ \epsilon\alpha_1 v_1\partial_x u \ + \ \epsilon^2\ \alpha_2 v_1 u \partial_x u\ + \ \epsilon^3\ \alpha_3 v_1 u^2\partial_x u\ & \\
+ \ \mu\ \nu \partial_x^3 v_1 \ + \ \mu\epsilon\ \partial_x\big(\kappa v_1\partial_x^2 u+\iota (\partial_x v_1)(\partial_x u)\big) & \ = \ R[u] ,\nn \end{align}
with
\[ R[u] \ \equiv \ - 2\mu\ \beta \partial_{x}\partial_t u \ + \ 3\mu\nu \partial_x^2 u \ + \ \mu\epsilon\ \big((\kappa +\iota)u\partial_x^2 u+2\iota (\partial_x u)^2\big) . \]

When taking the ($L^2-$)inner product of~\eqref{eq:v1} with $\Lambda^{2s}v_1$, one obtains (using that the operator $\Lambda^s$ is symmetric for the ($L^2-$)inner product, and $\partial_x$ is anti-symmetric)
\begin{align*} \frac12\frac{d}{dt}\big( E^s(v_1)\big)^2 + \big(\ \Lambda^s (\epsilon\alpha_1 v_1\partial_x u + \epsilon^2\ \alpha_2 v_1 u \partial_x u + \epsilon^3\ \alpha_3 v_1 u^2\partial_x u) \ , \ \Lambda^s v_1 \ \big) &\\
- \mu\epsilon\ \big(\ \big(\kappa v_1\partial_x^2 u+\iota (\partial_x v_1)(\partial_x u)\big) \ , \ \Lambda^s\partial_x v_1 \ \big) & \ = \ \big( \ \Lambda^s R[u] \ , \ \Lambda^s v_1 \ \big).
\end{align*}

Now, we use that $u$ is uniformly bounded through~\eqref{eq:estCL} for $t\leq T_\epsilon$.
\begin{itemize}
\item Using Cauchy-Schwarz inequality, and Moser estimates: $\big\vert fg \big\vert_{H^s}\leq C(\frac1{s_0-1/2}) \big\vert f \big\vert_{H^s}\big\vert g \big\vert_{H^s}$ for $s\geq s_0>1/2$, one has
\begin{multline*} 
\left\lvert \big(\ \Lambda^s (\epsilon\alpha_1 v_1\partial_x u \ + \ \epsilon^2\ \alpha_2 v_1 u \partial_x u\ + \ \epsilon^3\ \alpha_3 v_1 u^2\partial_x u) \ , \ \Lambda^s v_1 \ \big)\right\rvert \\
 \leq \ \big\vert v_1\big\vert_{H^s}^2 C(\frac1{s_0-1/2},\big\vert u\big\vert_{H^{s+1}} ) 
\ \leq \ C(\frac1{s_0-3/2},M, \big\vert u^0 \big\vert_{H^{s+2}_\mu})\big( E^s(v_1)\big)^2 . 
\end{multline*}
\item In the same way, and using the definition $\big( E^s(\cdot)\big)^2 \ = \ \big\vert \cdot \big\vert_{H^s}^2+\beta\mu \big\vert\cdot \big\vert_{H^{s+1}}^2 \ \geq \ \frac1{C(M)}\big\vert \cdot \big\vert_{H^{s+1}_\mu}^2$,
\begin{align*} 
\left\lvert \big(\ \Lambda^s\big(\kappa v_1\partial_x^2 u+\iota (\partial_x v_1)(\partial_x u)\big) \ , \ \Lambda^s\partial_x v_1 \ \big)\right\rvert \ &\leq \ C\big\vert v_1\big\vert_{H^{s+1}}\left(\big\vert v_1\big\vert_{H^s} \big\vert u\big\vert_{H^{s+2}} +\big\vert v_1\big\vert_{H^{s+1}} \big\vert u\big\vert_{H^{s+1}} \right) \\
&\leq \frac1\mu C(\frac1{s_0-3/2},M, \big\vert u^0 \big\vert_{H^{s+2}_\mu})\big( E^s(v_1)\big)^2 .
\end{align*}
\item Finally, one can check
\[ \left\lvert \big(\ \Lambda^s R[u] \ , \ \Lambda^s v_1 \ \big)\right\rvert \ \leq \ \mu \big\vert v_1\big\vert_{H^s}C(\frac1{s_0-1/2},\big\vert u\big\vert_{H^{s+2}}+\big\vert \partial_t u\big\vert_{H^{s+1}} ) \ \leq \ \mu C(\frac1{s_0-3/2},M, \big\vert u^0 \big\vert_{H^{s+3}_\mu}) E^s(v_1).\]
\end{itemize}
Altogether, one obtains the following differential inequality, valid for all $t\leq T_\epsilon$:
\[ \frac12\frac{d}{dt}\big( E^s(v_1)\big)^2 \ \leq \ \epsilon C(\frac1{s_0-3/2},M, \big\vert u^0 \big\vert_{H^{s+2}_\mu})\big( E^s(v_1)\big)^2 \ + \ \mu C(\frac1{s_0-3/2},M, \big\vert u^0 \big\vert_{H^{s+3}_\mu}) E^s(v_1).\]

Gronwall-Bihari’s inequality (see~\cite{TaylorI} for example) allows to deduce:
\[
E^s(v_1)(t) \ \leq \ E^s(v_1)\id{t=0} \ + \ \frac{\mu}{\epsilon} C(\frac1{s_0-3/2},M, \big\vert u^0 \big\vert_{H^{s+3}_\mu})\left(\exp(C(\frac1{s_0-3/2},M, \big\vert u^0 \big\vert_{H^{s+2}_\mu})\epsilon t)-1\right).
\]
When restricting $t$ to $t\leq T_{\epsilon,\mu}\equiv C(\frac1{s_0-3/2},M,\big\vert u^0 \big\vert_{H^{s+2}_\mu})\times \min(1/\epsilon,1/\mu)$, it follows that for any $s\geq s_0> 3/2$,
\[
 \big\vert x {u}\big\vert_{H^{s+1}_\mu} \ \leq \ \big\vert x {u^0}\big\vert_{H^{s+1}_\mu} \ + \ C(\frac1{s_0-3/2},M,\big\vert u^0 \big\vert_{H^{s+3}_\mu}) .
 \]

In order to control the time derivative of $v_1$, we take the ($L^2-$)inner product of~\eqref{eq:v1} with $\Lambda^{2s-2}(\partial_t v_1)$. Estimating each term as above, one obtains
\begin{align*} \big( E^{s-1}(\partial_t v_1)\big)^2 \ &\leq \ \epsilon \big\vert v_1\big\vert_{H^{s-1}}\big\vert \partial_t v_1\big\vert_{H^{s-1}} C(\frac1{s_0-1/2},\big\vert u\big\vert_{H^{s}} ) \ + \ \mu\nu \left\lvert \big(\ \Lambda^{s-1} \partial_x^2 v \ , \ \Lambda^{s-1}\partial_x \partial_t v_1 \ \big)\right\rvert\ \\ 
&\qquad + \mu\epsilon C\big\vert \partial_t v_1\big\vert_{H^{s}}\left(\big\vert v_1\big\vert_{H^{s-1}} \big\vert u\big\vert_{H^{s+1}} +\big\vert v_1\big\vert_{H^{s+1}} \big\vert u\big\vert_{H^{s}} \right) + \left\lvert \big( \Lambda^{s-1} R[u] , \Lambda^{s-1}\partial_t v_1 \big)\right\rvert \\
&\leq \ E^{s-1}(\partial_t v_1) \left( C(\frac1{s_0-3/2},M, \big\vert u^0 \big\vert_{H^{s+1}_\mu}) E^s(v_1)+ \mu C(\frac1{s_0-3/2},M, \big\vert u^0 \big\vert_{H^{s+2}_\mu}) \right). \end{align*}
Estimate~\eqref{eqn:weight} thus follows for $n=1$ and $k=0$.
\medskip

We know turn to $x\partial_x^k u$, for $k\in\NN$, $k\geq1$ . Note that
\[ x\partial_x^k u \ = \ \partial_x^k (xu) \ - \ k\partial_x^{k-1} u,\]
so that
\[ \big\vert x\partial_x^k u \big\vert_{H^{s+1}_\mu} \ \leq \ \big\vert xu \big\vert_{H^{s+k+1}_\mu} \ + \ k \big\vert u \big\vert_{H^{s+k}_\mu}\]
One deduces, for $ 0\leq t\leq T(M)\min(1/\epsilon,1/\mu)$,
 \[
 \big\vert x \partial_k {u}\big\vert_{H^{s+1}_\mu} \ \leq \ \big\vert x {u}\big\vert_{H^{s+k+1}_\mu} \ + \ C_1(\frac1{s_0-3/2}.M,\big\vert u^0 \big\vert_{H^{s+k+3}_\mu}) 
 \]
In the same way, one has $x\partial_x^k\partial_t u \ = \ \partial_x^k (x\partial_t u) \ - \ k\partial_x^{k-1}\partial_t u,$
so that estimate~\eqref{eqn:weight} hold for $n=1$ and any $k\in\NN$.
\bigskip

\noindent {\em The case $n\geq2$.} Let us assume that~\eqref{eqn:weight} holds for any $k\in\NN$ and $n\leq m-1$ (with $m\geq2$). One can easily check that $v_m$ satisfies the equation
\begin{align}
\label{eq:vn}
(1- \mu\ \beta \partial_{x}^2)\partial_t v_m \ + \ \epsilon\alpha_1 v_m\partial_x u \ + \ \epsilon^2\ \alpha_2 v_m u \partial_x u\ + \ \epsilon^3\ \alpha_3 v_m u^2\partial_x u\ & \\
+ \ \mu\ \nu \partial_x^3 v_m \ + \ \mu\epsilon\ \partial_x\big(\kappa v_m\partial_x^2 u+\iota (\partial_x v_m)(\partial_x u)\big) & \ = \ R_m[u] ,\nn \end{align}
with
\begin{align*}R_m[u] &\equiv - 2\mu\ \beta m x^{m-1}\partial_{x}\partial_t u- 2\mu\ \beta m (m-1)x^{m-2}\partial_t u + 3\mu\nu mx^{m-1}\partial_x^2 u + 3\mu\nu m(m-1)x^{m-2}\partial_x u\\
&\ + \mu\nu m(m-1)(m-2)x^{m-3} u + \mu\epsilon m x^{m-1} \big((\kappa +\iota)u\partial_x^2 u+2\iota (\partial_x u)^2\big) +\mu\epsilon m(m-1)x^{m-2}\iota u\partial_x u. 
\end{align*}
Taking the ($L^2-$)inner product of~\eqref{eq:vn} with $\Lambda^{2s}(v_m)$, one obtains as above
\[ \frac12\frac{d}{dt}\big( E^s(v_m)\big)^2 \ \leq \ \epsilon C(\frac1{s_0-3/2},M, \big\vert u^0 \big\vert_{H^{s+2}_\mu})\big( E^s(v_m)\big)^2 \ + \ \left\lvert \big(\ \Lambda^s R_m[u] \ , \ \Lambda^s v_m \ \big)\right\rvert .\]
Now, one can easily check
\begin{align*} \left\lvert \big(\ \Lambda^s R_m[u] \ , \ \Lambda^s v_m \ \big)\right\rvert \ & \leq \ \mu C(\frac1{s_0-1/2},m) \big\vert v_m\big\vert_{H^s}C\Big(\big\vert x^{m-1}\partial_x \partial_t u\big\vert_{H^{s}}+\big\vert x^{m-2} \partial_t u\big\vert_{H^{s}} \\
& \qquad +\big\vert x^{m-1} \partial_x^2 u\big\vert_{H^{s}}+\big\vert x^{m-2} \partial_x u\big\vert_{H^{s}} +\big\vert x^{m-3} u\big\vert_{H^{s}} \Big) \\ 
&\leq \ \mu E^s(v_m)C\Big(\frac1{s_0-3/2}, M,m,\sum_{j=0}^{m-1}\big\vert x^{m-1-j} {u^0}\big\vert_{H^{s+2j+1}_\mu}\Big) .\end{align*}

As above, Gronwall-Bihari’s inequality allows to deduce
\[
E^s(v_m)(t) \ \leq \ E^s(v_m)\id{t=0} \ + \ C\Big( \frac1{s_0-3/2}, M,m,\sum_{j=0}^{m-1}\big\vert x^{m-1-j} {u^0}\big\vert_{H^{s+2j+1}_\mu}\Big) ,
\]
for any $0\leq t\leq T_{\epsilon,\mu}\equiv C\big(\frac1{s_0-3/2}, M,m,\sum_{j=0}^{m-1}\big\vert x^{m-1-j} {u^0}\big\vert_{H^{s+2j+1}_\mu}\big)\times\min(1/\epsilon,1/\mu)$ and for any $s\geq s_0>3/2$. Therefore,
\begin{equation}\label{eq:xmu}
 \big\vert x^m {u}\big\vert_{H^{s+1}_\mu} \ \leq \ C\Big( \frac1{s_0-3/2},M,m,\sum_{j=0}^m\big\vert x^{m-j} {u^0}\big\vert_{H^{s+2j+1}_\mu}\Big) \quad \text{ for } \quad 0\leq t\leq T_{\epsilon,\mu}.
\end{equation}
The similar estimate on the time derivative of $v_m$, is obtained as above by taking the ($L^2-$)inner product of~\eqref{eq:vn} with $\Lambda^{2s-2}(\partial_t v_m)$. Estimate~\eqref{eqn:weight} follows for any $n=m$ and $k=0$.

\bigskip

We know turn to $x^m\partial_x^k u$ . Note that for any $k\in\NN$, $k\geq1$,
\[ x^m\partial_x^k u \ = \ \partial_x^k (x^m u) \ - \ \sum_{j=0}^{k-1}\ {k \choose j} (\partial_x^{k-j} x^m) (\partial_x^j u )\,\]
so that
\[ \big\vert x\partial_x^k u \big\vert_{H^{s+1}_\mu} \ \leq \ \big\vert x^m {u}\big\vert_{H^{s+k+1}_\mu} \ + \ C(k,m)\sum_{j=\min(0,k-m)}^{k-1}\big\vert x^{m-k+j} \partial_x^ju \big\vert_{H^{s+1}_\mu} .\]
Using~\eqref{eq:xmu}, one deduces by induction on $k$ that
 \begin{align*}
 \big\vert x^m\partial_k {u}\big\vert_{H^{s+1}_\mu} \ &\leq \ C\Big(\frac1{s_0-3/2}, M,m,\sum_{j=0}^m\big\vert x^{m-j} {u^0}\big\vert_{H^{s+k+2j+1}_\mu}\Big) \\
 & \qquad + \ \sum_{j=\min(0,k-m)}^{k-1}C\Big(\frac1{s_0-3/2}, M,m-k+j,j,\sum_{i=0}^{m-k+j}\big\vert x^{m-k+j-i} {u^0}\big\vert_{H^{s+j+2i+1}_\mu}\Big) \\
 &\leq \ C\Big( \frac1{s_0-3/2},M,m,k,\sum_{j=0}^m\big\vert x^{m-j} {u^0}\big\vert_{H^{s+k+2j+1}_\mu}\Big) \quad \text{ for } \quad 0\leq t\leq T_{\epsilon,\mu}.
 \end{align*}
In the same way, one estimates $x^m\partial_x^k\partial_t u$ by induction, using Leibniz rule of differentiation. 

Estimate~\eqref{eqn:weight} therefore holds for $n=m$ and any $k\in\NN$.
\medskip

By induction, we proved that~\eqref{eqn:weight} holds for any $k\in\NN$ and $n\in\NN$. Estimate~\eqref{eq:estCLweight2} follows as a direct consequence (using the case $k=0$), and Proposition~\ref{prop:WP} is proved.
\end{proof}

 \section{Unidirectional propagation}\label{sec:unidirectional}
In this section, we show that if one chooses carefully the initial perturbation (deformation of the interface as well as shear layer-mean velocity),
then the flow is unidirectional, in the sense that one can construct an extremely precise approximate solution, driven by a simple unidirectional scalar equation. This study follows the strategy developed for the water-wave problem in~\cite{ConstantinLannes09,Johnson02}.

The precise result, which has been displayed in the Introduction (Proposition~\ref{prop:unidirI}), is recalled below, followed by its proof and a brief discussion.
\begin{Proposition} \label{prop:unidir}
Set $\lambda,\theta\in\RR$, and $\zeta^0 \in H^{s+5}$ with $s\geq s_0>3/2$. For $(\epsilon,\mu,\delta,\gamma)=\p\in\P$, as defined in~\eqref{eqn:defRegimeFNL}, denote
$(\zeta^{\p})_{\p \in\P }$ the unique solution of the equation
\begin{align}\label{eq:CLuni}
\partial_t \zeta + \partial_x \zeta + \epsilon\alpha_1\zeta\partial_x \zeta + \epsilon^2 \alpha_2 \zeta^2\partial_x \zeta +\epsilon^3 \alpha_3 \zeta^3\partial_x \zeta+\mu\nu_x^{\theta,\lambda}\partial_x^3\zeta-\mu\nu_t^{\theta,\lambda}\partial_x^2\partial_t\zeta & \\
+\mu\epsilon\partial_x\left(\kappa_1^{\theta,\lambda} \zeta\partial_x^2\zeta +\kappa_2^{\theta} (\partial_x \zeta)^2\right) & =0\ , \nn 
\end{align}
with parameters
\[ \begin{array}{rlrl}
\alpha_1 &= \ \frac32\frac{\delta^2-\gamma}{\gamma+\delta}, 
\qquad 
\alpha_2 = \ \frac{21(\delta^2-\gamma)^2}{8(\gamma+\delta)^2}-3\frac{\delta^3+\gamma}{\gamma+\delta},
\qquad & \alpha_3 &= \ \frac{71(\delta^2-\gamma)^3}{16(\gamma+\delta)^3} -\frac{37(\delta^2-\gamma)(\delta^3+\gamma)}{4(\gamma+\delta)^2}+\frac{5(\delta^4-\gamma)}{\gamma+\delta}, 
\\
\nu_x^{\theta,\lambda} &= \ (1-\theta-\lambda)\frac{1+\gamma\delta}{6\delta(\gamma+\delta)} , &

 \nu_t^{\theta,\lambda} &= \ (\theta+\lambda)\frac{1+\gamma\delta}{6\delta(\gamma+\delta)} ,
\\
\kappa_1^{\theta,\lambda} &= \ \frac{(14-6(\theta+\lambda))(\delta^2-\gamma)(1+\gamma\delta)}{24\delta(\gamma+\delta)^2}-\frac{1-\gamma}{6(\gamma+\delta)},
&
\kappa_2^{\theta} &= \ \frac{(17-12\theta)(\delta^2-\gamma)(1+\gamma\delta)}{48\delta(\gamma+\delta)^2}-\frac{1-\gamma}{12(\gamma+\delta)}.
\end{array}
\]
For given $M_{s+5},h>0$, assume that there exists $T_{s+5}>0$ such that
\[ T_{s+5} \ = \ \max\big( \ T\geq0\quad \text{such that}\quad \big\Vert \zeta^{\p} \big\Vert_{L^\infty([0,T);H^{s+5})}\ \leq \ M_{s+5}\ \big) \ ,\]
and for any $(t,x)\in [0,T_{s+5})\times\RR$,
\[ h_1(t,x)=1-\epsilon\zeta^\p(t,x)>h>0, \quad h_2(t,x)=\frac1\delta+\epsilon\zeta^\p(t,x)>h>0.\]
Then define
$v^\p$ as $ v^\p=\frac{h_1+\gamma h_2}{h_1h_2}\underline{v}[\zeta^\p]$, with
\begin{equation} \label{eq:ztov}
\underline{v}[\zeta] \ = \ \zeta + \epsilon\frac{\alpha_1}2\zeta^2 + \epsilon^2 \frac{\alpha_2}{3} \zeta^3 +\epsilon^3 \frac{\alpha_3}4 \zeta^4 +\mu\nu\partial_x^2\zeta+\mu\epsilon\left(\kappa_1 \zeta\partial_x^2\zeta +\kappa_2 (\partial_x \zeta)^2\right),
\end{equation}
where parameters $\alpha_1,\alpha_2,\alpha_3$ are as above, and $\nu=\nu_x^{0,0},\kappa_1=\kappa_1^{0,0},\kappa_2=\kappa_2^{0}$.

Then $(\zeta^\p,v^\p)$ is consistent with Green-Naghdi equations~\eqref{eqn:GreenNaghdiMeanI}, of order $s$ and on $[0,T_{s+5})$, with precision $\O(\eps)$, with
\[ \eps \ = \ C(M_{s+5},\frac1{s_0-3/2},{h}^{-1},\frac{1}{\delta_{\text{min}}},\delta_{\text{max}},\epsilon_{\text{max}},\mu_{\text{max}},|\lambda|,|\theta|)\ \times \ \max(\epsilon^4,\mu^2)\ .\]
\end{Proposition}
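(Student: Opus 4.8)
## Proof Proposal

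\textbf{Overall strategy.} The plan is to treat $(\zeta^\p,v^\p)$ as a candidate approximate solution of the Green-Naghdi system~\eqref{eqn:GreenNaghdiMeanI} and to directly estimate the residuals $r_1,r_2$ produced when plugging it in, as in Definition~\ref{def:consistency}. The key structural observation is that the Green-Naghdi system, after writing $\underline v \equiv \frac{h_1h_2}{h_1+\gamma h_2}v$ (so $v^\p$ is exactly reconstructed from $\underline v[\zeta^\p]$ via the stated formula), can be recast up to $\O(\mu^2)$ terms as a system in $(\zeta,\underline v)$ whose first equation is $\partial_t\zeta + \partial_x\underline v = 0$ and whose second equation, after substituting the first and expanding in $\epsilon,\mu$, becomes a single scalar equation for $\zeta$ alone, modulo a residual of size $\O(\max(\epsilon^4,\mu^2))$. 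The algebraic content is to check that the scalar equation one obtains this way is \emph{precisely}~\eqref{eq:CLuni} with the listed coefficients, for every choice of the free BBM parameters $\theta,\lambda$. Since $(\zeta^\p)$ is \emph{defined} to solve~\eqref{eq:CLuni}, the residual of the second Green-Naghdi equation is then forced to be $\O(\max(\epsilon^4,\mu^2))$, and the residual of the first equation vanishes identically by the choice of the reconstruction $\underline v[\zeta^\p] = $ (the expression in~\eqref{eq:ztov}), because $\partial_t\zeta^\p + \partial_x\underline v[\zeta^\p]$ is itself, modulo $\O(\epsilon^4,\mu^2)$, a consequence of~\eqref{eq:CLuni}.

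\textbf{Step-by-step.} First I would expand $\frac{h_1h_2}{h_1+\gamma h_2}$ and $\frac{{h_1}^2-\gamma{h_2}^2}{(h_1+\gamma h_2)^2}$ in powers of $\epsilon\zeta$ (recall $h_1 = 1-\epsilon\zeta$, $h_2=\frac1\delta+\epsilon\zeta$), keeping terms through $\O(\epsilon^3)$ and discarding $\O(\epsilon^4)$; likewise expand $\overline{\Q}[h_1,h_2]$ and $\overline{\R}[h_1,h_2]$ to leading order in $\epsilon$, since they already carry a $\mu$ (resp.\ $\mu\epsilon$) prefactor and we only need accuracy $\O(\mu^2)$ (resp.\ $\O(\mu\epsilon^2,\mu^2)$). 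Second, I substitute $v = \frac{h_1+\gamma h_2}{h_1h_2}\underline v$ throughout and use the first equation $\partial_t\zeta = -\partial_x\underline v$ to eliminate time derivatives of $\zeta$ wherever they appear inside $\O(\epsilon)$ or $\O(\mu)$ terms (this is legitimate because such substitutions only generate errors one order higher, hence absorbable into the $\O(\max(\epsilon^4,\mu^2))$ residual). Third, I use the ansatz $\underline v[\zeta] = \zeta + \epsilon\frac{\alpha_1}2\zeta^2 + \cdots$ from~\eqref{eq:ztov}: plugging this into $\partial_t\underline v + (\gamma+\delta)\partial_x\zeta + \cdots$ and again replacing $\partial_t\zeta$ by $-\partial_x\zeta$ (valid at leading order) produces, after collecting powers of $\epsilon$ and $\mu$, an equation of exactly the form~\eqref{eq:CLuni}; matching coefficients order by order fixes $\alpha_1,\alpha_2,\alpha_3$ and the $\nu$'s and $\kappa$'s. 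The BBM parameters $\theta,\lambda$ enter because one is free to trade $\mu\partial_x^3\zeta$ against $-\mu\partial_x^2\partial_t\zeta$ (since $\partial_t\zeta \approx -\partial_x\zeta + \O(\epsilon,\mu)$) and to trade $\mu\lambda\partial_x^2$ factors in the reconstruction; the listed formulas for $\nu_x^{\theta,\lambda},\nu_t^{\theta,\lambda},\kappa_i^{\theta,\lambda}$ record the outcome of that freedom. Fourth, having identified~\eqref{eq:CLuni} as the reduced equation, I invoke that $\zeta^\p$ solves it exactly, so every term generated by the substitution cancels and only the discarded $\O(\max(\epsilon^4,\mu^2))$ pieces remain; I then bound those pieces in $H^s$ using the uniform control $\|\zeta^\p\|_{L^\infty([0,T_{s+5});H^{s+5})}\le M_{s+5}$ together with Moser-type product estimates (the loss of $5$ derivatives is exactly what the third-order dispersive/Camassa--Holm terms, differentiated once more for the residual, demand), the lower bound $h_1,h_2 > h$ to control the denominators, and the well-posedness/energy estimate for~\eqref{eq:CLuni} — which is an instance of Proposition~\ref{prop:WP} with $\mu\beta = \mu\nu_t^{\theta,\lambda}$ — to bound $\partial_t\zeta^\p$ in terms of the same quantities.

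\textbf{Main obstacle.} The genuinely delicate part is not the analysis but the bookkeeping in the third step: one must carry the expansion of the nonlinear reconstruction $\underline v[\zeta]$ to cubic order in $\epsilon$ \emph{and} the mixed $\mu\epsilon$ order simultaneously, track how each substitution $\partial_t\zeta \mapsto -\partial_x\zeta$ feeds back at the next order, and verify that the cross terms (e.g.\ $\epsilon\times\mu$ contributions coming from $\partial_x\big(\frac{{h_1}^2-\gamma{h_2}^2}{(h_1+\gamma h_2)^2}|\bar v|^2\big)$ interacting with the $\mu\nu\partial_x^2\zeta$ piece of $\underline v[\zeta]$) assemble into exactly the coefficients $\kappa_1^{\theta,\lambda},\kappa_2^{\theta}$ and not something else. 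This is where the precise values of all eight parameters get pinned down, and it is the step most prone to sign and combinatorial errors; everything downstream (the $H^s$ bounds, the dependence of $\eps$ on $M_{s+5},h^{-1},\delta_{\text{min}}^{-1},\delta_{\text{max}}$, etc.) is then routine. I would organize this step by introducing an explicit remainder notation and proving, lemma by lemma, that ``Green-Naghdi applied to $(\zeta^\p, v^\p)$'' equals ``\eqref{eq:CLuni} applied to $\zeta^\p$, times an explicit invertible operator'' plus a controlled remainder, so that the cancellation is transparent rather than obtained by a single monolithic computation.
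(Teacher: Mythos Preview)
Your proposal is correct and follows essentially the same approach as the paper: rewrite the Green--Naghdi system in the variables $(\zeta,\underline v)$ with $\underline v=\frac{h_1h_2}{h_1+\gamma h_2}\bar v$ so that the first equation becomes $\partial_t\zeta+\partial_x\underline v=0$, define $\underline v[\zeta]$ by integrating the scalar ansatz (which makes the first equation exact when $\theta=\lambda=0$ and $\O(\max(\epsilon^4,\mu^2))$-accurate otherwise), expand the second equation in $\epsilon,\mu$ to read off the coefficients, and then handle general $\theta,\lambda$ via the BBM substitution $\partial_x\zeta\mapsto(1-\theta)\partial_x\zeta-\theta(\partial_t\zeta+\epsilon\alpha_1\zeta\partial_x\zeta)$ and the near-identity change of variable $\zeta\mapsto\zeta-\mu\nu\lambda\partial_x^2\zeta$. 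Your identification of the coefficient bookkeeping as the main obstacle is exactly right; the paper organizes this by displaying the full expanded residual (its equation~(4.7)) and reading off the six vanishing conditions line by line.
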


\begin{proof} In order to simplify the calculations, we use the Green-Naghdi system~\eqref{eqn:GreenNaghdiMeanI} expressed using the variables $(\zeta,\underline{v})$
 where we define $\underline{v} \ = \ \frac{h_1h_2}{h_1+\gamma h_2}\bar v$. The system reads
 \begin{equation}\label{eqn:GreenNaghdiMean3}
\left\{ \begin{array}{l}
\displaystyle\partial_{ t}{\zeta} \ + \ \partial_x \underline{v} \ =\ 0, \\ \\
\displaystyle\partial_{ t}\Bigg( \frac{h_1+\gamma h_2}{h_1h_2}\underline{v} + \mu\underline{\Q}[h_1,h_2]\underline{v} \Bigg) \ + \ (\gamma+\delta)\partial_x{\zeta} + \frac{\epsilon}{2} \partial_x\Big(\frac{{h_1}^2 -\gamma {h_2}^2}{(h_1h_2)^2} |\underline{v}|^2\Big) = \mu\epsilon\partial_x\big(\underline{\R}[h_1,h_2]\underline{v} \big) ,
\end{array} 
\right. 
\end{equation}
with the following operators:
 \begin{align*} 
 \underline{\Q}[h_1,h_2]V \ &\equiv \ \frac{-1}{3h_1 h_2}\Bigg(h_1 \partial_x \Big({h_2}^3\partial_x\big( \frac{V}{h_2} \big)\Big)\ +\ \gamma h_2\partial_x \Big( {h_1}^3\partial_x \big( \frac{V}{h_1}\big)\Big)\Bigg), \\
 \underline{\R}[h_1,h_2]V \ &\equiv \ \frac12 \Bigg( \Big( h_2\partial_x \big( \frac{V}{h_2} \big)\Big)^2 - \gamma\Big(h_1\partial_x \big(\frac{V}{h_1} \big)\Big)^2\Bigg)\\
 &\qquad + \frac13 V\ \Bigg( \frac{h_1}{h_2}\partial_x\Big( {h_2}^3\partial_x\big(\frac{V}{h_2} \big)\Big) - \gamma\frac{h_2}{h_1}\partial_x\Big({h_1}^3\partial_x \big(\frac{V}{h_1}\big)\Big) \Bigg).
 \end{align*}
 
Using this system simplifies considerably the analysis. Indeed, it is clear that if $\zeta$ is to satisfy the following scalar evolution equation, 
\begin{align}
\partial_t \zeta + \partial_x \zeta + \epsilon\alpha_1\zeta\partial_x \zeta + \epsilon^2 \alpha_2 \zeta^2\partial_x \zeta +\epsilon^3 \alpha_3 \zeta^3\partial_x \zeta \qquad & \nn \\
+\mu\nu\partial_x^3\zeta +\mu\epsilon\partial_x\left(\kappa_1 \zeta\partial_x^2\zeta +\kappa_2 (\partial_x \zeta)^2\right)&=0, \label{eq:1} \end{align}
then $\underline{v}$ shall satisfy (using the first equation of~\eqref{eqn:GreenNaghdiMean3}, and the fact that the system is at rest at infinity: $\zeta,\underline{v}\to 0$ when $x\to\pm\infty$)
\begin{align}
&\underline{v} \ = \ \zeta + \epsilon\frac{\alpha_1}2\zeta^2 + \epsilon^2 \frac{\alpha_2}{3} \zeta^3 +\epsilon^3 \frac{\alpha_3}4 \zeta^4 +\mu\nu_x\partial_x^2\zeta+\mu\epsilon\left(\kappa_1 \zeta\partial_x^2\zeta +\kappa_2 (\partial_x \zeta)^2\right). \label{eq:2}\end{align}
Now we will show that one can choose coefficients $\alpha_1,\alpha_2$, {\em etc.} such that the second equation of~\eqref{eqn:GreenNaghdiMean3} is satisfied up to a small remainder.

Indeed, when plugging~\eqref{eq:1} and~\eqref{eq:2}, and expanding in terms of $\epsilon$ and $\mu$, one obtains
\begin{align}
& \epsilon\big( 3( {\delta}^{2}- \gamma)-2 ( \delta+\gamma ) \alpha_1 \big) \zeta \partial_x\zeta \nn \\
&\quad - \epsilon^2 \big(6( \delta^3+\gamma)-5
 ({\delta}^{2}-\gamma)\alpha_1+ ( {\alpha_1}^2+2\alpha_2 ) (\gamma+\delta) \big) \zeta^{2}\partial_x\zeta \nn \\
&\quad+ \epsilon^3 \big( 10( {\delta}^{4}-\gamma)-9( \delta^3+\gamma)\alpha_1+ (2 {\alpha_1}^{2}+14/3\ \alpha_2 ) (\delta^2-\gamma)-2 (\alpha_3+\alpha_1\alpha_2 ) (\delta+\gamma) \big) 
 \zeta^3\partial_x\zeta \nn \\
&\quad+\mu \big( \frac{1+\gamma\delta}{3\delta} -2\nu_x (\delta+\gamma) \big) \partial_x^3\zeta \nn\\
&\quad +	 \mu\epsilon \big( 4(\delta^2-\gamma)\nu_x- 2 ( \alpha_1 \nu + \kappa_1 )( \delta+\gamma) -\frac{1-\gamma}{3} +\frac{2(1+\gamma\delta)}{3\delta} \alpha_1 \big) \zeta
 \partial_x^3\zeta \nn \\
&\quad+
 \mu\epsilon ( 
 2 ({\delta}^{2}-\gamma)\nu_x- ( 3\alpha_1\nu_x +2 \kappa_1+4 \kappa_2 ) (\delta+\gamma)
 -\frac{2(1-\gamma)}{3}+\frac{2(1+\gamma\delta)}{\delta} \alpha_1 ) \partial_x\zeta\partial_x^2
\zeta \nn \\
& \qquad = \ \R \label{eq:remainder}
\end{align}
where the remainder $\R$ can be estimated, provided $h_1\geq h>0$ and $h_2\geq h>0$, as
\[ \big\vert \R \big\vert_{H^s} \ \leq \ C(\big\vert \zeta \big\vert_{H^{s+5}},\frac1{s_0-3/2},h^{-1},\delta_{\text{min}}^{-1},\delta_{\text{max}},\epsilon_{\text{max}},\mu_{\text{max}})\times\max(\epsilon^4,\mu^2). \]

The left hand side of~\eqref{eq:remainder} vanishes when we set
\[
\begin{array}{c}
 \alpha_1 \ = \ \frac32\frac{\delta^2-\gamma}{\gamma+\delta}, 
\qquad
\alpha_2 \ = \ \frac{5(\delta^2-\gamma)\alpha_1-6(\delta^3+\gamma)}{2(\gamma+\delta)}-\frac{\alpha_1^2}{2}\qquad \alpha_3= \frac{10(\delta^4-\gamma)-9(\delta^3+\gamma)\alpha_1+(\frac{14}3 \alpha_2+2\alpha_1^2)(\delta^2-\gamma)}{2(\gamma+\delta)}-\alpha_1\alpha_2, 
\\
 \nu=\frac{1+\gamma\delta}{6\delta(\gamma+\delta)} \qquad \kappa_1 \ = \ \frac{4(\delta^2-\gamma)\nu_x-\frac{1-\gamma}{3}+\alpha_1\frac{2(1+\gamma\delta)}{3\delta}}{2(\gamma+\delta)}- \alpha_1\nu,
\qquad 
\kappa_2 \ = \ \frac{(\delta^2-\gamma)\nu_x-\frac{1-\gamma}{3}+\alpha_1\frac{1+\gamma\delta}{\delta}}{2(\gamma+\delta)}-\frac{3\alpha_1\nu}{4}-\frac{\kappa_1}{2}.
\end{array}
\]
Such a choice corresponds to parameters of Proposition~\ref{prop:unidir}, with $\theta=\lambda=0$.
 \bigskip
 
The cases $\theta\neq0$ and $\lambda\neq0$ are obtained using the so-called BBM trick and near-identity change of variables, as precisely described in Section~\ref{ssec:formal}. We detail the calculations below, using for simplicity the notation $\O_{\b s}(\eps)$ for any term bounded by $\eps C(\big\vert \zeta \big\vert_{H^{s+\b s}})$.
\medskip

\noindent {\em BBM trick}.
We make use of the first order approximation in~\eqref{eq:1}:
\[ \partial_t \zeta \ + \ \partial_x \zeta \ + \ \epsilon\alpha_1 \zeta\partial_x \zeta \ = \ \O_3(\max(\mu,\epsilon^2)),\]
so that one has, for any $\theta\in\RR$,
\[ \partial_x \zeta \ = \ (1-\theta)\partial_x \zeta-\theta\big(\partial_t\zeta+ \epsilon\alpha_1 \zeta\partial_x \zeta\big) \ + \ \O_3(\max(\mu,\epsilon^2)).\] 
Plugging this identity into~\eqref{eq:1} yields
\begin{align}
\partial_t \zeta + \partial_x \zeta + \epsilon\alpha_1\zeta\partial_x \zeta + \epsilon^2 \alpha_2 \zeta^2\partial_x \zeta +\epsilon^3 \alpha_3 \zeta^3\partial_x \zeta 
+\mu(1-\theta)\nu\partial_x^3\zeta-\mu\theta\nu\partial_x^2\partial_t\zeta & \nn \\ +\mu\epsilon\partial_x\left((\kappa_1-\theta\nu\alpha_1 )\zeta\partial_x^2\zeta +(\kappa_2-\theta\nu\alpha_1) (\partial_x \zeta)^2\right)&=\O_5(\max(\mu^2,\mu\epsilon^2)). \label{eq:1theta} \end{align}
Conversely, $\zeta_\theta$, a solution of~\eqref{eq:1theta} (with zero on the right hand side), satisfies~\eqref{eq:1} with a reaminder bounded by $\O_5(\max(\mu^2,\mu\epsilon^2))$. One can easily check that, defining $\underline{v}_\theta$ as a function of $\zeta_\theta$ through~\eqref{eq:2}, $(\zeta_\theta,\underline{v}_\theta)$ satisfies~\eqref{eq:remainder} up to a remainder $\R_\theta=\O_5(\max(\mu^2,\mu\epsilon^2))$. Proposition~\ref{prop:unidir} is now proved for $\theta\in\RR$ and $\lambda=0$.
 \medskip
 
\noindent {\em Near identity change of variable}. Let us consider 
\[ \zeta_{\theta,\lambda} \ \equiv \ \zeta_\theta - \mu\nu \lambda \partial_x^2 \zeta_\theta\ , \]
where we recall $\nu=\frac{1+\gamma\delta}{6\delta(\gamma+\delta)}$, and with $\zeta_\theta$ satisfying~\eqref{eq:1theta} (with zero on the right hand side). Then $\zeta_{\theta,\lambda}$ satisfies
\begin{align}
\partial_t \zeta + \partial_x \zeta + \epsilon\alpha_1\zeta\partial_x \zeta + \epsilon^2 \alpha_2 \zeta^2\partial_x \zeta +\epsilon^3 \alpha_3 \zeta^3\partial_x \zeta 
+\mu((1-\theta)\nu-\lambda\nu)\partial_x^3\zeta-\mu(\theta\nu+\lambda\nu)\partial_x^2\partial_t\zeta & \nn \\ +\mu\epsilon\partial_x\left((\kappa_1-\theta\nu\alpha_1-\lambda\nu\alpha_1) \zeta\partial_x^2\zeta +(\kappa_2-\theta\nu\alpha_1) (\partial_x \zeta)^2\right)=\O_5(\max(\mu^2,\mu\epsilon^2)),& \label{eq:1thetalambda} \end{align}
Again, it is now straightforward though technical to check that, denoting $\zeta_{\theta,\lambda} $ the solution of~\eqref{eq:1thetalambda} and defining $\underline{v}_{\theta,\lambda}$ as a function of $\zeta_{\theta,\lambda} $ through~\eqref{eq:2}, then $(\zeta_{\theta,\lambda} ,\underline{v}_{\theta,\lambda} )$ satisfies~\eqref{eq:remainder} up to a remainder $\R_{\theta,\lambda}=\O_5(\max(\mu^2,\mu\epsilon^2))$. Proposition~\ref{prop:unidir} is now proved for any $\theta,\lambda\in\RR$.
\end{proof}
\medskip

 \paragraph{Discussion.} 
Note that the accuracy of the unidirectional approximate solution, described in Proposition~\ref{prop:unidir} is considerably better than the one of the decoupled model; see Proposition~\ref{prop:decompositionI}. As a matter of fact, the accuracy of the unidirectional approximation is as good as the solution of the coupled Green-Naghdi model, in the Camassa-Holm regime $\epsilon^2=\O(\mu)$. More precisely, provided Hypothesis~\ref{conj:stab} is valid, one obtains the following result (to compare with Corollary~\ref{cor:convergence}, below).
 \begin{Corollary}[Convergence of unidirectional approximation]\label{cor:convergenceuni}For $(\epsilon,\mu,\delta,\gamma)=\p\in\P$, as defined in~\eqref{eqn:defRegimeFNL}, let $U_{\text{GN}}^\p$ be a solution of Green-Naghdi equations~\eqref{eqn:GreenNaghdiMeanI} such that the family $(U_{\text{GN}}^\p)$ is uniformly bounded on $H^s$, $s$ sufficiently big, over time interval $[0,T_{\text{GN}}]$, and with initial data satisfying~\eqref{eq:ztov}. Assume that hypotheses of Proposition~\ref{prop:unidir} hold, and denote $U_{\text{CL}}^\p$ the unidirectional approximation defined therein. Then if Hypothesis~\ref{conj:stab} is true, one has for any $t\leq \min(T_{\text{GN}},T_{s+5})$, 
\[ \big\Vert U_{\text{CL}}-U_S\big\Vert_{L^\infty([0,t];H^s)} \ \leq \ C\ \max(\epsilon^4,\mu^2)\ t ,\]
with $C=C\big(\big\Vert U_{\text{GN}}^\p\big\Vert_{L^\infty([0,T];H^s)} ,M_{s+5},h^{-1},\delta_{\text{min}}^{-1},\delta_{\text{max}},\epsilon_{\text{max}},\mu_{\text{max}},|\lambda|,|\theta| \big)$.
\end{Corollary}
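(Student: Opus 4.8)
The plan is to derive this convergence statement by simply feeding the consistency estimate of Proposition~\ref{prop:unidir} into the stability part of Hypothesis~\ref{conj:stab}; no new energy estimate is required, and the whole argument is essentially bookkeeping.

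First I would record that Proposition~\ref{prop:unidir} states precisely that $U_{\text{CL}}^\p\equiv(\zeta^\p,v^\p)$ is consistent with the Green-Naghdi system~\eqref{eqn:GreenNaghdiMeanI}, of order $s$ and on $[0,T_{s+5})$, at precision $\O(\eps)$ with $\eps=C\max(\epsilon^4,\mu^2)$ and $C=C(M_{s+5},\frac1{s_0-3/2},h^{-1},\delta_{\text{min}}^{-1},\delta_{\text{max}},\epsilon_{\text{max}},\mu_{\text{max}},|\lambda|,|\theta|)$. This is exactly the hypothesis that item~2 of Hypothesis~\ref{conj:stab} requires, once we have checked the two remaining points: that $U_{\text{CL}}^\p$ and $U_{\text{GN}}^\p$ have the same initial data, and that both are uniformly controlled in the relevant norms over the interval under consideration.

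For the first point, by construction $U_{\text{CL}}^\p\id{t=0}$ has $\zeta$-component $\zeta^0$ and $v$-component $\frac{h_1+\gamma h_2}{h_1h_2}\underline v[\zeta^0]$ with $\underline v[\cdot]$ given by~\eqref{eq:ztov}; since $U_{\text{GN}}^\p$ is assumed to have initial data satisfying exactly~\eqref{eq:ztov}, the two solutions start from the same point. For the second point, $U_{\text{GN}}^\p$ is assumed uniformly bounded in $H^s$ over $[0,T_{\text{GN}}]$, hence --- using~\eqref{eqn:GreenNaghdiMeanI} to express $\partial_t U_{\text{GN}}^\p$ --- uniformly bounded in $W^1([0,T_{\text{GN}});H^s)$, while the bound $\|\zeta^\p\|_{L^\infty([0,T_{s+5});H^{s+5})}\leq M_{s+5}$, the fact that $h_1,h_2\geq h>0$, the algebraic relation $v^\p=\frac{h_1+\gamma h_2}{h_1h_2}\underline v[\zeta^\p]$, and the scalar equation~\eqref{eq:CLuni} together control $U_{\text{CL}}^\p$ and $\partial_t U_{\text{CL}}^\p$ uniformly in $L^\infty([0,T_{s+5});H^{s+\bar s})$ for the finite derivative loss $\bar s$ appearing in Definition~\ref{def:consistency} and Hypothesis~\ref{conj:stab}.

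It then only remains to apply item~2 of Hypothesis~\ref{conj:stab} on $[0,t]$ for any $t\leq\min(T_{\text{GN}},T_{s+5})$, which yields $\|U_{\text{CL}}^\p-U_{\text{GN}}^\p\|_{L^\infty([0,t];H^s)}\leq C\,\eps\,t=C\,\max(\epsilon^4,\mu^2)\,t$, the constant $C$ collecting the dependencies of the consistency constant above and of the stability constant in Hypothesis~\ref{conj:stab}. The only genuine care required is bookkeeping: matching the number of derivatives ($s+5$ in Proposition~\ref{prop:unidir} against the ``$s+\bar s$, $s$ sufficiently big'' of Hypothesis~\ref{conj:stab}), making sure the initial data of the unidirectional approximation is exactly of the form~\eqref{eq:ztov} so that the two solutions truly coincide at $t=0$, and noting that the constant in Hypothesis~\ref{conj:stab} is uniform in $\p\in\P$ so that the final bound is too. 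There is no real analytic obstacle here, since the substance of the statement already resides in Proposition~\ref{prop:unidir} and in the energy estimates underlying (the conjectural) Hypothesis~\ref{conj:stab}.
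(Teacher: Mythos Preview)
Your proposal is correct and matches the paper's approach: the paper presents Corollary~\ref{cor:convergenceuni} without proof, treating it as an immediate consequence of the consistency result in Proposition~\ref{prop:unidir} together with the stability part of Hypothesis~\ref{conj:stab}, exactly as you spell out. The bookkeeping you note (matching initial data via~\eqref{eq:ztov}, uniform control of both families, tracking the derivative loss) is precisely what needs to be checked, and there is no additional analytic content.
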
 
Such a result is supported by numerical simulations. In Figures~\ref{fig:Uni201} and~\ref{fig:Uni102}, we compute the decoupled Constantin-Lannes approximation of Definition~\ref{def:CL}, as well as the unidirectional approximation described in Proposition~\ref{prop:unidir}, and compare them with the the solution of the Green-Naghdi system~\eqref{eqn:GreenNaghdiMeanI}, in the Camassa-Holm regime $\epsilon^2=\mu$ and for a unidirectional initial data ({\em i.e.} such that~\eqref{eq:ztov} is satisfied at $t=0$). Figure~\ref{fig:Uni201} deals with the case of a critical ratio $\delta^2-\gamma$, whereas the ratio is non-critical in Figure~\ref{fig:Uni102}.

 Each time, we represent the difference between the Green-Naghdi model and scalar (unidirectional and decoupled) approximations, with respect to time and for $\epsilon=0.1,0.05,0.035$. Values at times $t=10$ and $1/\epsilon$ are marked. In the two right-hand-side panels, we plot the difference in a log-log scale for more values of $\epsilon$ (the markers reveal the positions which have been simulated), at aforementioned times. The pink triangles express the convergence rate. The bottom panel reproduces the difference with respect to the space variable, at final time $t=1/\epsilon$.
\medskip

 \begin{figure}[!p] 
 \subfigure[behavior of the error with respect to time]{
\includegraphics[width=0.5\textwidth]{./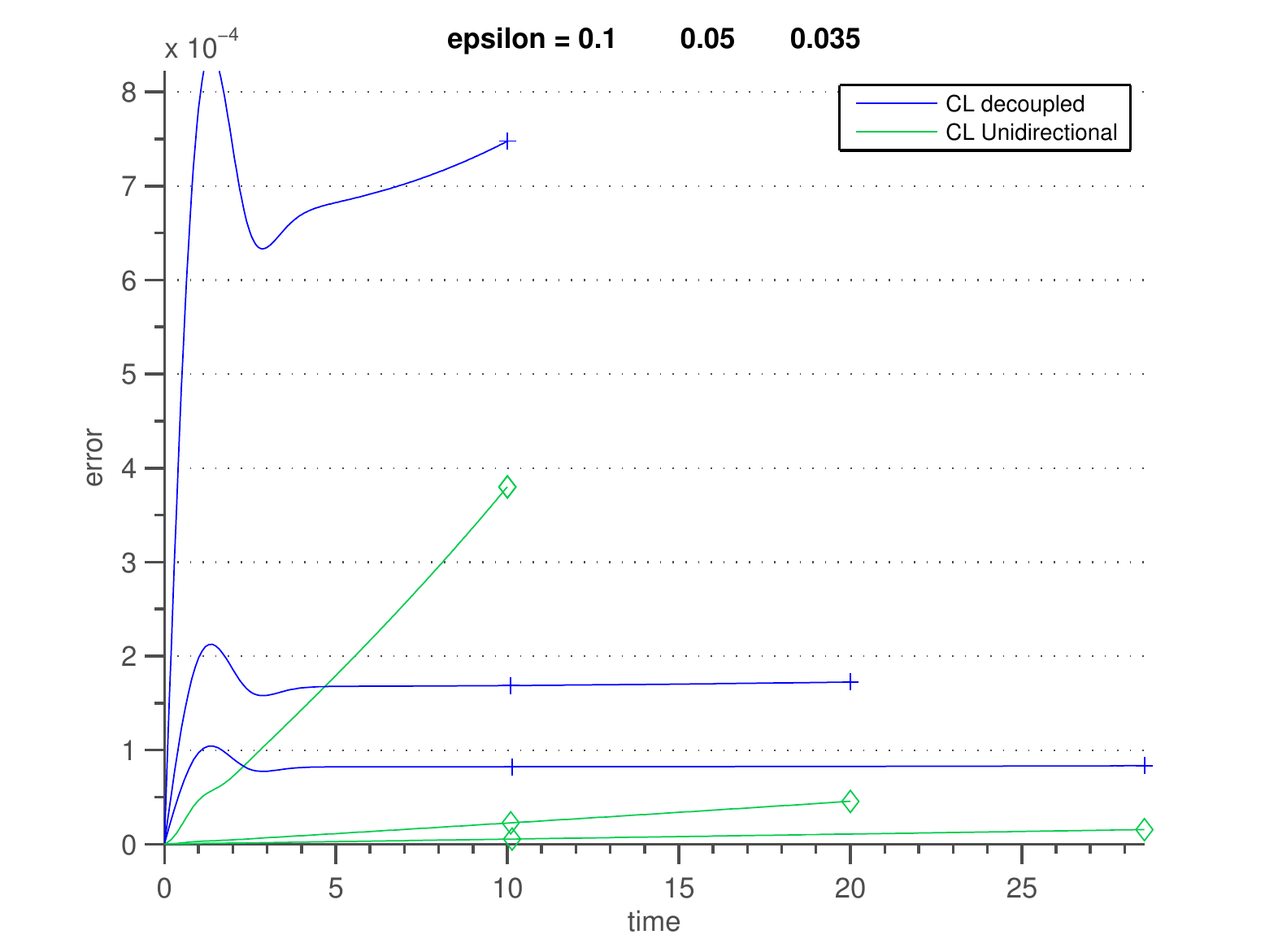}
\label{fig:Uni201a}
}\hspace{-1cm}
 \subfigure[behavior of the error with respect to $\epsilon=\sqrt\mu$]{
\includegraphics[width=0.5\textwidth]{./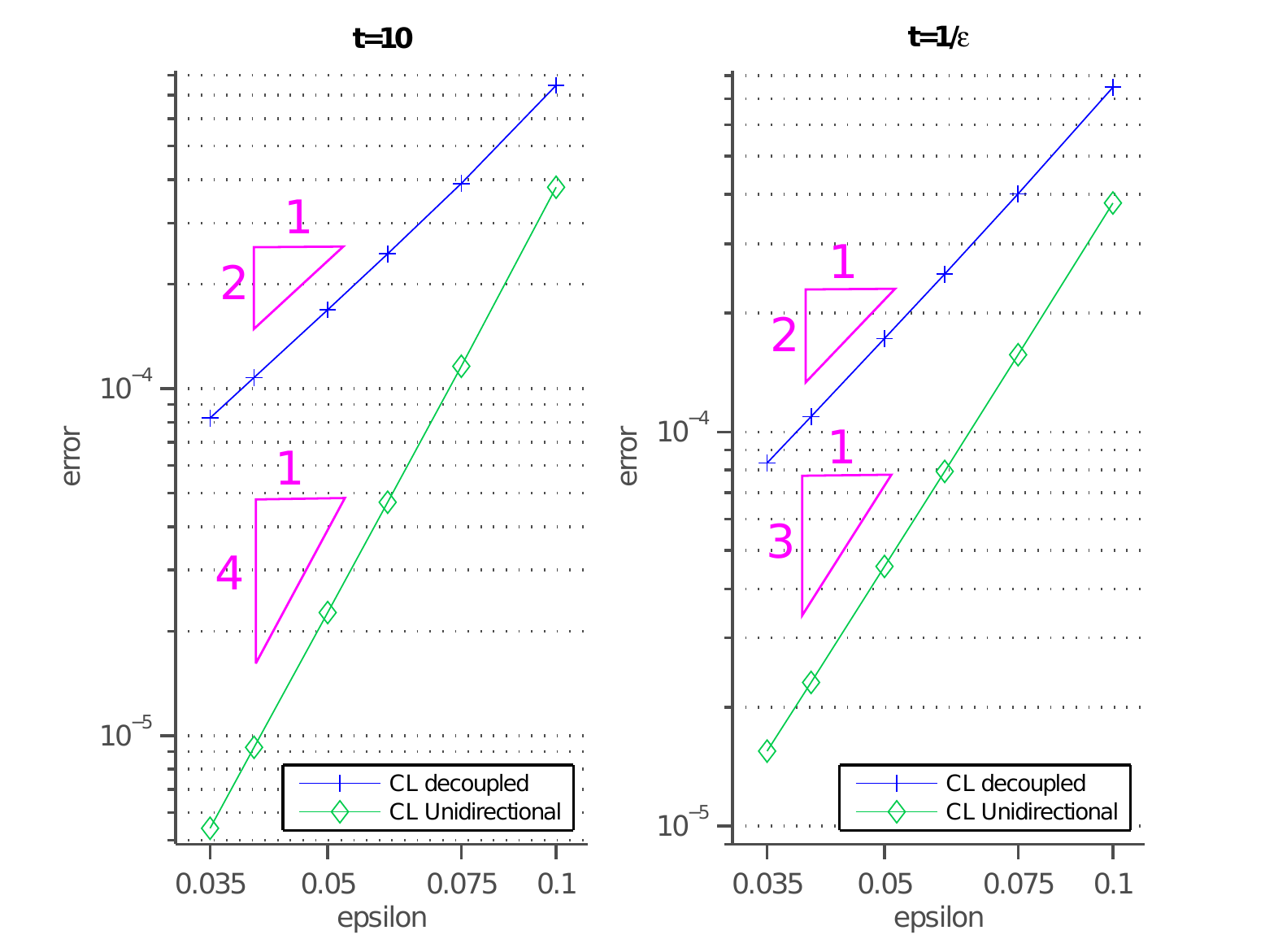}
\label{fig:Uni201b}
}\begin{center}
 \subfigure[Error at time $t=20$ for $\epsilon=0.05$]{
\includegraphics[width=0.8\textwidth]{./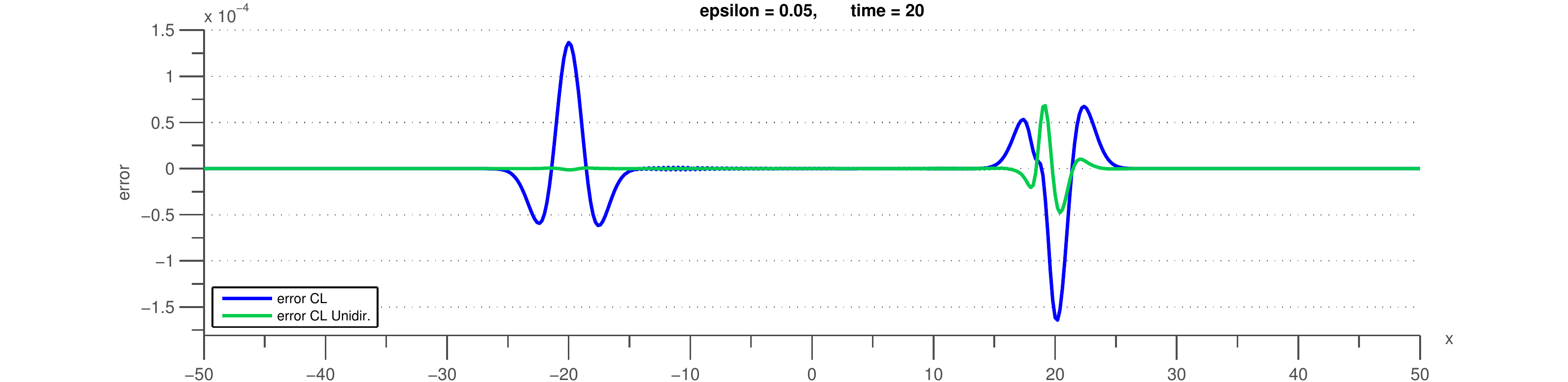}
\label{fig:Uni201c}
}\end{center}
\caption{Unidirectional, localized initial data, critical ratio, Camassa-Holm regime}
\label{fig:Uni201}
\end{figure}
\begin{figure}[!hp] 
 \subfigure[behavior of the error with respect to time]{
\includegraphics[width=0.5\textwidth]{./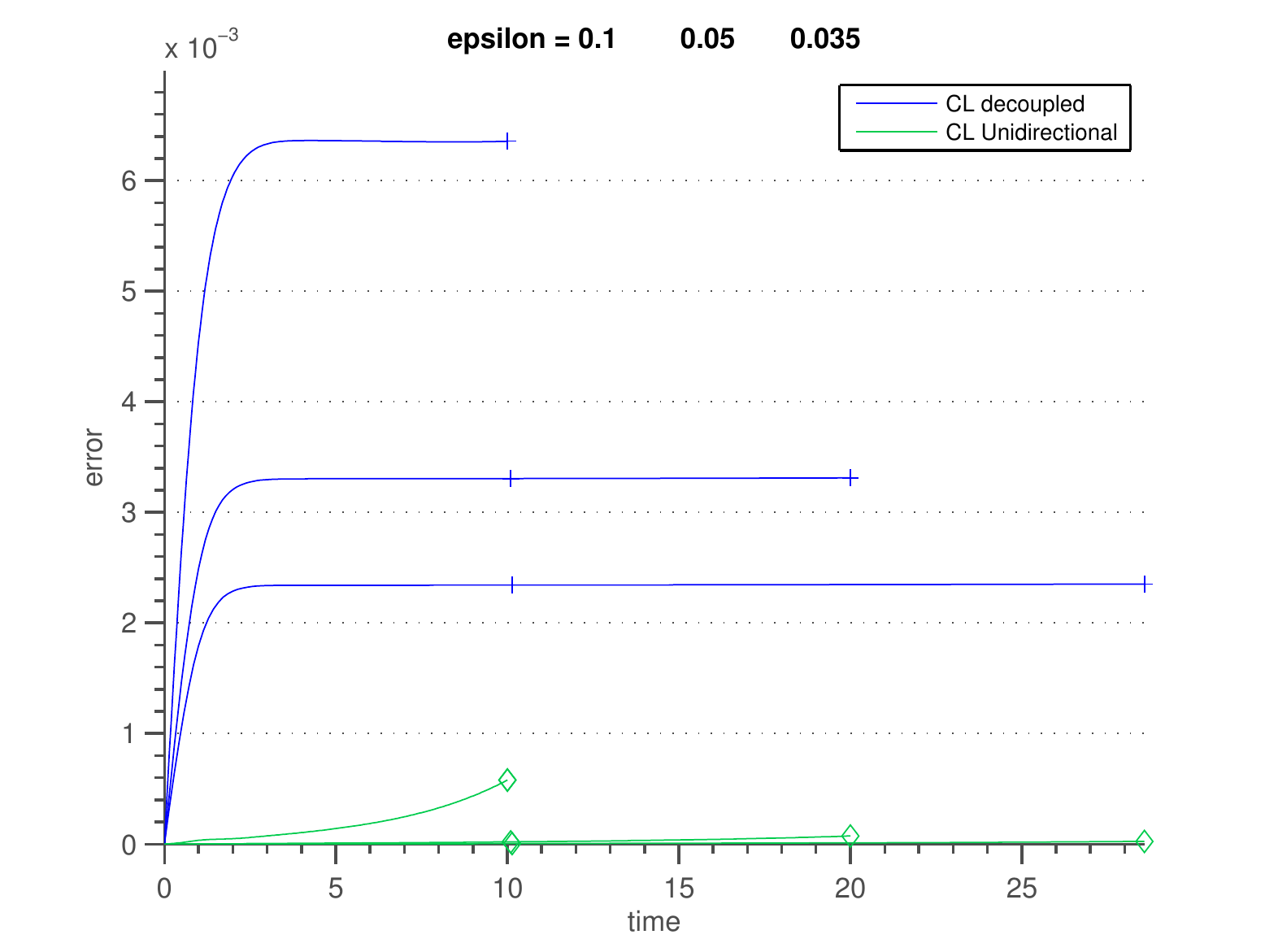}
\label{fig:Uni102a}
}\hspace{-1cm}
 \subfigure[behavior of the error with respect to $\epsilon=\sqrt\mu$]{
\includegraphics[width=0.5\textwidth]{./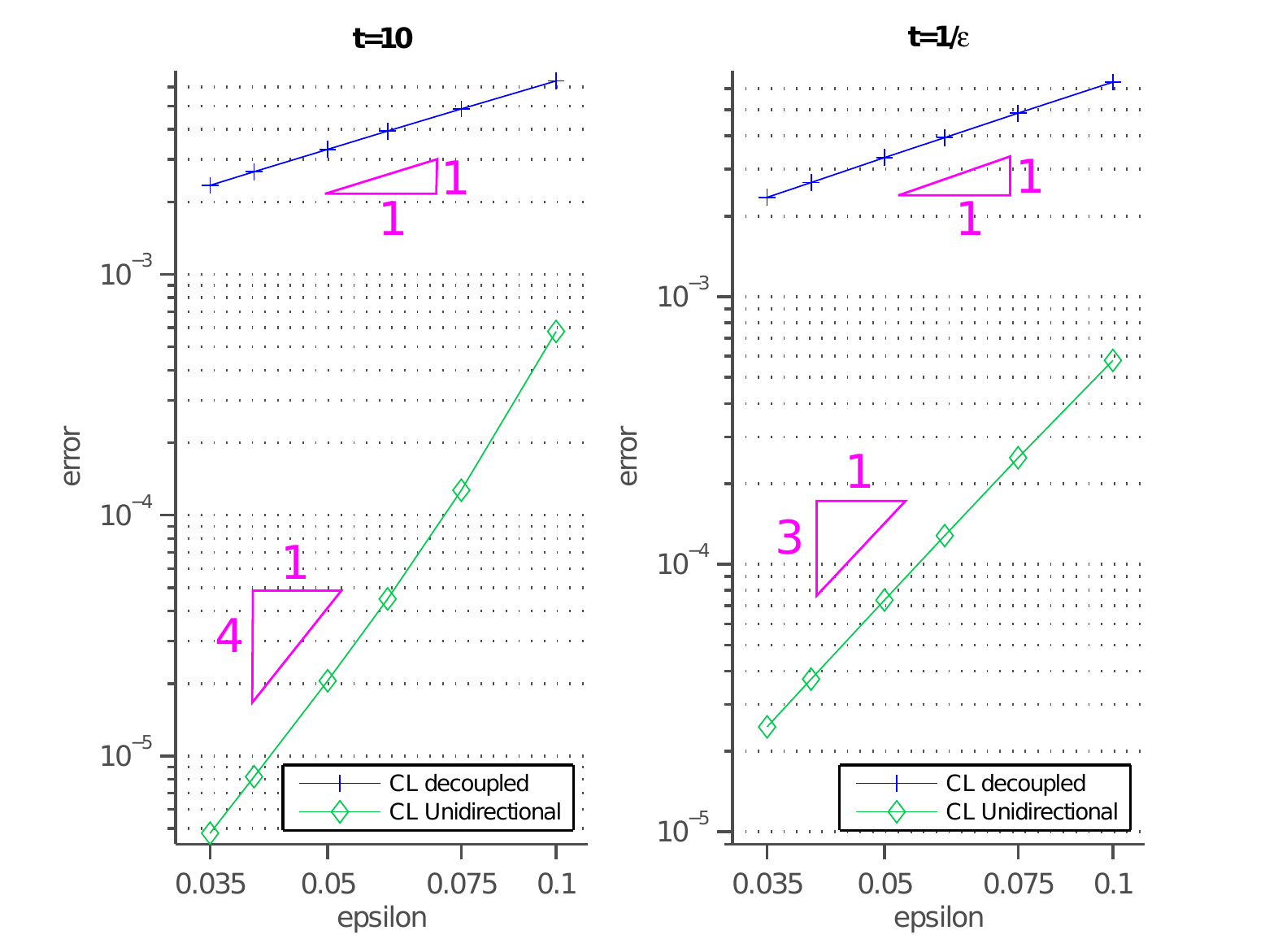}
\label{fig:Uni102b}
}\begin{center}
 \subfigure[Error at time $t=20$ for $\epsilon=0.05$]{
\includegraphics[width=0.8\textwidth]{./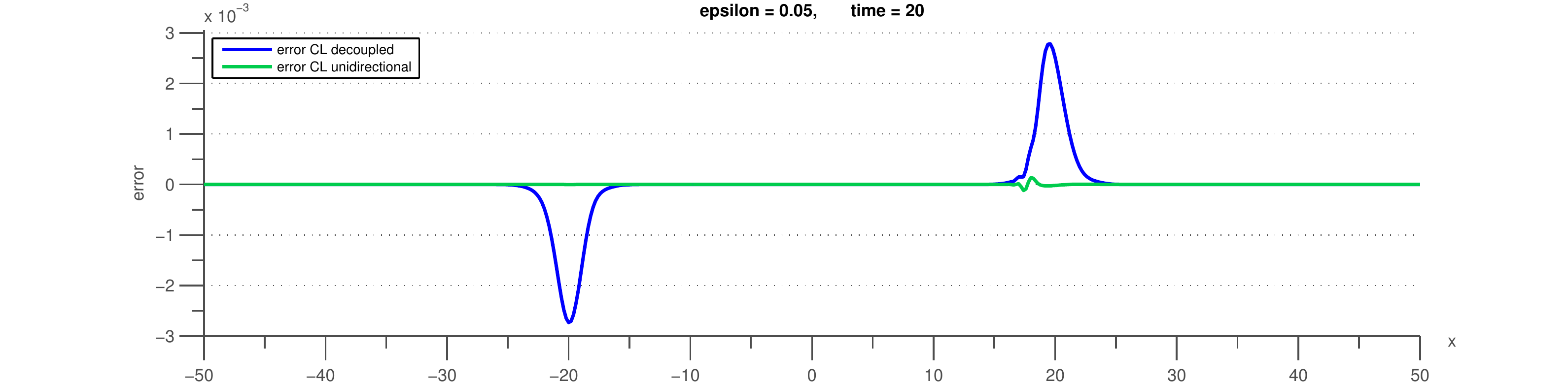}
\label{fig:Uni102c}
}\end{center}
\caption{Unidirectional, localized initial data, non-critical ratio, Camassa-Holm regime}
\label{fig:Uni102}
 \end{figure}
 
In Figure~\ref{fig:Uni201}, we choose a set up which is favorable to the decoupled approximation of Definition~\ref{def:CL}: critical ratio $\delta^2=\gamma=0.64$ and localized initial data, $\zeta\id{t=0}=\exp(-(x/2)^2)$. We see that the unidirectional approximation offers a greater accuracy than the decoupled approximation.
In particular, the decoupled model predicts a small wave moving on the left (of size $\O(\epsilon^2)$, as $\alpha_1=0$ in~\eqref{eq:ztov}), which is not predicted in the unidirectional model, and almost nonexistent in the solution of the Green-Naghdi system, as we can see in Figure~\ref{fig:Uni201c}. This short-time $\O(\epsilon^2)$ error of the CL decomposition is preserved over times of order $T=\O(1/\epsilon)$. As for the unidirectional model, the produced error is clearly, and as predicted by Corollary~\ref{cor:convergenceuni}, of size $\O(\epsilon^4\ t)$.

In Figure~\ref{fig:Uni102}, the ratio is non critical ($\delta=0.5,\gamma=0.9$), and initial data as previously.
The accuracy of the CL approximate solution of Definition~\ref{def:CL} is worse than in the critical case, as the short-time error is of size $\O(\epsilon)$. Once again, the same error estimates holds over times of order $T=\O(1/\epsilon)$. 
The accuracy of the unidirectional model is not affected, and is still of size $\O(\epsilon^4\ t)$: {\em the criticality of the depth-ratio do not play a role in the accuracy of the unidirectional approximation.}
\medskip

Let us now turn to the following question: {\em is it true that after a certain time, any perturbation will decompose into two waves, each one satisfying (approximatively) an equation of the form~\eqref{eq:ztov}?}
Our answer is numerical. We use the numerical simulations of Section~\ref{sssec:CH1} (non-critical ratio, localized initial data), and test the right wave, defined simply as the part of the signal located in the right half-line $x>0$, of the numerical solution of the Green-Naghdi system against~\eqref{eq:ztov}. As we can see in Figure~\ref{fig:ztova} (where the log of the error is plotted to ease the viewing), even with such a rough a crude exploration, a very good agreement appears after a given time $T_0$, which is independent of $\epsilon$ (but rather depends on the thickness, or wavelength of the initial data). The accuracy of this agreement is in our simulation of size $\O(\epsilon^4)$, and valid for long times; see Figure~\ref{fig:ztovb}. 

\begin{figure}[!hbt] 
 \subfigure[behavior of the error with respect to time]{
\includegraphics[width=0.5\textwidth]{./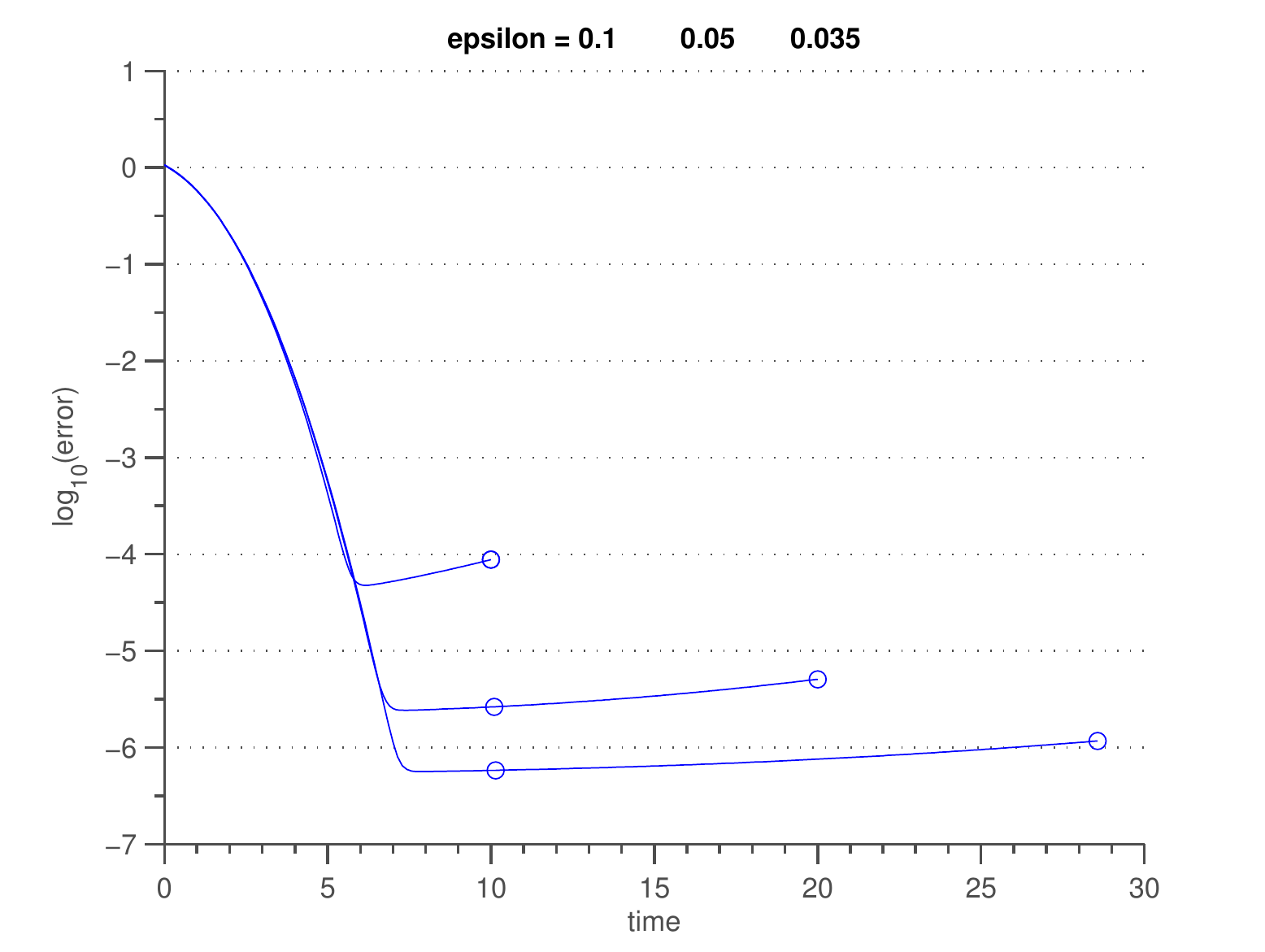}
\label{fig:ztova}
}
 \subfigure[behavior of the error with respect to $\epsilon=\sqrt\mu$]{
\includegraphics[width=0.5\textwidth]{./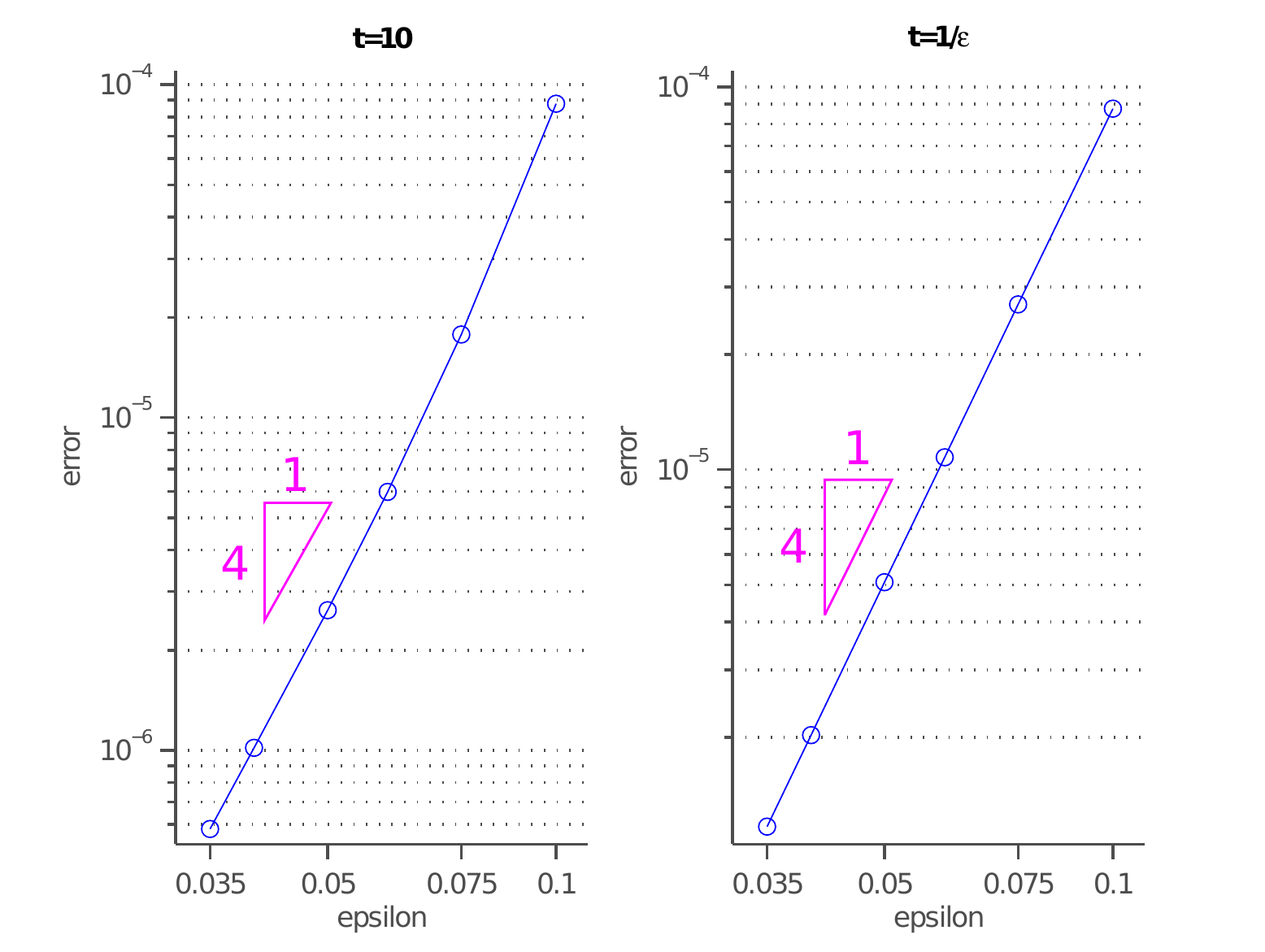}
\label{fig:ztovb}
}
\caption{Validity of~\eqref{eq:ztov} for generic initial data}
\label{fig:ztov}
\end{figure}

\paragraph{Numerical scheme.} The major difficulty concerning the numerical simulations computed throughout this paper is the fact that as $\epsilon$ is small (down to $0.035$), the time domain $t\in[0,1/\epsilon]$ ($t\in[0,\epsilon^{-3/2}]$ in Section~\ref{ssec:discussion}), and therefore the space domain of computation becomes very large. The decoupled models can be solved very efficiently by using a frame of reference moving with the decoupled wave, but solving numerically the Green-Naghdi scheme has to be carried out on the full time/space domain. Thus we need a scheme which allows a great accuracy, with a relatively small computation cost. With this in mind, we turn to multi-step, explicit and spectral methods.
The space discretization, and in particular the discrete differentiation matrices use trigonometric polynomial on an equispaced grid, as described in~\cite{Trefethen} (precisely (1.5)). This yields an exponential accuracy with the size of the grid $\Delta x$, if the signal is smooth (note that the major drawback is that the discrete differentiation matrices are not sparse). It turns out setting $\Delta x=0.2$ is sufficient for the numerical errors to be undetectable. After this space discretization, one has to solve a system of ordinary differential equations in time, and we use the Matlab solver \verb+ode113+, which is based on the explicit, multistep, Adams-Bashforth-Moulton method~\cite{ShampineReichelt97}, with a stringent tolerance of $10^{-8}$.

\section{Decomposition of the flow}\label{sec:decomposition}

In this section, our aim is to obtain approximate solutions to~\eqref{eqn:GreenNaghdiMeanI}, through a decomposition of the flow into two independent waves, each one satisfying a scalar evolution equation. Our aim is dual. First, we want to investigate which scalar equation each of these waves should to satisfy, in order to be as accurate as possible. Then, we want to estimate the size of the error we commit by neglecting the coupling between the two components. 

We first give a fairly simple formal approach in Section~\ref{ssec:formal}, which allows to heuristically construct the decoupled equations at stake, and consequently the approximate solutions (as precisely defined in Definitions~\ref{def:CL} and~\ref{def:other}). In Section~\ref{sec:prop:decompositionI}, we give the proof of the rigorous justification of such approximations; see Propositions~\ref{prop:decompositionI} and~\ref{prop:other}. Section~\ref{ssec:discussion} contains a discussion on our result, considering various different regimes and decoupled models, and supported with numerical simulations.

\subsection{Formal approach}\label{ssec:formal}

The main idea of the decomposition is that at first order (that is, setting $\epsilon=\mu=0$), our system of equations~\eqref{eqn:GreenNaghdiMeanI} is simply a linear wave equation
\begin{equation}\label{eq:wave} \partial_t U \ + \ A_0 \partial_x U \ = \ \O(\epsilon,\mu), \quad \text{with}\quad A_0=\begin{pmatrix} 0& \frac{1}{\gamma+\delta} \\ \gamma+\delta & 0
\end{pmatrix}
 \end{equation}
and $U=(\zeta,\b v)^T$. It is straightforward to check that $A_0$ has two distinct eigenvalues, therefore, we can find a basis of $\RR^2$ such that~\eqref{eq:wave} reduces to two decoupled equations. More precisely, there exists
\[ P \ = \ \begin{pmatrix} 1& 1 \\ \gamma+\delta &-(\gamma+\delta)
\end{pmatrix}, P^{-1} \ = \ \frac{1}{2}\begin{pmatrix} 1& \frac{1}{\gamma+\delta} \\ 1 &\frac{-1}{\gamma+\delta}
\end{pmatrix} \quad \text{such that}\quad P^{-1} A_0P \ = \ \begin{pmatrix} 1& 0 \\ 0 & -1
\end{pmatrix}. \]
Now, let us define $( u_l ,u_r)=P^{-1}U=\frac12(\zeta+\frac{\bar v}{\gamma+\delta},\zeta-\frac{\bar v}{\gamma+\delta})$. Multiplying~\eqref{eq:wave} by $P^{-1}$ (on the left), and keeping only first order terms, yields
\begin{equation}\label{eq:dwave} \partial_t \begin{pmatrix} u_l \\ u_r \end{pmatrix} \ + \ \begin{pmatrix} 1 & 0 \\ 0& -1 \end{pmatrix}\partial_x \begin{pmatrix} u_l \\ u_r \end{pmatrix} \ = \ \O(\epsilon,\mu). \end{equation}
{\em As a conclusion, the solution $U=(\zeta,\b v)^T$ may be decomposed in the following way:}
\[ U \ \approx \ \Big( v_+(x-t)+ v_-(x+t) \ , \ (\gamma+\delta)\big(v_+(x-t)+ v_-(x+t) \big)\Big)^T.\]
\medskip

Now, let us take into account the higher order terms in~\eqref{eqn:GreenNaghdiMeanI}. One will make use of the following straightforward expansions:
 \begin{align*} 
 \dfrac{h_1 h_2}{h_1+\gamma h_2}\ &= \ \frac1{\gamma+\delta}\ +\ \epsilon\frac{\delta^2-\gamma}{(\gamma+\delta)^2}\zeta \ - \ \epsilon^2\frac{\gamma\delta(\delta+1)^2}{(\gamma+\delta)^3}\zeta^2 \ - \ \epsilon^3\frac{\gamma\delta^2(\delta+1)^2(1-\gamma)}{(\gamma+\delta)^4}\zeta^3 \ + \ \O(\epsilon^4),\\
 \dfrac{{h_1}^2 -\gamma {h_2}^2 }{(h_1+\gamma h_2)^2} \ &= \ \frac{\delta^2-\gamma}{(\gamma+\delta)^2} \ - \ 2\epsilon\frac{\gamma\delta(\delta+1)^2}{(\gamma+\delta)^3}\zeta \ - \ 3\epsilon^2\frac{\gamma\delta^2(\delta+1)^2(1-\gamma)}{(\gamma+\delta)^4}\zeta^2 \ + \ \O(\epsilon^3),\\
 \overline{\Q}[h_1,h_2]\b v \ &= \ - \frac{1+\gamma\delta}{3\delta(\gamma+\delta)}\partial_x^2 \b v-\epsilon\frac{\gamma+\delta}3 \left((\beta-\alpha)\b v \partial_x^2\zeta \ + \ (\alpha+2\beta)\partial_x(\zeta\partial_x\b v)-\beta\zeta\partial_x^2\b v\right) \ + \ \O(\epsilon^2), \\
 \overline{\R}[h_1,h_2]\b v \ &\equiv \ \alpha\left(\frac12 (\partial_x \b v)^2+\frac13\b v\partial_x^2 \b v\right) \ + \ \O(\epsilon)
 \end{align*}
with $\alpha=\dfrac{1-\gamma}{(\gamma+\delta)^2}$ and $\beta=\dfrac{(1+\gamma\delta)(\delta^2-\gamma)}{\delta(\gamma+\delta)^3}$.

Using the decomposition as above: $( u_l ,u_r)=P^{-1}(\zeta,\b v)^T=\frac12(\zeta+\frac{\b v}{\gamma+\delta},\zeta-\frac{\b v}{\gamma+\delta})$, and withdrawing every term of size $\O(\mu\epsilon^2,\epsilon^4)$,
one can check that the Green-Naghdi system~\eqref{eqn:GreenNaghdiMeanI} becomes the following coupled system in terms of $u_l$ (left-going wave) and $u_r$ (right-going wave):
 \begin{subequations}
 \begin{align}
 \partial_t u_l \ + \ \partial_x u_l \ + \ f_l^{\epsilon,\mu}(u_l,u_r) \ &= \ 0, \label{eq:dSerrel} \\
 \partial_t u_r \ - \ \partial_x u_r \ + \ f_r^{\epsilon,\mu}(u_l,u_r) \ &= \ 0, \label{eq:dSerrer}
 \end{align}
 where $f_l^{\epsilon,\mu}$ and $f_l^{\epsilon,\mu}$ are defined below:
 \begin{align*} f_l^{\epsilon,\mu} &= \ \epsilon\frac{\alpha_1}{2}\partial_x \big((u_l+\frac13u_r)(u_l-u_r)\big) + \epsilon^2 \frac{\alpha_2}3 \partial_x\big((u_l-u_r)u_l(u_l+u_r)\big)\\
&\qquad +\epsilon^3 \frac{\alpha_3}4\partial_x\big((u_l-\frac15u_r)(u_l-u_r)(u_l+u_r)^2\big) 
\\
&\qquad - \mu\nu\partial_x^2\partial_t(u_l-u_r)+\mu\epsilon\kappa_3\partial_x\Big( \frac13(u_l-u_r)(\partial_x^2 u_l-\partial_x^2 u_r)+\frac12(\partial_x u_l-\partial_x u_r)^2 \Big)\\
&\qquad - \mu\epsilon \partial_t\Big[ \kappa_1(u_l\partial_x^2u_l-u_r\partial_x^2 u_r)+\kappa_2 (u_r\partial_x^2u_l-u_l\partial_x^2u_r)+(\kappa_1+\frac{\kappa_2}2)\big((\partial_x u_l)^2-(\partial_x u_r)^2\big)\Big], \\
 f_r^{\epsilon,\mu} &= -\epsilon\frac{\alpha_1}2\partial_x \big((\frac13 u_l+u_r)(u_r-u_l)\big) - \epsilon^2 \frac{\alpha_2}3\partial_x\big((u_r-u_l)u_r(u_l+u_r)\big)\\
 &\qquad -\epsilon^3\frac{\alpha_3}4\partial_x\big((u_r-\frac15u_l)(u_r-u_l)(u_l+u_r)^2\big) \\
&\qquad -\mu\nu \partial_x^2\partial_t(u_r-u_l) -\mu\epsilon\kappa_3\partial_x\Big( \frac13(u_r-u_l)(\partial_x^2 u_r-\partial_x^2 u_l)+\frac12(\partial_x u_r-\partial_x u_l)^2 \Big) \\
&\qquad - \mu\epsilon \partial_t\Big[ \kappa_1(u_r\partial_x^2u_r-u_l\partial_x^2 u_l)+\kappa_2 (u_l\partial_x^2u_r-u_r\partial_x^2u_l)+(\kappa_1+\frac{\kappa_2}2)\big((\partial_x u_r)^2-(\partial_x u_l)^2\big)\Big],
\end{align*}
 \end{subequations}
and where we used the notation
\begin{equation}\label{eqn:par0}
\begin{array}{c}
\alpha_1=\frac32\frac{\delta^2-\gamma}{\gamma+\delta},\quad \alpha_2=-3\frac{\gamma\delta(\delta+1)^2}{(\gamma+\delta)^2},\quad \alpha_3=-5\frac{\delta^2(\delta+1)^2\gamma(1-\gamma)}{(\gamma+\delta)^3},\quad \nu=\frac16\frac{1+\gamma\delta}{\delta(\delta+\gamma)}.\\
\kappa_1=\frac{(1+\gamma\delta)(\delta^2-\gamma)}{3\delta(\gamma+\delta)^2},\quad \kappa_2=\frac{(1-\gamma)}{3(\gamma+\delta)},\quad \kappa_3=\frac{\gamma-1}{2(\gamma+\delta)}.\end{array}\end{equation}

\begin{Remark}\label{rem:CH}
One could use higher order expansions with respect to the parameter $\epsilon$, which would lead to decoupled models with formally higher accuracy. However, let us note that our results (see discussion in Section~\ref{ssec:discussion}) show that the main error of our decoupled models comes from neglecting coupling terms which arise at low order, rather than the unidirectional error which is produced by neglecting higher order terms in the scalar equations. Thus including these higher order terms in the evolution equation is unlikely to produce substantial improvement. 
\end{Remark}

\begin{Proposition}[Consistency of~\eqref{eq:dSerrel}--\eqref{eq:dSerrer}]\label{prop:ConsSerre}
Let $\big(u_l^\p,u_r^\p\big)$ be strong solutions of~\eqref{eq:dSerrel}--\eqref{eq:dSerrer}, depending on sets of parameters $(\epsilon,\mu,\delta,\gamma)=\p\in\P$, as defined in~\eqref{eqn:defRegimeFNL}. We assume that for $s\geq s_0>1/2$, $u_l^\p,u_r^\p \in W^{1}([0,T);H^{s+3}(\RR))$, uniformly in $\p$.
Additionally, we assume that there exists $h>0$ such that 
\[h_1^\p \ \equiv\ 1-\epsilon(u_l^\p+u_r^\p) \geq h>0, \qquad h_2^\p \ \equiv \ \frac1\delta +\epsilon (u_l^\p+u_r^\p)\geq h>0.\]
Then $\big(\zeta^\p,\b v^\p\big)^T\equiv \big(u_l^\p+u_r^\p,(\gamma+\delta)(u_l^\p-u_r^\p)\big)$ is consistent with Green-Naghdi equations~\eqref{eqn:GreenNaghdiMeanI} of order $s$ on $[0,T)$, at precision $\O(\mu\epsilon^2+\epsilon^4)$ (in the sense of Definition~\ref{def:consistency}).
\end{Proposition}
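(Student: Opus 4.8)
The plan is to reverse the formal derivation of Section~\ref{ssec:formal}. By Definition~\ref{def:consistency}, what must be shown is that $(\zeta^\p,\bar v^\p)\equiv\big(u_l^\p+u_r^\p,(\gamma+\delta)(u_l^\p-u_r^\p)\big)$ solves the two equations of \eqref{eqn:GreenNaghdiMeanI} up to remainders $\eps^\p r_1,\eps^\p r_2$ with $\eps^\p=\O(\mu\epsilon^2+\epsilon^4)$ and $(r_1,r_2)$ bounded in $H^s$. Since the matrices $P$, $P^{-1}$ of Section~\ref{ssec:formal} are invertible with entries depending only on $(\gamma,\delta)$, uniformly bounded on $\P$ because $0\le\gamma<1$ and $\delta\in(\delta_{\text{min}},\delta_{\text{max}})$, this is equivalent to the analogous statement after multiplying by $P^{-1}$, namely that $(u_l^\p,u_r^\p)$ solves the system obtained by rewriting \eqref{eqn:GreenNaghdiMeanI} in the variables $(u_l,u_r)$ up to an $H^s$ remainder of size $\O(\mu\epsilon^2+\epsilon^4)$. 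As $(u_l^\p,u_r^\p)$ solves \eqref{eq:dSerrel}--\eqref{eq:dSerrer} exactly, it suffices to prove that the $(u_l,u_r)$-form of \eqref{eqn:GreenNaghdiMeanI} coincides with \eqref{eq:dSerrel}--\eqref{eq:dSerrer} modulo such a remainder.

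To carry this out, first substitute $\zeta=u_l+u_r$ and $\bar v=(\gamma+\delta)(u_l-u_r)$ into \eqref{eqn:GreenNaghdiMeanI} and insert the four algebraic expansions displayed just before the statement (for $\frac{h_1h_2}{h_1+\gamma h_2}$, $\frac{{h_1}^2-\gamma{h_2}^2}{(h_1+\gamma h_2)^2}$, $\overline{\Q}[h_1,h_2]\bar v$ and $\overline{\R}[h_1,h_2]\bar v$), which are genuine Taylor expansions in $\epsilon$ of smooth functions of $\epsilon\zeta$, the denominator satisfying $h_1+\gamma h_2\ge(1+\gamma)h>0$ by hypothesis. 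Then bound the remainders of these expansions in $H^s$ by the same tame product and quotient (Moser-type) estimates used in Proposition~\ref{prop:expGH}, using $\zeta,\bar v\in W^1([0,T);H^{s+3})$: after the outer $\partial_x$, the two leading expansions contribute $\O(\epsilon^4)$, while $\partial_t\big(\mu\overline{\Q}[h_1,h_2]\bar v\big)$ contributes $\O(\mu\epsilon^2)$ and $\mu\epsilon\,\partial_x\big(\overline{\R}[h_1,h_2]\bar v\big)$ contributes $\O(\mu\epsilon^2)$, in each case with a constant depending only on $\frac1{s_0-1/2}$, $h^{-1}$, $\delta_{\text{min}}^{-1}$, $\delta_{\text{max}}$, $\epsilon_{\text{max}}$, $\mu_{\text{max}}$ and the $W^1([0,T);H^{s+3})$ norms of $u_l^\p,u_r^\p$.

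What remains is a purely algebraic verification that, once all $\O(\mu\epsilon^2,\epsilon^4)$ terms are discarded, the $(u_l,u_r)$-form of \eqref{eqn:GreenNaghdiMeanI} is exactly \eqref{eq:dSerrel}--\eqref{eq:dSerrer} with the coefficients \eqref{eqn:par0}. Order by order: the $\epsilon^0$ part reproduces $\partial_t\pm\partial_x$ through $P^{-1}A_0P=\mathrm{diag}(1,-1)$; the $\O(\epsilon),\O(\epsilon^2),\O(\epsilon^3)$ terms at $\mu=0$ match the quadratic, cubic and quartic nonlinearities, by identifying $f_l+f_r$ with $\partial_x$ of the expansion of $\frac{h_1h_2}{h_1+\gamma h_2}\bar v$ minus $\partial_x(u_l-u_r)$ for the first equation, and $(\gamma+\delta)(f_l-f_r)$ with $\frac{\epsilon}{2}\partial_x\big(\frac{{h_1}^2-\gamma{h_2}^2}{(h_1+\gamma h_2)^2}|\bar v|^2\big)$ for the second; the $\O(\mu)$ BBM-type term arises from the leading part of $\partial_t\big(\mu\overline{\Q}[h_1,h_2]\bar v\big)$ together with the $-\mu\nu\partial_x^2\partial_t(u_l-u_r)$ terms in $f_l,f_r$, using $2\nu=\frac{1+\gamma\delta}{3\delta(\gamma+\delta)}$; and the $\O(\mu\epsilon)$ nonlinear dispersive terms come from the $\O(\epsilon)$ correction of $\overline{\Q}$ and from $\mu\epsilon\,\partial_x\big(\overline{\R}[h_1,h_2]\bar v\big)$, matching the $\kappa_i$ terms. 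One checks in passing that all $\mu$-dependent contributions cancel in $f_l+f_r$, consistently with the first equation of \eqref{eqn:GreenNaghdiMeanI} being exact. Collecting these pieces gives the claimed remainder bound, and multiplying back by $P$ transfers it to $(\zeta^\p,\bar v^\p)$ with the constant enlarged only by a factor depending on $\delta_{\text{max}}$.

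The main obstacle is not conceptual but organisational: one must track the orders of the many discarded terms and verify the cancellations that define \eqref{eqn:par0}. The single technical point needing care is that each expansion remainder must truly be controlled in $H^s$ (not merely formally $\O(\epsilon^k)$): this is where the regularity $u_l^\p,u_r^\p\in W^1([0,T);H^{s+3})$ and the lower bound $h_1,h_2\ge h>0$ enter, the loss of three spatial derivatives being forced by $\mu\epsilon\,\partial_x\big(\overline{\R}[h_1,h_2]\bar v\big)$ and the loss of one time derivative by $\partial_t\big(\mu\overline{\Q}[h_1,h_2]\bar v\big)$, exactly as encoded in the hypotheses.
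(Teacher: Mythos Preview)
Your proposal is correct and follows essentially the same approach as the paper's own proof, which is very terse: the paper simply says that the proposition is straightforward once the four expansions (of $\frac{h_1h_2}{h_1+\gamma h_2}$, $\frac{h_1^2-\gamma h_2^2}{(h_1+\gamma h_2)^2}$, $\overline{\Q}[h_1,h_2]\bar v$, $\overline{\R}[h_1,h_2]\bar v$) are made rigorous with residuals estimated in $W^1([0,T);H^{s+1})$, using the hypotheses $u_l^\p,u_r^\p\in W^1([0,T);H^{s+3})$, $h_1^\p,h_2^\p\ge h>0$ and the Sobolev embedding $H^s\hookrightarrow L^\infty$ for $s\ge s_0>1/2$. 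You have simply spelled out in more detail the same steps (change of variables via $P$, order-by-order algebraic identification, Moser-type control of the Taylor remainders), and correctly identified why three extra spatial derivatives and one time derivative are needed.
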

\begin{proof}
Proposition~\ref{prop:ConsSerre} is straightforward once one obtains a rigorous statement of the expansions of $ \frac{h_1^\p h_2^\p}{h_1^\p+\gamma h_2^\p}\ , \
 \frac{(h_1^\p)^2 -\gamma (h_2^\p)^2 }{(h_1^\p+\gamma h_2^\p)^2} \ , \
 \overline{\Q}[h_1^\p,h_2^\p]\b v^\p \ , \ 
 \overline{\R}[h_1^\p,h_2^\p]\b v^\p $, as stated above, the residual being estimated in $W^1([0,T);H^{s+1})$ norm. These expansions are easily checked, provided the assumptions of the proposition ($u_l^\p,u_r^\p \in W^{1}([0,T);H^{s+3}(\RR))$, $h_1^\p,h_2^\p\geq h>0$) are satisfied, and using uniformly continuous (for $s\geq s_0>1/2$) Sobolev embedding $H^s \hookrightarrow L^\infty$.
\end{proof}
\medskip

The decoupled approximation simply consists in neglecting all coupling terms in~\eqref{eq:dSerrel}--\eqref{eq:dSerrer}, that is replacing $f_l^{\epsilon,\mu}(u_l,u_r)$ by $f_l^{\epsilon,\mu}(u_l,0)$, and $f_r^{\epsilon,\mu}(u_l,u_r)$ by $f_r^{\epsilon,\mu}(0,u_r)$. This yields
 \begin{subequations}
\begin{align}\label{eq:CL0l}
\partial_t u_l \ +\ \partial_x u_l \ +\ \epsilon\alpha_1 u_l\partial_x u_l \ +\ \epsilon^2 \alpha_2 u_l^2\partial_x u_l \ +\ \epsilon^3 \alpha_3 {u_l}^3\partial_x u_l\ -\ \mu\nu\partial_x^2\partial_t u_l \quad &\nn \\
- \mu\epsilon \partial_t\big( \kappa_1 u_l\partial_x^2u_l+(\kappa_1+1/2\kappa_2)(\partial_x u_l)^2\big)+ \mu\epsilon\kappa_3\partial_x\big(\frac13u_l\partial_x^2 u_l+\frac12(\partial_x u_l)^2\big) &\ = 0, \\
\label{eq:CL0r}
\partial_t u_r\ -\ \partial_x u_r\ -\ \epsilon\alpha_1 u_r\partial_x u_r\ -\ \epsilon^2 \alpha_2 u_r^2\partial_x u_r \ -\ \epsilon^3 \alpha_3 {u_r}^3\partial_x u_r\ -\ \mu\nu\partial_x^2\partial_t u_r \quad & \nn \\
- \mu\epsilon \partial_t\big( \kappa_1 u_r\partial_x^2u_r+(\kappa_1+1/2\kappa_2)(\partial_x u_r)^2\big)- \mu\epsilon\kappa_3\partial_x\big(\frac13u_r\partial_x^2 u_r+\frac12(\partial_x u_r)^2\big) &\ = 0, 
\end{align}
 \end{subequations}
and~\eqref{eq:CL0l}--\eqref{eq:CL0r} are the decoupled equations we consider; see Definition~\ref{def:CL}.
\medskip

Let us now reckon that one can deduce from~\eqref{eq:CL0l}--\eqref{eq:CL0r} a large family of formally equivalent models, with different parameters, following the techniques used for example in~\cite{BonaChenSaut02,BonaColinLannes05,ConstantinLannes09,Duchene11a}, and that we discuss below.
\begin{itemize}
\item{\em The BBM trick} (from Benjamin-Bona-Mahony~\cite{BenjaminBonaMahony72}). 
Keeping only the first order terms in equations~\eqref{eq:CL0l}--\eqref{eq:CL0r}, one has the simple transport equations
\[ \partial_t u_l \ + \ \partial_x u_l \ = \ \O(\mu,\epsilon), \quad \text{ and } \quad \partial_t u_r \ - \ \partial_x u_r \ = \ \O(\mu,\epsilon).\]
It follows that one can replace time derivatives in higher order terms (of order $\O(\mu\epsilon)$) by spatial derivatives (up to a sign), and both equations have formally the same order of accuracy. In order to simplify, we consider only equations with spatial derivatives in $\O(\mu\epsilon)$ terms (if they exist). In particular,~\eqref{eq:CL0l}--\eqref{eq:CL0r} becomes
\begin{subequations}
\begin{align}\label{eq:CL1l}
\partial_t u_l \ +\ \partial_x u_l \ +\ \epsilon\alpha_1 u_l\partial_x u_l \ +\ \epsilon^2 \alpha_2 u_l^2\partial_x u_l \ +\ \epsilon^3 \alpha_3 {u_l}^3\partial_x u_l\ -\ \mu\nu\partial_x^2\partial_t u_l \quad &\nn \\
+ \mu\epsilon \partial_x\big(( \kappa_1+1/3\kappa_3) u_l\partial_x^2u_l+(\kappa_1+1/2\kappa_2+1/2\kappa_3)(\partial_x u_l)^2\big) &\ = 0, \\
\label{eq:CL1r}
\partial_t u_r\ -\ \partial_x u_r\ -\ \epsilon\alpha_1 u_r\partial_x u_r\ -\ \epsilon^2 \alpha_2 u_r^2\partial_x u_r \ -\ \epsilon^3 \alpha_3 {u_r}^3\partial_x u_r\ -\ \mu\nu\partial_x^2\partial_t u_r \quad & \nn \\
- \mu\epsilon \partial_x\big( (\kappa_1+1/3\kappa_3) u_r\partial_x^2u_r+(\kappa_1+1/2\kappa_2+1/2\kappa_3)(\partial_x u_r)^2\big) &\ = 0. 
\end{align}
 \end{subequations}
\medskip

Following a similar idea, we make use of the low order identity obtained from~\eqref{eq:CL1l}--\eqref{eq:CL1r}:
\[ \partial_t u_l \ + \ \partial_x u_l \ + \ \epsilon\alpha_1 u_l\partial_x u_l \ = \ \O(\mu,\epsilon^2),\]
so that one has, for any $\theta\in\RR$,
\[ \partial_t u_l \ = \ \theta\partial_t u_l+(\theta-1)\big(\partial_x u_l + \epsilon\alpha_1 u_l\partial_x u_l\big) \ + \ \O(\mu,\epsilon^2).\] 
Plugging back into~\eqref{eq:CL1l} and withdrawing $\O(\mu^2,\mu\epsilon^2)$ terms yields
\begin{subequations}
\begin{align}\label{eq:CL2l}
\partial_t u_l \ +\ \partial_x u_l \ +\ \epsilon\alpha_1 u_l\partial_x u_l \ +\ \epsilon^2 \alpha_2 u_l^2\partial_x u_l \ +\ \epsilon^3 \alpha_3 {u_l}^3\partial_x u_l\ -\ \mu\nu^\theta_t\partial_x^2\partial_t u_l \ \quad & \nn \\
+ \ \mu\nu^\theta_x\partial_x^3 u_l \ +\ \mu\epsilon \partial_x\big( \kappa_1^\theta u_l\partial_x^2u_l+\kappa_2^\theta(\partial_x u_l)^2\big) &\ = 0,\\\label{eq:CL2r}
\partial_t u_r\ -\ \partial_x u_r\ -\ \epsilon\alpha_1 u_r\partial_x u_r\ -\ \epsilon^2 \alpha_2 u_r^2\partial_x u_r \ -\ \epsilon^3 \alpha_3 {u_r}^3\partial_x u_r\ -\ \mu\nu^\theta_t\partial_x^2\partial_t u_r\ \quad & \nn \\
 -\ \mu^\theta_x\nu\partial_x^3 u_r\ - \mu\epsilon \partial_x\big( \kappa_1^\theta u_r\partial_x^2u_r+\kappa_2^\theta(\partial_x u_r)^2\big) &\ = 0,\end{align}
 \end{subequations}
where we have defined, after parameters~\eqref{eqn:par0},
\begin{equation}\label{eqn:partheta}
 \nu_t^\theta\equiv \theta\nu, \quad \nu_x^\theta\equiv (1-\theta)\nu, \quad \kappa_1^\theta \equiv \kappa_1+\frac{\kappa_3}3+(1-\theta)\alpha_1\nu, \quad \kappa_2^\theta \equiv \kappa_1+\frac{\kappa_2}2+ \frac{\kappa_3}2+(1-\theta)\alpha_1\nu.
 \end{equation}
 
 \item {\em Near identity changes of variables}. We used system~\eqref{eqn:GreenNaghdiMeanI} as our reference system, and therefore the unknowns we consider are $(\zeta,\overline v)$, where $\b v=\overline{u}_{2}-\gamma\overline{u}_{1}$ is the shear layer-mean velocity. However, other natural variables may also be used, such as the the {\em shear velocity at the interface} (leading to system~\eqref{eqn:GreenNaghdi2})
 \[ v_0 \ = \ \partial_x\left(\big(\phi_2 - \gamma\phi_1 \big)\id{z=\epsilon\zeta}\right) \ = \ \partial_x\Big( \phi_2(x,r_2(x,0))-\gamma\phi_1(x,r_1(x,0))\Big) , \]
 (where we use the change of coordinate flattening the fluid domains: $r_i(x, \t z)=\t z h_i(x)+\epsilon\zeta(x)$; see~\cite{Duchene10}), 
 or more generally, using the horizontal derivative of the potential at specific heights ($z_1,z_2)\in [0,1)\times(-1,0]$: 
 \[ v^{z_1,z_2} \ = \ \partial_x\Big( \phi_2(x,r_2(x,z_2))-\gamma \phi_1(x,r_2(x,z_1))\Big) . \]
Using the expansion of the velocity potentials and layer-mean velocities, as obtained in~\cite{Duchene10}, as well as the identity $h_1\overline{u}_{1}+h_2\overline{u}_{2}=0$ (obtained through the rigid lid assumption), yields the following approximation: 
\[v^{z_1,z_2} \ = \ \bar v \ + \ \mu \lambda^{z_1,z_2}\partial_x^2 \bar v\ +\ \mu\epsilon \T^{z_1,z_2}( \zeta,\bar v) \ + \ \O(\mu^2,\mu\epsilon^2),\]
with $\lambda^{z_1,z_2} = - \dfrac{(3 {z_2}^2+6 z_2+2)+\gamma\delta(3z_1^2-6z_1+2)}{6\delta(\gamma+\delta)}$, and $\T^{z_1,z_2}$ a bi-linear differential operator whose precise formula do not play a significant role in our work as we shall discuss below.
Following this idea, we consider
\begin{align*}u_l^{\lambda} \ &\equiv \ u_l \ +\ \mu \lambda \partial_x^2 u_l\ , \\
 u_r^{\lambda} \ &\equiv \ u_r\ -\ \mu \lambda \partial_x^2 u_r\ .\end{align*}
If $(u_l,u_r)$ satisfies~\eqref{eq:CL2l}--\eqref{eq:CL2r}, then $(u_l^{\lambda},u_r^{\lambda})$ satisfies the following equations, up to terms of order $\O(\mu^2,\mu\epsilon^2)$:
\begin{subequations}
\begin{align}\label{eq:CL3l}
\partial_t u_l^\lambda \ +\ \partial_x u_l^\lambda \ +\ \epsilon\alpha_1 u_l^\lambda\partial_x u_l^\lambda \ +\ \epsilon^2 \alpha_2 {u_l^\lambda}^2\partial_x u_l^\lambda \ +\ \epsilon^3 \alpha_3 {u_l^\lambda}^3\partial_x u_l^\lambda\ -\ \mu\nu^{\theta,\lambda}_t\partial_x^2\partial_t u_l^\lambda \ \quad & \nn \\
+ \ \mu\nu^{\theta,\lambda}_x\partial_x^3 u_l^\lambda \ +\ \mu\epsilon \partial_x\big( \kappa_1^{\theta,\lambda} u_l^\lambda\partial_x^2u_l^\lambda+\kappa_2^\theta(\partial_x u_l^\lambda)^2\big) &\ = 0, \\
\label{eq:CL3r}
\partial_t u_r^\lambda\ -\ \partial_x u_r^\lambda\ -\ \epsilon\alpha_1 u_r^\lambda\partial_x u_r^\lambda\ -\ \epsilon^2 \alpha_2 {u_r^\lambda}^2\partial_x u_r^\lambda \ -\ \epsilon^3 \alpha_3 {u_r^\lambda}^3\partial_x u_r^\lambda\ -\ \mu\nu^{\theta,\lambda}_t\partial_x^2\partial_t u_r^\lambda\ \quad & \nn \\
 -\ \mu\nu^{\theta.\lambda}_x\partial_x^3 u_r^\lambda\ - \mu\epsilon \partial_x\big( \kappa_1^{\theta,\lambda} u_r^\lambda\partial_x^2u_r^\lambda+\kappa_2^\theta(\partial_x u_r^\lambda)^2\big) &\ = 0,\end{align}
 \end{subequations}

where we have defined, after parameters~\eqref{eqn:partheta},
\begin{equation}\label{eqn:parthetalambda}
\nu_t^{\theta,\lambda}\equiv \nu_t^\theta + \lambda , \quad \nu_x^{\theta,\lambda}\equiv \nu_x^\theta -\lambda, \quad \kappa_1^{\theta,\lambda} \equiv \kappa_1^{\theta,\lambda}+\alpha_1\lambda . 
\end{equation}

 Note that in order to fit as much as possible with variables $(\zeta,v^{z_1,z_2})$, one could have used more complex change of variables, such as
\begin{align*}u_l^{\lambda} \ &\equiv \ u_l +\mu \lambda \partial_x^2 u_l\ + \ \mu\epsilon\big( \lambda_2 u_l \partial_x^2 u_l + \lambda_3\partial_x( {u_l}^2)\big),\\
 u_r^{\lambda} \ &\equiv \ u_r-\mu \lambda \partial_x^2 u_r\ - \ \mu\epsilon\big( \lambda_2 u_r \partial_x^2 u_r + \lambda_3\partial_x( {u_r}^2)\big),\end{align*}
 with $\lambda=\frac{\lambda^{z_1,z_2}}{2(\gamma+\delta)}$, and $\lambda_2,\lambda_3$ obtained through $\T^{z_1,z_2}$.
It turns out $u_l^{\lambda} $ and $u_r^{\lambda} $ would then satisfy the same equation as above: the new parameters do not depend on $\lambda_2$ and $\lambda_3$, as their contribution is of order $\O(\mu\epsilon^2,\mu^2\epsilon)$, after using BBM trick to suppress higher order derivatives with respect to time. We thus do not consider such changes of variable.
\end{itemize}
\bigskip

Ultimately, one obtains the family of approximations described in Definition~\ref{def:CL}, with the following set of parameters:
\begin{equation}\label{eqn:parameters}
\begin{array}{c} \displaystyle
\alpha_1=\frac32\frac{\delta^2-\gamma}{\gamma+\delta},\quad \alpha_2=-3\frac{\gamma\delta(\delta+1)^2}{(\gamma+\delta)^2},\quad \alpha_3=-5\frac{\delta^2(\delta+1)^2\gamma(1-\gamma)}{(\gamma+\delta)^3},\\ \displaystyle
 \nu_t^{\theta,\lambda}\equiv \frac\theta6\frac{1+\gamma\delta}{\delta(\delta+\gamma)} + \lambda , \qquad \nu_x^{\theta,\lambda}\equiv \frac{1-\theta}6\frac{1+\gamma\delta}{\delta(\delta+\gamma)} -\lambda,\\ \displaystyle
 \kappa_1^{\theta,\lambda} \equiv \frac{(1+\gamma\delta)(\delta^2-\gamma)}{3\delta(\gamma+\delta)^2}(1+\frac{1-\theta}4)-\frac{(1-\gamma)}{6(\gamma+\delta)}+\lambda\frac32\frac{\delta^2-\gamma}{\gamma+\delta} , \\ \displaystyle
 \kappa_2^\theta \equiv \frac{(1+\gamma\delta)(\delta^2-\gamma)}{3\delta(\gamma+\delta)^2}(1+\frac{1-\theta}4)-\frac{(1-\gamma)}{12(\gamma+\delta)}.\end{array}
\end{equation}

\subsection{Rigorous justification; proof of Proposition~\ref{prop:decompositionI} }\label{sec:prop:decompositionI}

In this section, we give the rigorous justification of the Constantin-Lannes decoupled approximation, as defined in Definition~\ref{def:CL}. More precisely, we prove Proposition~\ref{prop:decompositionI}, that we recall below.
\begin{Proposition}[Consistency]\label{prop:decomposition}
Let $\zeta^0,v^0\in H^{s+6}$, with $s\geq s_0> 3/2$. For $(\epsilon,\mu,\delta,\gamma)=\p\in\P$, as defined in~\eqref{eqn:defRegimeFNL}, we denote $U_{\text{CL}}^\p$ the unique solution of the CL approximation, as defined in Definition~\ref{def:CL}. For some given $M^\star_{s+6}>0$, sufficiently big, we assume that there exists $T^\star>0$ and a family $(U_{\text{CL}}^\p)_{\p\in\P}$ such that
\[ T^\star \ = \ \max\big( \ T\geq0\quad \text{such that}\quad \big\Vert U_{\text{CL}}^\p \big\Vert_{L^\infty([0,T);H^{s+6})}+\big\Vert \partial_t U_{\text{CL}}^\p \big\Vert_{L^\infty([0,T);H^{s+5})} \ \leq \ M^\star_{s+6}\ \big) \ .\]

Then there exists $U^c=U^c[U_{\text{CL}}^\p]$ such that $U\equiv U_{\text{CL}}^\p+U^c$ is consistent with Green-Naghdi equations~\eqref{eqn:GreenNaghdiMeanI} of order $s$ on $[0,t]$ for $t<T^\star$, at precision $\O(\eps^\star_{\text{CL}})$ with
\[ \eps^\star_{\text{CL}} \ = \ C\ \max(\epsilon^2(\delta^2-\gamma)^2,\epsilon^4,\mu^2)\ (1+\sqrt t) ,\]
with $C=C(\frac1{s_0-3/2},M^\star_{s+6},\frac1{\delta_{\text{min}}},\delta_{\text{max}},\epsilon_{\text{max}},\mu_{\text{max}},|\lambda|,|\theta|)$, and the corrector term $U^c$ is estimated as
\[ \big\Vert U^c \big\Vert_{L^\infty([0,T^\star];H^{s})}+\big\Vert \partial_t U^c \big\Vert_{L^\infty([0,T^\star];H^{s})} \leq C\ \max(\epsilon(\delta^2-\gamma),\epsilon^2,\mu) \min( t, \sqrt t) .\]

Additionally, if there exists $\alpha>1/2$, $M^\sharp_{s+6},\ T^\sharp>0$ such that 
\[ \sum_{k=0}^6\big\Vert (1+x^2)^\alpha \partial_x^k U_{\text{CL}}^\p \big\Vert_{L^\infty([0,T);H^{s})} +\sum_{k=0}^5\big\Vert (1+x^2)^\alpha \partial_x^k\partial_t U_{\text{CL}}^\p \big\Vert_{L^\infty([0,T);H^{s})} \ \leq \ M^\sharp_{s+6}\ ,\]
then $U\equiv U_{\text{CL}}^\p+U^c$ is consistent with Green-Naghdi equations~\eqref{eqn:GreenNaghdiMeanI} of order $s$ on $[0,t]$ for $t<T^\sharp$, at precision $\O(\eps_{\text{CL}}^\sharp)$ with
\[ \eps^\sharp_{\text{CL}} \ = \ C\ \max(\epsilon^2(\delta^2-\gamma)^2,\epsilon^4,\mu^2),\]
with $C=C(\frac1{s_0-3/2},M^\sharp_{s+6},\frac1{\delta_{\text{min}}},\delta_{\text{max}},\epsilon_{\text{max}},\mu_{\text{max}},|\lambda|,|\theta|)$ and $U^c$ is uniformly estimated as
\[ \big\Vert U^c \big\Vert_{L^\infty([0,T^\sharp];H^{s})}+\big\Vert \partial_t U^c \big\Vert_{L^\infty([0,T^\sharp];H^{s})} \leq C\ \max(\epsilon(\delta^2-\gamma),\epsilon^2,\mu) \min( t, 1). \]
\end{Proposition}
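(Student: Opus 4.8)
The plan is to pass to the characteristic variables of the linear part of~\eqref{eqn:GreenNaghdiMeanI} and to treat the whole problem as a perturbation of the decoupled scalar flow. Since Proposition~\ref{prop:ConsSerre} already shows that the coupled system~\eqref{eq:dSerrel}--\eqref{eq:dSerrer} is consistent with~\eqref{eqn:GreenNaghdiMeanI} at precision $\O(\mu\epsilon^2+\epsilon^4)$, it suffices to build, out of the components $v_\pm$ of $U_{\text{CL}}^\p$, a pair $(u_l,u_r)$ solving~\eqref{eq:dSerrel}--\eqref{eq:dSerrer} up to the announced residual, and then to set $U^c\equiv P\,(u_l,u_r)^T-U_{\text{CL}}^\p$, with $P$, $A_0$ as in Section~\ref{ssec:formal}. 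The first step is to reverse, for $v_\pm$, the manipulations of Section~\ref{ssec:formal}: writing $v_\pm^\lambda=(1\pm\mu\lambda\partial_x^2)v_\pm$ and reading backwards the identities relating~\eqref{eq:CL} to~\eqref{eq:CL2l}--\eqref{eq:CL2r} (the near-identity change of variables) and then to~\eqref{eq:CL0l}--\eqref{eq:CL0r} (the BBM substitutions), one checks that $v_\pm$ solve the reference decoupled equations~\eqref{eq:CL0l}--\eqref{eq:CL0r} up to remainders of size $\O(\mu^2,\mu\epsilon^2)$ in $H^s$. Here the hypotheses $\big\Vert U_{\text{CL}}^\p\big\Vert_{H^{s+6}},\big\Vert\partial_t U_{\text{CL}}^\p\big\Vert_{H^{s+5}}\le M^\star_{s+6}$ together with Moser-type product estimates supply all the control needed, the extra derivatives being consumed by the $\partial_x^3$ terms and by differentiating the BBM identities.

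Second, I would insert $u_l=v_+(\cdot-t)$, $u_r=v_-(\cdot+t)$ into~\eqref{eq:dSerrel}--\eqref{eq:dSerrer}. Because $v_\pm$ solve~\eqref{eq:CL0l}--\eqref{eq:CL0r}, the diagonal parts $f_l^{\epsilon,\mu}(u_l,0)$ and $f_r^{\epsilon,\mu}(0,u_r)$ cancel, and the residual splits into: (i) coupling terms $f_l^{\epsilon,\mu}(u_l,u_r)-f_l^{\epsilon,\mu}(u_l,0)$ and the symmetric $f_r$ ones, each a finite sum of bilinear or trilinear expressions in which a right-going profile $v_+(\cdot-t)$ (or a derivative of it) multiplies a left-going profile $v_-(\cdot+t)$ (or a derivative), with prefactor of size $\O(\epsilon|\delta^2-\gamma|)$ (from $\alpha_1$), $\O(\epsilon^2)$ (from $\alpha_2$), or $\O(\mu)$ (from the dispersive term $\mu\nu\partial_x^2\partial_t$); and (ii) genuinely small remainders of order $\O(\mu\epsilon^2,\epsilon^4,\mu^2)$. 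I then define $U^c$ as the solution, with zero initial data, of the linear system $\partial_t U^c+A_0\partial_x U^c=-P\,R_{\mathrm c}$, where $R_{\mathrm c}$ collects the type-(i) terms; this is solved explicitly by Duhamel's formula along the characteristics $x\mp t=\mathrm{const}$. The decisive point is that, for a source $f(x-s)g(x+s)$, the Duhamel integral equals $\tfrac12\,g(x\pm t)\int_{x-t}^{x+t}f$, whose $H^s$ norm is bounded both by $C\,t$ and, by Cauchy--Schwarz in the inner integral, by $C\sqrt t\,\big\vert f\big\vert_{H^s}\big\vert g\big\vert_{H^s}$; hence $\big\Vert U^c\big\Vert_{H^s}\le C\max(\epsilon|\delta^2-\gamma|,\epsilon^2,\mu)\min(t,\sqrt t)$, and $\big\Vert\partial_t U^c\big\Vert_{H^s}$ satisfies the same bound once one writes $\partial_t U^c=-A_0\partial_x U^c-P R_{\mathrm c}$ (this trade of one time derivative for one space derivative is the source of the margin beyond $H^s$, hence the $H^{s+6}$ requirement). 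Feeding $U\equiv U_{\text{CL}}^\p+U^c$ back into~\eqref{eqn:GreenNaghdiMeanI} then produces, besides the type-(ii) terms and the $\O(\mu\epsilon^2+\epsilon^4)$ residual of Proposition~\ref{prop:ConsSerre}, only cross terms between $U_{\text{CL}}^\p$ and $U^c$; each such term carries a prefactor equal to the product of the prefactor of the relevant quadratic coefficient of~\eqref{eqn:GreenNaghdiMeanI} (itself $\O(|\delta^2-\gamma|)$, $\O(1)$, or $\O(\mu)$) with the size of $U^c$, and the elementary inequality $ab\le\max(a,b)^2$ shows that all of them are $\O\big(\max(\epsilon^2(\delta^2-\gamma)^2,\epsilon^4,\mu^2)\big)(1+\sqrt t)$ — exactly $\eps^\star_{\mathrm{CL}}$.

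For the second, localized, statement nothing changes except the estimate of the Duhamel integral: when $(1+x^2)^\alpha\partial_x^k U_{\text{CL}}^\p$ and $(1+x^2)^\alpha\partial_x^k\partial_t U_{\text{CL}}^\p$ are bounded with $\alpha>1/2$, one has $\big\vert\int_{x-t}^{x+t}f\big\vert\le\big\vert f\big\vert_{L^1}\le C\big\vert(1+x^2)^\alpha f\big\vert_{L^2}$ uniformly in $t$, so $\sqrt t$ is replaced by $1$ throughout (one also checks that the weight passes through $\partial_x$ and the elliptic operator $(1\pm\mu\lambda\partial_x^2)^{-1}$ up to harmless lower-order terms). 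The main obstacle, and where essentially all the real work lies, is the bookkeeping underlying the splitting (i)/(ii): one must verify that every term generated by expanding~\eqref{eq:CL}, undoing the BBM and near-identity manipulations, and then substituting the corrector, is either already of type (ii) or a genuine left-going$\,\times\,$right-going product to which the transport estimate applies — in particular that no resonant self-interaction of a single wave survives with the wrong, merely $\O(t)$, growth; this is precisely what the choice of coefficients~\eqref{eqn:parameters} is designed to guarantee. The ancillary estimates (Moser products, elliptic regularity, and the control of $\partial_t U^c$) are then routine.
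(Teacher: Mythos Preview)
Your strategy is essentially the paper's: pass to characteristic variables, use the simplified coupled system~\eqref{eq:dSerrel}--\eqref{eq:dSerrer} as intermediate, define the corrector $U^c$ as the solution of a transport equation forced by the coupling terms, and invoke a secular-growth estimate (your Cauchy--Schwarz argument is the $L^2$ core of the Lannes lemma the paper cites). The localized case via $L^1$ is likewise the same idea.

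There is, however, one genuine gap. Your explicit Duhamel formula ``$\tfrac12 g(x\pm t)\int_{x-t}^{x+t} f$'' and the ensuing $\sqrt t$ bound are only valid when $f$ and $g$ are \emph{fixed} profiles, i.e.\ when the source is a product of exact traveling waves $f(x-s)g(x+s)$. But $v_\pm$ evolve in time through the CL dynamics: in your notation the source is $v_+(s,x-t)\,v_-(s,x-t+2s)$, with nontrivial dependence on the first argument $s$. That slow drift (of size $\O(\sigma s)$ with $\sigma=\max(\epsilon|\delta^2-\gamma|,\epsilon^2,\mu)$) is precisely what can spoil the clean $\sqrt t$ secular estimate, and you do not address it. The paper handles this by an explicit two-timescale decomposition: it writes $v_\pm(\tau,t,x)=\t v_\pm(\tau,x\mp t)$ with $\tau=\sigma t$, defines $u^c_\pm$ by the transport equation~\eqref{eq:coupling} \emph{at fixed $\tau$} (so that the source really is a product of frozen traveling profiles and Lemmata~\ref{lem:easy}--\ref{lem:Lannes} apply verbatim), and then controls the extra term $\sigma^2\partial_\tau u^c_\pm$ separately (estimate~\eqref{eq:est2}) --- this contribution is what produces the $\max(\dots)^2$ prefactor in $\eps^\star_{\text{CL}}$. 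Without this device (or an equivalent bootstrap showing that the slow drift contributes only at higher order uniformly on $[0,T^\star]$), your argument as written does not close.
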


\begin{Remark}The proof of Proposition~\ref{prop:other} is identical to the one of Proposition~\ref{prop:decomposition}, presented below. More precisely, the choice of lower-order evolution equations modifies mostly the last term of~\eqref{eq:estR} in Lemma~\ref{lem:proofcons} (below), as the contribution of neglected unidirectional terms should be added. The additional error is therefore uniformly bounded over times $[0,T^\star]$ and $[0,T^\sharp]$. The detailed proof of Proposition~\ref{prop:other} is omitted.\end{Remark}

\paragraph{Strategy of the proof.} 

Our strategy is the following. Inspired by the calculations of Section~\ref{ssec:formal}, we seek
an approximate solution of~\eqref{eqn:GreenNaghdiMeanI} under the form
\[ U_{\text{app}}\ \equiv \ \Big(v_+(t,x-t)+v_-(t,x+t),(\gamma+\delta)\big(v_+(t,x-t)-v_-(t,x+t)\big)\Big) \ + \ \sigma U_{c}[v_\pm] \ ,\]
where $v_\pm$ satisfies the Constantin-Lannes equation~\eqref{eq:CL}, and $U_{c}$ contains the leading order coupling terms. The parameter $\sigma$ is assumed to be small, and we want to justify our approximate solutions over times of order $\O(1/\sigma)$.

Precisely, our aim is to prove that
\begin{enumerate}[i.]
\item the coupling term $\sigma U_{c}$ can be controlled, and grows sublinearly in time;
\item the approximate function $U_{\text{app}} $ solves the coupled equation~\eqref{eqn:GreenNaghdiMeanI}, up to a small remainder.
\end{enumerate}
\medskip 

As we shall see, controlling the secular growth of $U_{c}$ requires to consider separately the short time scale, where the coupling effects may be strong, and long time scale, where the coupling between two localized waves moving in opposite directions can be controlled. Thus we introduce the long time scale $\tau=t/\sigma$, and will seek an approximate solution of~\eqref{eqn:GreenNaghdiMeanI} as
\begin{align*} & U_{\text{app}}(t, x) \ = \ U_{\text{app}}(\sigma t, t, x), \quad \text{with} \\
& U_{\text{app}}(\tau, t, x) \ = \ \Big(v_+(\tau, t, x)+v_-(\tau, t, x)\ ,\ (\gamma+\delta)\big(v_+(\tau, t, x)-v_-(\tau, t, x)\big)\Big) \ + \ \sigma U_{c}[v_\pm](\tau, t, x) \ , \nn \end{align*}
with obvious misuses of notation.

Plugging the Ansatz into the coupled Green-Naghdi equation~\eqref{eqn:GreenNaghdiMeanI} yields at first order
\begin{equation}\label{eq:transport}
\partial_t v_+ \ + \ \partial_x v_+ \ = \ 0 \qquad \text{ and } \qquad \partial_t v_- \ - \ \partial_x v_- \ = \ 0,
\end{equation}
so that $v_\pm(\tau, t,x) \ = \ \t v_\pm(\tau, x_\pm)\equiv \t v_\pm(\tau, x\mp t)$. 
\medskip

At next order, and following the calculations of Section~\ref{ssec:formal}, one obtains the decoupled equations
\begin{align}\label{eq:v0}
\sigma(1- \mu\nu_t \partial_{x}^2)\partial_\tau \t v_\pm \ \pm \ \epsilon\alpha_1 \t v_\pm\partial_x \t v_\pm \ \pm \ \epsilon^2 \alpha_2 ( \t v_\pm )^2 \partial_x \t v_\pm\ \pm \ \epsilon^3 \alpha_3 (\t v_\pm)^3\partial_x \t v_\pm\ & \\
\pm\ \mu(\nu_x+\nu_t) \partial_x^3 \t v_\pm \ 
\pm \ \mu\epsilon\partial_x\big(\kappa_1 \t v_\pm\partial_x^2 \t v_\pm+\kappa_2(\partial_x \t v_\pm)^2\big) \ &= \ 0, \nn
\end{align}
where parameters satisfy identities of Definition~\ref{def:CL}. In order to deal with the coupling terms, we introduce the following first order correction.
\begin{Definition}\label{def:corrector}
We denote $U^c \equiv \ (u^c_++u^c_-,(\gamma+\delta)(u^c_+-u^c_-)\big)$ where $u^c_\pm[v_\pm](\tau, t, x)$ satisfies initial condition ${u^c_\pm}\id{t =0} \ \equiv\ 0$, and equation
\begin{equation}\label{eq:coupling}
\sigma(\partial_t+\partial_x) u^c_+ \ + \ f^l(\t v_+,\t v_-)-f^l(\t v_+,0) \ = \ 0, \ \text{ and } \ \sigma (\partial_t-\partial_x) u^c_- \ + \ f^r(\t v_+,\t v_-)-f^r(0,\t v_-) \ = \ 0,
\end{equation}
where $f^l$ and $f^r$ are defined as in~\eqref{eq:dSerrel}--\eqref{eq:dSerrer}. 
\end{Definition}

\begin{Remark} \label{rem:visCL} The above discussion does not take into account the use of near identity change of variable as described in Section~\ref{ssec:formal}. In that case, we set the initial data as follows:
\begin{equation} \label{eq:initialdata}
\t v_\pm\id{\tau=0} \ = \ (1\pm\mu\lambda\partial_x^2)\left(\frac{\zeta^0\pm\frac{v^0}{\gamma+\delta}}2\right).
\end{equation}
Thus the function $\t v_\pm(\sigma t,x\mp t)$ , defined by~\eqref{eq:v0} and with initial data~\eqref{eq:initialdata}, satisfies precisely
\[ v_\pm(\sigma t, t,x) \ = \ \t v_\pm(\sigma t,x\mp t) \ = \ v_\pm^\lambda(t,x\mp t),\]
where $v_\pm^\lambda$ is the solution of~\eqref{eq:CL} as defined in Definition~\ref{def:CL}. 
\end{Remark}

The proof is now as follows. First, we state that the approximate solution, $ U_{\text{app}}(\sigma t,t, x) $, constructed as above, does satisfy~\eqref{eqn:GreenNaghdiMeanI}, up to a small remainder (although depending on the size of $v_\pm$, $u^c_\pm$, and their derivatives). The fact that $v_\pm$ is uniformly bounded follows from the assumptions of the proposition. The key ingredient in the proof consists in estimating the growth over long times (that is, in the variable $\tau$), of $u^c_\pm$ satisfying~\eqref{eq:coupling}. Each of these steps are described in details in the following.
\bigskip

\paragraph{Construction and accuracy of the approximate solution $U_\text{app}$.}

The following Lemma states carefully the definition of our approximate solution $U_\text{app}$, and its precision in the sense of consistency.
\begin{Lemma}\label{lem:proofcons}
Let $\zeta^0,v^0\in H^{s+6}$, with $s\geq s_0>3/2$, and $(\epsilon,\mu,\delta,\gamma)=\p\in\P$, as defined in~\eqref{eqn:defRegimeFNL}. Let $v_\pm^\lambda(\tau, t,x)$ be defined by~\eqref{eq:transport},~\eqref{eq:v0} and with initial data~\eqref{eq:initialdata}. Then define $v_\pm(\tau,t,x)$ as 
\[ v_\pm(\tau,t,x) \ = \ (1\pm\mu\lambda\partial_x^2)^{-1} v_\pm^\lambda(\tau,t,x)\ , \] and set
\[ U_{\text{app}}(\tau, t, x) \ = \ \Big(v_+(\tau, t, x)+v_-(\tau, t, x)\ ,\ (\gamma+\delta)\big(v_+(\tau, t, x)-v_-(\tau, t, x)\big)\Big) \ + \ \sigma U_{c}[v_\pm](\tau, t, x),\]
where $U^c$ is defined in Definition~\ref{def:corrector}. Then for $\epsilon$ small enough,
$ U_{\text{app}}(\sigma t,t, x) $ satisfies the coupled equations~\eqref{eqn:GreenNaghdiMeanI}, up to a remainder, $\R$, bounded by 
\begin{align} \big\vert \R \big\vert_{H^s} \ \leq \ F\Big( \epsilon^4 (\big\vert v_\pm \big\vert_{H^{s+1}}+\sigma \big\vert u^c_\pm \big\vert_{H^{s+1}}) \ + \ \mu\epsilon^2(\big\vert v_\pm \big\vert_{H^{s+3}}+\big\vert \partial_t v_\pm \big\vert_{H^{s+2}}+\sigma \big\vert u^c_\pm \big\vert_{H^{s+3}}+\sigma\big\vert u^c_\pm \big\vert_{H^{s+2}}) \nn \\
 +\sigma^2 \big\vert \partial_\tau u^c_\pm \ \big\vert_{H^{s}} \
 + \ \epsilon\alpha_1\sigma \big\vert u^c_\pm \ \big\vert_{H^{s+1}} \left(\big\vert v_\pm \ \big\vert_{H^{s+1}}+\sigma \big\vert u^c_\pm \ \big\vert_{H^{s+1}}\right) \ +\ \epsilon^2\sigma (\big\vert u^c_\pm \ \big\vert_{H^{s+1}}\big\vert v_\pm \ \big\vert_{H^{s+1}}) \nn \\
\ + \ \mu\sigma \left(\big\vert \partial_x^3 u^c_\pm \ \big\vert_{H^{s}} + \big\vert \partial_x^2\partial_t u^c_\pm \ \big\vert_{H^{s}}\right) \left(1+\epsilon \left(\big\vert \partial_x^3 v_\pm \ \big\vert_{H^{s}} + \big\vert \partial_x^2\partial_t v_\pm \ \big\vert_{H^{s}}\right) \right)\nn\\
 + \ \max(\epsilon^4,\mu\epsilon^2,\mu^2)C(\big\vert v_\pm \big\vert_{H^{s+5}}+\big\vert \partial_t v_\pm \big\vert_{H^{s+4}}) \Big), \label{eq:estR}
\end{align}
with a function $F$ satisfying $F(X)\leq C(\frac1{s_0-3/2},M^\star_{s+6},\frac{1}{\delta_{\text{min}}},\delta_{\text{max}},\epsilon_{\text{max}},\mu_{\text{max}},|\lambda|,|\theta|) X$.
\end{Lemma}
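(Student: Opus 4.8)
The plan is to substitute the Ansatz $U_{\text{app}}(\sigma t,t,x)$ directly into~\eqref{eqn:GreenNaghdiMeanI} and reorganize the residual so that each leftover falls into one of the categories displayed in~\eqref{eq:estR}. First I would pass to the diagonalizing variables of Section~\ref{ssec:formal}: applying $P^{-1}$ to $U_{\text{app}}$ turns its components into $v_\pm+\sigma u^c_\pm$, and, using the expansions of $\tfrac{h_1h_2}{h_1+\gamma h_2}$, $\tfrac{h_1^2-\gamma h_2^2}{(h_1+\gamma h_2)^2}$, $\overline{\Q}[h_1,h_2]$, $\overline{\R}[h_1,h_2]$ recalled there, the system~\eqref{eqn:GreenNaghdiMeanI} is rewritten as~\eqref{eq:dSerrel}--\eqref{eq:dSerrer} up to a remainder of size $\max(\epsilon^4,\mu\epsilon^2,\mu^2)$ times a polynomial in the Sobolev norms of the argument. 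Plugging the full $U_{\text{app}}$ — rather than a generic pair of functions — into these expansions, and using $\sigma\leq\epsilon$ together with the algebra property of $H^s$ and the Moser estimate $\big\vert fg\big\vert_{H^s}\leq C(\tfrac1{s_0-1/2})\big\vert f\big\vert_{H^s}\big\vert g\big\vert_{H^s}$, shows that this contribution is bounded by $\max(\epsilon^4,\mu\epsilon^2,\mu^2)\,C(\big\vert v_\pm\big\vert_{H^{s+5}}+\big\vert\partial_t v_\pm\big\vert_{H^{s+4}})$, which is the last line of~\eqref{eq:estR}; the worst term, carrying five extra derivatives, is what forces the $H^{s+6}$ requirement on the data.

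Next I would use the chain rule $\partial_t\big|_{x\text{ fixed}}=\partial_t+\sigma\partial_\tau$ and treat the profile part $v_\pm$ and the corrector part $\sigma u^c_\pm$ separately. For the profile part, the transport relation~\eqref{eq:transport}, the slow equation~\eqref{eq:v0}, the near-identity substitution $v_\pm=(1\pm\mu\lambda\partial_x^2)^{-1}v_\pm^\lambda$ (expanded as $v_\pm=v_\pm^\lambda\mp\mu\lambda\partial_x^2 v_\pm^\lambda+\O(\mu^2)$), and the BBM trick have all been arranged so that $v_\pm$ solves the decoupled part of~\eqref{eq:dSerrel}--\eqref{eq:dSerrer} exactly up to $\O(\max(\mu^2,\mu\epsilon^2))$; the residual generated by the inversion and the trick is again absorbed into the $\max(\epsilon^4,\mu\epsilon^2,\mu^2)$ line. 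For the corrector part, Definition~\ref{def:corrector} is chosen precisely so that $\sigma(\partial_t\pm\partial_x)u^c_\pm$ cancels the leading coupling contributions $f^l(\t v_+,\t v_-)-f^l(\t v_+,0)$ (resp. $f^r(\t v_+,\t v_-)-f^r(0,\t v_-)$).

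What remains are the genuinely higher-order leftovers, each estimated term by term by the same product estimates: the slow-time derivative $\sigma^2\partial_\tau u^c_\pm$ arising from $\partial_t(\sigma u^c_\pm)$; the mixed nonlinear interactions obtained when expanding $(v_\pm+\sigma u^c_\pm)\partial_x(v_\pm+\sigma u^c_\pm)$, its quadratic and cubic analogues, giving the terms $\epsilon\alpha_1\sigma\big\vert u^c_\pm\big\vert_{H^{s+1}}(\big\vert v_\pm\big\vert_{H^{s+1}}+\sigma\big\vert u^c_\pm\big\vert_{H^{s+1}})$ and $\epsilon^2\sigma\big\vert u^c_\pm\big\vert_{H^{s+1}}\big\vert v_\pm\big\vert_{H^{s+1}}$; the dispersive pieces acting on $\sigma u^c_\pm$, of size $\mu\sigma(\big\vert\partial_x^3 u^c_\pm\big\vert_{H^s}+\big\vert\partial_x^2\partial_t u^c_\pm\big\vert_{H^s})$ multiplied by the variable-coefficient factor $1+\epsilon(\big\vert\partial_x^3 v_\pm\big\vert_{H^s}+\big\vert\partial_x^2\partial_t v_\pm\big\vert_{H^s})$; and the quadratic-dispersive interactions of order $\mu\epsilon^2$ and $\epsilon^4$. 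Summing by the triangle inequality and packaging the linear dependence on the listed norms into a function $F$ with $F(X)\leq C(\tfrac1{s_0-3/2},M^\star_{s+6},\tfrac1{\delta_{\text{min}}},\delta_{\text{max}},\epsilon_{\text{max}},\mu_{\text{max}},|\lambda|,|\theta|)X$ yields~\eqref{eq:estR}. I do not expect any analytic obstruction here: the whole lemma is bookkeeping, carefully tracking how many derivatives land on $v_\pm$ in each product; the real difficulty — controlling the secular growth of $u^c_\pm$ in the slow variable $\tau$ — is deferred to the subsequent step.
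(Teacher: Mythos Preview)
Your proposal is correct and follows essentially the same approach as the paper: decompose the residual into the expansion errors of the coefficients (the paper's $\R_0$, yielding the $\epsilon^4$ and $\mu\epsilon^2$ terms in the first line of~\eqref{eq:estR}), the BBM-trick and near-identity residuals $\R_\theta,\R_\lambda$ (the last line), the slow-time term $\sigma^2\partial_\tau u^c_\pm$, and the cross-terms $f(v_\pm+\sigma u^c_\pm,\cdot)-f(v_\pm,\cdot)$ estimated by Moser product estimates. The only organizational difference is that the paper keeps the $\sigma u^c_\pm$ contributions inside $\R_0$ explicitly (since $\zeta_{\text{app}}=v_++v_-+\sigma(u^c_++u^c_-)$ is the argument of the expanded coefficients), whereas you fold them into the final ``quadratic-dispersive'' catch-all; either bookkeeping gives~\eqref{eq:estR}.
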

\begin{proof}
This result a direct application of the definitions above, and calculations presented in Section~\ref{ssec:formal}. More precisely, one obtains
\[ \R \ = \ \R_0 \ + \ \sigma^2\partial_\tau u^c_\pm \ + \ \big(f(v_++\sigma u^c_+,v_-+\sigma u^c_-)-f(v_+,v_-)\big)\ + \ \R_\theta \ + \R_\lambda,\]
where \begin{itemize}
\item $\R_0$ is the contribution due to the expansion of $\frac{h_1 h_2}{h_1+\gamma h_2},
 \frac{{h_1}^2 -\gamma {h_2}^2 }{(h_1+\gamma h_2)^2} ,
 \overline{\Q}[h_1,h_2]\b v$ and $\overline{\R}[h_1,h_2]\b v$ with respect to parameter $\epsilon$;
 \item $f$ is a linear combination of $f^{\epsilon,\mu}_{l}$ and $f^{\epsilon,\mu}_{r}$ as defined in~\eqref{eq:dSerrel}--\eqref{eq:dSerrer};
 \item $\R_\theta$ and $\R_\lambda$ are respectively the components due to the use of the BBM trick and near-identity change of variable.
 \end{itemize}

Let us detail the first terms leading to $\R_0$. We want to control the contribution of the expansion of
$\frac{h_1 h_2}{h_1+\gamma h_2}$, that is, more precisely, estimate
\[\left\vert \R_0^{(1)} \right\vert_{H^{s+1}} \ = \ \left\vert\ \dfrac{h_1 h_2}{h_1+\gamma h_2} - \frac1{\gamma+\delta} - \epsilon\frac{\delta^2-\gamma}{(\gamma+\delta)^2}\zeta + \epsilon^2\frac{\gamma\delta(\delta+1)^2}{(\gamma+\delta)^3}\zeta^2 + \epsilon^3\frac{\gamma\delta^2(\delta+1)^2(1-\gamma)}{(\gamma+\delta)^4}\zeta^3 \right\vert_{H^{s+1}},\]
where we denote $\zeta=\zeta_{\text{app}}=(v_++v_-+\sigma(u^c_++u^c_))(\sigma t,t,x)$, and $h_1=1-\epsilon \zeta_{\text{app}},\ h_2=\frac1\delta+\epsilon \zeta_{\text{app}}$.

Note that, as we shall prove that $\zeta_{\text{app}}$ is bounded in $L^\infty([0,T^\star);H^{s+1})$, there exists $\epsilon_0>0$ such that for any $0\leq\epsilon<\epsilon_0$, one has
\[ \big\vert\zeta\big\vert_{L^\infty}<\min(1,1/\delta) \quad \text{ and } \quad\big\vert \dfrac{1}{h_1+\gamma h_2}\big\vert_{H^{s+1}} \leq \frac1{2(1+\gamma/\delta)} .\]
Now, one can easily check that, in that case, $\R_0^{(1)} \ = \ \epsilon^4 \frac{P^{(1)}(\zeta)}{h_1+\gamma h_2} $, where $P^{(1)}(X)$ is a polynomial of degree 4, and estimate
$ \left\vert \R_0^{(1)} \right\vert_{H^{s+1}} \ \leq \ \epsilon^4 F(\big\vert v_\pm \ \big\vert_{H^{s+1}}+\sigma \big\vert u^c_\pm \ \big\vert_{H^{s+1}})$ follows.

As $\frac{h_1 h_2}{h_1+\gamma h_2}$ gets multiplied by $\bar v$ and differentiated once, the first term in~\eqref{eq:estR} follows.

The contributions due to the expansion of $\frac{{h_1}^2 -\gamma {h_2}^2 }{(h_1+\gamma h_2)^2}$ is estimated in the same way. The contribution due to the expansion of $ \overline{\R}[h_1,h_2]\b v$ requires more derivatives, but may be estimated as above by $\mu\epsilon^2 F\Big( \big\vert v_\pm \big\vert_{H^{s+3}}+\sigma \big\vert u^c_\pm \big\vert_{H^{s+3}}\Big)$. The contribution due to the expansion of $\overline{\Q}[h_1,h_2]\b v$ involves one time derivative, and is controlled by $\mu\epsilon^2 F\Big( \big\vert \partial_t v_\pm \big\vert_{H^{s+2}}+\sigma\big\vert u^c_\pm \big\vert_{H^{s+2}}\Big)$. This yields the first line of~\eqref{eq:estR}.
\medskip

Then, one can check that
\begin{align*}& f^{\epsilon,\mu}_{l}(v_++\sigma u^c_+,v_-+\sigma u^c_-)-f^{\epsilon,\mu}_{l}(v_+,v_-)\ = \ \partial_x \left( P(\sigma u^c_\pm , v_\pm) \right)+\mu\sigma\nu\partial_x^2\partial_t u^c_\pm\\
&\quad +\mu\epsilon\partial_x\Big( Q_1(\sigma u^c_\pm,\partial_x^2 v_\pm)+Q_2(\sigma \partial_x u^c_\pm,\partial_x v_\pm) +Q_3(\sigma \partial_x^2 u^c_\pm, v_\pm)+Q_4(\sigma u^c_\pm,\partial_x^2 u^c_\pm)+Q_5(\sigma \partial_x u^c_\pm,\partial_x v_\pm)\Big) \\
&\quad +\mu\epsilon\partial_t \Big( Q_6(\sigma u^c_\pm,\partial_x^2 v_\pm)+Q_7(\sigma \partial_x u^c_\pm,\partial_x v_\pm) +Q_8(\sigma \partial_x^2 u^c_\pm, v_\pm)+Q_9(\sigma u^c_\pm,\partial_x^2 u^c_\pm)+Q_{10}(\sigma \partial_x u^c_\pm,\partial_x v_\pm)\Big), 
\end{align*}
where $P$ is a bivariate polynomial of degree 4, and whose leading order terms are 
\[ P(\sigma u^c_\pm , v_\pm) \ = \ \alpha_1\epsilon (\sigma u^c_\pm v_\pm)+\epsilon(\sigma u^c_\pm)^2 \ + \ \alpha_2\epsilon^2 \sigma u^c_\pm {v_\pm}^2 \ + \ \dots ,\]
 and $Q_i$ are bilinear forms. Each of these terms are bounded as in~\eqref{eq:estR}. 

Finally, $\R_\theta$ and $\R_\lambda$ depend uniquely on decoupled terms $v_\pm$, and one has similarly
\[
\big\vert \R_\theta\big\vert_{H^s} + \big\vert \R_\lambda\big\vert_{H^s} \leq C_0 \max(\epsilon^4,\mu\epsilon^2,\mu^2) (\big\vert v_\pm \big\vert_{H^{s+5}}+\big\vert \partial_t v_\pm \big\vert_{H^{s+4}}),\]
with $C_0=C(\frac1{s_0-3/2},\frac{1}{\delta_{\text{min}}},\delta_{\text{max}},\epsilon_{\text{max}},\mu_{\text{max}},|\lambda|,|\theta|)$. The proposition follows.
\end{proof}

Our aim is now to estimate each of the terms in~\eqref{eq:estR}.
\bigskip

\paragraph{Estimates of the decoupled approximation $v_\pm$.}
The bound on $v_\pm$ is a direct consequence of the assumptions of the Proposition,
 using the identity
\[ v_\pm(\sigma t, t,x) \ = \ \t v_\pm(\sigma t,x\mp t) \ = \ v_\pm(t,x\mp t),\]
where $v_\pm(t,x\mp t)$ is defined through Definition~\ref{def:CL}, and therefore is uniformly controlled as assumed in the Proposition. It follows
\begin{equation}\label{eq:estv0}
\big\Vert \t v_\pm \big\Vert_{L^\infty([0,\sigma T^\star)\times[0,T^\star);H^{s+6})}
\ + \ 
\sigma\big\Vert \partial_\tau \t v_\pm \big\Vert_{L^\infty([0,\sigma T^\star)\times[0,T^\star);H^{s+5})}\ \leq \ \ M^\star_{s+6}.
\end{equation}

 However, let us note that one can gain extra smallness on $\sigma \partial_\tau \t v_\pm$ (trading with a loss of derivatives) from the fact that $\t v_\pm$ satisfies~\eqref{eq:v0}. Indeed, one has for any $k\in \NN$,
\[ \sigma\big\vert\partial_\tau \t v_\pm\big\vert_{H^{s+k}} \ \leq \ \max(\epsilon\alpha_1,\epsilon^2,\mu) P(\big\vert\t v_\pm\big\vert_{H^{s+k+3}}) \ + \ \mu\nu_t \sigma\big\vert\partial_x^2\partial_\tau \t v_\pm\big\vert_{H^{s+k}} ,\]
where $P(X)$ is a polynomial, so that \footnote{Here and in the following, we do not explicitly keep track of the dependence with respect to all the parameters; as $(\epsilon,\mu,\delta,\gamma)=\p\in\P$, as defined in~\eqref{eqn:defRegimeFNL}, and parameters satisfy identities of Definition~\ref{def:CL}, one should replace $C(M^\star_{s+6})$ by $C(\frac1{s_0-3/2},M^\star_{s+6},\frac1{\delta_{\text{min}}},\delta_{\text{max}},\epsilon_{\text{max}},\mu_{\text{max}},|\lambda|,|\theta|)$.}
\begin{equation}\label{eq:estv1}
\sigma\big\Vert \partial_\tau \t v_\pm \big\Vert_{L^\infty([0,\sigma T^\star)\times[0,T^\star);H^{s+3})}\ \leq \ \ \max(\epsilon\alpha_1,\epsilon^2,\mu) C(M^\star_{s+6}).
\end{equation}

As for the case of localized initial data, the assumption of the Proposition yields
\begin{equation}\label{eq:estv0s}
\sum_{k=0}^6\big\Vert(1+x^2)^\alpha \partial_x^k v_\pm \big\Vert_{L^\infty([0,\sigma T^\sharp)\times[0,T^\sharp);H^{s}}+\sum_{k=0}^5\sigma\big\Vert(1+x^2)^\alpha \partial_\tau v_\pm \big\Vert_{L^\infty([0,\sigma T^\sharp)\times[0,T^\sharp);H^{s}} \leq M^\sharp_{s+6} .
\end{equation}
and one deduces as above, for $k=0,1,2,3$
\begin{align} \label{eq:estv1s}\sigma\big\vert(1+x^2)^\alpha\partial_x^k\partial_\tau \t v_\pm\big\vert_{H^{s}} &\leq \max(\epsilon\alpha_1,\epsilon^2,\mu) P\left(\sum_{i=0}^3\big\vert(1+x^2)^\alpha\partial_x^{k+i} \t v_\pm\big\vert_{H^{s}}\right) + \mu\sigma\big\vert(1+x^2)^\alpha\partial_x^{k+2}\partial_\tau \t v_\pm\big\vert_{H^{s}} \nn \\
 &\leq \ \max(\epsilon\alpha_1,\epsilon^2,\mu) C(M^\sharp_{s+6}).\end{align}

\paragraph{Control of the secular growth of the coupling terms $u^c_\pm$.}
Let us now study the term $U^c \equiv \ (u^c_++u^c_-,(\gamma+\delta)(u^c_+-u^c_-)\big)$, which contains all the coupling effects between the different components. Our aim is to control the secular growth of this term. This will be achieved through the following two Lemmas.
\begin{Lemma}
\label{lem:easy}
 Let $s\geq 0$, and $f^0 \in H^{s+1}(\RR)$. Then there exists a unique global strong solution, $u(t,x)\in C^0(\RR; H^{s+1})\cap C^1(\RR; H^{s})$, of 
\[
 \left\{\begin{array}{l}
 (\partial_t+c_1\partial_x) u=\partial_x f \\
 u\id{t=0}=0
 \end{array}\right. \ \text{ with }\ \ \quad
 \left\{\begin{array}{l}
 (\partial_t+c_2\partial_x) f =0\\
 {f_i}\id{t=0}=f^0
 \end{array}\right.
\]
where $c_1\neq c_2$. 

Moreover, one has the following estimates for any $t\in\RR$:
\[ \big\vert u(t,\cdot) \big\vert_{H^{s+1}(\RR)} \ \leq \ \frac{2}{|c_1-c_2|}\big\vert f^0 \big\vert_{H^{s+1}(\RR)} ,\qquad \big\vert u(t,\cdot) \big\vert_{H^{s}(\RR)} \ \leq \ | t| \ \big\vert f^0 \big\vert_{H^{s+1}(\RR)} .\]
\end{Lemma}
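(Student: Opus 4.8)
The plan is to solve the transport equations explicitly and then read off both estimates. First I would note that the second equation, $(\partial_t+c_2\partial_x)f=0$ with $f\id{t=0}=f^0$, has the unique solution $f(t,x)=f^0(x-c_2 t)$; this is smooth in $t$ with values in $H^{s+1}$ since translation is an isometry on every Sobolev space, and $\partial_t f=-c_2\partial_x f^0(\cdot-c_2 t)\in H^s$. Then the equation for $u$ is a linear transport equation with source, $(\partial_t+c_1\partial_x)u=\partial_x f^0(x-c_2 t)$, which by Duhamel's formula (method of characteristics) has the unique solution
\[
u(t,x) \ = \ \int_0^t (\partial_x f^0)\big(x-c_1(t-\tau)-c_2\tau\big)\ d\tau.
\]
Well-posedness in $C^0(\RR;H^{s+1})\cap C^1(\RR;H^s)$ is then immediate: the integrand is continuous in $\tau$ with values in $H^{s+1}$ (again by continuity of translations), so $u(t,\cdot)\in H^{s+1}$, and differentiating under the integral sign together with the equation itself gives $\partial_t u = \partial_x f - c_1\partial_x u \in H^s$. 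Uniqueness follows from the standard energy estimate for the transport operator $\partial_t+c_1\partial_x$ applied to the difference of two solutions.

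For the two quantitative bounds I would proceed as follows. The $H^s$ bound is the trivial one: from the Duhamel formula, $\big\vert u(t,\cdot)\big\vert_{H^s}\leq \int_0^{|t|}\big\vert \partial_x f^0\big\vert_{H^s}\,d\tau = |t|\,\big\vert\partial_x f^0\big\vert_{H^s}\leq |t|\,\big\vert f^0\big\vert_{H^{s+1}}$, using that each translate of $\partial_x f^0$ has the same $H^s$ norm. The $H^{s+1}$ bound is the point where the hypothesis $c_1\neq c_2$ enters, and it is the key step. Here I would change variables in the integral: writing $y=c_1(t-\tau)+c_2\tau$, so that $dy=(c_2-c_1)\,d\tau$ and as $\tau$ runs over $[0,t]$ the variable $y$ runs over the interval with endpoints $c_1 t$ and $c_2 t$, we get
\[
u(t,x) \ = \ \frac{1}{c_2-c_1}\int_{c_1 t}^{c_2 t} (\partial_x f^0)(x-y)\ dy \ = \ \frac{1}{c_2-c_1}\big(f^0(x-c_1 t)-f^0(x-c_2 t)\big).
\]
This is the crucial telescoping: the source term, once integrated along the characteristic of the $u$-equation, becomes an exact difference because $f^0$ is a function of a single variable. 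Then $\big\vert u(t,\cdot)\big\vert_{H^{s+1}}\leq \frac{1}{|c_2-c_1|}\big(\big\vert f^0(\cdot-c_1 t)\big\vert_{H^{s+1}}+\big\vert f^0(\cdot-c_2 t)\big\vert_{H^{s+1}}\big) = \frac{2}{|c_1-c_2|}\big\vert f^0\big\vert_{H^{s+1}}$, which is exactly the claimed estimate, uniform in $t$.

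The main (and only real) obstacle is recognizing and justifying the telescoping identity for $u$ — i.e. that the resonance-free condition $c_1\neq c_2$ is precisely what prevents secular growth in the top norm, while the generic $\O(|t|)$ growth survives in the lower norm because there one does not integrate by the change of variables but simply bounds the integrand uniformly. Everything else (existence, uniqueness, continuity in time) is routine transport-equation theory. One small care to take is that the two bounds are genuinely different in nature: the $H^{s+1}$ one is bounded in time, the $H^s$ one grows linearly; both are needed downstream, since in the application $f$ will be a nonlinear functional of $\t v_\pm$ and this lemma is iterated/applied termwise to control the corrector $U^c$ in Definition~\ref{def:corrector}.
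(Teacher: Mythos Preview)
Your proof is correct and follows essentially the same approach as the paper: both obtain the explicit formula $u(t,x)=\frac{1}{c_2-c_1}\big(f^0(x-c_1 t)-f^0(x-c_2 t)\big)$ and read off the two estimates from it, the $H^{s+1}$ bound by the triangle inequality on the difference of translates and the $H^s$ bound by rewriting this difference as the integral $\int_{c_1 t}^{c_2 t}\partial_x f^0(x-y)\,dy$. The only cosmetic difference is that you start from the Duhamel integral and change variables to reach the closed formula, whereas the paper writes down the closed formula directly and then recovers the integral representation for the $H^s$ bound.
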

\begin{Lemma} \label{lem:Lannes} Let $s\geq s_0>1/2$, and $v^0_1$, $v^0_2 \in H^{s+1}(\RR)$. Then there exists a unique global strong solution, $u\in C^0(\RR; H^{s+1})\cap C^1(\RR; H^{s})$, of 
\[
 \left\{\begin{array}{l}
 (\partial_t+c\partial_x) u=g(v_1,v_2)\\
 u\id{t=0}=0
 \end{array}\right. \ \text{ with }\ \quad\forall i\in\{1,2\}, \quad
 \left\{\begin{array}{l}
 (\partial_t+c_i\partial_x) v_i=0\\
 {v_i}\id{t=0}=v^0_i
 \end{array}\right.
\]
where $c_1\neq c_2$, and $g$ is a bilinear mapping defined on $\RR^2$ and with values in $\RR$. 

Moreover, one has the following estimates:
\begin{enumerate}[i.]
\item 
\[ \big\Vert u \big\Vert_{L^\infty([0,t);H^{s})} \ \leq \ C_{s_0}\ t|\big\vert v^0_1 \big\vert_{H^{s}(\RR)}\big\vert v^0_2 \big\vert_{H^{s}(\RR)} , \quad \big\Vert \partial_t u \big\Vert_{L^\infty([0,t);H^{s})} \ \leq \ C_{s_0}\ |t|\big\vert v^0_1 \big\vert_{H^{s+1}(\RR)}\big\vert v^0_2 \big\vert_{H^{s+1}(\RR)}.\]
\item Using $c_1\neq c_2$, one has the sublinear growth 
\[ \big\vert u(t,\cdot) \big\vert_{H^{s}(\RR)}=C_{s_0}\frac{\sqrt t}{\sqrt{|c_1-c_2|}} \big\vert v^0_1 \big\vert_{H^{s}}\big\vert v^0_2 \big\vert_{H^{s}} \ \qquad \ \big\vert \partial_t u(t,\cdot) \big\vert_{H^{s}}=C_{s_0}\ \frac{1+\sqrt t}{\sqrt{|c_1-c_2|}} \big\vert v^0_1 \big\vert_{H^{s+1}}\big\vert v^0_2 \big\vert_{H^{s+1}},\]
with $C_{s_0}= C(\frac1{s_0-1/2})$.
\item If moreover, there exists $\alpha>1/2$ such that $v^0_1 (1+x^2)^\alpha$, and $v^0_2 (1+x^2)^\alpha\in H^{s}(\RR)$, then one has the (uniform in time) estimate 
\[ \big\Vert u \big\Vert_{L^\infty H^{s}(\RR)}\ \leq\ C_{s_0} \big\vert v^0_1 (1+x^2)^\alpha \big\vert_{H^{s}(\RR)}\big\vert v^0_2 (1+x^2)^\alpha \big\vert_{H^{s}(\RR)},\]
with $C_{s_0}=C(\frac1{s_0-1/2},\frac1{c_1-c_2},\frac1{\alpha-1/2}) $.
\end{enumerate}
 \end{Lemma}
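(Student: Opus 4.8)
The plan is to reduce everything to an explicit representation formula. Since $v_i$ solves the transport equation $(\partial_t+c_i\partial_x)v_i=0$ with data $v_i^0$, we have $v_i(t,x)=v_i^0(x-c_it)$; in particular each $v_i$ is globally defined and bounded in $H^{s+1}$ uniformly in $t$, and $\big\vert v_i(t,\cdot)\big\vert_{H^\sigma}=\big\vert v_i^0\big\vert_{H^\sigma}$ for every $\sigma$. The source term $g(v_1,v_2)$ is therefore a (globally defined) $H^s$-valued function of $t$, and Duhamel's formula for the linear transport operator $\partial_t+c\partial_x$ gives the unique strong solution
\[
u(t,x)\ =\ \int_0^t g\big(v_1,v_2\big)\big(s,x-c(t-s)\big)\,ds\ =\ \int_0^t g\Big(v_1^0(x-c(t-s)-c_1 s)\,,\,v_2^0(x-c(t-s)-c_2 s)\Big)\,ds.
\]
Existence, uniqueness and the regularity $u\in C^0(\RR;H^{s+1})\cap C^1(\RR;H^s)$ follow from this formula together with the fact that translations are isometries on every $H^\sigma$, the bilinearity of $g$, and the Moser-type product estimate $\big\vert fg\big\vert_{H^\sigma}\leq C(\tfrac1{s_0-1/2})\big\vert f\big\vert_{H^\sigma}\big\vert g\big\vert_{H^\sigma}$ valid for $\sigma\geq s_0>1/2$. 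Differentiating Duhamel's formula in $t$ gives $\partial_t u=g(v_1,v_2)(t,\cdot)-c\partial_x u$, from which the stated bounds on $\partial_t u$ will be deduced once the bounds on $u$ are in hand (losing one derivative, hence the shift from $H^s$ to $H^{s+1}$ in the data).

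For item~(i), the triangle inequality applied to the integral representation, combined with $\big\vert g(v_1,v_2)(s,\cdot)\big\vert_{H^s}\leq C_{s_0}\big\vert v_1^0\big\vert_{H^s}\big\vert v_2^0\big\vert_{H^s}$ (the integrand is constant in $s$ after using the isometry property), immediately gives the linear-in-$t$ growth. Item~(ii) is the key point: one must exploit $c_1\neq c_2$ to beat the trivial linear bound. I would estimate $\big\vert u(t,\cdot)\big\vert_{H^s}^2$ by writing it as a double integral $\int_0^t\int_0^t \big(g(v_1,v_2)(s,\cdot)\,,\,g(v_1,v_2)(s',\cdot)\big)_{H^s}\,ds\,ds'$, change variables so that the relative displacement of the two profiles is $(c_1-c_2)(s-s')$, and use that the $H^s$ inner product of two translates of fixed $L^1\cap L^2$-type functions decays in the translation parameter — more precisely $\int_\RR \big|\big(F(\cdot)\,,\,F(\cdot-a)\big)_{H^s}\big|\,da\leq \big\vert F\big\vert_{H^s}^2\cdot\|(\text{something})\|$, which after integrating in the $s-s'$ variable produces the $\mathcal O(t/|c_1-c_2|)$ bound on the square, hence the $\sqrt t$ bound; the Plancherel/Fourier-side computation is the cleanest route. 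The estimate on $\partial_t u$ then follows from $\partial_t u = g(v_1,v_2)-c\partial_x u$ and the uniform-in-time bound $\big\vert g(v_1,v_2)\big\vert_{H^s}\leq C_{s_0}\big\vert v_1^0\big\vert_{H^s}\big\vert v_2^0\big\vert_{H^s}$ plus the $\sqrt t$ bound just obtained at regularity $s+1$.

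For item~(iii), the point is that when both profiles have the spatial weight $(1+x^2)^\alpha$ with $\alpha>1/2$, the two translated profiles $v_1^0(\cdot-c_1 s)$ and $v_2^0(\cdot-c_2 s)$ have essentially disjoint supports for $|s|$ large, so the product $g(v_1,v_2)(s,\cdot)$ is integrable in $s$ over all of $\RR$, not merely bounded. Concretely I would bound $\big\vert g(v_1,v_2)(s,\cdot)\big\vert_{H^s}$ by $C_{s_0}\big\vert (1+x^2)^\alpha v_1^0\big\vert_{H^s}\big\vert (1+x^2)^\alpha v_2^0\big\vert_{H^s}\cdot\big(1+|(c_1-c_2)s|\big)^{-2\alpha}$ using that $(1+x^2)^{-\alpha}(1+(x-a)^2)^{-\alpha}\lesssim (1+a^2)^{-\alpha}\big((1+x^2)^{-\alpha}+(1+(x-a)^2)^{-\alpha}\big)$, and then $\int_\RR (1+|(c_1-c_2)s|)^{-2\alpha}\,ds<\infty$ precisely because $2\alpha>1$, giving the uniform-in-time bound with constant depending on $\tfrac1{\alpha-1/2}$ and $\tfrac1{|c_1-c_2|}$. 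The main obstacle, and where care is needed, is item~(ii): making the oscillation/decay argument rigorous at the level of $H^s$ norms — the clean way is to pass to Fourier variables and recognize that the time integral of a product of translated profiles is, up to Fourier transform, a convolution whose $L^2$ norm is controlled by exploiting the $e^{i(c_1-c_2)s\xi}$ phase, i.e.\ a one-dimensional dispersive/stationary-phase type estimate; all the product estimates needed are standard Moser estimates and need not be detailed.
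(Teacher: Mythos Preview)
Your proposal is essentially correct and, in fact, goes well beyond what the paper does: the paper's own proof of this lemma consists of the single sentence ``the well-posedness as well as estimate~i.\ are standard; the remaining estimates, controlling the secular growth, are proved in~\cite{Lannes03}, Propositions~3.2 and~3.5.'' So there is no real comparison to make --- the paper outsources (ii) and (iii) entirely to Lannes' earlier work on secular growth estimates, while you have sketched the actual mechanisms.

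Your mechanisms are the right ones. Duhamel plus the Moser product estimate handles existence, regularity, and~(i); for~(iii) the Peetre-type weight inequality you wrote down is exactly what makes $s\mapsto \big\vert g(v_1,v_2)(s,\cdot)\big\vert_{H^s}$ integrable on~$\RR$, yielding the uniform bound. For~(ii) your description is a little imprecise --- the double-integral correlation argument you outline is indeed the spirit of~\cite{Lannes03}, but the clean way to execute it is to first pass to the frame moving with one of the profiles (so that, say, $v_1^0$ becomes stationary and only $v_2^0$ is translated by $(c_1-c_2)s$), and then control the $L^2$ norm of the time integral via the $L^1_s L^2_x$--$L^\infty_s L^2_x$ interplay rather than a generic ``inner product of translates decays'' statement; the Fourier-side phase $e^{i(c_1-c_2)s\xi}$ you mention is another route to the same conclusion. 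Either way, the $\sqrt t$ gain comes precisely from the fact that the two profiles separate at speed $|c_1-c_2|$, which is the content of the cited propositions.
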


\begin{proof} 
Lemma~\ref{lem:easy} is straightforward, since one has an explicit expression for the solution:
\[ u(t,x) \ = \ \frac1{c_2-c_1}\big(\ f(t,x+(c_2-c_1) t)-f(t,x)\ \big) \ = \ \frac1{c_2-c_1}\big(\ f^0(x-c_1 t)-f^0(x-c_2 t)\ \big).\]
It follows $\big\lvert u\big\rvert_{H^{s+1}} \ \leq \ \frac{2}{|c_2-c_1|}\big\lvert f^0 \big\rvert_{H^{s+1}}$, and
\[ \big\lvert u\big\rvert_{H^s} \ = \ \frac{1}{|c_2-c_1|}\big\lvert \int_{c_2t}^{c_1t} \ \partial_x f^0(x-y)\ dy \big\rvert_{H^s_x}\ \leq \ \frac{1}{|c_2-c_1|} \int_{c_2t}^{c_1t} \big\lvert \partial_x f^0(x-y) \big\rvert_{H^s_x} \ dy \ \leq \ |t| \big\lvert \partial_x f^0 \big\rvert_{H^s}.\]

As for Lemma~\ref{lem:Lannes}, the well-posedness as well as
estimate i. are standard; the remaining estimates, controlling the secular growth, are proved in~\cite{Lannes03}, Propositions~3.2 and~3.5.
\end{proof}
\medskip

One can now state our key result, concerning the control of the secular growth of $U^c$.
\begin{Lemma}\label{lem:estimatescoupling}
Let $v_\pm$ defined as in Lemma~\ref{lem:proofcons}, for $(\tau,t,x)\in[0,\sigma T^\star)\times[0,T^\star)\times\RR$. Then there exists a unique $U_{c}(\tau, t, x)\in L^\infty([0,T)\times\RR;H^{s})$ such that $U^c \equiv \ (u^c_++u^c_-,(\gamma+\delta)(u^c_+-u^c_-)\big)$, with $u^c_\pm$ satisfying~\eqref{eq:coupling} and $U^c\id{t =0} \ \equiv\ 0$. Moreover, one has the estimate for and $t\in [0, T^\star]$
\begin{align}
\sigma\big\Vert u^c_\pm \big\Vert_{L^\infty([0,\sigma T^\star]\times[0,t];H^{s})} &\leq C(M^\star_{s+3}) \max(\epsilon\alpha_1,\epsilon^2,\mu) \min( t, \sqrt t) , \label{eq:est1}\\
\sigma^2\big\Vert \partial_\tau u^c_\pm \big\Vert_{L^\infty([0,T]\times[0,t];H^{s})} &\leq C(M^\star_{s+6})\max(\epsilon\alpha_1,\epsilon^2,\mu)^2 \min( t, \sqrt t) .\label{eq:est2}
\end{align}

Moreover, if $(1+x^2)\zeta^0,(1+x^2)v^0\in H^{s+3}$, $s\geq s_0>1/2$, and $\zeta^0,v^0\in H^{s+3+2m}$, then one has the uniform estimate 
\begin{align}
\sigma\big\Vert u^c_\pm \big\Vert_{L^\infty([0,\sigma T^\sharp]\times[0,T^\sharp];H^{s})} &\leq C(M^\sharp_{s+3}) \max(\epsilon\alpha_1,\epsilon^2,\mu) \min( t, 1) , \label{eq:est3}\\
\sigma^2\big\Vert \partial_\tau u^c_\pm \big\Vert_{L^\infty([0,T]\times[0,T^\sharp];H^{s})} &\leq C(M^\sharp_{s+6}) \max(\epsilon\alpha_1,\epsilon^2,\mu)^2 \min( t, 1) .\label{eq:est4}
\end{align}
\end{Lemma}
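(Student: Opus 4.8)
The plan is to decompose the coupling terms $f^l(\t v_+,\t v_-)-f^l(\t v_+,0)$ and $f^r(\t v_+,\t v_-)-f^r(0,\t v_-)$ that appear in~\eqref{eq:coupling} into a sum of genuinely coupled pieces — i.e. terms involving products of a function of $\t v_+$ (a function of $x-t$) and a function of $\t v_-$ (a function of $x+t$) — together with some pure $v_\pm$-terms which have already been absorbed into the scalar equations~\eqref{eq:v0}. Inspecting the explicit formulas for $f_l^{\epsilon,\mu}$, $f_r^{\epsilon,\mu}$ in Section~\ref{ssec:formal}, every such coupled piece is of the form $\partial_x(P)$ or $\partial_t(Q)$ where $P,Q$ are (multi-)linear expressions in $\t v_\pm$ and their $x$-derivatives, with a prefactor of size $\max(\epsilon\alpha_1,\epsilon^2,\mu)$ (the $\epsilon\alpha_1$ from the quadratic term $\alpha_1\partial_x(u_lu_r)$-type contributions, $\epsilon^2$ from the cubic term, and $\mu,\mu\epsilon$ from the dispersive and mixed terms). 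Dividing~\eqref{eq:coupling} through by $\sigma$, we see $u^c_\pm$ solves, in the $(t,x)$ variables, a transport equation $(\partial_t\pm\partial_x)u^c_\pm = \frac1\sigma(\text{coupling source})$, which is precisely of the type treated by Lemma~\ref{lem:easy} (for the purely $x$-derivative, single-bilinear pieces like $\mu\epsilon\partial_x(\kappa\,\t v_+\partial_x^2\t v_-)$ after writing them as $\partial_x f$ with $f$ transported) and by Lemma~\ref{lem:Lannes} (for the bilinear source pieces where the subtlety of secular growth matters). Since $v_+$ travels at speed $+1$ and $v_-$ at speed $-1$, the hypothesis $c_1\neq c_2$ in both lemmas is satisfied with $|c_1-c_2|=2$.

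The concrete steps are as follows. First I would write out $f^l(\t v_+,\t v_-)-f^l(\t v_+,0)$ explicitly from the definitions in Section~\ref{ssec:formal}, collecting it as $\sum_j \max(\epsilon\alpha_1,\epsilon^2,\mu)\,\partial_x(g_j(\t v_+,\t v_-))$ plus finitely many $\partial_t$-terms; the $\partial_t$-terms, via the transport equations~\eqref{eq:transport}, can be rewritten as $\partial_x$-terms up to $O(\sigma)$ corrections involving $\partial_\tau\t v_\pm$, whose smallness is controlled by~\eqref{eq:estv1}. Second, for each bilinear source term I apply Lemma~\ref{lem:Lannes}: part (i) gives the $O(t)$ bound, part (ii) gives the $O(\sqrt t)$ bound with constant $C(1/(s_0-1/2))$, and part (iii) — under the localization hypothesis $(1+x^2)^\alpha\t v_\pm\in H^s$, which follows from~\eqref{eq:estv0s} — gives the uniform-in-time bound. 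Taking the minimum of the $O(t)$ and $O(\sqrt t)$ (resp.\ $O(t)$ and $O(1)$) estimates yields the $\min(t,\sqrt t)$ (resp.\ $\min(t,1)$) factors in~\eqref{eq:est1}--\eqref{eq:est4}. Multiplying back by $\sigma^{-1}$ inside and $\sigma$ outside, and using~\eqref{eq:estv0} to bound $\big|\t v_\pm\big|_{H^{s+k}}$ by $M^\star_{s+6}$ (resp.\ $M^\sharp_{s+6}$), produces~\eqref{eq:est1} and~\eqref{eq:est3}. Third, for~\eqref{eq:est2} and~\eqref{eq:est4}, I differentiate~\eqref{eq:coupling} in $\tau$: $\sigma(\partial_t\pm\partial_x)\partial_\tau u^c_\pm$ equals the $\tau$-derivative of the coupling source, which — by the chain rule and the fact that $\sigma\partial_\tau\t v_\pm$ is itself of size $\max(\epsilon\alpha_1,\epsilon^2,\mu)$ by~\eqref{eq:estv1}, \eqref{eq:estv1s} — gains an extra factor $\max(\epsilon\alpha_1,\epsilon^2,\mu)$, and then applying Lemmas~\ref{lem:easy}--\ref{lem:Lannes} once more gives the squared prefactor and the same $\min(t,\sqrt t)$ (resp.\ $\min(t,1)$) time dependence. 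Well-posedness and uniqueness of $U^c$ in $C^0([0,T^\star]\times\RR;H^s)$ is immediate from the explicit solution formula of the transport equations (as in Lemma~\ref{lem:easy}) or from the existence statements in Lemma~\ref{lem:Lannes}.

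The main obstacle — or rather the only point requiring genuine care — is bookkeeping: the coupling source is a long sum of bilinear and trilinear terms with various orders of $x$- and $t$-derivatives, and each must be dispatched to the correct lemma with the correct count of derivatives lost. In particular, the $\mu\epsilon$-terms such as $\mu\epsilon\partial_x(\kappa\,\t v_+\partial_x^2\t v_-)$ demand up to two extra $x$-derivatives, which is why the estimates are stated in $H^{s+3}$ (for $u^c_\pm$) and require $H^{s+6}$ control of $v_\pm$ for the $\partial_\tau u^c_\pm$ estimate — one must check the derivative counts close up consistently with the hypotheses $\zeta^0,v^0\in H^{s+6}$ and $(1+x^2)^\alpha\partial_x^k U^\p_{\text{CL}}\in H^s$ for $k\le 6$. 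The trilinear terms (from the $\epsilon^2\alpha_2$ contribution) require a trivial extension of Lemma~\ref{lem:Lannes} to trilinear $g$, obtained by freezing one transported factor in $L^\infty$ via the Sobolev embedding $H^{s_0}\hookrightarrow L^\infty$ and applying the bilinear lemma to the remaining pair; since the frozen factor is bounded uniformly by $M^\star_{s+6}$, this introduces no new secular growth. Once this routine case analysis is organized, the estimates~\eqref{eq:est1}--\eqref{eq:est4} follow by summing the finitely many contributions.
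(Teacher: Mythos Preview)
Your approach is essentially the paper's: decompose the coupling source into pure-$\t v_-$ pieces (handled by Lemma~\ref{lem:easy}) and genuinely mixed $\t v_+$--$\t v_-$ products (handled by Lemma~\ref{lem:Lannes}), then sum the contributions. One slip in your dispatch: the example you route to Lemma~\ref{lem:easy}, namely $\mu\epsilon\partial_x(\kappa\,\t v_+\partial_x^2\t v_-)$, is a \emph{mixed} term --- the product $\t v_+\partial_x^2\t v_-$ is not transported at any single speed --- so it belongs to Lemma~\ref{lem:Lannes}; the terms that Lemma~\ref{lem:easy} actually handles are the pure-$\t v_-$ contributions such as $\epsilon\alpha_1\partial_x(\t v_-^2)$ or $\mu\nu\partial_x^3\t v_-$, for which $f$ is genuinely transported at speed $-1\neq c_1=+1$. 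Also, the conversion of $\partial_t$-terms to $\partial_x$-terms is exact at fixed $\tau$ (since $\partial_t\t v_\pm=\mp\partial_x\t v_\pm$ holds identically in the two-scale setup), so no $O(\sigma)$ corrections arise there.
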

\begin{proof} The function $u^c_\pm$ are defined as the solutions of a transport equation, with a known and controlled forcing term, thus the existence and uniqueness of $u^c_\pm$ and $U^c$ is straightforward.

As for the estimates, we focus below on $u^c_+$. Estimates for $u^c_-$ follow in the same way.

 By definition, $u^c_+$ satisfies
\[\sigma (\partial_t+\partial_x) u^c_+ \ = \ f^l(\t v_+,0) \ -\ f^l(\t v_+,\t v_-), \]
and the right-hand-side can be decomposed in the following way:
\begin{align*} f^l( v_+,0) \ -\ f^l( v_+, v_-)\ &= \ \sum_{j=1}^3 a_j\epsilon^{j} \partial_x\big(v_-^{j+1}\big) + \sum_{j=1}^3\epsilon^j \sum_{i=1}^{j} a_{i,j} \partial_x\big( v_+^{i}v_-^{j-i+1})+ \mu b \partial_x^3v_- \\
&\qquad +\mu\epsilon \partial_x(c_1v_-\partial_x^2v_-+c_2(\partial_x v_-)^2)+\mu\epsilon \sum_{k=0}^3 d_{k} ( \partial_x^k v_+)(\partial_x^{3-k} v_-) \\
&\equiv \sum_{j=1}^3 \epsilon^{j} \partial_xf_{j} \ +\ \sum_{j=1}^3\epsilon^j \sum_{i=1}^{j} g_{i,j} \ +\ \mu \partial_x f_4\ +\ \mu\epsilon \partial_x f_5\ +\ \mu\epsilon \sum_{k=0}^3 g_k,
\end{align*}
where the parameters $a_j$, $a_{i,j}$, $b_j$, $c_j$ and $d_k$ depend on $\gamma,\delta,\lambda,\theta$, and where we used $\partial_tv_-=\partial_x v_-$. Note that the first order terms $\epsilon a_1$ and $\epsilon a_{1,1}$ are factored by $\alpha_1=\frac32\frac{\delta^2-\gamma}{\gamma+\delta}$.

The contribution from $f_j$, $j=1,\dots,5$ are estimated thanks to Lemma~\ref{lem:easy}, whereas we use Lemma~\ref{lem:Lannes} for the contribution of $g_{i,j}$ and $g_k$.
\medskip

For all $0\leq j\leq 4$, one has $\partial_t f_j - \partial_x f_j =0$, and
\begin{itemize}
\item $\big\lvert f_j \big\rvert_{H^{s+1}} \ \leq \ C\big\lvert v_- \big\rvert_{L^\infty}\big\lvert v_- \big\rvert_{H^{s+1}} \ \leq \ C(\big\lvert \t v_-(\tau,\cdot) \big\rvert_{H^{s+1}})$, for $j=1,2,3$;
\item $\big\lvert f_4 \big\rvert_{H^{s+1}} \ \leq \ C \big\lvert \partial_x^2 v_- \big\rvert_{H^{s+1}} \ \leq \ C(\big\lvert \t v_-(\tau,\cdot) \big\rvert_{H^{s+3}})$;
\item $\big\lvert f_5 \big\rvert_{H^{s+1}} \ \leq \ C\big\lvert v_- \big\rvert_{L^\infty}\big\lvert \partial_x^2v_- \big\rvert_{H^{s+1}}+\big\lvert (\partial_x v_-)^2 \big\rvert_{H^{s+1}} \ \leq \ C(\big\lvert \t v_-(\tau,\cdot) \big\rvert_{H^{s+3}})$.
\end{itemize}
\medskip

It is then straightforward to check that $g_{ij}$ and $ g_k$ are the sum of bilinear mappings, applied to functions $v_\pm$, satisfying $\partial_t v_\pm \pm \partial_x v_\pm =0$, with $v_\pm$ bounded as below:
\begin{itemize}
\item $\big\lvert v_\pm^{i} \big\rvert_{H^{s}} \ \leq \ C(\big\lvert v_\pm \big\rvert_{H^{s}}) \ \leq \ C(\big\lvert \t v_\pm(\tau,\cdot) \big\rvert_{H^{s}})$, for $i=1,2,3$.
\item $\big\lvert \partial_x v_\pm^{j} \big\rvert_{H^{s}} \ \leq \ C(\big\lvert v_\pm \big\rvert_{H^{s+1}}) \ \leq \ C(\big\lvert \t v_\pm(\tau,\cdot) \big\rvert_{H^{s+1}})$, for $j=1,2,3$.
\item $\big\lvert \partial_x^k v_\pm \big\rvert_{H^{s}} \ \leq \ \mu C( \big\lvert v_\pm \big\rvert_{H^{s+k}}) \ \leq \ C(\big\lvert \t v_\pm(\tau,\cdot) \big\rvert_{H^{s+3}})$, for $k=0,1,2,3$.
\end{itemize}

It follows form Lemmata~\ref{lem:easy} and~\ref{lem:Lannes} that $u^c_+$ satisfies
\[\sigma \big\lvert u^c_+(\tau,t,\cdot) \big\rvert_{H^{s}} \ \leq \ \min(t,\sqrt t ) \max(\alpha_1\epsilon,\epsilon^2,\mu,\mu\epsilon) C\left(\big\lvert \t v_\pm(\tau,\cdot) \big\rvert_{H^{s+3}}\right),\]
and the last term is uniformly estimated through~\eqref{eq:estv0}, so that~\eqref{eq:est1} follows.
\medskip

As for the case of localized initial data, we make use of the fact that for $k=0,1,2,3$, one has
\[ \big\vert (1+x^2)^\alpha\partial_x^k \t v_\pm(\tau,t,\cdot) \big\vert_{H^s} \ = \ \big\vert (1+x^2)^\alpha\partial_x^k \t v_\pm(\tau,0,\cdot) \big\vert_{H^s} \ = \ \big\vert (1+x^2)^\alpha\partial_x^k v_\pm^\lambda(\tau/\sigma,\cdot) \big\vert_{H^s} \ \leq \ M^\sharp_{s+3},\]
so that~\eqref{eq:est3} follows in the same way for $\tau\leq \sigma T^\sharp$.
\medskip

The estimates on $\partial_\tau v_\pm(\tau,x)$ are obtained similarly, as
 \begin{align*} 
 (\partial_t+\partial_x) \partial_\tau u^c_+ \ \equiv \sum_{j=1}^3 \epsilon^{j} \partial_x \partial_\tau f_{j} \ +\ \sum_{j=1}^3\epsilon^j \sum_{i=1}^{j} \partial_\tau g_{i,j} \ +\ \mu \partial_x \partial_\tau f_4\ +\ \mu\epsilon \partial_x \partial_\tau f_5\ +\ \mu\epsilon \sum_{k=0}^3 \partial_\tau g_k.
\end{align*}
One can check
that each term of the right hand side satisfies the hypotheses of Lemmata~\ref{lem:easy} or~\ref{lem:Lannes}, and estimate~\eqref{eq:estv1} allows to obtain~\eqref{eq:est2}.
\medskip

The case of weighted Sobolev spaces~\eqref{eq:est4} follows as above, using estimate~\eqref{eq:estv1s}. The Lemma is proved.
\end{proof}

\paragraph{Completion of the proof.}
The consistency result stated in Proposition~\ref{prop:decomposition} is now a straightforward consequence of Lemma~\ref{lem:proofcons}, together with estimates~\eqref{eq:estv0}--\eqref{eq:estv0s}, and Lemma~\ref{lem:estimatescoupling}.

One can check that the remainder in Lemma~\ref{lem:proofcons} can be estimated as
\[ \big\vert \R \big\vert_{H^s} \ \leq \ C(M^\star_{s+6})\Big(\ \max(\alpha_1^2\epsilon^2,\epsilon^4,\mu^2) \min( t, \sqrt t) \ + \ \max(\epsilon^4,\mu^2) \ \Big)\]

Note that we use
\[ \partial_t\big(v_\pm(\sigma t,t,x) \big)\ = \ \sigma\partial_\tau \t v_\pm+\partial_t v_\pm , \quad \text{ and } \sigma\big\vert\partial_\tau \t v_\pm\big\vert_{H^{s}} \ \leq \ \max(\epsilon\alpha_1,\epsilon^2,\mu\epsilon) \big\vert v_\pm\big\vert_{H^{s+2}},\]
as above, as well as a uniform estimate
\[ \sigma \big\Vert \ u^c_\pm \ \big\Vert_{L^\infty([0,T^\star;H^{s+1})} \ \leq \ M. \]
The latter estimate can be enforced using Lemma~\ref{lem:estimatescoupling}, by restricting the time interval with
\[ (T^\star)^{1/2} \ \leq \ \frac{M}{\max(\epsilon\alpha_1,\epsilon^2,\mu) },\]
although such condition is not necessary, as Proposition~\ref{prop:decomposition} is empty for $t\geq M(\max(\epsilon\alpha_1,\epsilon^2,\mu) )^{-2}$, the accuracy of the decoupled solutions being of order $\O(1)$.
\medskip

More precisely, we proved the following Lemma.
\begin{Lemma}\label{lem:fullconsistency} For $\zeta^0,v^0\in H^{s+6}$, with $s\geq s_0>3/2$, and $(\epsilon,\mu,\delta,\gamma)=\p\in\A$, as defined in~\ref{eqn:defRegimeFNL},
let $U_{\text{app}}$ be defined as in Lemma~\ref{lem:proofcons}. Then $U_{\text{app}}(\sigma t,t,x)$ satisfies the coupled equations~\eqref{eqn:GreenNaghdiMeanI}, up to a remainder, $\R$, estimated for $t\in [0,T^\star)$ by
\[ \big\Vert \R \big\Vert_{L^\infty([0,t);H^s)} \ \leq \ C(M^\star_{s+6})\Big(\ \max(\alpha_1^2\epsilon^2,\epsilon^4,\mu^2) \min( t, \sqrt t) \ + \ \max(\epsilon^4,\mu^2) \ \Big),\]
and on $[0,T^\sharp)$ by 
\[ \big\Vert \R \big\Vert_{L^\infty([0,t);H^s)} \ \leq \ C(M^\sharp_{s+6})\ \Big(\ \max(\alpha_1^2\epsilon^2,\epsilon^4,\mu^2) \min( t, 1) \ + \ \max(\epsilon^4,\mu^2) \ \Big) .\]
\end{Lemma}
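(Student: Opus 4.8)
The plan is to assemble \textbf{Lemma~\ref{lem:fullconsistency}} by feeding the estimates on $v_\pm$ and $u^c_\pm$ into the remainder bound of \textbf{Lemma~\ref{lem:proofcons}}, term by term. First I would fix $\p\in\P$ and invoke Lemma~\ref{lem:proofcons} to obtain the decomposition $\R = \R_0 + \sigma^2\partial_\tau u^c_\pm + \big(f(v_\pm+\sigma u^c_\pm)-f(v_\pm)\big) + \R_\theta + \R_\lambda$ together with the explicit bound \eqref{eq:estR}. The strategy is then to show that every one of the nine groups of terms on the right-hand side of \eqref{eq:estR} is dominated by $C(M^\star_{s+6})\big(\max(\alpha_1^2\epsilon^2,\epsilon^4,\mu^2)\min(t,\sqrt t) + \max(\epsilon^4,\mu^2)\big)$.

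The bookkeeping splits into three blocks. (i) The \emph{purely decoupled} contributions: $\R_\theta,\R_\lambda$ and the last line of \eqref{eq:estR} are of the form $\max(\epsilon^4,\mu\epsilon^2,\mu^2)\,C(|v_\pm|_{H^{s+5}}+|\partial_t v_\pm|_{H^{s+4}})$, and since $\mu\epsilon^2\le\max(\epsilon^4,\mu^2)$ (AM–GM on $\mu\epsilon^2 = \sqrt{\mu^2}\sqrt{\mu\epsilon^4}$... more simply $\mu\epsilon^2\le\tfrac12(\mu^2+\epsilon^4)$), the uniform bound \eqref{eq:estv0} turns these into the time-independent part $\max(\epsilon^4,\mu^2)\,C(M^\star_{s+6})$. (ii) The terms multiplying $v_\pm$ alone (first line of \eqref{eq:estR}, namely $\epsilon^4|v_\pm|_{H^{s+1}}$, $\mu\epsilon^2(|v_\pm|_{H^{s+3}}+|\partial_t v_\pm|_{H^{s+2}})$) are again absorbed into $\max(\epsilon^4,\mu^2)\,C(M^\star_{s+6})$ via \eqref{eq:estv0}. (iii) The terms involving $u^c_\pm$, $\sigma u^c_\pm$, $\sigma^2\partial_\tau u^c_\pm$, $\sigma\partial_x^3 u^c_\pm$, $\sigma\partial_x^2\partial_t u^c_\pm$: here I would substitute \eqref{eq:est1}–\eqref{eq:est2} from Lemma~\ref{lem:estimatescoupling}, which give $\sigma|u^c_\pm|_{H^{s+k}}\le C(M^\star_{s+6})\max(\epsilon\alpha_1,\epsilon^2,\mu)\min(t,\sqrt t)$ and $\sigma^2|\partial_\tau u^c_\pm|_{H^s}\le C\max(\epsilon\alpha_1,\epsilon^2,\mu)^2\min(t,\sqrt t)$. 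One must be slightly careful with the products: e.g.\ the term $\epsilon\alpha_1\sigma|u^c_\pm|_{H^{s+1}}(|v_\pm|_{H^{s+1}}+\sigma|u^c_\pm|_{H^{s+1}})$ splits into $\epsilon\alpha_1\cdot\max(\epsilon\alpha_1,\epsilon^2,\mu)\min(t,\sqrt t)\cdot C(M^\star_{s+6})$, which is bounded by $\max(\alpha_1^2\epsilon^2,\epsilon^4,\mu^2)\min(t,\sqrt t)\,C$, and a cross term $\sigma^2|u^c_\pm|^2$ which by \eqref{eq:est1} is $O(\max(\epsilon\alpha_1,\epsilon^2,\mu)^2\min(t,\sqrt t)^2)$ — this is $O(1)\cdot\max(\dots)\min(t,\sqrt t)$ \emph{provided} $\sigma|u^c_\pm|_{H^{s+1}}\le M$, which is exactly the auxiliary uniform bound mentioned in the proof (enforced, if one wishes, by restricting $(T^\star)^{1/2}\le M/\max(\epsilon\alpha_1,\epsilon^2,\mu)$, and otherwise harmless since the conclusion is vacuous for larger $t$). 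Finally, one also uses $\partial_t(v_\pm(\sigma t,t,x)) = \sigma\partial_\tau\tilde v_\pm + \partial_t v_\pm$ with $\sigma|\partial_\tau\tilde v_\pm|_{H^s}\le\max(\epsilon\alpha_1,\epsilon^2,\mu\epsilon)|v_\pm|_{H^{s+2}}$ from \eqref{eq:v0}, to control wherever a time derivative of $v_\pm$ appears.

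For the localized case one repeats the identical tallying, but now replaces \eqref{eq:est1}–\eqref{eq:est2} by the uniform-in-time bounds \eqref{eq:est3}–\eqref{eq:est4} and uses \eqref{eq:estv0s}, \eqref{eq:estv1s} in place of \eqref{eq:estv0}, \eqref{eq:estv1}; every $\min(t,\sqrt t)$ becomes $\min(t,1)$ and every $M^\star$ becomes $M^\sharp$, yielding the second displayed bound of the Lemma. The only genuinely delicate point — and the one I would flag as the main obstacle — is \emph{not} a new difficulty but the need to track which terms acquire the factor $\alpha_1 = \tfrac32\tfrac{\delta^2-\gamma}{\gamma+\delta}$ versus which only carry a bare $\epsilon$: the $\O(\sqrt t)$-growing part of the error comes solely through coupling terms whose leading contribution (the $f_j$ and $g_{ij}$ pieces in the proof of Lemma~\ref{lem:estimatescoupling}) is proportional to $\alpha_1\epsilon$, which is why the final bound reads $\max(\alpha_1^2\epsilon^2,\epsilon^4,\mu^2)\min(t,\sqrt t)$ rather than $\max(\epsilon^2,\mu^2)\min(t,\sqrt t)$ — this refinement is what makes the $(\delta^2-\gamma)$ dependence in Proposition~\ref{prop:decomposition} visible, and getting it right requires reading off the $\alpha_1$-homogeneity of each forcing term rather than bounding crudely by $\epsilon$. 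Everything else is routine Moser-estimate bookkeeping with the embedding $H^s\hookrightarrow L^\infty$ and the elementary inequality $\mu\epsilon^2\le\tfrac12(\mu^2+\epsilon^4)$, so I would present the proof as little more than the sentence ``collect the estimates of Lemmata~\ref{lem:proofcons} and~\ref{lem:estimatescoupling} together with \eqref{eq:estv0}--\eqref{eq:estv1s}'' plus a short table of the dominant terms.
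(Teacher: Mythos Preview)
Your proposal is correct and follows essentially the same approach as the paper: plug the estimates \eqref{eq:estv0}--\eqref{eq:estv1s} on $v_\pm$ and the secular-growth bounds of Lemma~\ref{lem:estimatescoupling} into the remainder bound \eqref{eq:estR} of Lemma~\ref{lem:proofcons}, using the two-scale time-derivative decomposition, the auxiliary uniform bound $\sigma\lvert u^c_\pm\rvert_{H^{s+1}}\le M$ (enforced by a harmless time restriction), and the elementary inequality $\mu\epsilon^2\le\max(\epsilon^4,\mu^2)$. Your term-by-term bookkeeping and your identification of the $\alpha_1$-tracking as the only delicate point match the paper's (much terser) argument exactly.
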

Proposition~\ref{prop:decomposition} is a consequence of the above Lemma, together with the estimates of Lemma~\ref{lem:estimatescoupling} (see also Remark~\ref{rem:visCL}).

\subsection{Discussion} \label{ssec:discussion}
Our result takes its full meaning in the light of a conjectured stability result on Green-Naghdi equations~\eqref{eqn:GreenNaghdiMeanI}, {\em or any consistent model}, as described in Hypothesis~\ref{conj:stab}. If such a result holds, then one can deduce from the consistency result of Proposition~\ref{prop:decompositionI} that the difference between the solution of the Green-Naghdi system~\eqref{eqn:GreenNaghdiMeanI} (and in the same way, the solution of the full Euler system if it exists) and the weakly coupled approximate solution $U\equiv U_{\text{CL}}+U^c$ is small. The estimates concerning the difference between the solution of~\eqref{eqn:GreenNaghdiMeanI} and the fully decoupled solution $U_{\text{CL}}$ simply follows from the estimates on $ U_{c}$, in Proposition~\ref{prop:decompositionI}. This strategy has been used in the water-wave case in~\cite{BonaColinLannes05}, and in the case of internal waves (when restricting to the long wave regime) in~\cite{Duchene11a}. 
 
Throughout this section, we assume that Hypothesis~\ref{conj:stab} holds, and study the convergence results between solutions of the Green-Naghdi model and the different approximate solutions which proceeds. 
\medskip

Let us first state the convergence results concerning the weakly coupled model, defined in Proposition~\ref{prop:decompositionI}.
\begin{Corollary}[Convergence of weakly coupled model]\label{cor:convergenceWC}
For $(\epsilon,\mu,\delta,\gamma)=\p\in\P$, as defined in~\eqref{eqn:defRegimeFNL}, let $U_{\text{GN}}^\p$ be a solution of Green-Naghdi equations~\eqref{eqn:GreenNaghdiMeanI} such that the family $(U_{\text{GN}}^\p)$ is uniformly bounded on $H^s$, $s$ sufficiently big, over time interval $[0,T_{\text{GN}}]$. 
Assume that hypotheses of Proposition~\ref{prop:decompositionI} hold, and denote $U^\p\equiv U_{\text{CL}}^\p+U^c[U_{\text{CL}}^\p]$ the approximate solution, satisfying ${U_{\text{GN}}}^\p\id{t=0}=U^\p\id{t=0}$. Then if Hypothesis~\ref{conj:stab} is valid, for any $t\leq \min(T_{\text{GN}},T^\star_{s+6})$, one has
\begin{align*} \big\Vert U^\p -U_{\text{GN}}^\p\big\Vert_{L^\infty([0,t];H^s)} \ &\leq \ C\ \max(\epsilon^2(\delta^2-\gamma)^2,\epsilon^4,\mu^2)\ t (1+\sqrt{t}),
\end{align*}
with $C=C\big(\big\Vert U_{\text{GN}}^\p\big\Vert_{L^\infty([0,T_{\text{GN}}];H^s)} ,M^\star_{s+6},\frac{1}{\delta_{\text{min}}},\delta_{\text{max}},\epsilon_{\text{max}},\mu_{\text{max}},|\lambda|,|\theta| \big)$.

If moreover, the initial data is sufficiently localized in space, then one has for $t\leq \min(T_{\text{GN}},T^\sharp_{s+6})$,
\begin{align*}\big\Vert U^\p -U_{\text{GN}}^\p\big\Vert_{L^\infty([0,t];H^s)} \ &\leq \ C\ \max(\epsilon^2(\delta^2-\gamma)^2,\epsilon^4,\mu^2)\ t,
\end{align*}
with $C=C\big(\big\Vert U_{\text{GN}}^\p\big\Vert_{L^\infty([0,T];H^s)} ,M^\sharp_{s+6},\frac{1}{\delta_{\text{min}}},\delta_{\text{max}},\epsilon_{\text{max}},\mu_{\text{max}},|\lambda|,|\theta| \big)$.
\end{Corollary}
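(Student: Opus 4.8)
The plan is to derive Corollary~\ref{cor:convergenceWC} as an immediate consequence of the consistency result in Proposition~\ref{prop:decompositionI} together with the stability estimate postulated in Hypothesis~\ref{conj:stab}. The key observation is that $U^\p \equiv U_{\text{CL}}^\p + U^c[U_{\text{CL}}^\p]$ is, by Proposition~\ref{prop:decompositionI}, consistent with the Green-Naghdi system~\eqref{eqn:GreenNaghdiMeanI} of order $s$ on $[0,t]$ for every $t<T^\star$, at precision $\O(\eps^\star_{\text{CL}})$ with $\eps^\star_{\text{CL}} = C\ \max(\epsilon^2(\delta^2-\gamma)^2,\epsilon^4,\mu^2)\ (1+\sqrt t)$. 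Since moreover $U^\p$ is uniformly bounded in $L^\infty([0,T^\star];H^{s+\bar s})$ (again from Proposition~\ref{prop:decompositionI}, noting that $U_{\text{CL}}^\p$ is controlled in $H^{s+6}$ by $M^\star_{s+6}$ and $U^c$ in $H^s$ by the corrector estimate), the pair $U^\p$ fits exactly into the framework of the second item of Hypothesis~\ref{conj:stab}, with $\eps$ there taken to be $\eps^\star_{\text{CL}}$.

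First I would fix $t\leq \min(T_{\text{GN}},T^\star_{s+6})$ and invoke Hypothesis~\ref{conj:stab}(2) with the two families $U^\p$ and $U_{\text{GN}}^\p$, which share initial data by assumption ${U_{\text{GN}}}^\p\id{t=0}=U^\p\id{t=0}$. This directly yields
\[ \big\Vert U^\p - U_{\text{GN}}^\p \big\Vert_{L^\infty([0,t];H^s)} \ \leq \ C\ \eps^\star_{\text{CL}}\ t \ = \ C\ \max(\epsilon^2(\delta^2-\gamma)^2,\epsilon^4,\mu^2)\ t(1+\sqrt t), \]
with $C$ depending on $\big\Vert U_{\text{GN}}^\p\big\Vert_{L^\infty([0,T_{\text{GN}}];H^s)}$, $M^\star_{s+6}$, and the fixed parameters $\delta_{\text{min}}^{-1},\delta_{\text{max}},\epsilon_{\text{max}},\mu_{\text{max}},|\lambda|,|\theta|$; here one absorbs the $C$ coming from the consistency precision and the $C$ coming from the stability estimate into a single constant of the stated form, using that both are nondecreasing in their arguments. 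This is the first asserted bound.

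For the localized case, the argument is identical but uses the second part of Proposition~\ref{prop:decompositionI}: under the additional weighted-Sobolev hypothesis with weight $(1+x^2)^\alpha$, $\alpha>1/2$, the approximate solution $U^\p = U_{\text{CL}}^\p+U^c$ is consistent at the improved precision $\eps^\sharp_{\text{CL}} = C\ \max(\epsilon^2(\delta^2-\gamma)^2,\epsilon^4,\mu^2)$ (no $\sqrt t$ factor) on $[0,t]$ for $t<T^\sharp$, with $U^c$ uniformly bounded. Feeding this into Hypothesis~\ref{conj:stab}(2) gives the bound $C\ \max(\epsilon^2(\delta^2-\gamma)^2,\epsilon^4,\mu^2)\ t$ for $t\leq\min(T_{\text{GN}},T^\sharp_{s+6})$, with $C$ now depending on $M^\sharp_{s+6}$ instead of $M^\star_{s+6}$. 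Since nothing here requires a fresh energy estimate or a new a priori bound — everything is already packaged in Proposition~\ref{prop:decompositionI} and Hypothesis~\ref{conj:stab} — there is no genuine obstacle; the only point requiring a line of care is checking that the regularity index $s+\bar s$ needed by the stability hypothesis is indeed supplied by the hypotheses of Proposition~\ref{prop:decompositionI} (i.e. that $\bar s \leq 6$, which holds with the stated Sobolev exponents), and that the triangle inequality $\Vert U_{\text{CL}}^\p - U_{\text{GN}}^\p\Vert \leq \Vert U^\p - U_{\text{GN}}^\p\Vert + \Vert U^c\Vert$ together with the corrector estimate of Proposition~\ref{prop:decompositionI} recovers the companion statement for the fully decoupled solution alluded to in the surrounding text.
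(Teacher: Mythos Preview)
Your proposal is correct and matches the paper's approach exactly: the paper presents Corollary~\ref{cor:convergenceWC} without a separate proof, treating it as an immediate consequence of the consistency result in Proposition~\ref{prop:decompositionI} combined with the stability estimate in Hypothesis~\ref{conj:stab}, which is precisely the argument you give.
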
 
We therefore see that the weakly coupled model achieves the same accuracy as the fully coupled Green-Naghdi model, in the Camassa-Holm regime~\eqref{eqn:defRegimeCH}, if one restricts to critical case $\delta^2-\gamma=\O(\mu)$, and if the initial data is sufficiently localized in space.
\medskip

Let us now turn to the fully decoupled models. The following result is a straightforward application of the above Corollary, together with the estimate of Proposition~\ref{prop:decompositionI}. Estimates concerning lower order decoupled models are obtained in the same way, using Proposition~\ref{prop:other}.

\begin{Corollary}[Convergence of decoupled models]\label{cor:convergence}For $(\epsilon,\mu,\delta,\gamma)=\p\in\P$, as defined in~\eqref{eqn:defRegimeFNL}, let $U_{\text{GN}}^\p$ be a solution of Green-Naghdi equations~\eqref{eqn:GreenNaghdiMeanI} such that the family $(U_{\text{GN}}^\p)$ is uniformly bounded on $H^s$, $s$ sufficiently big, over time interval $[0,T_{\text{GN}}]$. Denote respectively $U_{\text{CL}}^\p,U_{\text{eKdV}}^\p,U_{\text{KdV}}^\p,U_{\text{iB}}^\p$ the decoupled approximations defined in Definitions~\ref{def:CL} and~\ref{def:other}. Assume that the hypotheses of Proposition~\ref{prop:decompositionI} hold, and Hypothesis~\ref{conj:stab} is valid. Define $\eps_0=\max(\epsilon(\delta^2-\gamma),\epsilon^2,\mu)$. Then for any $t\leq \min(T_{\text{GN}},T^\star_{s+6})$, one has
\begin{align*} \big\Vert U_{\text{CL}}^\p-{U_{\text{GN}}}^\p\big\Vert_{L^\infty([0,t];H^s)} \ &\leq \ C\ \eps_0\ \min(t,t^{1/2}) (1 \ + \ \eps_0 t ) ,\\
\big\Vert U_{\text{eKdV}}^\p-{U_{\text{GN}}}^\p\big\Vert_{L^\infty([0,t];H^s)} \ &\leq \ C\ \eps_0\ \min(t,t^{1/2}) (1 \ + \ \eps_0 t ) \ + \ C\ \max(\epsilon^3,\mu\epsilon)\ t ,\\
\big\Vert U_{\text{KdV}}^\p-{U_{\text{GN}}}^\p\big\Vert_{L^\infty([0,t];H^s)} \ &\leq \ C\ \eps_0\ \min(t,t^{1/2}) (1 \ + \ \eps_0 t ) \ + \ C\ \epsilon^2 \ t ,\\
\big\Vert U_{\text{iB}}^\p-{U_{\text{GN}}}^\p\big\Vert_{L^\infty([0,t];H^s)} \ &\leq \ C\ \eps_0\ \min(t,t^{1/2}) (1 \ + \ \eps_0 t ) \ + \ C\ \max(\epsilon^2,\mu)\ t,
\end{align*}
with $C=C\big(\big\Vert U_{\text{GN}}^\p\big\Vert_{L^\infty([0,T_{\text{GN}}];H^s)} ,M^\star_{s+6},\frac{1}{\delta_{\text{min}}},\delta_{\text{max}},\epsilon_{\text{max}},\mu_{\text{max}},|\lambda|,|\theta| \big)$.

If the initial data is sufficiently localized in space, then one has for $t\leq \min(T,T^\sharp)$,
\begin{align*} \big\Vert U_{\text{CL}}-U_S\big\Vert_{L^\infty([0,t];H^s)} \ &\leq \ C\ \eps_0\ \min(t,1) (1 \ + \ \eps_0 t ) ,\\
\big\Vert U_{\text{eKdV}}-U_S\big\Vert_{L^\infty([0,t];H^s)} \ &\leq \ C\ \eps_0\ \min(t,1) (1 \ + \ \eps_0 t ) \ + \ C\ \max(\epsilon^3,\mu\epsilon)\ t ,\\
\big\Vert U_{\text{KdV}}-U_S\big\Vert_{L^\infty([0,t];H^s)} \ &\leq \ C\ \eps_0\ \min(t,1) (1 \ + \ \eps_0 t ) \ + \ C\ \epsilon^2 \ t ,\\
\big\Vert U_{\text{iB}}-U_S\big\Vert_{L^\infty([0,t];H^s)} \ &\leq \ C\ \eps_0\ \min(t,1) (1 \ + \ \eps_0 t ) \ + \ C\ \max(\epsilon^2,\mu)\ t,
\end{align*}
with $C=C\big(\big\Vert U_{\text{GN}}^\p\big\Vert_{L^\infty([0,T_{\text{GN}}];H^s)} ,M^\sharp_{s+6},\frac{1}{\delta_{\text{min}}},\delta_{\text{max}},\epsilon_{\text{max}},\mu_{\text{max}},|\lambda|,|\theta| \big)$.
\end{Corollary}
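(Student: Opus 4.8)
The statement to prove is Corollary~\ref{cor:convergence}, which combines the consistency results (Propositions~\ref{prop:decompositionI} and~\ref{prop:other}) with the conjectured stability Hypothesis~\ref{conj:stab}.

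\medskip

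\textbf{Approach.} The plan is to feed each decoupled (or weakly coupled) approximation directly into the stability estimate of Hypothesis~\ref{conj:stab}(2). First I would treat the weakly coupled model $U\equiv U_{\text{CL}}^\p+U^c$: by Proposition~\ref{prop:decompositionI} this is consistent with~\eqref{eqn:GreenNaghdiMeanI} on $[0,t]$, $t<T^\star$, at precision $\O(\eps_{\text{CL}}^\star)$ with $\eps_{\text{CL}}^\star = C\max(\epsilon^2(\delta^2-\gamma)^2,\epsilon^4,\mu^2)(1+\sqrt t)$. Since moreover ${U_{\text{GN}}^\p}\id{t=0}=U^\p\id{t=0}$ by hypothesis, and $U_{\text{GN}}^\p$ is uniformly bounded on $H^s$ over $[0,T_{\text{GN}}]$, Hypothesis~\ref{conj:stab}(2) applies with $\eps=\eps_{\text{CL}}^\star$, giving
\[ \big\Vert U^\p-U_{\text{GN}}^\p\big\Vert_{L^\infty([0,t];H^s)} \leq C\,\eps_{\text{CL}}^\star\, t = C\max(\epsilon^2(\delta^2-\gamma)^2,\epsilon^4,\mu^2)\,t(1+\sqrt t),\]
which is Corollary~\ref{cor:convergenceWC}. (One must check that the constant $C$ in Hypothesis~\ref{conj:stab}, which depends on $\Vert U^\p\Vert_{L^\infty([0,T];H^{s+\bar s})}$, is controlled; this follows from the uniform bound $M^\star_{s+6}$ on $U_{\text{CL}}^\p$ and the uniform estimate on $U^c$ from Proposition~\ref{prop:decompositionI}, after possibly raising $s$.)

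\medskip

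\textbf{From the weakly coupled model to the fully decoupled ones.} For $U_{\text{CL}}^\p$ alone, I would write $U_{\text{CL}}^\p - U_{\text{GN}}^\p = (U_{\text{CL}}^\p + U^c - U_{\text{GN}}^\p) - U^c$ and apply the triangle inequality: the first piece is bounded by Corollary~\ref{cor:convergenceWC}, and $\Vert U^c\Vert_{L^\infty([0,t];H^s)} \leq C\max(\epsilon(\delta^2-\gamma),\epsilon^2,\mu)\min(t,\sqrt t) = C\eps_0\min(t,\sqrt t)$ directly from the corrector estimate in Proposition~\ref{prop:decompositionI}. Combining, $\Vert U_{\text{CL}}^\p - U_{\text{GN}}^\p\Vert \leq C\eps_0^2\, t(1+\sqrt t) + C\eps_0\min(t,\sqrt t) \leq C\eps_0\min(t,t^{1/2})(1+\eps_0 t)$ after writing $\eps_{\text{CL}}^\star \leq C\eps_0^2(1+\sqrt t)$ and bounding $\eps_0^2 t(1+\sqrt t) \lesssim \eps_0 \min(t,\sqrt t)\cdot \eps_0 t$ on the relevant range (and noting the whole bound is vacuous once $\eps_0 t \gtrsim 1$, so one may always insert the $\min(t,\sqrt t)$ factor). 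For $U_{\text{eKdV}}^\p$, $U_{\text{KdV}}^\p$, $U_{\text{iB}}^\p$ the same argument runs through Proposition~\ref{prop:other} instead of Proposition~\ref{prop:decompositionI}: each of these gives consistency at precision $\eps_0^2(1+\sqrt t)$ plus an extra \emph{uniform-in-time} unidirectional error term ($\max(\epsilon^3,\mu\epsilon)$ for eKdV, $\epsilon^2$ for KdV, $\max(\epsilon^2,\mu)$ for iB); feeding this into Hypothesis~\ref{conj:stab}(2) multiplies by $t$, producing the additional $C\max(\epsilon^3,\mu\epsilon)t$, $C\epsilon^2 t$, $C\max(\epsilon^2,\mu)t$ terms, while the corrector estimate $\Vert U^c\Vert \leq C\eps_0\min(t,\sqrt t)$ is unchanged. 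The localized-data variant is identical, replacing $\sqrt t$ by $1$ everywhere using the second halves of Propositions~\ref{prop:decompositionI} and~\ref{prop:other}.

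\medskip

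\textbf{Main obstacle.} There is no real analytic difficulty here beyond bookkeeping — this corollary is a formal consequence of the earlier propositions plus the (assumed) Hypothesis~\ref{conj:stab}. The one point requiring care is verifying that the constant produced by Hypothesis~\ref{conj:stab}(2), which a priori depends on the higher Sobolev norm $\Vert U^\p\Vert_{L^\infty([0,T];H^{s+\bar s})}$ of the approximate solution, is uniformly controlled by the data $M^\star_{s+6}$ (resp.\ $M^\sharp_{s+6}$); this is where one invokes the uniform bounds on $U_{\text{CL}}^\p$ and on the corrector $U^c$ and, if necessary, applies the propositions at a slightly higher regularity index so that $s+\bar s \leq s+6$. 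The second, purely cosmetic, step is the algebraic simplification that repackages the various $\max(\cdot)$ expressions and powers of $t$ into the compact form $C\eps_0\min(t,t^{1/2})(1+\eps_0 t)$, using that the bound is trivially true (the right-hand side exceeds the uniformly-$\O(1)$ size of both sides) once $\eps_0 t$ is not small.
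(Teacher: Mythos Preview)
Your proposal is correct and follows essentially the same route as the paper, which states only that the result is ``a straightforward application of the above Corollary [\ref{cor:convergenceWC}], together with the estimate of Proposition~\ref{prop:decompositionI}'' (and Proposition~\ref{prop:other} for the lower-order models). You have in fact spelled out more detail than the paper does, including the derivation of Corollary~\ref{cor:convergenceWC} via Hypothesis~\ref{conj:stab}(2), the triangle-inequality splitting $U_{\text{CL}}^\p - U_{\text{GN}}^\p = (U_{\text{CL}}^\p + U^c - U_{\text{GN}}^\p) - U^c$, and the two bookkeeping points (control of the constant in Hypothesis~\ref{conj:stab} and the algebraic repackaging into $\eps_0\min(t,t^{1/2})(1+\eps_0 t)$), all of which are handled correctly.
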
 

\begin{figure}[htb]
\subfigure[non-localized initial data]{
\includegraphics[width=0.46\textwidth]{./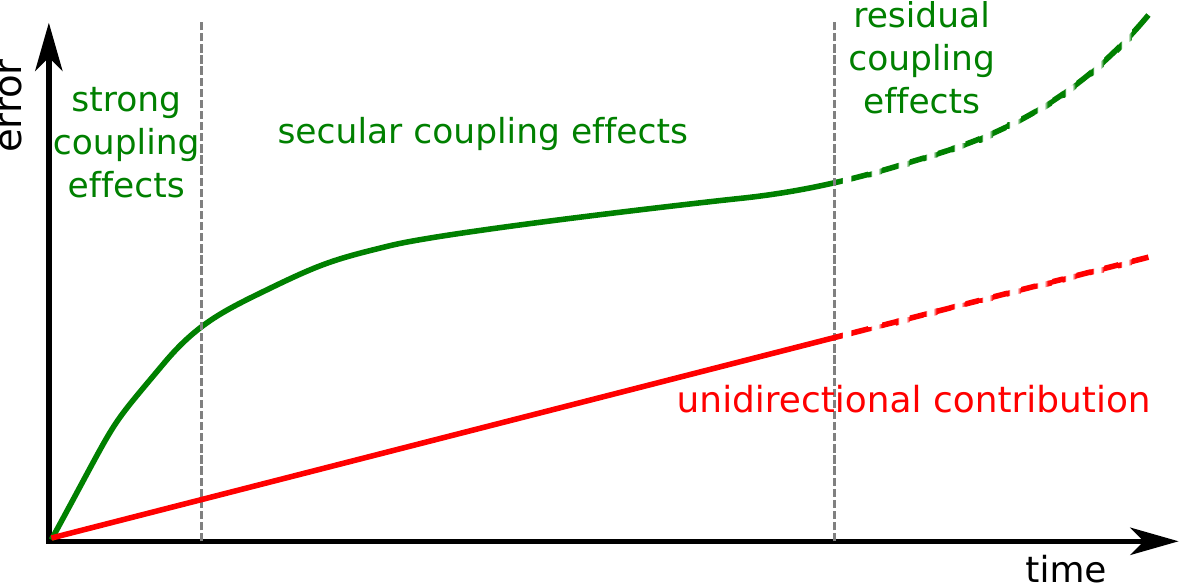}
}
\subfigure[localized initial data]{
\includegraphics[width=0.46\textwidth]{./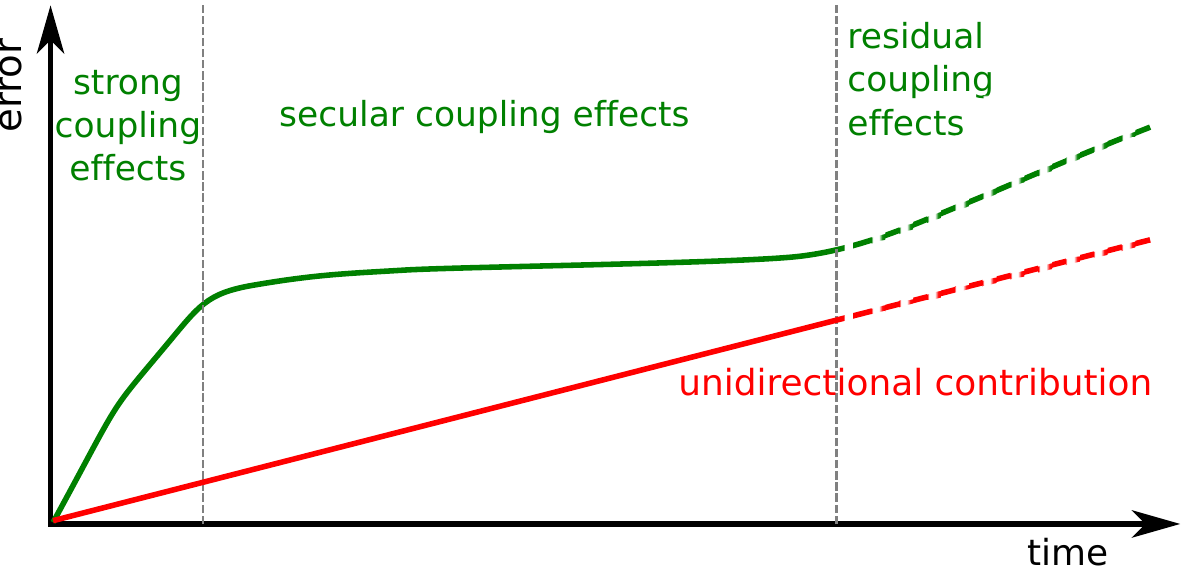}
}
\caption{Sketch of the error}
\label{fig:SketchError}
\end{figure}
 Corollary~\ref{cor:convergence} exhibits different sources of error with different time scales.
\begin{itemize}
\item The first (and often main) source of error comes from the coupling between the two counterpropagating waves which are neglected by our decoupled models, but recovered at first order by the weakly coupled model (see Corollary~\ref{cor:convergenceWC}). Following~\cite{Wright05}, we name the contribution of this term the {\em counterpropagation error}. 
\subitem--- This error grows linearly in time, for times of order $\O(1)$, as the two waves are located at the same position; and coupling effects are {\em strong}. 
\subitem--- However, one is able to control sublinearly in time the {\em secular growth} of such coupling terms, which is the key ingredient of our result, and yields a contribution of size $\O(\eps_0\sqrt t)$ in general, $\O(\eps_0)$ if the initial data is sufficiently localized. 
\item After very long time, one sees the effect of the precision of the consistency result, through the stability hypothesis. Such a contribution is unavoidable, as it appears after many manipulations of the equations, such as the use of BBM trick, or near-identity change of variables. We call these contributions residual errors. The error generated in that way affects both the solution of the scalar evolution equation and the coupling term, thus there are two contributions. 
\subitem--- The so-called {\em unidirectional error} is linear in time as long as the solution of the decoupled model is uniformly bounded, and varies greatly with the choice of the model.
\subitem--- The {\em residual error} coming from the coupling term may be superlinear (if the initial data is not localized in space), as the coupling term grows in time.
\end{itemize}
All of these contributions are summarized in Figure~\ref{fig:SketchError}. The total error of a decoupled models is the sum of the green and the red curves. The green curve represents the error generated by neglecting the coupling between two waves, and therefore is affected by three of the contributions listed above, at different time scales. The red curve is the unidirectional contribution, and varies with the choice of the model.

However, let us precise that the full pattern is not always visible, as in many cases, one source of error will be negligible in front of the others. In particular, keep in mind that we obtained existence and uniform control in weighted Sobolev spaces of the solutions of decoupled models only over times of order $\O(1/\max(\epsilon,\mu))$ (Proposition~\ref{prop:WP}). Consequently, the time domain for which residual contributions appear is out of the scope of our rigorous results.

In order to understand the relevance of a specific evolution equation, one has to look whether the unidirectional contribution, which depends on the evolution equation itself, is smaller or greater than the different sources of error due to coupling. For example, unidirectional error is always negligible in front of coupling terms in the Constantin-Lannes model, whereas it is expected to be preponderant in the inviscid Burgers' model.
\medskip

Let us look precisely at several interesting scenari, in the following subsections. Our discussion is supported by numerical simulations. Each time, we compute the coupled Green-Naghdi model and various decoupled approximations, for different values of $\epsilon$ and corresponding $\mu$ (depending on the situation, $\mu=\epsilon$ or $\mu=\sqrt\epsilon$), and over times $\O(\epsilon^{-3/2})$. As pointed out above, such time scale is out of the scope of our rigorous result. However, interesting behavior appears after times $\O(1/\epsilon)$, as we shall see. Each of the figures below contains three panels. On the left-hand side, we represent the difference between the Green-Naghdi model and decoupled models, with respect to time and for $\epsilon=0.1,0.05,0.035$. Values at times $1/\epsilon$ and $\epsilon^{-3/2}$ are marked. In the two right-hand-side panels, we plot the difference in a log-log scale for several values of $\epsilon$ (the markers reveal the positions which have been computed), at aforementioned times. The pink triangles express the convergence rate.

The initial data is fixed such that the left-going wave is initially two-third the right going wave, in order to avoid symmetry cancellations: ${v_-}\id{t=0}=2/3{v_+}\id{t=0}$.
For localized initial data, we choose initial data ${v_+}\id{t=0}=\exp(-(x/2)^2)$ and for non-localized initial data, we choose ${v_+}\id{t=0}=(1+10x^2)^{-1/3}$.
We set $\delta^2=\gamma=0.64$ in the critical ratio setting, and $\delta=0.5,\gamma=0.9$ in the non-critical ratio setting. Finally, we set $\theta=1/2$ and $\lambda=0$. The numerical scheme we use has been briefly described at the end of Section~\ref{sec:unidirectional}

\subsubsection{The long wave regime.}\label{sssec:KdV} 

Let us assume that we are in the long wave regime: $\epsilon=\O(\mu)$, and therefore $\eps_0\approx\mu$ in Corollary~\ref{cor:convergence}.

It follows that {\em the decoupled KdV approximation has the same order of accuracy as any higher order model}, whatever the initial data or critical ratio is. Indeed, one has the same estimates for the decoupled approximations $U=U_{\text{KdV}}$ or $U=U_{\text{eKdV}}$ or $U=U_{\text{CL}}$:
\[ \big\Vert U_{\text{KdV}}-U_{\text{GN}}\big\Vert_{L^\infty([0,t];H^s)} \ \approx \ \big\Vert U_{\text{CL}}-U_{\text{GN}}\big\Vert_{L^\infty([0,t];H^s)} \ \leq \ C_0\ \mu\ \min(t,t^{1/2}) (1 \ + \ \mu t ),\]
for any $t\in [0,T^\star_{s+6})$, and if the initial data is localized,
\[\big\Vert U_{\text{KdV}}-U_{\text{GN}}\big\Vert_{L^\infty([0,t];H^s)} \ \approx \ \big\Vert U_{\text{CL}}-U_{\text{GN}}\big\Vert_{L^\infty([0,t];H^s)} \ \leq \ C_0\ \mu\ \min(t,1) (1 \ + \ \mu t ),\]
for any $t\in [0,T^\sharp_{s+6})$.
Let us note that one recovers the results of~\cite{Duchene11a}, for the KdV approximation.
\begin{figure}[!hbt] 
 \subfigure[behavior of the error with respect to time]{
\includegraphics[width=0.5\textwidth]{./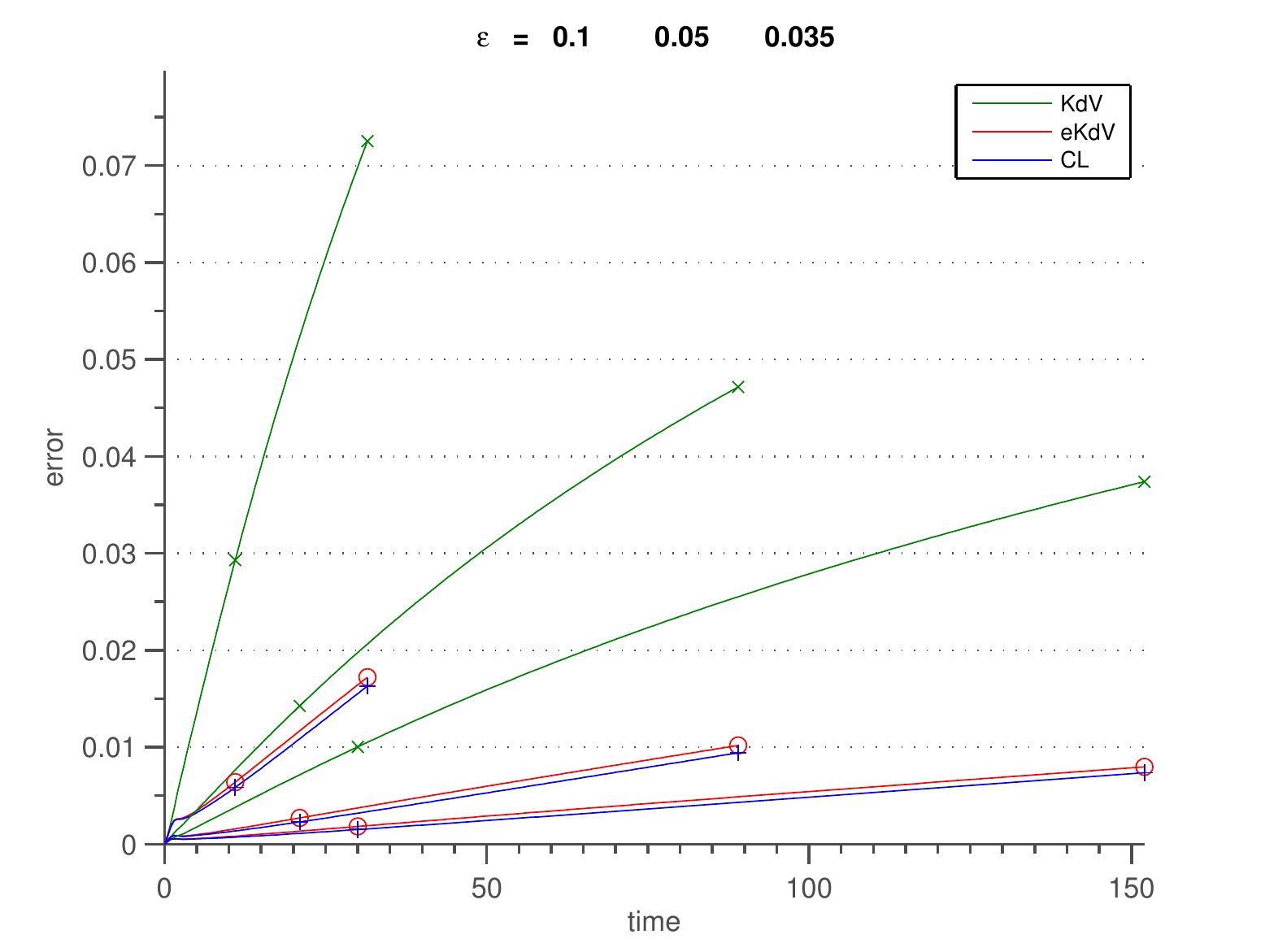}
\label{fig:21KdVa}
}\hspace{-1cm}
 \subfigure[behavior of the error with respect to $\epsilon=\mu$]{
\includegraphics[width=0.5\textwidth]{./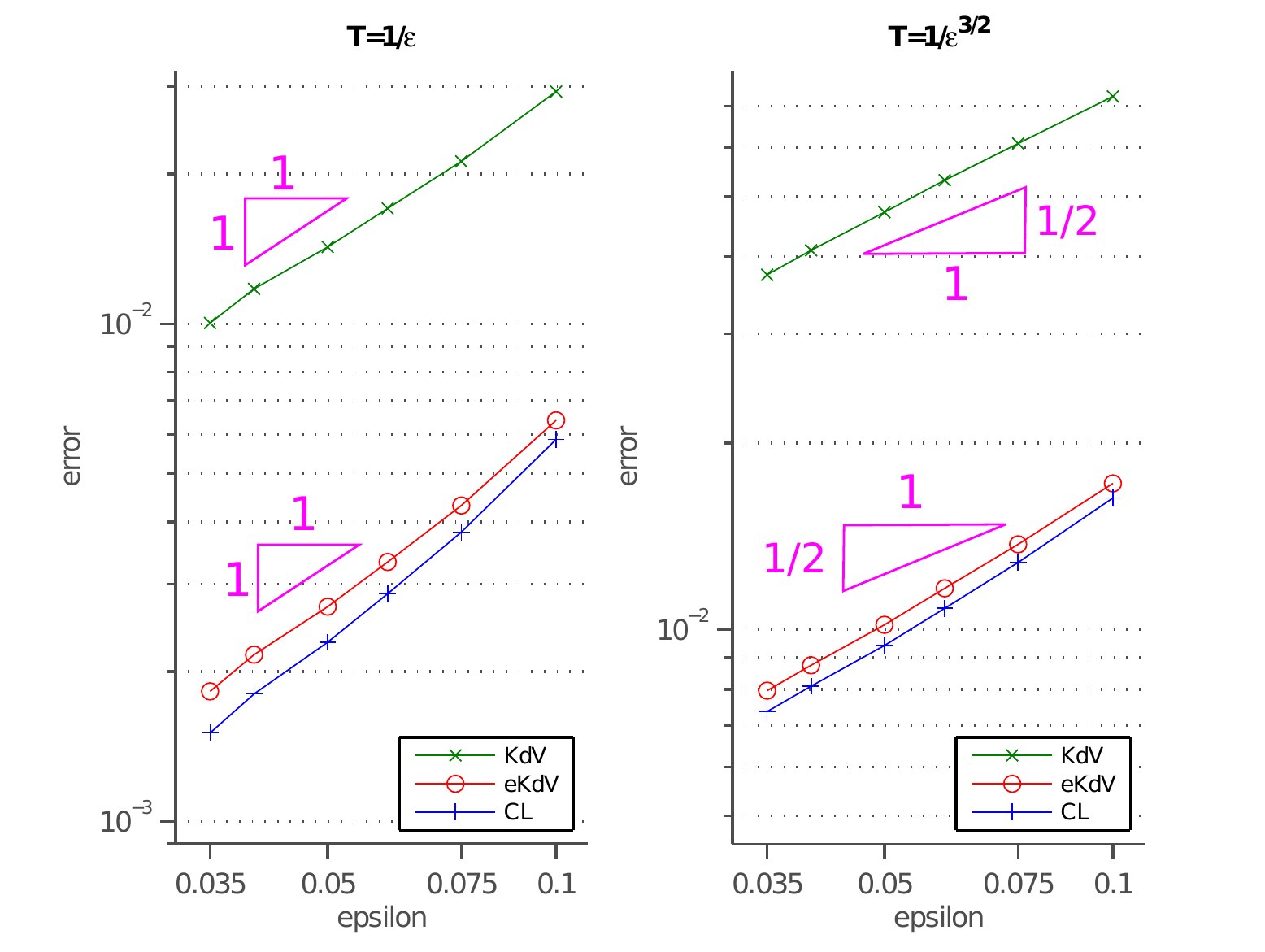}
\label{fig:21KdVb}
}
\caption{Long wave regime, critical ratio, localized initial data}
\label{fig:21KdV}
\end{figure}

More precisely, a unidirectional error of size $\O(\mu^2 t)$ is not detectable, as it is smaller than the coupling terms, presented above, so that the additional error produced by neglecting higher order terms in the Constantin-Lannes approximation, and considering only the KdV equation, does not change the accuracy of the model. However, let us note that for localized initial data, unidirectional error of size $\O(\mu^2 t)$ is the limiting case. This is why one sees in Figure~\ref{fig:21KdVa} a noticeable difference between the KdV approximation and higher order approximations (eKdV, CL). The rate of convergence in figure~\ref{fig:21KdVb}, however, is identical: the error is of size $\O(\mu)$ at time $T=\O(1/\mu)$ and $\O(\mu^{1/2})$ at time $T=\O(1/\mu^{3/2})$. We remark that the last panel of Figure~\ref{fig:21KdV} shows a slight discrepancy with respect to the predicted convergence rate in $\O(\mu^{1/2})$. This may be due to the fact that higher order sources of error (typically of size $\O(\mu^3 t)$) are detectable for larger values of $\mu$ ($\mu=0.1$), and eventually becomes negligible for smaller values ($\mu=0.035$), therefore artificially improving the convergence rate.
Let us note also that the criticality of the depth ratio do not play a role in this analysis, and that simulations in the case of non-critical ratio gives similar outcome.

\subsubsection{The Camassa-Holm regime, with non-critical ratio.} \label{sssec:CH1}
We are now in the case where $|\delta^2-\gamma|\geq \alpha_0>0$, and $\epsilon\approx\sqrt\mu$, thus $\eps_0\approx\epsilon\approx\sqrt\mu$ in Corollary~\ref{cor:convergence}.

In that case, the contribution of the coupling error are always greater than the one of the unidirectional error, and one has the same estimates as for the decoupled approximations $U=U_{\text{iB}}$, $U=U_{\text{KdV}}$, $U=U_{\text{eKdV}}$ or $U=U_{\text{CL}}$:
\[\big\Vert U_{\text{iB}}-U_{\text{GN}}\big\Vert_{L^\infty([0,t];H^s)} \ \approx \ \big\Vert U_{\text{CL}}-U_{\text{GN}}\big\Vert_{L^\infty([0,t];H^s)} \ \leq \ C_0\ \sqrt\mu\ \min(t,t^{1/2}) (1 \ + \ \sqrt\mu t ),\]
for any $t\in [0,T^\star_{s+6})$, and if the initial data is localized,
\[ \big\Vert U_{\text{iB}}-U_{\text{GN}}\big\Vert_{L^\infty([0,t];H^s)} \ \approx \ \big\Vert U_{\text{CL}}-U_{\text{GN}}\big\Vert_{L^\infty([0,t];H^s)} \ \leq \ C_0\ \sqrt\mu\ \min(t,1) (1 \ + \ \sqrt\mu t ),\]
for any $t\in [0,T^\sharp_{s+6})$.
As a consequence, {\em the inviscid Burgers' approximation is as precise as any higher order decoupled model.}
\begin{figure}[!hbt] 
 \subfigure[behavior of the error with respect to time]{
\includegraphics[width=0.5\textwidth]{./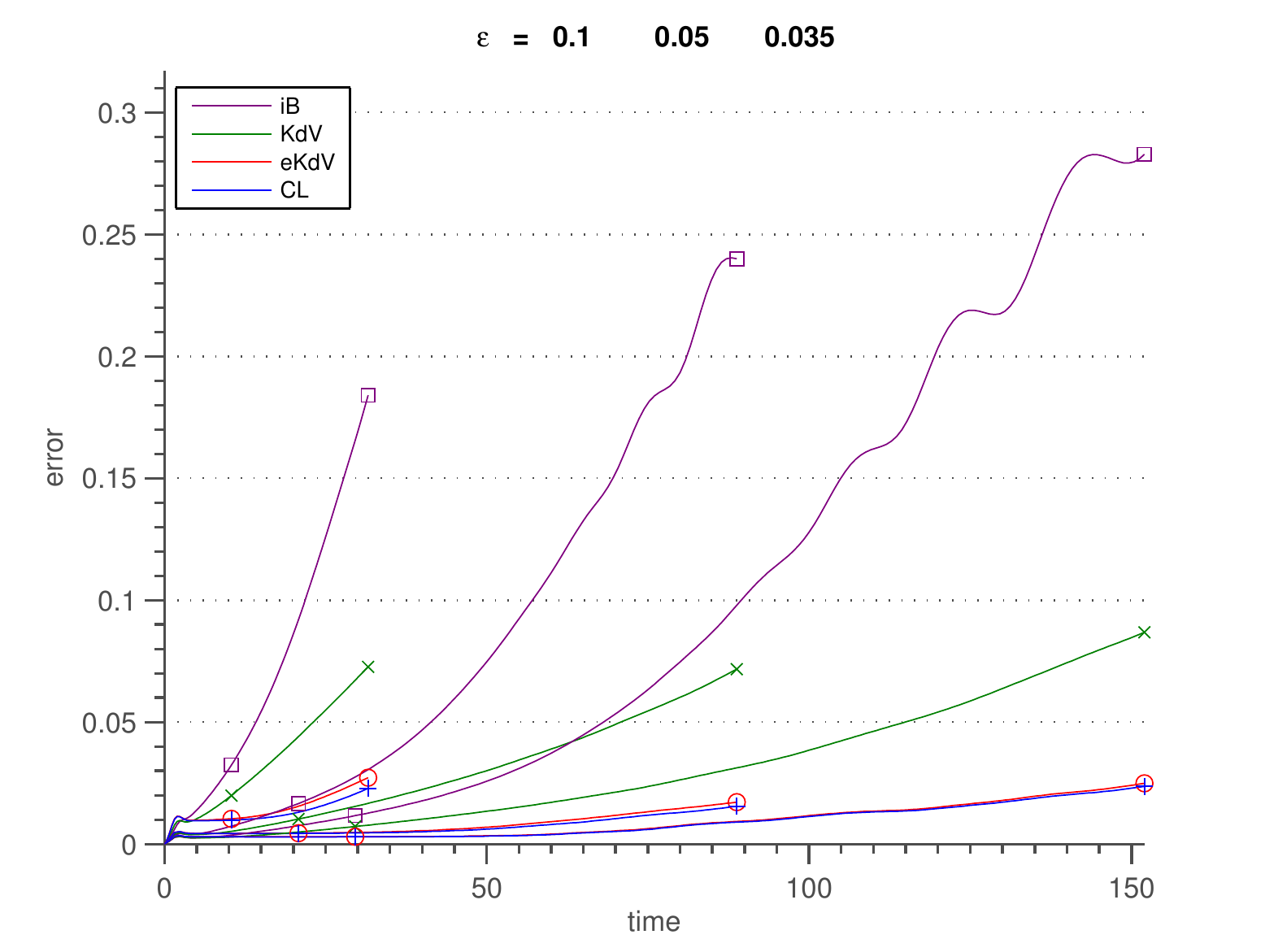}
\label{fig:11CHa}
}\hspace{-1cm}
 \subfigure[behavior of the error with respect to $\epsilon=\sqrt\mu$]{
\includegraphics[width=0.5\textwidth]{./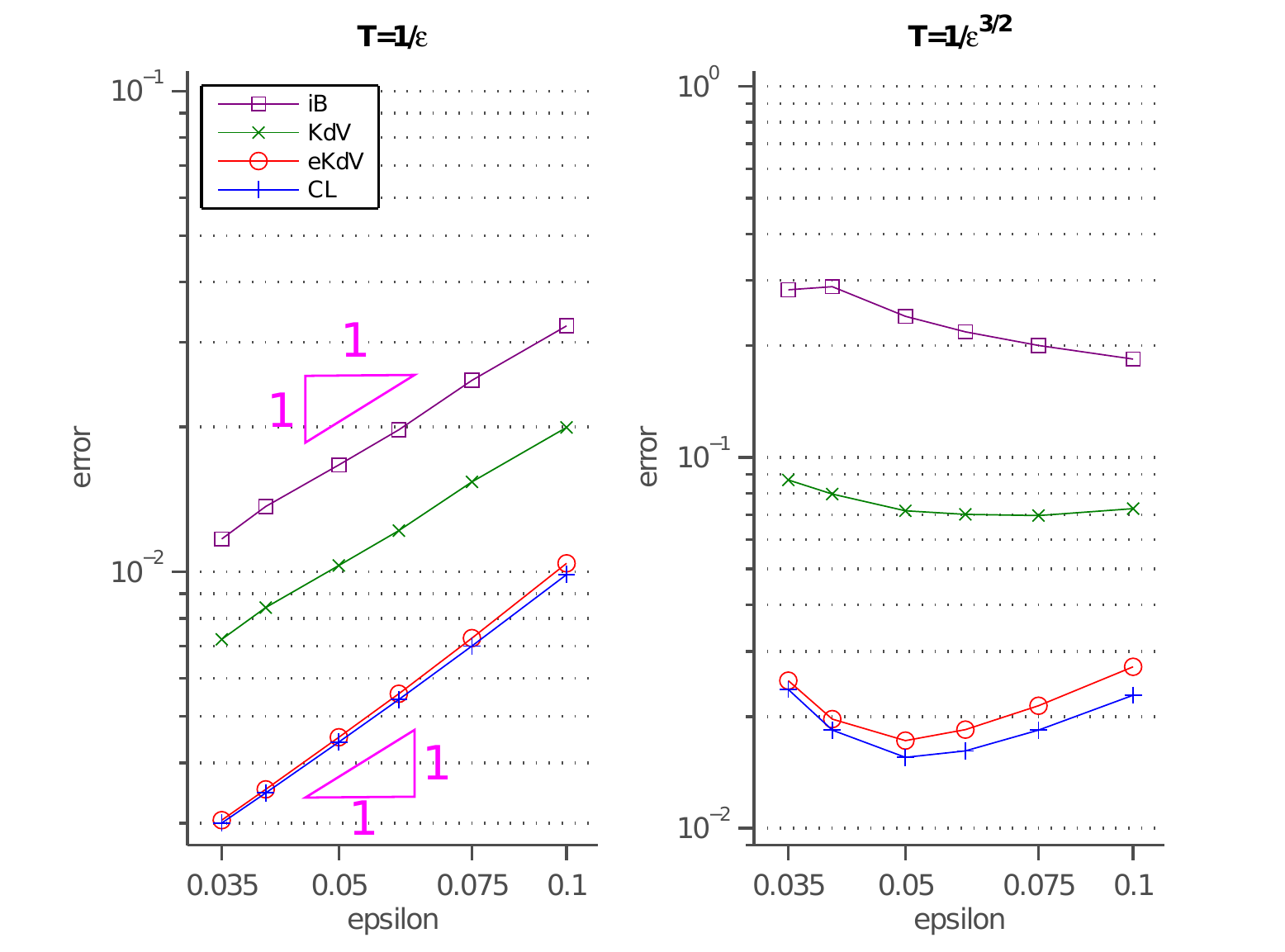}
\label{fig:11CHb}
}
\caption{Camassa-Holm regime, non-critical ratio, localized initial data}
\label{fig:11CH}
\end{figure}

 Here, the unidirectional error of the inviscid Burgers' approximation is of the same order of magnitude as the residual coupling error, if the initial data is sufficiently localized. Therefore, in figure~\ref{fig:11CH}, one can observe a noticeable difference between the iB approximation and higher order models for long times, although the convergence rate is similar at time $T=1/\epsilon=1/\sqrt\mu$. For longer times, the decoupled approximate solutions do not seem to converge, which may indicate that the exact solution $U_{\text{GN}}$ cannot be controlled over times $\O(\epsilon^{-3/2})$, in the setting we use.

\subsubsection{The Camassa-Holm regime, critical case.} \label{sssec:CH2}
Now let us assume that $\delta^2-\gamma \ = \ \O(\sqrt\mu)$, and $\epsilon\ \approx \ \sqrt\mu$, so that $\eps_0\approx\mu$ in Corollary~\ref{cor:convergence}.

 We recall that $T^\star$ and $T^\sharp$ are known to exist and to be of size $T=\O(1/\max(\epsilon,\mu))=\O(\mu^{-1/2})$, as a result of Proposition~\ref{prop:WP}. Over such time, the unidirectional error in the eKdV model is smaller than the coupling errors: one has for both $U=U_{\text{eKdV}}$ and $U=U_{\text{CL}}$,
\[ \big\Vert U_{\text{eKdV}}-U_{\text{GN}}\big\Vert_{L^\infty([0,t];H^s)} \ \approx \ \big\Vert U_{\text{CL}}-U_{\text{GN}}\big\Vert_{L^\infty([0,t];H^s)} \ \leq \ C_0\ \mu\ \min(t,t^{1/2}) ,\]
for any $t\in [0,T^\star_{s+6})$ if $T^\star=\O(\mu^{-1/2})$. If moreover, the initial data is localized, then
\[ \big\Vert U_{\text{eKdV}}-U_{\text{GN}}\big\Vert_{L^\infty([0,t];H^s)} \ \approx \ \big\Vert U_{\text{CL}}-U_{\text{GN}}\big\Vert_{L^\infty([0,t];H^s)} \ \leq \ C_0\ \mu\ \min(t,1),\]
for any $t\in [0,T^\sharp_{s+6})$ if $T^\sharp=\O(\mu^{-1/2})$. The accuracy of both eKdV and CL approximations are of size $\O(\mu^{3/4})$ at time $T=\O(\mu^{-1/2})=\O(\epsilon^{-1})$, or improved to size $\O(\mu)$ if the initial data is sufficiently localized. The accuracy of the KdV and iB approximation is of size $\O(\mu^{1/2})$ at the latter time.

However, if one looks at longer times, then the picture is different. In our simulations, we looked at times up to $T=\O(\epsilon^{-3/2})=\O(\mu^{-3/4})$. At that time, the localization in space of the initial data plays an important role. If the initial data is non-localized in space, then contribution of the coupling terms are predicted to be greater than the unidirectional error of the eKdV approximation: the secular coupling error is dominant up to $T=\O(\mu^{-1})$ ($\mu\sqrt t\geq \mu^{3/2}t$), after what the residual coupling term is dominant ($\mu^2 t^{3/2}\geq \mu^{3/2} t$ for $t\geq \mu^{-1}$). Thus {\em the eKdV approximation is predicted to be as precise as the CL approximation in the Camassa-Holm regime and if the initial data is non-localized in space, even if the depth ratio is critical}.

 On the contrary, if the initial data is localized, then the unidirectional error is dominant: $\mu^{3/2}t\geq \mu(1+\mu t)$ for any $t\geq\mu^{-1/2}$. This leads, at time $T=\O(\mu^{-3/4})$, to an error of size $\O(\mu)$ for the CL approximation, and of size $\O(\mu^{3/4})$ for the eKdV approximation. {\em If the initial data is localized, then the CL approximation is substantially more accurate than the eKdV approximation after very long time, in the Camassa-Holm regime with critical depth ratio.}
 
 The difference between localized and non-localized initial data can be clearly seen when comparing Figure~\ref{fig:21CH} and Figure~\ref{fig:23CH}. The fact the convergence rate of our numerical simulations fit the predictions of Corollary~\ref{prop:WP}, even for times which are out of the scope of our rigorous results, gives hope to extend the validity of the CL approximation to such times, where it becomes more precise than lower order approximations.

\begin{figure}[!hbt] 
 \subfigure[behavior of the error with respect to time]{
\includegraphics[width=0.5\textwidth]{./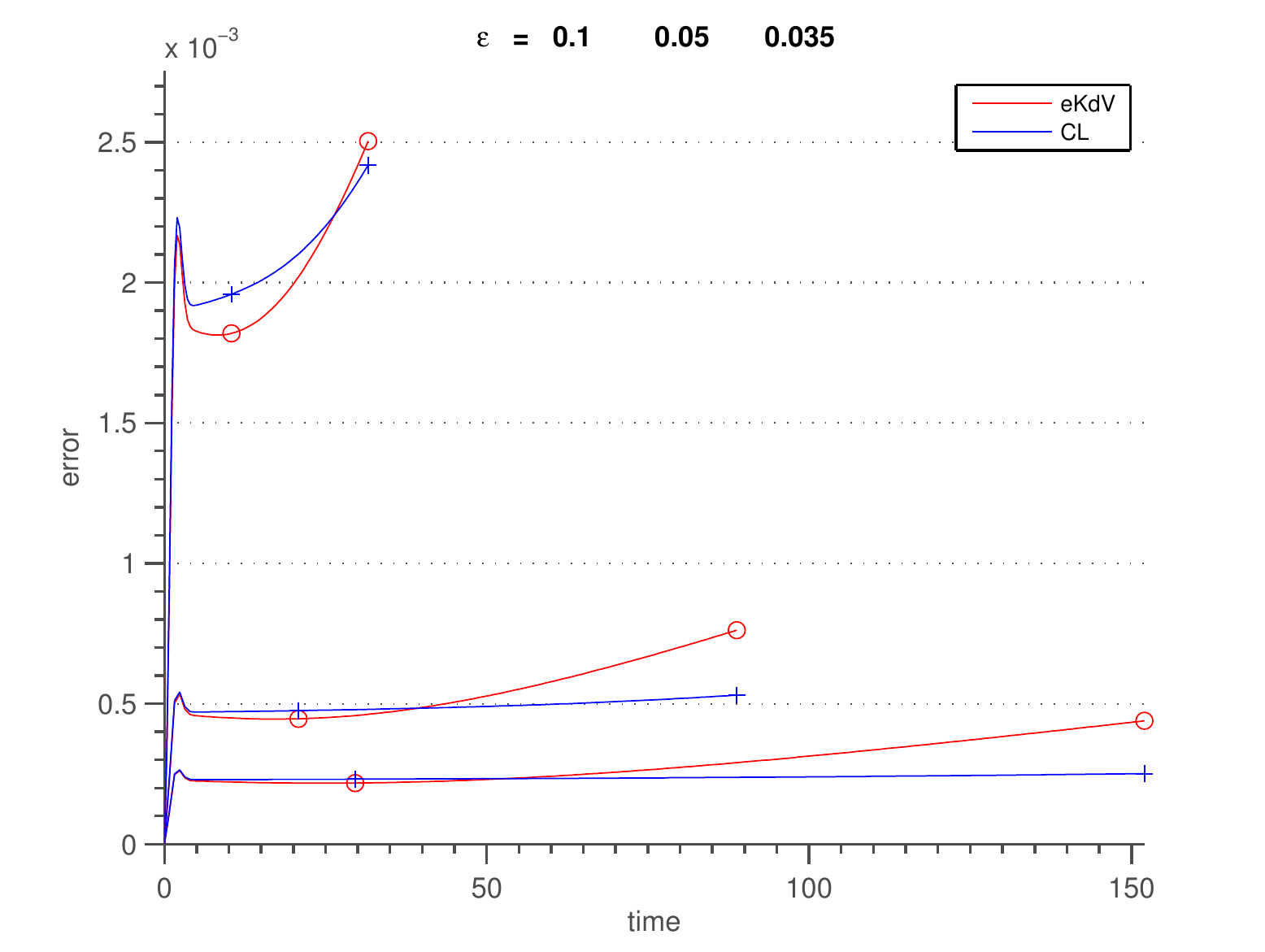}
\label{fig:21CHa}
}\hspace{-1cm}
 \subfigure[behavior of the error with respect to $\epsilon=\sqrt\mu$]{
\includegraphics[width=0.5\textwidth]{./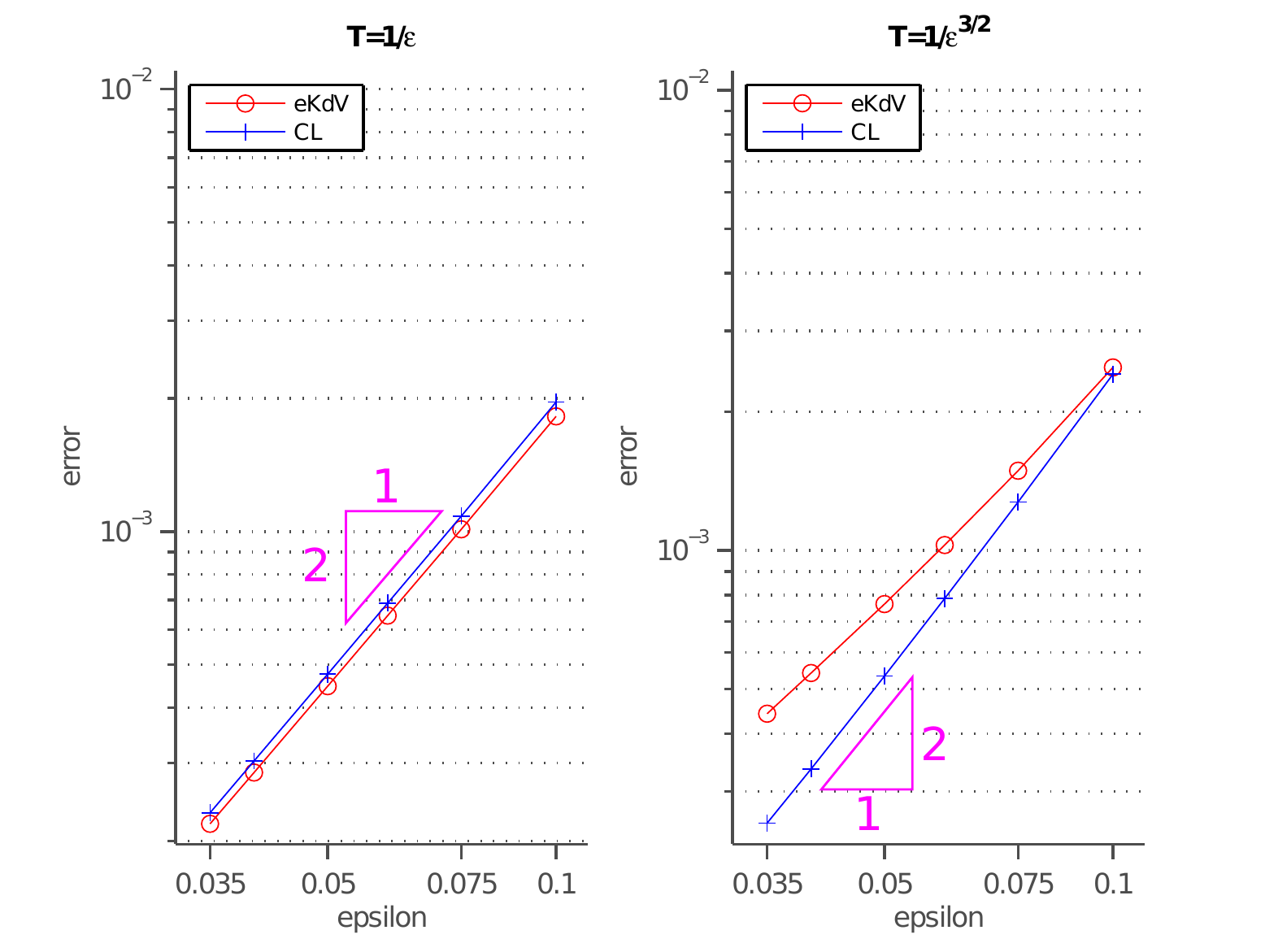}
\label{fig:21CHb}
}
\caption{Camassa-Holm regime, critical ratio, localized initial data}
\label{fig:21CH}
\end{figure}
\begin{figure}[!hbt] 
 \subfigure[behavior of the error with respect to time]{
\includegraphics[width=0.5\textwidth]{./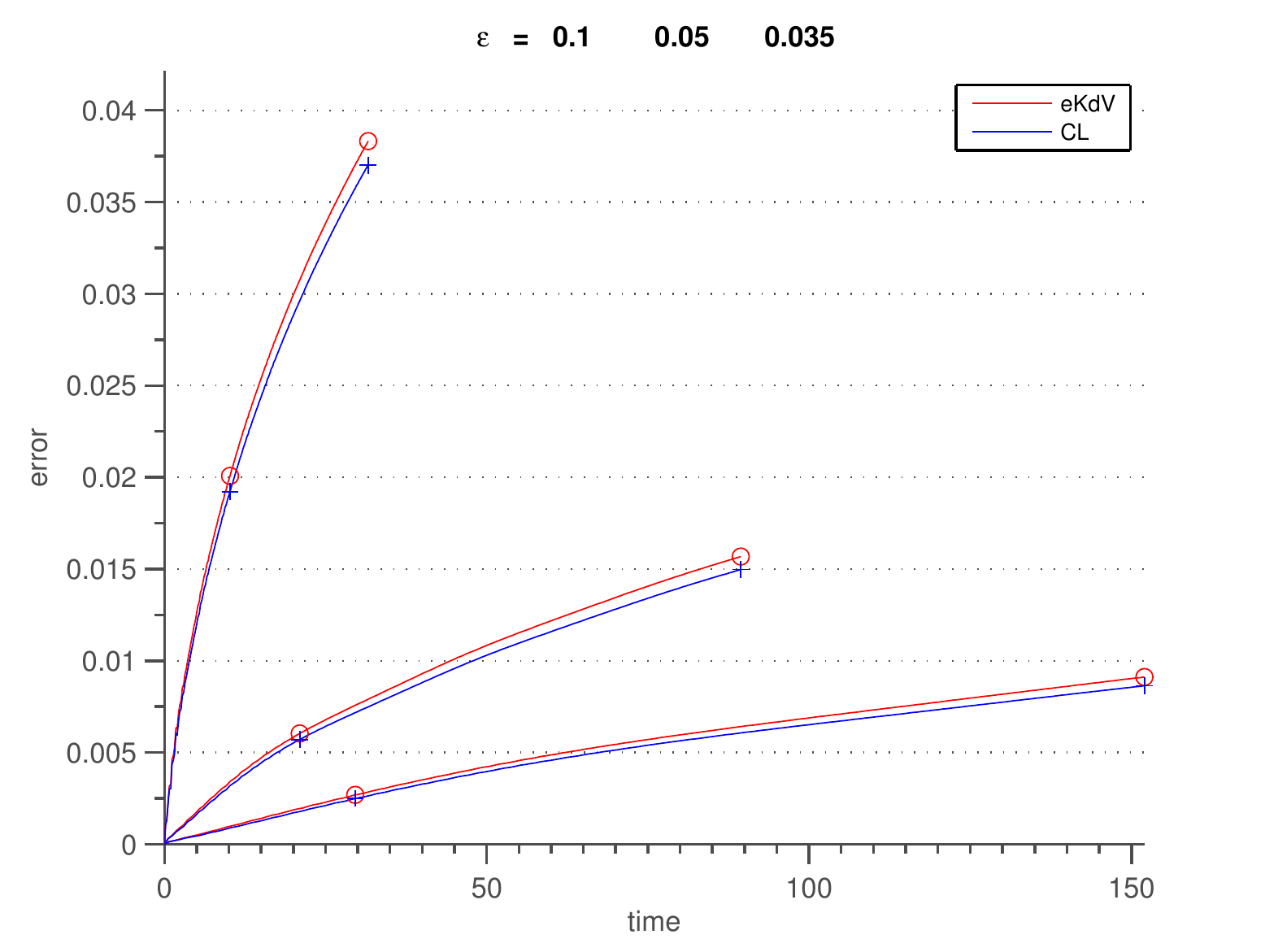}
\label{fig:23CHa}
}\hspace{-1cm}
 \subfigure[behavior of the error with respect to $\epsilon=\sqrt\mu$]{
\includegraphics[width=0.5\textwidth]{./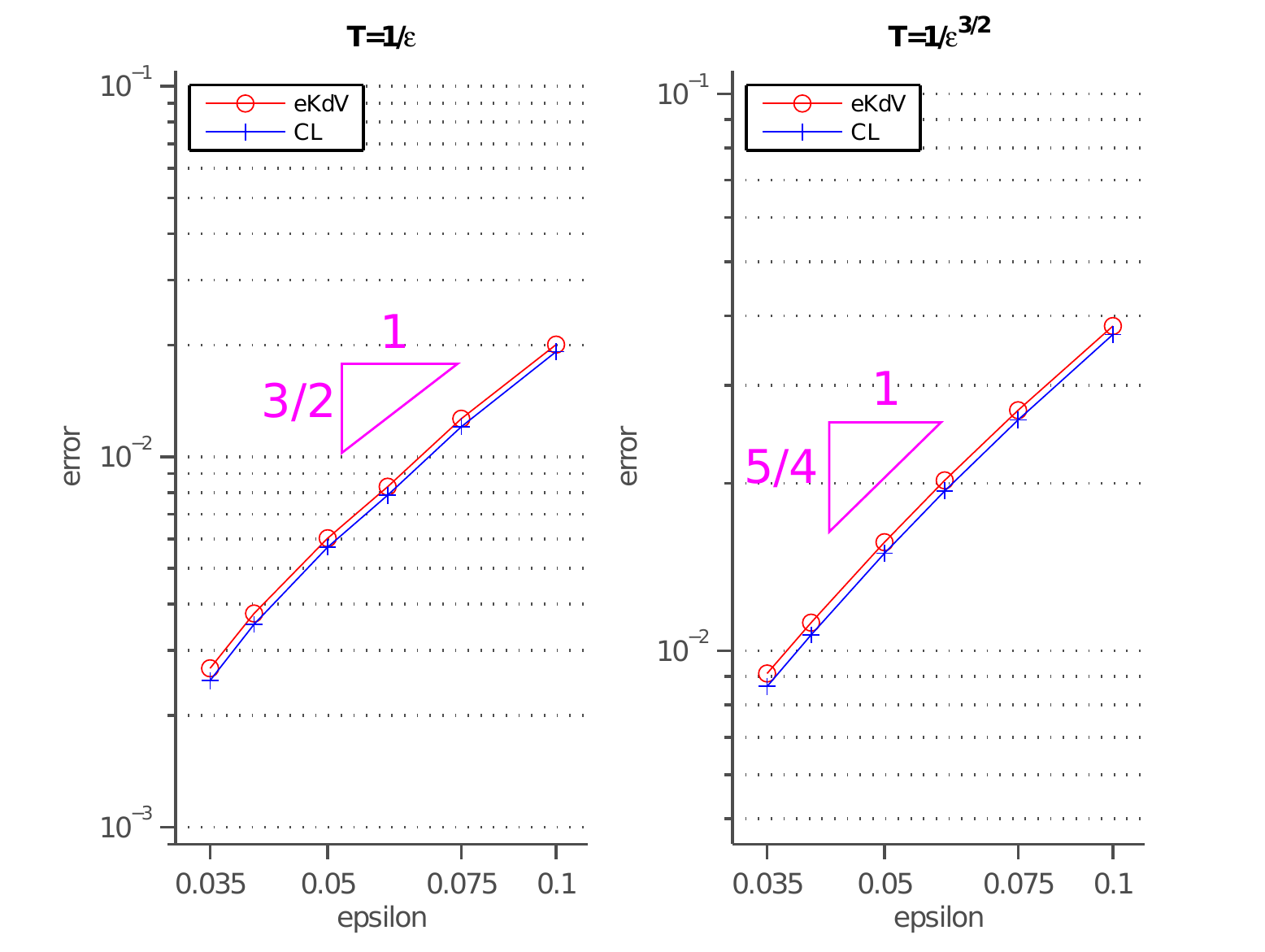}
\label{fig:23CHb}
}
\caption{Camassa-Holm regime, critical ratio, non-localized initial data}
\label{fig:23CH}
\end{figure}

\end{document}